\numberwithin{equation}{section}
\newtheorem{thm}[equation]{Theorem}
\newtheorem{prop}[equation]{Proposition}
\newtheorem{defn}[equation]{Definition}
\newtheorem{lemma}[equation]{Lemma}
\newtheorem{remark}[equation]{Remark}
\newtheorem{ex}[equation]{Example}
\newtheorem{coro}[equation]{Corollary}
\newcommand{\exref}[1]{Ex\-am\-ple \ref{#1}}
\newcommand{\thmref}[1]{Theo\-rem \ref{#1}}
\newcommand{\defref}[1]{Def\-i\-ni\-tion \ref{#1}}
\newcommand{\lemref}[1]{Lem\-ma \ref{#1}}
\newcommand{\propref}[1]{Prop\-o\-si\-tion \ref{#1}}
\newcommand{\corref}[1]{Cor\-ol\-lary \ref{#1}}
\newcommand{\remref}[1]{Re\-mark \ref{#1}}
\providecommand{\normaleq}{\unlhd}
\providecommand{\normal}{\lhd}
\providecommand{\union}{\cup}
\providecommand{\intersect}{\cap}
\DeclareMathOperator{\chr}{char }
\DeclareMathOperator{\rad}{rad }
\DeclareMathOperator{\End}{End }
\DeclareMathOperator{\Sym}{Sym }
\DeclareMathOperator{\Adj}{Adj }
\DeclareMathOperator{\GL}{GL}
\DeclareMathOperator{\SL}{SL}
\DeclareMathOperator{\Sp}{Sp}
\DeclareMathOperator{\GO}{GO}
\DeclareMathOperator{\GU}{GU}
\DeclareMathOperator{\Aut}{Aut}
\DeclareMathOperator{\Inv}{Inv}
\DeclareMathOperator{\Isom}{Isom}
\DeclareMathOperator{\im}{im }
\DeclareMathOperator{\Diag}{Diag }
\DeclareMathOperator{\disc}{disc }
\DeclareMathOperator{\spec}{spec }
\newcommand{\Bi}{\mathsf{Bi} }
\newcommand{\Grp}{\mathsf{Grp} }
\begin{document}




\title{Decomposing $p$-groups via Jordan algebras}
\author{James B. Wilson}
\thanks{This research was supported in part by NSF Grant DMS 0242983.}
\date{\today}
\email{jwilson7@uoregon.edu}
\address{
	Department of Mathematics,
	University of Oregon,
	Eugene, Oregon 97403.
}

\begin{abstract}
For finite $p$-groups $P$ of class $2$ and exponent $p$ the following are invariants
of fully refined central decompositions of $P$: the number of members in the decomposition,
the multiset of orders of the members, and the multiset of orders of their centers.  Unlike for direct product decompositions, $\Aut P$ is not always transitive on the set of fully refined central decompositions, and the number of orbits can in fact be any positive integer.  

The proofs use the standard semi-simple and radical structure of Jordan algebras.
These algebras also produce useful criteria for a $p$-group to be centrally indecomposable.
\end{abstract}

\keywords{central product, $p$-group, bilinear map, $*$-algebra,
Jordan algebra.}


\maketitle

\tableofcontents

%
%
\section{Introduction}\label{sec:intro}

A \emph{central decomposition} of a group $G$ is a set $\mathcal{H}$ of subgroups in which distinct members commute, and $G$ is generated by $\mathcal{H}$ but no proper subset.  A group is \emph{centrally indecomposable} if its only central decomposition consists of the group itself.  A central decomposition is \emph{fully refined} if it consists of centrally indecomposable subgroups.

We prove:
\begin{thm}\label{thm:central-count}
For $p$-groups $P$ of class $2$ and exponent $p$,
\begin{enumerate}[(i)]
\item the following are invariants of fully refined central decompositions of $P$:
the number of members, the multiset of orders of the members, and the multiset of orders of the centers of the members; and
\item the number of $\Aut P$-orbits acting on the set of fully refined central decompositions can be any positive integer.
\end{enumerate}
\end{thm}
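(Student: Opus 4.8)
The plan is to translate everything into the language of bilinear maps and then into a Jordan algebra, where the semisimple/radical structure supplies the invariants. Since $P$ has class $2$ and exponent $p$ with $p$ odd, the commutator induces a nondegenerate alternating bilinear map $b\colon V\times V\to W$ with $V=P/Z(P)$ and $W=P'$, together with a central quotient $Z(P)/P'$ to be tracked separately. The first thing I would set up is a dictionary: a central decomposition $\{H_1,\dots,H_k\}$ corresponds to an orthogonal decomposition $V=V_1\perp\cdots\perp V_k$ (meaning $b(V_i,V_j)=0$ for $i\neq j$) that spans with no redundant summand, where the central part is distributed among the members; ``fully refined'' becomes each $V_i$ being $\perp$-indecomposable. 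Under this dictionary $|H_i|$ is determined by $\dim V_i$, by $\dim b(V_i,V_i)$, and by the central contribution, while $|Z(H_i)|$ records that central contribution. So it suffices to prove that the analogous data of a $\perp$-decomposition of $b$ into indecomposables is an invariant.

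To reach these decompositions algebraically I would form the adjoint $*$-algebra $A=\Adj(b)$ of adjoint pairs of $b$ and observe that $\perp$-decompositions of $b$ correspond to decompositions of $1\in A$ into pairwise orthogonal Hermitian idempotents, fully refined ones corresponding to frames of primitive Hermitian idempotents. Passing to the Jordan algebra $J=\Her(A)$ of symmetric elements under $x\circ y=\tfrac12(xy+yx)$, fully refined central decompositions of $P$ become frames in $J$. Now the structure theory applies: writing $J=\rad J\oplus\bar J$ with $\bar J$ semisimple, idempotents and whole frames lift through $\rad J$ without changing type, and in the semisimple $\bar J$ all frames are conjugate, so the number of members of a frame and the capacities of its primitive idempotents are invariants of $\bar J$. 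Translating these capacities back through the dictionary yields exactly the number of members, the multiset of orders of the members, and the multiset of orders of their centers, proving (i).

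For (ii) I would exploit precisely the gap between these two levels. The numerical type of a frame is fixed by $\bar J$, but the radical $\rad J$ creates a genuine moduli of non-conjugate frames lifting a single frame of $\bar J$, and $\Aut P$ acts on frames essentially through the (pseudo-)isometry group of $b$, i.e.\ through the involution-compatible units of $A$. The strategy is to build, for each positive integer $n$, a group $P_n$ whose adjoint algebra carries just enough radical that the fully refined decompositions split into exactly $n$ orbits. Concretely I would design $b$ so that the relevant idempotents are parametrized by points of a small affine or projective space over $\mathbb F_p$ on which the isometry group acts with exactly $n$ orbits, and then tune a single dimension parameter, or take a central product with a rigid indecomposable factor, to realize any prescribed orbit count.

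The main obstacle I anticipate is in part (i): matching the group quantities $|H_i|$ and $|Z(H_i)|$ exactly to Jordan-algebraic capacities, and---crucially---verifying that $\rad J$, though responsible for the non-uniqueness (and hence the failure of transitivity in (ii)), does not disturb the \emph{numerical} invariants. Controlling idempotent lifting modulo $\rad J$, and showing that conjugacy modulo the radical already pins down the multisets of orders even when honest $\Aut P$-conjugacy fails, is the technical heart of the argument; part (ii) should then reduce to choosing constructions transparent enough that the orbits can be counted directly.
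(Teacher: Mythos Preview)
Your overall framework---translate to bilinear maps, pass to the adjoint $*$-algebra $A=\Adj(b)$ and the Hermitian Jordan algebra $J$, and identify fully refined central decompositions with frames of $J$---is exactly the paper's, and it is the right one. But the analysis after that point inverts the actual picture, and both parts would fail as written.

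For (i), the assertion that ``in the semisimple $\bar J$ all frames are conjugate'' is false. When a simple factor of $\bar J$ has orthogonal type (i.e.\ comes from a symmetric bilinear $K$-form $d$), frames of $\Sym(d)$ correspond to orthogonal bases of $d$, and over a finite field these fall into several isometry classes, distinguished by what the paper calls the \emph{address} $(n-s:s)$ (essentially the multiset of square/non-square values $d(x,x)$). So $\Inv(\bar J)$ (and hence $\Isom(b)$) is transitive only on frames with a fixed address, not on all frames. Your argument therefore establishes the invariance of $|\mathcal H|$ (which really is a Jordan-algebra capacity statement), but does not yet yield the multiset of orders: two indecomposable factors with different addresses need not be related by any isometry, and you have given no reason their orders must agree. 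The paper closes this gap with an extra device, \emph{semi-refined} decompositions (pairing up orthogonal-type factors with equal address into $H\circ H$ pieces), and proves that $C_{\Aut P}(P')$ is transitive on those; the multisets of $|H|$ and $|Z(H)|$ are then read off from the unique semi-refined type.

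For (ii), the mechanism you propose is backwards. The paper proves (your own ``lifting through $\rad J$'' step, done carefully) that two frames of $J$ are $\Inv(J)$-conjugate if and only if their images in $\bar J$ are $\Inv(\bar J)$-conjugate; in other words the radical creates \emph{no} new orbits. The multiplicity of $\Aut P$-orbits comes entirely from the semisimple part: take $H$ centrally indecomposable of orthogonal type (e.g.\ the free class-$2$ exponent-$p$ group on three generators) and set $P=H^{\circ 2n}$. Then $\bar J\cong \Sym(d)$ for a symmetric form $d$ of dimension $2n$, the addresses $(2n-2m:2m)$ give $n+1$ isometry orbits of frames, and after accounting for pseudo-isometries one gets exactly $1+\lfloor n/2\rfloor$ $\Aut P$-orbits. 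A construction that relies on $\rad J$ to separate orbits will not work; you should instead look for orthogonal-type indecomposables and exploit the discriminant-type invariant on the semisimple quotient.
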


Central decompositions arise from, and give rise to, central products (cf. 
Section \ref{sec:cent-prod}), and hence \thmref{thm:central-count}.$(i)$ is a theorem
of Krull-Remak-Schmidt type (cf. \cite[(3.3.8)]{Robinson}).  That theorem states that the multiset of isomorphism types of fully refined direct decompositions (Remak-decompositions) is uniquely determined by the group, and the automorphism group is transitive on the set of Remak-decompositions.  \thmref{thm:central-count}.$(ii)$ points out how unrelated the proof of \thmref{thm:central-count}.$(i)$ is to that of the classical Krull-Remak-Schmidt theorem.  Moreover, inductive proofs do not work for central decompositions.  For example, a quotient by a member in a central decomposition generally removes the subtle intersections of other factors and so is of little use.  Similarly, automorphisms of a member in a central decomposition usually do not extend to automorphisms of the entire group. 

We conjecture that under the hypotheses of \thmref{thm:central-count}, even the multiset of isomorphism types of a fully refined central decomposition of $P$ is uniquely determined by $P$.  For details see Section \ref{sec:conjecture}.

While the literature on direct decompositions is vast, little appears to have been
done for central decompositions.  For $p$-groups, results similar to \thmref{thm:central-count} have concentrated on central decompositions with centrally indecomposable subgroups of rank 2 and 3, with various constraints on their centers \cite{Abbasi:2,Abbasi:1,Tang:cent-1,Tang:cent-2}.  Using entirely different techniques, our setting applies to groups of arbitrary rank at the cost of assuming exponent $p$.

The methods used in this paper involve bilinear maps and non-associative algebras, but not the nilpotent Lie algebras usually associated with $p$-groups.  We introduce a $*$-algebra and a Jordan algebra in order to study central decompositions.  The approach leads to a many other results for $p$-groups and introduces a surprising interplay between $p$-groups, symmetric bilinear forms, and various algebras.  Most of these ideas are developed in subsequent works. As the algebras we use are easily computed, in \cite{Wilson:algo-cent} we provide algorithms for finding fully refined central decompositions and related decompositions -- even for $p$-groups of general class and exponent.  In \cite{Wilson:grp-alge} we prove there are $p^{2n^3/27 + Cn^2}$ centrally indecomposable groups of order $p^n$, which is of the same form as the Higman-Sims bound on the total number of groups of order $p^n$ \cite{Higman:enum,Sims:enum}.  In \cite{Wilson:grp-alge} we also prove that a \emph{randomly presented} group of order $p^n$ is centrally indecomposable, and we characterize various \emph{minimal} centrally indecomposable $p$-groups by means of locally finite $p$-groups, including those $p$-groups with $P'\cong \mathbb{Z}_p^2$.  Finally, in \cite{Wilson:isoclinism} we address central decompositions of $2$-groups, $p$-groups of arbitrary exponent, and $p$-groups of arbitrary class, by means of an equivalence on $p$-groups related to the isoclinism of P. Hall \cite{Hall:isoclinism}.

		%
		%
\subsection{Outline of the proof}

Section \ref{sec:central} contains background and notation for central decompositions 
of groups and orthogonal decompositions of bilinear maps.  

Section \ref{sec:p-bilinear} translates $p$-groups $P$ of class 2 and exponent $p$ into alternating bilinear maps on $P/P'$ induced by commutation.  This approach is well-known
and appears as early as Baer's work \cite{Baer:class-2} and refined in
\cite{Kaloujnine:class-2} and \cite{Warfield:nil}; however, such techniques have
been upstaged by appealing to various associated Lie algebras of Kaloujnine,  Lazard, Mal'cev and others \cite{Khukhro}.  By contrast, the bilinear approach translates unwieldy central decompositions into natural-looking orthogonal decompositions, and automorphisms into pseudo-isometries (\thmref{thm:main-reduction}).  

In Section \ref{sec:adj-sym} we introduce two algebraic invariants of bilinear maps: the associative $*$-algebra of adjoint operators, and the Jordan algebra of self-adjoint operators.  The first of these encodes isometries, while the second encodes orthogonal decompositions via sets of pairwise orthogonal idempotents (\thmref{thm:perp-idemp}).   We use these algebras to give criteria for indecomposable bilinear maps and centrally indecomposable $p$-groups (\corref{coro:indecomp} and \thmref{thm:indecomps}).  We also prove the first part of \thmref{thm:central-count}.$(i)$.
  
In Section \ref{sec:main} we prove that a certain subgroup of isometries acts on 
suitable sets of idempotents of our Jordan algebra with the same orbits as the full isometry 
group.  Using the radical theory of Jordan algebras and the classification of finite 
dimensional simple Jordan algebras we identify the orbits of the isometry group acting 
on the set of fully refined orthogonal decompositions (and therefore the orbits of 
$C_{\Aut P}(P')$ on the set of fully refined central decompositions of $P$) 
(\corref{coro:main}).  

In Section \ref{sec:orbits}, semi-refined central decompositions are introduced.  These
are derived from properties of symmetric bilinear forms and then interpreted in the 
setting of $p$-groups, leading to the proof of \thmref{thm:central-count}.$(i)$.

The remaining Section \ref{sec:ex} proves \thmref{thm:central-count}.$(ii)$.  We also build families of centrally indecomposable groups of the types in \thmref{thm:indecomps}.  These examples are only a sample of the known constructions of this sort and the proofs provided are self-contained versions of broader results in \cite{Wilson:grp-alge}.

Section \ref{sec:closing} has concluding remarks. 

%
%
\section{Background}\label{sec:central}

Unless stated otherwise, all groups, algebras, and vector spaces will be finite and
$p$ will be odd.  We begin with brief introductions 
to central products and central decompositions of groups, followed by orthogonal 
decompositions of bilinear maps.

		%
		%
\subsection{Central decompositions and products}\label{sec:cent-prod}

Let $\mathcal{H}$ be a central decomposition of a group $G$ (cf. Section \ref{sec:intro}).  The condition $[H,K]=1$ for distinct $H, K\in\mathcal{H}$ shows that $H\intersect \langle \mathcal{H}-\{H\}\rangle\leq Z(G)$ for all $H\in\mathcal{H}$.  Then, the members of $\mathcal{H}$ are normal subgroups of $G$.  

Central decompositions can be realized by means of central products. 
Fix a set $\mathcal{H}$ of groups and a subgroup
$N$ of $\widetilde{\mathcal{H}}:=\prod_{H\in \mathcal{H}} H$ such that
$N\intersect H=1$ for all $H\in\mathcal{H}$.  The \emph{central product} 
of $\mathcal{H}$ with respect to $N$ is $\widetilde{\mathcal{H}}/N$.  
If $\mathcal{H}$ is a central decomposition of a group $G$, then define
$\pi:\widetilde{\mathcal{H}}\to G$ by 
$(x_H)_{H\in\mathcal{H}}\mapsto \prod_{H\in\mathcal{H}} x_H$.
Then $G\cong \widetilde{\mathcal{H}}/\ker \pi$.
These two treatments are equivalent \cite[(11.1)]{Aschbacher}.

In an arbitrary central decomposition $\mathcal{H}$ of a group $G$, in general 
$H\intersect K$ and $H\intersect J$ are distinct, for distinct 
elements $H,K,J\in\mathcal{H}$.  
\begin{defn}
Given a subgroup $M\leq G$ and a central decomposition $\mathcal{H}$ of $G$, 
we call $\mathcal{H}$ an $M$-\emph{central decomposition} if 
$M=H\intersect K$ for all distinct $H,K\in\mathcal{H}$.  The associated central
product is an \emph{$M$-central product}.
\end{defn}

Every central decomposition induces a $Z(G)$-central decomposition 
$\mathcal{H}Z(G)=\{HZ(G):H\in\mathcal{H}\}$.  Some authors write 
$H_1*\cdots *H_s$ or $H_1\circ \cdots \circ H_s$ for a $Z(G)$-central product.
These notations still depend on the given $N\leq H_1\times\cdots\times H_s$.
We require a precise meaning in the following specific case:
\begin{equation}\label{eq:canonical-cent}
	\overbrace{H\circ \cdots \circ H}^n  = \overbrace{H\times\cdots \times H}^n/N
\end{equation}
where $N:=\langle (1,\dots,\overset{i}{x},1,\dots,\overset{j}{x^{-1}},1,\dots)
			| 1\leq i<j\leq n, x\in Z(H)\rangle$.

		%
		%
\subsection{Central decompositions of $p$-groups of class $2$ and exponent $p$}

Using standard group theory, we show that central decompositions of a finite $p$-group $P$ 
of class $2$ and exponent $p$ reduce to central decompositions of a subgroup $Q$ where
$P'=Q'=Z(Q)$ and $P=QZ(P)$.  Furthermore, we show that for our purposes we may 
consider only $Z(Q)$-central decompositions (cf. \corref{coro:refine}).  

\begin{defn}\label{def:central-auto}
An automorphism $\varphi\in \Aut P$ is \emph{upper central} if $Z(P)x\varphi=Z(P)x$, for all $x\in P$,  and \emph{lower central} if $P'x\varphi=P'x$, for all $x\in P$.  The group of
upper central automorphisms we denote by $\Aut_{\zeta} P$ and the lower central
automorphisms by $\Aut_{\gamma} P$.
\end{defn}
As $P$ has class $2$, $\Aut_{\gamma} P\leq \Aut_{\zeta} P$.  Furthermore, every $\alpha\in\Aut_{\gamma} P$ is also the identity on $P'$.

\begin{lemma}\label{lem:contract-1}
\begin{enumerate}[(i)]
\item There are subgroups $Q$ and $A$ of $P$ such that $Z(Q)=Q'=P'$, $A\leq Z(P)$
and $P=Q\times A$.
\item Given subgroups $Q$ and $R$ of $P$ such that $Z(Q)=Q'=P'=R'=Z(R)$ and 
$P=QZ(P)=RZ(P)$, if $A$ is a complement to $Q$ as in $(i)$ then it is also a complement to $R$ so that $P=Q\times A=R\times A$.  Furthermore, there is an upper central automorphism of $P$ sending $Q$ to $R$ and identity on $Z(P)$.
\end{enumerate}
\end{lemma}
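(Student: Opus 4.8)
The plan is to prove both parts by exploiting the structure $P$ has class $2$ and exponent $p$, so that $P' \leq Z(P)$ and $P/P'$ carries the alternating bilinear commutator map.

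For part $(i)$, I would first produce $Q$ and then split off a central complement. Since $P$ has class $2$, $P'$ is central, and the commutator induces a well-defined alternating bilinear form $b\colon P/P' \times P/P' \to P'$. The radical of this form (the vectors pairing trivially with everything) corresponds to $Z(P)/P'$. The idea is to choose $Q$ so that it captures the "non-degenerate part" of the commutation. Concretely, I would pick a complement $V$ to $Z(P)/P'$ inside $P/P'$ as $\mathbb{F}_p$-spaces (possible since we are in exponent $p$, so $P/P'$ is an elementary abelian $p$-group, hence a vector space), and let $Q$ be the preimage in $P$ of $V \oplus (Z(P)/P')$ restricted appropriately — more carefully, I want $Q$ to contain $P'$ and satisfy $Q/P' = V \perp$-component together with $P'$ so that $Z(Q) = P' = Q'$. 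I would then take $A \leq Z(P)$ to be a complement of $Q \cap Z(P) = P'$ inside $Z(P)$; because $P$ has exponent $p$, $Z(P)$ is elementary abelian and such a complement exists as a vector-space direct summand of $P'$ in $Z(P)$. Verifying $P = Q \times A$ amounts to checking $Q \cap A = 1$ and $QA = P$, which follows from the complement choices, and that $A$ is central makes the product direct. The condition $Z(Q) = Q' = P'$ holds because we arranged $Q/P'$ to meet the radical of $b$ only in $0$, forcing the center of $Q$ down to $P'$, while $Q' = P'$ since the commutator map restricted to $Q$ still surjects onto $P'$.

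For part $(ii)$, suppose $Q, R$ both satisfy $Z(Q) = Q' = P' = R' = Z(R)$ and $P = QZ(P) = RZ(P)$, and let $A$ be the complement to $Q$ from $(i)$. First I would show $A$ is also a complement to $R$: since $A \leq Z(P)$ with $A \cap P' = 1$ and $A P' = Z(P)$ (the defining properties of the complement, as $Q \cap Z(P) = Z(Q) = P'$), and since $R \cap Z(P) = Z(R) = P'$ likewise, the same $A$ satisfies $R \cap A \leq R \cap Z(P) \cap A = P' \cap A = 1$ and $RA \supseteq R\,(A P') = R Z(P) = P$, giving $P = R \times A$. For the automorphism, I would define $\varphi$ using the decompositions $P = Q \times A = R \times A$: every element $g \in P$ writes uniquely as $g = q a$ with $q \in Q$, $a \in A$, and because $P = QZ(P) = RZ(P)$ with the same $A$, there is a canonical bijection $Q \to R$ obtained by composing the quotient $Q \to Q/P' = P/Z(P) = R/P' \leftarrow R$ — here the point is that $Q \cong Q/P' \cong P/Z(P) \cong R/P' \cong R$ as groups need not hold on the nose, so instead I would build $\varphi$ as the identity on $A$ (hence on $Z(P)$) and as the map $Q \to R$ that agrees modulo $P'$. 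The cleanest route is to observe $P/Z(P)$ is the common quotient and lift: pick the isomorphism $\theta\colon Q \to R$ sending each $q$ to the unique element of $R$ congruent to $q$ modulo $Z(P)$ that lies in $R$; this is well-defined and a homomorphism because $Q, R$ both map isomorphically onto $P/Z(P)$ via projection, and it fixes $P'$ pointwise. Setting $\varphi = \theta \times \mathrm{id}_A$ gives the desired upper central automorphism sending $Q$ to $R$ and fixing $Z(P)$.

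The main obstacle I expect is verifying that the candidate map $\theta\colon Q \to R$ is a genuine homomorphism and sends $Q$ isomorphically onto $R$, i.e., that reducing modulo $Z(P)$ really does identify $Q$ and $R$ as isomorphic copies. This requires that both $Q \to P/Z(P)$ and $R \to P/Z(P)$ are isomorphisms, which follows from $Q \cap Z(P) = P' = Z(Q)$ together with $QZ(P) = P$ — but one must check that the lift respects multiplication, and this is exactly where the class-$2$ and exponent-$p$ hypotheses enter: the commutator values, which land in $P' = Z(Q) = Z(R)$, must match up under $\theta$. I would verify this by noting that the commutator map on $P/P'$ is intrinsic (it does not depend on the choice of complement to $Z(P)$), so $\theta$ automatically preserves commutators and hence is a homomorphism, with inverse constructed symmetrically. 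Showing $\varphi$ is upper central, meaning $Z(P)x\varphi = Z(P)x$ for all $x$, is then immediate since $\theta$ acts as the identity modulo $Z(P)$ and $\varphi$ fixes $A \leq Z(P)$.
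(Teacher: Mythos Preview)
Your argument for part $(i)$ is essentially the paper's: choose $Q$ with $P'\leq Q$, $Q\cap Z(P)=P'$, and $QZ(P)=P$ (a vector-space complement to $Z(P)/P'$ in $P/P'$), then take $A$ to be a complement to $P'$ in the elementary abelian group $Z(P)$. The bilinear-form language is not needed, but it does no harm.

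For part $(ii)$ there is a genuine gap in your construction of $\theta\colon Q\to R$. You assert that $Q$ and $R$ ``both map isomorphically onto $P/Z(P)$ via projection'' and hence that each $q\in Q$ determines a \emph{unique} $r\in R$ with $qZ(P)=rZ(P)$. But the projection $Q\to P/Z(P)$ has kernel $Q\cap Z(P)=P'$, which is nontrivial once $P$ is nonabelian; the same holds for $R$. Thus the set of $r\in R$ with $rZ(P)=qZ(P)$ is an entire $P'$-coset, not a single element, and your $\theta$ is not well-defined. (For the same reason, your claim that $\theta$ fixes $P'$ pointwise has no content: for $q\in P'$ the condition $\theta(q)Z(P)=qZ(P)=Z(P)$ only forces $\theta(q)\in P'$, not $\theta(q)=q$.)

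The paper repairs this by working modulo $A$ rather than modulo $Z(P)$. Since $P=R\times A$, there is an honest group homomorphism $\pi\colon P\to R$ with kernel $A$; set $\alpha:=\pi|_Q$. Then $\alpha$ is automatically a homomorphism, it is injective because $Q\cap A=1$, surjective because $QA=P$, and it fixes $P'$ pointwise because $P'\leq R$ and $\pi|_R=\mathrm{id}_R$. The automorphism $\alpha\times 1_A$ of $P=Q\times A$ then does the job. Your instinct to match cosets is correct, but the modulus must be the complement $A$ (where $R\cap A=1$), not $Z(P)$ (where $R\cap Z(P)=P'$).
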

\begin{proof}
$(i)$. Since $P/P'$ is elementary abelian, there is $P'\leq Q\leq P$ such that 
$Q\intersect Z(P)=P'$ and $P=QZ(P)$.
Furthermore, $P'=[QZ(P),QZ(P)]=Q'$ and $[P,Z(Q)]=[QZ(P),Z(Q)]=1$,
so $Q'\leq Z(Q)\leq Q\intersect Z(P)=Q'$.

Also, $Z(P)$ is elementary abelian, so there is a complement $A$ to
$P'$ in $Z(P)$.  Whence, $P=QZ(P)=QA$ and 
$Q\intersect A\leq Q\intersect Z(P)\intersect A=P'\intersect A=1$.  
As $A$ is central in $P$, $P=Q\times A$.

$(ii)$. Fix two subgroups $Q$ and $R$ as described in the hypothesis.  So there is a complement $A$ to $Q$ as in $(i)$.  Since $Q\intersect Z(P)=P'=R\intersect Z(P)$ it follows that $P=Q\times A=R\times A$.  Let $\pi:P\to P$ be the projection of $P$ to $R$ with kernel $A$.  Restricting $\pi$ to $Q$ gives a homomorphism $\alpha:Q\to R$.  Furthermore, $P=QA$ so $\alpha$ is surjective, and $Q\intersect A=1$ so $\alpha$ is injective.  Hence $\alpha$ is an isomorphism.  Indeed, $Q'=P'=R'$ and $\pi$ is the identity on $R$, so $\alpha$ is the identity on $Q'=R'$.  Then $\beta=\alpha\times 1_A:Q\times A\to R\times A$ is a upper central automorphism of $P$ sending $Q$ to $R$.
\end{proof}

\begin{defn}
If $\mathcal{H}$ is a central decomposition of $P$, then define
$Z(\mathcal{H})=\{H\in \mathcal{H}:H\leq Z(P)\}$.
\end{defn}

\begin{lemma}\label{lem:contract-2}
Let $\mathcal{H}$ be a fully refined central decomposition of $P$.
If $Q=\langle \mathcal{H}-Z(\mathcal{H})\rangle$ and 
$A=\langle Z(\mathcal{H})\rangle$,
then $P=Q\times A$, $Q'=Z(Q)$ and $Q'A=Z(P)$.  
\end{lemma}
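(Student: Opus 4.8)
The plan is to read the internal structure of $Z(P)$ off the decomposition and then let central indecomposability of the individual members do the rest. First I would record two easy reductions. Each member $H\in Z(\mathcal H)$ lies in $Z(P)$, hence is abelian of exponent $p$; being centrally indecomposable it must have order $p$, since an elementary abelian group of rank $\geq 2$ splits as a central product of its $1$-dimensional subspaces. Consequently $A=\langle Z(\mathcal H)\rangle\leq Z(P)$, and because $\mathcal H$ generates $P$ we get $P=QA$ with $A$ central. Moreover, since distinct members commute every commutator already occurs inside a single member, so $Q'=\prod_{H\in\mathcal H- Z(\mathcal H)}H'$, and $Q'\leq P'\leq Z(P)$ as $P$ has class $2$.

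Next I would pin down $Z(P)$ itself. For each $H\in\mathcal H$ one has $Z(H)=Z(P)\cap H$: the inclusion $Z(H)\leq Z(P)$ holds because $Z(H)$ centralizes $H$ and commutes with every other member, hence with all of $P$, while $Z(P)\cap H\leq Z(H)$ is clear. Writing an arbitrary $z\in Z(P)$ as $z=\prod_{H}h_H$ with $h_H\in H$ (possible since the members generate $P$ and pairwise commute) and commuting $z$ against an element $g\in K$ gives, in a class-$2$ group, $1=[z,g]=[h_K,g]$; thus $h_K\in Z(K)$ for every $K$, and therefore $Z(P)=\prod_{H\in\mathcal H}Z(H)$.

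The crux is to show $Z(H)=H'$ for every non-central member $H\in\mathcal H- Z(\mathcal H)$; this is the one place where central indecomposability does real work, and I expect it to be the main obstacle, being the elementary prototype of the Jordan-algebra criteria developed later. Suppose there were $z\in Z(H)\setminus H'$. Since $H$ has exponent $p$ and class $2$, $H/H'$ is elementary abelian and $zH'$ is a nonzero vector; choosing a subspace complementary to $\langle zH'\rangle$ in $H/H'$ and taking its preimage $H_0\geq H'$ gives $H=H_0\langle z\rangle$ with $H_0\cap\langle z\rangle=1$ and $z\notin H_0$. As $z$ is central of order $p$, this is a direct, hence central, decomposition $H=H_0\times\langle z\rangle$ with both factors nontrivial, unless $H=\langle z\rangle$ has order $p$, in which case $H\leq Z(P)$ and so $H\in Z(\mathcal H)$. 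Either outcome contradicts the central indecomposability of $H\in\mathcal H- Z(\mathcal H)$, so $Z(H)=H'$.

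Finally I would assemble the pieces. Combining the previous two steps, $Z(P)=\bigl(\prod_{H\in\mathcal H- Z(\mathcal H)}H'\bigr)A=Q'A$, which is the third assertion. For directness, minimality of $\mathcal H$ gives $L\cap\langle\mathcal H-\{L\}\rangle=1$ for each order-$p$ member $L\in Z(\mathcal H)$, and since $\langle\mathcal H-\{L\}\rangle=Q\prod_{L'\neq L}L'$, a short computation shows both that $A$ is the internal direct product of its lines and that $Q\cap A=1$. With $A\leq Z(P)$ this yields $P=Q\times A$. Then $Z(P)=Z(Q)\times A$, and comparing this with $Z(P)=Q'A$ using $Q'\leq Q$ and $Q\cap A=1$ forces $Z(Q)=Q'$, completing the proof.
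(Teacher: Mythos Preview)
Your proof is correct and follows essentially the same route as the paper: the core step is showing $Z(H)=H'$ for each non-central member $H$ via its central (hence direct) indecomposability, which the paper obtains by invoking \lemref{lem:contract-1} while you reprove it directly. Your argument is in fact more explicit than the paper's on the point $Q\cap A=1$, which you correctly extract from the minimality of $\mathcal{H}$ together with $|L|=p$ for $L\in Z(\mathcal{H})$; the paper's proof passes over this step.
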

\begin{proof}
Certainly $A\leq Z(P)$ and $P=QA$.  Also $P'=Q'$ and $Z(P)=Z(Q)A$.
As $\mathcal{H}$ is fully refined, every $H\in \mathcal{H}-\mathcal{A}$ is 
centrally indecomposable and so also directly indecomposable.  By
\lemref{lem:contract-1} it follows that $H'=Z(H)$, for all 
$H\in\mathcal{H}-Z(\mathcal{H})$.
As a result, $Q'=Z(Q)$.  Thus  $P=Q\times A$.
\end{proof}

\begin{defn}
Two central decompositions $\mathcal{H}$ and $\mathcal{K}$ of a group $G$ 
are \emph{exchangeable} if, for each $\mathcal{J}\subseteq \mathcal{H}$, 
there is an $\alpha\in \Aut G$ such that $\mathcal{J}\alpha\subseteq \mathcal{K}$
and $(\mathcal{H}-\mathcal{J})\alpha=\mathcal{H}-\mathcal{J}$.
\end{defn}

For instance, if $G=H_1\circ\dots \circ H_s=K_1\circ\cdots \circ K_t$ are
exchangeable decompositions, then $s=t$ and, for each $1\leq i\leq s$,
	\[G=H_1\circ\cdots \circ H_i\circ K_{i+1}\circ\cdots\circ K_t.\]
Replacing $\circ$ with $\times$ we recognize this as the usual exchange 
property for direct decompositions.  The Krull-Remak-Schmidt theorem 
states that all fully refined direct decompositions (Remak-decompositions) 
are exchangeable \cite[(3.3.8)]{Robinson}.  In light of \thmref{thm:central-count}.$(ii)$, a general $p$-group of class 2 and exponent $p$ will have fully refined central decompositions which are not exchangeable.

Subgroups in $Z(\mathcal{H})$ can only be exchanged with subgroups in
$Z(\mathcal{K})$, and similarly for the complements of these sets.

\begin{lemma}
If $\mathcal{H}$ and $\mathcal{K}$ are two fully refined central decompositions of $P$ such that $\mathcal{H}-Z(\mathcal{H})=\mathcal{K}-Z(\mathcal{K})$, then $\mathcal{H}$ and $\mathcal{K}$ are exchangeable.
\end{lemma}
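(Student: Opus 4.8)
The plan is to reduce the whole statement to the classical exchange property of bases in a vector space: since the non-central parts of the two decompositions coincide, I can hold them fixed and shuffle only the central members, which correspond to two bases of a single $\mathbb{F}_p$-space.

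First I would set $\mathcal{B}=\mathcal{H}-Z(\mathcal{H})=\mathcal{K}-Z(\mathcal{K})$ and $Q=\langle\mathcal{B}\rangle$. Applying \lemref{lem:contract-2} to each of $\mathcal{H}$ and $\mathcal{K}$ gives $P=Q\times A_{\mathcal{H}}=Q\times A_{\mathcal{K}}$ with $A_{\mathcal{H}}=\langle Z(\mathcal{H})\rangle$, $A_{\mathcal{K}}=\langle Z(\mathcal{K})\rangle$, and (using $Q'=P'$) also $Z(P)=P'\times A_{\mathcal{H}}=P'\times A_{\mathcal{K}}$. Because $P$ has exponent $p$, each member of $Z(\mathcal{H})$ or $Z(\mathcal{K})$ is a central, hence abelian, centrally---equivalently directly---indecomposable subgroup, so it is cyclic of order $p$; minimality of the decompositions then forces $Z(\mathcal{H})$ and $Z(\mathcal{K})$ to be honest internal direct decompositions of $A_{\mathcal{H}}$ and $A_{\mathcal{K}}$. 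Passing to $V:=Z(P)/P'$, the projection identifies each of $A_{\mathcal{H}},A_{\mathcal{K}}$ with $V$, so on choosing a generator $x_i$ of each member of $Z(\mathcal{H})$ and $y_j$ of each member of $Z(\mathcal{K})$, the images $\bar x_i$ and $\bar y_j$ in $V$ form two bases.

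Next I would record the only automorphisms I need, namely those fixing $Q$ pointwise. For any homomorphism $\phi\colon A_{\mathcal{H}}\to Z(P)$ the rule $\alpha(qa)=q\,\phi(a)$ for $q\in Q$, $a\in A_{\mathcal{H}}$ is a well-defined endomorphism of $P$, since $\phi(a)$ is central; and $\alpha$ is an automorphism exactly when the induced map $\bar\phi\colon V\to V$ lies in $\GL(V)$. This is the lifting device: any replacement of basis vectors of $V$ is realized by such an $\alpha$, which in addition fixes every member of $\mathcal{B}$ pointwise. Then, given $\mathcal{J}\subseteq\mathcal{H}$, I would write $\mathcal{J}_Z=\mathcal{J}\cap Z(\mathcal{H})$ with index set $S$. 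Since $\{\bar x_i:i\notin S\}$ is linearly independent and $\{\bar y_j\}$ spans $V$, the Steinitz exchange lemma supplies $T$ with $|T|=|S|$ and a bijection $\sigma\colon S\to T$ so that $\{\bar x_i:i\notin S\}\cup\{\bar y_{\sigma(i)}:i\in S\}$ is a basis. Defining $\phi(x_i)=x_i$ for $i\notin S$ and $\phi(x_i)=y_{\sigma(i)}$ for $i\in S$, this basis condition is exactly $\bar\phi\in\GL(V)$, so the associated $\alpha$ is an automorphism; it fixes each $\langle x_i\rangle$ with $i\notin S$ and carries each $\langle x_i\rangle$ with $i\in S$ onto the genuine member $\langle y_{\sigma(i)}\rangle\in Z(\mathcal{K})$. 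Thus $(\mathcal{H}-\mathcal{J})\alpha=\mathcal{H}-\mathcal{J}$ and $\mathcal{J}\alpha\subseteq\mathcal{B}\cup Z(\mathcal{K})=\mathcal{K}$, which is exchangeability.

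I expect the genuine content to sit entirely in the first paragraph: checking that the central members descend to honest bases of $V$ and that $\GL(V)$ lifts to automorphisms fixing $Q$. After that, the exchange of central members is just the Steinitz exchange lemma, and the only point to watch is that the images be actual members of $\mathcal{K}$ rather than merely correct modulo $P'$; defining $\phi$ on the genuine generators $y_{\sigma(i)}$, instead of on cosets in $V$, handles this automatically.
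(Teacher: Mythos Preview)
Your argument is correct and follows the same strategy as the paper: fix the common non-central part $Q$, observe that the central members form bases of an elementary abelian complement, and realize the required exchanges by automorphisms that are the identity on $Q$. The paper packages this slightly differently---it first projects $A_{\mathcal{H}}$ onto $A_{\mathcal{K}}$ along $Q$ and then invokes Krull--Remak--Schmidt inside $A_{\mathcal{K}}$---whereas you work in $V=Z(P)/P'$ and construct the exchange automorphism in one step via Steinitz; your version is more explicit about why the resulting map is an automorphism and why the images land in genuine members of $Z(\mathcal{K})$, but the underlying idea is identical.
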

\begin{proof}
Set $Q=\langle \mathcal{H}-Z(\mathcal{H})\rangle$, $A=\langle Z(\mathcal{H})\rangle$, $R=\langle \mathcal{K}-Z(\mathcal{K})\rangle$ and $B=\langle Z(\mathcal{K})\rangle$. By \lemref{lem:contract-1}.$(i)$ it follows that $P=Q\times A=R\times B$ and by \lemref{lem:contract-1}.$(ii)$, $P=Q\times B$ as well.  The projection endomorphism $\pi$ from $P$ to $B$ with kernel $Q$ makes $\alpha=1_Q\times \pi$ an automorphism sending $A$ to $B$ and identity on $Q$.  Since $A$ and $B$ are abelian, any fully refined central decomposition is a direct decomposition so $(Z(\mathcal{H}))\alpha$ is exchangeable with $Z(\mathcal{K})$ by automorphisms of $B$.  As $\Aut B$ extends to $\Aut P$ inducing the identity on $Q$, it follows that $\mathcal{H}$ and $\mathcal{K}$ are exchangeable.
\end{proof}

\begin{thm}\label{thm:refine}
If $\mathcal{H}$ and $\mathcal{K}$ are two fully refined central 
decompositions of $P$ such that $\mathcal{H}Z(P)=\mathcal{K}Z(P)$, then 
$\mathcal{H}$ and $\mathcal{K}$ are exchangeable.
\end{thm}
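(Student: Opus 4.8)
The plan is to reduce to the preceding lemma by straightening out the non-central members of $\mathcal{H}$ one at a time, keeping the central members to be handled separately by the abelian exchange. Write $Z = Z(P)$. Since $\mathcal{H}Z = \mathcal{K}Z$, each non-central $H_i \in \mathcal{H} - Z(\mathcal{H})$ has a partner $K_i \in \mathcal{K} - Z(\mathcal{K})$ with $H_i Z = K_i Z$; taking commutators inside $H_iZ = K_iZ$ gives $H_i' = K_i'$, and by \lemref{lem:contract-1} each such member satisfies $H_i \cap Z = Z(H_i) = H_i' = K_i' = Z(K_i) = K_i \cap Z$. By \lemref{lem:contract-2} the central members of $\mathcal{H}$ and of $\mathcal{K}$ generate elementary abelian complements $A$ and $B$ to $P'$ in $Z$.

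The heart of the argument is a single-member move. Fix a non-central $H_i$ with partner $K_i$ and set $M_i = \langle \mathcal{H} - \{H_i\}\rangle$. Since distinct members of $\mathcal{H}$ commute and $K_i \leq H_iZ$, one checks $[K_i, M_i] \leq [H_iZ, M_i] = [H_i, M_i] = 1$, and $K_iM_i = P$ (because $H_i' = K_i' \leq K_i$ supplies the part of $Z$ missing from $M_i$, so $Z \leq K_iM_i$ and then $H_iZ = K_iZ \leq K_iM_i$ forces $P = H_iM_i \leq K_iM_i$). Thus $K_i$ is a legitimate replacement for $H_i$. Applying \lemref{lem:contract-1}$(ii)$ inside $L = H_iZ$ — where $L' = H_i' = Z(H_i) = Z(K_i) = K_i'$ and $L = H_iZ(L) = K_iZ(L)$ — yields an isomorphism $f\colon H_i \to K_i$ that is the identity on $H_i' = K_i'$, hence on $H_i \cap M_i \leq H_i \cap Z = H_i'$. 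Extending $f$ by the identity on $M_i$ gives a well-defined automorphism $\sigma_i$ of $P = H_iM_i$ that is the identity on $M_i$ and on $Z$, and that sends $H_i$ to $K_i$.

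Now given any $\mathcal{J} \subseteq \mathcal{H}$, split it into its non-central part $\{H_i : i \in I\}$ and central part $\mathcal{J}_0 \subseteq Z(\mathcal{H})$. Because each $\sigma_{i'}$ fixes $Z$ pointwise and fixes $H_i$ for $i \neq i'$, it fixes $H_iZ \supseteq K_i$ pointwise; hence the $\sigma_i$ do not interfere, and $\tau = \prod_{i \in I}\sigma_i$ sends each $H_i$ $(i\in I)$ to $K_i$, fixes every other member of $\mathcal{H}$, and fixes $Z$ pointwise. Let $M^\ast = \langle K_i\ (i \in I),\ H_j\ (j \notin I)\rangle$. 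These generators pairwise commute and form a fully refined central decomposition of $M^\ast$ with no central members, so as in \lemref{lem:contract-2} one gets $M^\ast \cap Z = P'$ and $M^\ast Z = P$, whence $P = M^\ast \times A = M^\ast \times B$. Since $A$ and $B$ are elementary abelian, $Z(\mathcal{H})$ and $Z(\mathcal{K})$ are exchangeable by automorphisms of the central part, which extend to an automorphism $\rho$ of $P$ that is the identity on $M^\ast$, carries $\mathcal{J}_0$ into $Z(\mathcal{K})$, and fixes $Z(\mathcal{H}) - \mathcal{J}_0$. Then $\gamma = \tau\rho$ does the job: the new members $K_i$ and the old members $H_j$ $(j \notin I)$ all lie in $M^\ast$ and so are fixed by $\rho$, while $\tau$ fixes $Z(\mathcal{H})$; thus $\mathcal{J}\gamma \subseteq \mathcal{K}$ and $(\mathcal{H} - \mathcal{J})\gamma = \mathcal{H} - \mathcal{J}$, which is exactly exchangeability.

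The hard part is the single-member move together with its interaction with the central exchange. The difficulty is that $H_iZ = K_iZ$ determines $K_i$ only modulo commutators contributed by the \emph{other} members, so one cannot straighten the members by a single projection of $\langle \mathcal{H}-Z(\mathcal{H})\rangle$ onto $\langle \mathcal{K}-Z(\mathcal{K})\rangle$ member-by-member. The two devices that resolve this are forcing $\tau$ to be the identity on $Z$, so that it is invisible to the central exchange, and choosing the \emph{mixed} complement $M^\ast$ containing both the replaced members $K_i$ and the untouched members $H_j$, so that the central exchange $\rho$ is in turn invisible to the non-central part. Verifying that $\sigma_i$ is a genuine automorphism (the commuting and generation checks above) and that $M^\ast$ is a complement are the routine but essential computations.
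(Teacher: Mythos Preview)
Your argument is correct and follows essentially the same route as the paper: the heart in both is the single-member move obtained from \lemref{lem:contract-1}$(ii)$ inside $H_iZ=K_iZ$, extended by the identity on the remaining members, with the crucial feature that the resulting automorphism is the identity on $Z(P)$. The paper's proof simply asserts ``it suffices to prove that a single subgroup of $\mathcal{H}$ can be exchanged with one in $\mathcal{K}$'' and carries out exactly your $\sigma_i$ for a non-central $H$; your write-up is more explicit about composing the $\sigma_i$, about why they do not interfere, and about the separate treatment of the central members via the mixed complement $M^\ast$ (the paper leaves this last point to iteration together with the preceding lemma on $Z(\mathcal{H})$).
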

\begin{proof}
It suffices to prove that a single subgroup of $\mathcal{H}$ can be exchanged with one
in $\mathcal{K}$.  Let $M=Z(P)$ and fix $H\in\mathcal{H}-Z(\mathcal{H})$.  As $\mathcal{H}M=\mathcal{K}M$ there is a $K\in\mathcal{K}$ such that $HM=KM$.  Since $H$ is not contained in $Z(P)$ neither is $K$.  If $J\in\mathcal{K}$ such that $HM=JM$ then
$J\leq \langle K,M\rangle$, and so $\mathcal{K}-\{J\}$ generates $P$.  As $\mathcal{K}$
is fully refined this cannot occur.  So $K$ is uniquely determined by $H$.

By \lemref{lem:contract-1}.$(i)$ and the assumption that $\mathcal{H}$ 
and $\mathcal{K}$ are fully refined, it follows that $H'=Z(H)$ and $K'=Z(K)$.
As $Z(HM)=M=Z(KM)$ it follows that $H Z(HM)=HM=KM=K Z(KM)$.  So by 
\lemref{lem:contract-1}.$(ii)$ there is an automorphism $\alpha$ of $HM=KM$ which 
is the identity on $M$ and maps $H$ to $K$.
Extend $\alpha$ to $P$ by defining 
$\alpha$ as the identity on all $J\in \mathcal{H}-\{H\}$.  This extension exchanges 
$H$ and $K$.
\end{proof}

\begin{coro}\label{coro:refine}
Let $P$ be a $p$-group of class $2$ and exponent $p$.
\begin{enumerate}[(i)]
\item $\Aut_{\zeta} P$ is transitive on Remak-decompositions. 
\item Given two fully refined central decomposition $\mathcal{H}$ and $\mathcal{K}$ of $P$, there is a $\varphi\in \Aut_{\zeta} P$ such that $\mathcal{H}\varphi=\mathcal{K}$ if, and only if, $\mathcal{H}Z(P)=\mathcal{K}Z(P)$.
\end{enumerate}
\end{coro}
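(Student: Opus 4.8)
The plan is to treat part $(ii)$ first, since it is essentially a bookkeeping refinement of \thmref{thm:refine}, and then to obtain part $(i)$ by the same mechanism applied to direct (rather than $Z(P)$-central) decompositions. For the forward implication of $(ii)$, suppose $\varphi\in\Aut_{\zeta} P$ satisfies $\mathcal{H}\varphi=\mathcal{K}$. The key observation is that an upper central automorphism fixes every coset of $Z(P)$, so for each subgroup $H\leq P$ one has $(H\varphi)Z(P)=HZ(P)$: indeed $h\varphi\in Z(P)h$ for $h\in H$ gives $(H\varphi)Z(P)\leq HZ(P)$, and applying the same to $\varphi^{-1}\in\Aut_{\zeta} P$ gives the reverse containment. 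Hence $\mathcal{K}Z(P)=(\mathcal{H}\varphi)Z(P)=\mathcal{H}Z(P)$, which is the desired conclusion.

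For the reverse implication of $(ii)$, assume $\mathcal{H}Z(P)=\mathcal{K}Z(P)$. By \thmref{thm:refine} the decompositions $\mathcal{H}$ and $\mathcal{K}$ are exchangeable, and the construction underlying that theorem produces the matching automorphism as a composite of single exchanges. The point that must be checked is that each such exchange lies in $\Aut_{\zeta} P$. I would verify this by re-examining the two ingredients of that construction: each non-central member is exchanged by the automorphism produced in \lemref{lem:contract-1}.$(ii)$, which is upper central, and the central members are matched by an automorphism acting as the identity on the non-central part and by an element of $\Aut Z(P)$ on the complement — also upper central, since it moves elements only within $Z(P)$. As $\Aut_{\zeta} P$ is a subgroup of $\Aut P$, the composite of these finitely many upper central exchanges lies in $\Aut_{\zeta} P$ and sends $\mathcal{H}$ to $\mathcal{K}$, so that one may take $\mathcal{J}=\mathcal{H}$ in the definition of exchangeability with an upper central witness.

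For part $(i)$, a Remak-decomposition is a direct, hence central, decomposition whose members are directly indecomposable; grouping the abelian members and the non-abelian members and invoking \lemref{lem:contract-1} writes $P=Q\times A$ with $Z(Q)=Q'=P'$ and $A$ a complement to $P'$ in $Z(P)$, each non-abelian factor $H$ satisfying $Z(H)=H'\leq P'\leq Z(P)$ (as in the proof of \lemref{lem:contract-2}). Given two Remak-decompositions with data $(Q,A)$ and $(R,B)$, I would first apply \lemref{lem:contract-1}.$(ii)$ to obtain an upper central automorphism, the identity on $Z(P)$, carrying $Q$ onto $R$, reducing to the case $Q=R$; the abelian complements are then matched by letting the general linear group of the elementary abelian complement to $P'$ in $Z(P)$ act, extended by the identity on $Q$, which is again upper central. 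The remaining task — and what I expect to be the main obstacle — is to realize the classical Krull--Remak--Schmidt matching of the non-abelian directly indecomposable factors inside $Q$ by an automorphism of $P$ that lies in $\Aut_{\zeta} P$: the subtlety is that an arbitrary isomorphism between two such factors differs from an upper central one precisely by its action on the commutator subgroups, which sit inside $Z(P)$, and one must check that this discrepancy can always be absorbed into $\Aut_{\zeta} P$ rather than forcing a genuinely non-upper-central automorphism. Assembling these pieces produces a single upper central automorphism relating the two Remak-decompositions.
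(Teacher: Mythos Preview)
Your treatment of part $(ii)$ is correct and matches the paper's proof; you are simply more explicit than the paper in verifying that the automorphisms produced in \thmref{thm:refine} (via \lemref{lem:contract-1}.$(ii)$ and the action on the abelian complement) lie in $\Aut_{\zeta} P$, which is exactly what the paper is tacitly using when it says ``by \thmref{thm:refine} there is a $\varphi\in\Aut_{\zeta} P$.''

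For part $(i)$, however, you are working much harder than necessary and you leave the crucial step unresolved. The paper's entire proof of $(i)$ is a one-line citation: this \emph{is} the classical Krull--Remak--Schmidt theorem. The point you appear to be missing is that the classical statement (e.g.\ Robinson (3.3.8)) already asserts that any two Remak-decompositions of a group satisfying the chain conditions are related by a \emph{central} automorphism --- one acting trivially on $G/Z(G)$ --- and that is precisely the definition of $\Aut_{\zeta} P$. Thus the ``main obstacle'' you flag (absorbing the action on commutator subgroups into $\Aut_{\zeta} P$) is not something to be handled here; it is exactly the content of the classical theorem. Your preliminary reduction to $Q=R$ and the matching of abelian complements are correct but superfluous once this is recognized.
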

\begin{proof}
$(i)$.  This is the Krull-Remak-Schmidt theorem.

$(ii)$.  Suppose that $\mathcal{H}\varphi=\mathcal{K}$ for some $\varphi\in \Aut_{\zeta} P$.  Given $H\in \mathcal{H}$ set $K:=H\varphi$.  Then $HZ(P)/Z(P)=(HZ(P)/Z(P))\varphi = KZ(P)/Z(P)$ so $HZ(P)=KZ(P)$.  Thus $\mathcal{H}Z(P)=\mathcal{K}Z(P)$.  

For the reverse direction, let $\mathcal{H}Z(P)=\mathcal{K}Z(P)$.  Then by \thmref{thm:refine} there is a $\varphi\in\Aut_{\zeta} P$ sending $\mathcal{H}$ to $\mathcal{K}$.
\end{proof}

		%
		%
\subsection{Bilinear and Hermitian maps, isometries, and pseudo-isometries}
		\label{sec:bilinear}

In this section we introduce terminology and elementary properties for bilinear maps which
we will use frequently.  Throughout, let $V$ and $W$ be vector spaces over a 
field $k$.  

A map $b:V\times V\to W$ is \emph{$k$-bilinear} if it satisfies
\begin{equation*}
	b(su+u',tv+v')=stb(u,v)+tb(u',v)+sb(u,v')+b(u',v')
\end{equation*}
for all $u,u',v,v'\in V$ and $s,t\in k$.   Given $X,Y\subseteq V$ define
\begin{equation*}
	b(X,Y) := \langle b(u,v) : u\in X, v\in Y\rangle.
\end{equation*}
For convenience we assume all our bilinear maps have $W=b(V,V)$.  
Whenever $X\leq V$ we can restrict $b$ to
\begin{equation}\label{eq:b-X}
	b_X:X\times X\to b(X,X).
\end{equation}
The \emph{radical} of $b$ is 
\begin{equation*}
	\rad b := \{u\in V:b(u,V)=0=b(V,u)\}.
\end{equation*}
If $\rad b=0$ then $b$ is \emph{non-degenerate}. 
A $k$-bilinear map $b:V\times V\to W$ is called $\theta$-\emph{Hermitian} if
$\theta\in \GL(W)$ and 
\begin{equation}\label{eq:herm}
	b(u,v)=b(v,u)\theta,\qquad \forall u,v\in V.
\end{equation}
As $W=b(V,V)$, $\theta$ is an \emph{involution} (which in this paper will mean 
$\theta^2=1$ and allow $\theta=1$).  Furthermore, $\theta$ is uniquely 
determined by $b$ (assuming $W\neq 0$) and so it is sufficient to say $b$ is Hermitian.  

If $\theta=1_W$ we say that $b$ is \emph{symmetric} and if $\theta=-1_W$ we
call $b$ \emph{skew-symmetric}.  As we work in odd characteristic it follows that
every skew-symmetric bilinear map is equivalently \emph{alternating} in the sense
that $b(v,v)=0$ for all $v\in V$.

Given two $k$-bilinear maps $b:V\times V\to W$ and  $b':V'\times V'\to W'$ a 
\emph{morphism} from $b$ to $b'$ is a pair $(\alpha,\beta)$ 
of linear maps $\alpha:V\to V'$ and $\beta:W\to W'$ such that
\begin{equation}\label{eq:pseudo-def}
	b'(u\alpha,v\alpha)=b(u,v)\beta,\qquad \forall u,v\in V.
\end{equation}
When $\alpha$ is surjective it follows that $W'= b'(V\alpha,V\alpha)$; so,
$\beta$ is uniquely determined by $\alpha$.  In this case we often write $\hat{\alpha}$ 
for $\beta$.  If $\alpha$ and $\hat{\alpha}$ are isomorphisms then we say $b$ and $b'$ are
\emph{pseudo-isometric}.   The term \emph{isometric} is reserved for the special
circumstance where $W=W'$ and $\hat{\alpha}=1_W$.  

The \emph{pseudo-isometry group} is
\begin{equation}\label{eq:isom-*}
\begin{split}
	\Isom^*(b)  := & \{(\alpha,\hat{\alpha})\in\GL(V)\times\GL(W) :\\
		 &  b(u\alpha,v\alpha)=b(u,v)\hat{\alpha}, \forall u,v\in V\},
\end{split}
\end{equation}
and the \emph{isometry group} is
\begin{equation}
	\Isom(b):=\{\alpha\in \GL(V) : b(u\alpha,v\alpha)=b(u,v), \forall u,v\in V\}.
\end{equation}
(The decision to write the isometry group as a subgroup of $\GL(V)$ rather than
$\GL(V)\times \GL(W)$ is to match with the classical definition of the isometry group
of a bilinear form.)   When $b$ is a bilinear $k$-form (i.e.: $W=k$), the pseudo-isometry group goes by various names, including the group of \emph{similitudes} and the \emph{conformal}
group of $b$.  The following is obvious:
\begin{prop}\label{prop:pseudo-isom}
\begin{enumerate}[(i)]
\item If $(\varphi,\hat{\varphi})$ is a pseudo-isometry from $b$ to $b'$ then
$\Isom^*(b)\cong \Isom^*(b')$ via $(\alpha,\hat{\alpha})\mapsto (\alpha^\varphi,\hat{\alpha}^{\hat{\varphi}})$, and $\Isom(b)\cong \Isom(b')$ via
$\alpha\mapsto \alpha^\varphi$.
\item
If $b:V\times V\to W$ is a bilinear map, then $(\alpha,\hat{\alpha})\mapsto \hat{\alpha}$ is a homomorphism from $\Isom^*(b)$ into $\GL(W)$ with kernel naturally
identified with $\Isom(b)$.
\end{enumerate}
\end{prop}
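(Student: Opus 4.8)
The plan is to treat both parts as direct verifications, since the content is purely formal manipulation of the defining identity \eqnref{eq:pseudo-def}; the author rightly labels the statement obvious. Throughout I read maps on the right and interpret $\alpha^\varphi$ as $\varphi^{-1}\alpha\varphi$ and $\hat{\alpha}^{\hat{\varphi}}$ as $\hat{\varphi}^{-1}\hat{\alpha}\hat{\varphi}$. For part $(i)$, I would first check that $(\alpha,\hat{\alpha})\mapsto(\alpha^\varphi,\hat{\alpha}^{\hat{\varphi}})$ really lands in $\Isom^*(b')$. Fixing $u',v'\in V'$ and using that $\varphi$ is bijective, write $u'=u\varphi$, $v'=v\varphi$, so that $u'\alpha^\varphi=u\alpha\varphi$. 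Applying the pseudo-isometry identity for $(\varphi,\hat{\varphi})$ and then membership $(\alpha,\hat{\alpha})\in\Isom^*(b)$ gives
\[
	b'(u'\alpha^\varphi,v'\alpha^\varphi)=b(u\alpha,v\alpha)\hat{\varphi}=b(u,v)\hat{\alpha}\hat{\varphi},
\]
while the same two identities applied to the right-hand side yield $b'(u',v')\hat{\alpha}^{\hat{\varphi}}=b(u,v)\hat{\alpha}\hat{\varphi}$, so the two sides agree and the map is well defined into $\Isom^*(b')$. It is a homomorphism because transport by the fixed pair $(\varphi,\hat{\varphi})$ respects the componentwise product, and it is bijective with inverse given by transport along $(\varphi^{-1},\hat{\varphi}^{-1})$; hence it is an isomorphism. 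Specializing to $\hat{\alpha}=1_W$ (so that the image has second coordinate $1_W^{\hat{\varphi}}=1_{W'}$) recovers the statement $\Isom(b)\cong\Isom(b')$ via $\alpha\mapsto\alpha^\varphi$.

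For part $(ii)$, the preliminary point I would record is that $\Isom^*(b)$ is genuinely a subgroup of $\GL(V)\times\GL(W)$ under the componentwise operation: a one-line computation shows that $(\alpha\beta,\hat{\alpha}\hat{\beta})$ and $(\alpha^{-1},\hat{\alpha}^{-1})$ both satisfy \eqnref{eq:isom-*} whenever $(\alpha,\hat{\alpha})$ and $(\beta,\hat{\beta})$ do. Granting this, the projection $(\alpha,\hat{\alpha})\mapsto\hat{\alpha}$ is visibly a homomorphism into $\GL(W)$. An element lies in its kernel exactly when $\hat{\alpha}=1_W$, which by \eqnref{eq:isom-*} is precisely the condition $b(u\alpha,v\alpha)=b(u,v)$ defining $\Isom(b)$; so the kernel is $\{(\alpha,1_W):\alpha\in\Isom(b)\}$, identified with $\Isom(b)$ via $(\alpha,1_W)\mapsto\alpha$. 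This identification is canonical because, as noted just after \eqnref{eq:pseudo-def}, the standing hypothesis $W=b(V,V)$ forces $\hat{\alpha}$ to be uniquely determined by $\alpha$.

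There is no substantive obstacle; the only things demanding attention are bookkeeping items. The first is keeping the order of composition consistent under the right-action convention when expanding $\alpha^\varphi=\varphi^{-1}\alpha\varphi$, so that the cancellation $u'\alpha^\varphi=u\alpha\varphi$ in part $(i)$ is unambiguous. The second is verifying the componentwise closure of $\Isom^*(b)$ in part $(ii)$, which is what makes the projection a homomorphism of \emph{groups} rather than merely a map of sets, and which also underwrites the claim in part $(i)$ that transport is multiplicative.
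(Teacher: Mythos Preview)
Your proposal is correct and matches the paper's treatment: the paper states ``The following is obvious'' and gives no proof, and your argument is exactly the routine unwinding of the defining identities \eqref{eq:pseudo-def} and \eqref{eq:isom-*} that justifies this omission. There is nothing to add.
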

In light of \propref{prop:pseudo-isom}.$(ii)$ we will view $\Isom(b)$ as a subgroup
of $\Isom^*(b)$ and $\Isom^*(b)/\Isom(b)$ as a subgroup of $\GL(W)$.

		%
		%
\subsection{$\perp$-decompositions}

\begin{defn}\label{def:perp}
Let $b:V\times V\to W$ be a $k$-bilinear map.
\begin{enumerate}[(i)]
\item A set $\mathcal{X}$ of subspaces of $V$ is a \emph{$\perp$-decomposition} of $b$
if: $(a)$ $b(X,Y)=0$ for all distinct $X,Y\in\mathcal{X}$ and $(b)$ 
$V=\langle \mathcal{Y}\rangle$ for $\mathcal{Y}\subseteq \mathcal{X}$ if, and only
if, $\mathcal{Y}=\mathcal{X}$.
\item A subspace $X$ of $V$ is a $\perp$-factor if there is a $\perp$-decomposition 
$\mathcal{X}$ containing $X$.  Furthermore, define
\begin{equation*}
	X^\perp := \langle \mathcal{X}-\{X\}\rangle.
\end{equation*}
\item We say $b$ is \emph{$\perp$-indecomposable} if is has only the trivial
$\perp$-decomposition $\{V\}$.
\item A $\perp$-decomposition $\mathcal{X}$ of $b$ is \emph{fully refined} if
$b_X$ is $\perp$-indecomposable for each $X\in\mathcal{X}$ (cf. \eqref{eq:b-X}).
\end{enumerate}
\end{defn}

When $b$ is Hermitian it is also \emph{reflexive} in the sense that $b(u,v)=0$ if, and only if, $b(v,u)=0$, for $u,v\in V$.  Also, $X^\perp=\{x\in V: b(X,x)=0\}$.

Let $\mathcal{X}$ be a $\perp$-decomposition of $b$ and take $X\in\mathcal{X}$.
For each $x\in X\intersect \langle\mathcal{X}-\{X\}\rangle$ we know 
$b(x,\langle \mathcal{X}-\{X\}\rangle)=0$ and $b(x,X)=0$; thus, $b(x,V)=0$.  
Hence, $X\intersect \langle \mathcal{X}-\{X\}\rangle\leq \rad b$.  
Thus a fully refined $\perp$-decomposition is also a direct decomposition of $V$
(and more generally any $\perp$-decomposition, if the bilinear map is non-degenerate.)

The pseudo-isometry group \eqref{eq:isom-*} acts on the set of all $\perp$-decompositions,
but may not be transitive on the set of all fully refined decompositions.  This
fact can already be seen for symmetric bilinear forms (see \thmref{thm:address}). 

		%
		%
\subsection{Symmetric bilinear forms}\label{sec:sym-forms}

Various parts of our proofs and examples require some classical facts about 
symmetric bilinear forms over finite fields.

Let $K$ be a finite field and $\omega\in K$ a non-square.  
By \cite[p. 144]{Artin:geometry}, every $n$-dimensional non-degenerate
 symmetric bilinear $K$-form is isometric to $d:K^n\times K^n\to K$ defined by
\begin{equation}\label{eq:dot-prod}
	d(u,v) := u D v^t,  \forall u,v\in K^n;
\end{equation} 
where $D$ is $I_n$ or $I_{n-1}\oplus [\omega]$.  If $n$ is odd then these two forms are pseudo-isometric, but they are not pseudo-isometric if $n$ is even.  If $A\in \GL(n,K)$ then 
$d(uA,vA)=u(ADA^t)v^t$.  The \emph{discriminant}  of $d$ is 
\begin{equation}\label{def:disc}
	\disc d\equiv\det D \equiv \det ADA^t \pmod{(K^\times)^2},
\end{equation}
for any $A\in \GL(n,K)$ \cite[(3.7)]{Artin:geometry}.  The discriminant 
distinguishes the two isometry classes of non-degenerate symmetric bilinear forms
of a fixed dimension. 
\begin{lemma}\label{lem:line}
Let $d:K^2\times K^2\to K$ be defined as in \eqref{eq:dot-prod}.
\begin{enumerate}[(i)]
\item If $\disc d=[1]$ then 
$\left(\begin{bmatrix} \alpha & \beta\\ \beta & -\alpha\end{bmatrix}, \omega\right)
\in\Isom^*(d)$, where $\omega=\alpha^2+\beta^2\in K$.
\item If $\disc d=[\omega]$ then 
$\left(\begin{bmatrix} 0 & 1\\ \omega & 0\end{bmatrix},\omega\right)
\in\Isom^*(d)$.
\end{enumerate}
\end{lemma}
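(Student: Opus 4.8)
The plan is to reduce both parts to the single matrix identity that characterizes membership in $\Isom^*(d)$. From \eqref{eq:dot-prod} we have $d(uA,vA)=u(ADA^t)v^t$, so a pair $(A,\lambda)\in\GL(2,K)\times K^\times$ lies in $\Isom^*(d)$ precisely when $A$ is invertible and $ADA^t=\lambda D$. Since $d$ is presented in the normal form of \eqref{eq:dot-prod}, the discriminant hypothesis pins down $D$ exactly: by \eqref{def:disc}, $\disc d=[1]$ forces $D=I_2$ (as $\det I_2=1$ is a square), while $\disc d=[\omega]$ forces $D=I_1\oplus[\omega]$ (as $\det(I_1\oplus[\omega])=\omega$ is a non-square). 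In each case the claim thus becomes one matrix equation to check.

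For part $(i)$, with $D=I_2$ the matrix $A=\begin{bmatrix}\alpha&\beta\\\beta&-\alpha\end{bmatrix}$ is symmetric, so I would compute $ADA^t=A^2=(\alpha^2+\beta^2)I_2$, which equals $(\alpha^2+\beta^2)D$ for every choice of $\alpha,\beta$; moreover $\det A=-(\alpha^2+\beta^2)$. Hence $\bigl(A,\alpha^2+\beta^2\bigr)\in\Isom^*(d)$ whenever $\alpha^2+\beta^2\neq 0$. The only point beyond routine computation is realizing the prescribed scalar, namely the fixed non-square $\omega$: I must exhibit $\alpha,\beta\in K$ with $\alpha^2+\beta^2=\omega$ (which in particular is nonzero, giving invertibility of $A$).

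To supply that representation I would invoke the standard finite-field pigeonhole argument: the set $S=\{x^2:x\in K\}$ of squares has $(|K|+1)/2$ elements, so $S$ and $\omega-S$ each contain more than half of $K$ and must intersect; any common value yields $\alpha^2=s$ and $\omega-\alpha^2=\beta^2$, that is, $\omega=\alpha^2+\beta^2$. (In fact this shows every element of $K$ is a sum of two squares.) This is the crux of part $(i)$.

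For part $(ii)$, with $D=I_1\oplus[\omega]$ and $A=\begin{bmatrix}0&1\\\omega&0\end{bmatrix}$, I would simply multiply out $ADA^t=\begin{bmatrix}\omega&0\\0&\omega^2\end{bmatrix}=\omega D$ and note $\det A=-\omega\neq 0$, so $(A,\omega)\in\Isom^*(d)$ with no auxiliary input. The main obstacle, such as it is, is entirely isolated in part $(i)$: once $\omega$ is known to be a sum of two squares the verification is mechanical, so the counting argument is the substantive step, while everything else is direct linear algebra in the chosen normal form.
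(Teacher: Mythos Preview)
Your proof is correct and follows the same route as the paper, which simply writes ``This follows by direct computation.'' You supply more detail than the paper does, including the pigeonhole argument that the fixed non-square $\omega$ is a sum of two squares in $K$; the paper evidently treats this as background, but your inclusion of it is harmless and arguably clarifies what ``where $\omega=\alpha^2+\beta^2$'' is asserting.
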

\begin{proof} This follows by direct computation. \end{proof}

\begin{prop}\label{prop:dot-isom-*}
Let $d$ be as in \eqref{eq:dot-prod}.
Then (by definition) $\Isom(d)$ is the general orthogonal group $\GO(d)$.  Also,
\begin{enumerate}[(i)]
\item if $n$ is odd then $\Isom^*(d)=
\langle (\alpha,1), (s I_n, s^2) \mid \alpha\in\GO(d), s\in K^\times\rangle$;
hence, $\Isom^*(d)/\Isom(d)\cong (K^\times)^2$;
\item if $n$ is even then $\Isom^*(d)=
\langle (\alpha,1), (s I_n,s^2), (\varphi,\omega) \mid \alpha\in\GO(d), 
s\in K^\times \rangle$ where $\varphi:=\phi\oplus \cdots \oplus \phi\oplus \mu$,
$(\phi,\omega)$ is as in \lemref{lem:line}.$(i)$ and
\begin{enumerate}[(a)]
\item if $\disc d=[1]$ then $(\mu,\omega)$ is as in \lemref{lem:line}.$(i)$; and
\item if $\disc d=[\omega]$ then $(\mu,\omega)$ is as in \lemref{lem:line}.$(ii)$.
\end{enumerate}
In particular, $\Isom^*(d)/\Isom(d)\cong K^\times$.
\end{enumerate}
Therefore, $|\Isom^*(d)|=\varepsilon (q-1)|\GO(d)|$ where $q=|K|$,
$\varepsilon=1/2$ if $n$ is odd, and $\varepsilon=1$ if $n$ is even.
\end{prop}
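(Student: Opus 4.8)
The plan is to determine $\Isom^*(d)$ by analyzing the homomorphism $(\alpha,\hat\alpha)\mapsto\hat\alpha$ from $\propref{prop:pseudo-isom}.(ii)$, whose kernel is $\Isom(d)=\GO(d)$. The whole problem thus reduces to identifying the image of this map in $\GL(W)=K^\times$, i.e.\ computing the scalars $\lambda\in K^\times$ for which there exists $\alpha\in\GL(n,K)$ with $d(u\alpha,v\alpha)=\lambda\, d(u,v)$ for all $u,v$. Concretely, using $d(uA,vA)=u(ADA^t)v^t$, the pair $(A,\lambda)$ lies in $\Isom^*(d)$ precisely when $ADA^t=\lambda D$. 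Since $\Isom^*(d)/\Isom(d)$ embeds in $K^\times$, the final order formula follows once I know the size of this multiplier subgroup: $|\Isom^*(d)|=|\GO(d)|\cdot[\Isom^*(d):\Isom(d)]$, and I will identify the bracketed index as $(q-1)/2$ when $n$ is odd and $(q-1)$ when $n$ is even, matching the stated $\varepsilon(q-1)$.

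I would first handle the multipliers coming from scalar matrices. For $s\in K^\times$, taking $A=sI_n$ gives $ADA^t=s^2D$, so every square $s^2\in(K^\times)^2$ is a multiplier, contributing the subgroup $(K^\times)^2$ of index $2$ in $K^\times$. This already gives all the listed generators $(sI_n,s^2)$, and it shows the multiplier group always contains $(K^\times)^2$. The question is then whether a non-square can also occur as a multiplier, which is exactly where the parity of $n$ enters.

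For the odd case $(i)$, I would argue that no non-square is a multiplier: if $ADA^t=\lambda D$ then taking determinants gives $(\det A)^2\det D=\lambda^n\det D$, so $\lambda^n=(\det A)^2$ is a square, and since $n$ is odd this forces $\lambda$ itself to be a square modulo $(K^\times)^2$. Hence the multiplier group is exactly $(K^\times)^2$, giving $\Isom^*(d)=\langle(\alpha,1),(sI_n,s^2)\rangle$ and $\Isom^*(d)/\Isom(d)\cong(K^\times)^2$, which has order $(q-1)/2$; this yields $\varepsilon=1/2$. For the even case $(ii)$, the determinant obstruction disappears since $\lambda^n$ is automatically a square, and I expect every $\lambda\in K^\times$ to be a multiplier. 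The construction is supplied by $\lemref{lem:line}$: the $2$-dimensional blocks realize the non-square multiplier $\omega$, so assembling $\varphi=\phi\oplus\cdots\oplus\phi\oplus\mu$ out of these blocks (choosing $\mu$ according to the discriminant of $d$, case $(a)$ or $(b)$) produces $(\varphi,\omega)\in\Isom^*(d)$ with $\omega$ a non-square. Adjoining this generator enlarges the multiplier group from $(K^\times)^2$ to all of $K^\times$, giving $\Isom^*(d)/\Isom(d)\cong K^\times$ of order $q-1$ and $\varepsilon=1$.

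The only genuine content is the even-case realization, and here the main obstacle is verifying that $\lemref{lem:line}$ assembles correctly across blocks so that $\varphi D\varphi^t=\omega D$ globally. Since the form $d$ is a diagonal orthogonal sum of $1$- and $2$-dimensional pieces and $b(X,Y)=0$ across distinct summands, the multiplier $\omega$ must be realized simultaneously on every block; this is why each $\phi$ block and the final $\mu$ block are all chosen in $\lemref{lem:line}$ with the \emph{same} multiplier $\omega$, and why the discriminant case-split is needed: the shape of the last block depends on whether it carries discriminant $[1]$ or $[\omega]$ so that the block decomposition of $D$ is honored. Once the block compatibility is checked—a routine matrix computation deferred exactly as in the proof of $\lemref{lem:line}$—the order formula $|\Isom^*(d)|=\varepsilon(q-1)|\GO(d)|$ is immediate from the index computation in both parities.
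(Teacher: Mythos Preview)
Your proposal is correct and follows essentially the same approach as the paper: both use the homomorphism of \propref{prop:pseudo-isom}.$(ii)$ to reduce to computing the multiplier group inside $K^\times$, obtain the determinant constraint $\lambda^n=(\det A)^2$ to handle the odd case, and in the even case invoke \lemref{lem:line} block-by-block to realize the non-square multiplier $\omega$. Your write-up is in fact slightly more explicit than the paper's about why the block assembly of $\varphi$ works, but the argument is the same.
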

\begin{proof}
By \propref{prop:pseudo-isom}.$(ii)$ we start knowing $\Isom^*(d)/\Isom(d)\leq K^\times$.  Furthermore, $\Isom^*(d)=\{(A,s)\in \GL(V)\times k^\times : ADA^t = s D\}$.  
Hence, for each $(A,s)\in\Isom^*(d)$ we must have $s^n=(\det A)^2$.  
$(i)$.  If $n$ is odd then $s$ must be a square.  Hence, 
$\Isom^*(d)/\Isom(d)\cong (K^\times)^2$.  As $(s I_n, s^2)\in \Isom^*(d)$ it follows
that $\Isom^*(d)=\langle (\alpha,1), (s I_n, s^2) \mid \alpha\in\GO(d), 
s\in K^\times\rangle$.  $(ii)$.  If  $n$ is even, then $(\varphi,\omega)\in \Isom^*(d)$.  Thus  $\Isom^*(d)/\Isom(d) = \langle s^2, \omega : s\in K^\times\rangle=K^\times$
and $\Isom^*(d)=\langle (\alpha,1), (s I_n,s^2), (\varphi,\omega) \mid \alpha\in\GO(d), 
s\in K^\times \rangle$.
\end{proof}

%
%
\section{Bilinear maps and $p$-groups}\label{sec:p-bilinear}

In this section we translate fully refined central decompositions to 
$\perp$-de\-com\-pos\-i\-tions, automorphisms to pseudo-isometries, and back (\propref{prop:auto-isom} and \thmref{thm:main-reduction}).  

To prove these we describe a well-known method to convert $p$-groups of class $2$ into bilinear maps explored as early as \cite{Baer:class-2}, compare \cite{Kaloujnine:class-2}, and \cite[Section 5]{Warfield:nil}.  The method has ties to the Kaloujnine-Lazard-Mal'cev correspondence (see \cite[Theorems 10.13,10.20]{Khukhro}).

Our notation is additive when inside elementary abelian sections.

		%
		%
\subsection{The functor $\Bi$}\label{sec:bi}

Let $P$ be a $p$-group of class 2 and exponent $p$, $V:=P/P'$, and 
$W:=P'$.  Then $V$ and $W$ are elementary abelian $p$-groups, that is, 
$\mathbb{Z}_p$-vector spaces.  The commutator affords an alternating 
$\mathbb{Z}_p$-bilinear map $\Bi(P):V\times V\to W$ where $b:=\Bi(P)$
is defined by
\begin{equation}\label{eq:bi-p}
	b(P'x,P'y):=[x,y],\qquad \forall x,y\in P.
\end{equation}
The radical of $b$ is $Z(P)/P'$.  If $\alpha:P\to Q$ is a
homomorphism of $p$-groups of class $2$ and exponent $p$, then 
\begin{equation}
	\Bi(\alpha):=(\alpha|_{P/P'}:P'x\mapsto Q'x\alpha,\alpha|_{P'}:x\mapsto x\alpha)
\end{equation}
is a morphism from $\Bi(P)$ to $\Bi(Q)$ (cf. \eqref{eq:pseudo-def}).  

\begin{remark}
We have refrained from using $V:=P/Z(P)$ and $W:=Z(P)$.   A homomorphism
$\alpha:P\to Q$ of $p$-groups need not map the center of $P$ into the center of $Q$.
Hence, with $W=Z(P)$ we cannot induce a morphism $\Bi(\alpha)$ of $\Bi(P)\to\Bi(Q)$.
Moreover, using $P'$ we have $W=b(V,V)$.  The penalty is that $b$ may be 
degenerate.  We avoid this difficulty by means of \lemref{lem:contract-1}.$(i)$.
\end{remark}

Given another homomorphism $\beta:Q\to R$ then $\Bi(\alpha\beta)=\Bi(\alpha)\Bi(\beta)$; so, $\Bi$ is a functor.  Finally, if $\alpha,\beta:P\to Q$ are homomorphisms then $\Bi(\alpha)=\Bi(\beta)$ if, and only if, $\alpha|_{P/P'}=\beta|_{P/P'}$ (which forces also $\alpha|_{P'}=\beta|_{P'}$).

Finally, subgroups $Q\leq P$ are mapped to $b_{QP'/P'}$ (see \eqref{eq:b-X}).  If 
$Q'=Z(Q)$ (as in \lemref{lem:contract-1}.$(i)$) then 
$Q'\leq Q\intersect P'\leq Q\intersect Z(P)\leq Z(Q)=Q'$ so that 
$Q\intersect P'=Q'$.  Hence, $QP'/P' \cong Q/Q'$ and $b_{QP'/P'}$ is naturally
pseudo-isometric to $\Bi(Q)$.  

\begin{prop}\label{prop:central-perp}
If $\mathcal{H}$ is a central decomposition of $P$, then 
$\Bi(\mathcal{H}):=\{HP'/P' : H\in\mathcal{H}\}$ is a $\perp$-decomposition 
of $b$.  
\end{prop}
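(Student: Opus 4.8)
The plan is to verify the two defining conditions of a $\perp$-decomposition directly from \defref{def:perp}.$(i)$, translating the group-theoretic hypotheses on $\mathcal{H}$ through the definition of $b=\Bi(P)$ in \eqref{eq:bi-p}. Write $\mathcal{X}=\Bi(\mathcal{H})=\{HP'/P':H\in\mathcal{H}\}$. First I would check condition $(a)$, that $b(HP'/P',KP'/P')=0$ for distinct $H,K\in\mathcal{H}$. Since $\mathcal{H}$ is a central decomposition, distinct members commute, so $[H,K]=1$. For generators $P'x$ with $x\in H$ and $P'y$ with $y\in K$ we get $b(P'x,P'y)=[x,y]=1$, and since $b$ is bilinear and these cosets span $HP'/P'$ and $KP'/P'$ respectively, it follows that $b(HP'/P',KP'/P')=0$.

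Next I would verify condition $(b)$, the minimal-generation property: $V=\langle\mathcal{Y}\rangle$ for $\mathcal{Y}\subseteq\mathcal{X}$ if and only if $\mathcal{Y}=\mathcal{X}$. The forward-to-spanning direction comes from the fact that $P=\langle\mathcal{H}\rangle$, so $V=P/P'$ is spanned by all the $HP'/P'$, i.e.\ $\langle\mathcal{X}\rangle=V$. For the converse I would argue contrapositively: suppose some proper $\mathcal{Y}\subsetneq\mathcal{X}$ already spans $V$. Each element of $\mathcal{Y}$ is $HP'/P'$ for some $H$ in a corresponding proper subset $\mathcal{J}\subsetneq\mathcal{H}$, and the hypothesis $\langle\mathcal{Y}\rangle=V=P/P'$ means $\langle\mathcal{J}\rangle P'=P$. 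Because $P'=[P,P]\leq\langle\mathcal{J}\rangle$ whenever $\langle\mathcal{J}\rangle$ together with $P'$ fills out $P$ (one can see $P'\leq\Phi(P)\leq\langle\mathcal{J}\rangle$, or argue directly that $\langle\mathcal{J}\rangle\langle\mathcal{H}-\mathcal{J}\rangle=P$ forces, via the central/normal structure, $\langle\mathcal{J}\rangle=P$), this would give $\langle\mathcal{J}\rangle=P$, contradicting the minimality built into the definition of a central decomposition. The one subtlety requiring care here is the passage from ``$\langle\mathcal{J}\rangle$ maps onto $P/P'$'' back to ``$\langle\mathcal{J}\rangle=P$''; I would handle it by noting $P'$ is contained in the Frattini subgroup (as $P$ has class $2$ and exponent $p$), so any subgroup surjecting onto $P/P'$ is the whole group.

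I expect this last implication to be the main obstacle, since conditions $(a)$ and the spanning half of $(b)$ are essentially immediate restatements of commutativity and generation. The real content is showing that no proper subfamily of $\mathcal{X}$ spans $V$, which is exactly where the ``but no proper subset'' clause of the central decomposition definition must be invoked, and it hinges on the Frattini-type observation that cosets mod $P'$ detect whether a set of subgroups generates $P$. Once that is in place, both defining clauses of \defref{def:perp}.$(i)$ hold and $\mathcal{X}$ is a $\perp$-decomposition of $b$.
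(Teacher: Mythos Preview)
Your proof is correct and follows the same overall outline as the paper's: verify condition $(a)$ via $[H,K]=1$, verify that $\mathcal{X}$ spans $V$ via $\langle\mathcal{H}\rangle=P$, and then show no proper subfamily spans by pulling back to a proper $\mathcal{J}\subsetneq\mathcal{H}$ and deriving a contradiction. The only difference is in the last step: the paper argues by an index computation (showing $QP'=P$ forces $QK=Q$ for $K\in\mathcal{H}-\mathcal{J}$), whereas you invoke $P'\leq\Phi(P)$ directly to conclude that $\langle\mathcal{J}\rangle P'=P$ implies $\langle\mathcal{J}\rangle=P$. Your Frattini argument is the more transparent route here, since for a $p$-group of exponent $p$ one has $\Phi(P)=P'P^p=P'$, so any subgroup surjecting onto $P/P'$ is all of $P$; this sidesteps the index bookkeeping entirely.

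One small point worth tightening: when you pass from a proper $\mathcal{Y}\subsetneq\mathcal{X}$ to a proper $\mathcal{J}\subsetneq\mathcal{H}$, you should define $\mathcal{J}=\{H\in\mathcal{H}:HP'/P'\in\mathcal{Y}\}$ explicitly (as the paper does), since in principle distinct members of $\mathcal{H}$ could have the same image in $P/P'$. With that definition, any $H_0\in\mathcal{H}$ whose image lies in $\mathcal{X}\setminus\mathcal{Y}$ is excluded from $\mathcal{J}$, so $\mathcal{J}$ is genuinely proper and your contradiction goes through.
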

\begin{proof}
Let $H$ and $K$ be distinct members of $\mathcal{H}$.  As $[H,K]=1$ it
follows that $b(HP'/P',KP'/P')=0$.  Furthermore, $\mathcal{H}$ generates
$P$ and so $\mathcal{X}:=\Bi(\mathcal{H})$ generates $V=P/P'$.  Take 
a proper subset $\mathcal{Y}\subset\mathcal{X}$.  Define 
$\mathcal{J}:=\{ H\in \mathcal{H} : HP'/P'\in\mathcal{Y}\}\subseteq\mathcal{H}$.
Note $\mathcal{Y}=\Bi(\mathcal{J})$.  Since $\mathcal{Y}$ is a proper 
subset of $\mathcal{X}$, it follows that $\mathcal{J}$ generates a proper
subgroup $Q$ of $P$ and thus $\mathcal{Y}$ generates $QP'/P'$.  We must
show $QP'/P'\neq P/P'$, or rather, that $QP'\neq P$.

Suppose that $QP'=P$.   For each $K\in \mathcal{H}-\mathcal{J}$, $K$ is not 
contained in $Q$ by the assumptions on $\mathcal{H}$.  
Now $[P:P']=[Q:Q\intersect P']\leq [QK:Q\intersect P']\leq [P:P']$ so
$QK=Q$ and $K\leq Q$.  This is impossible.  Hence $Q$ is proper.
\end{proof}

		%
		%
\subsection{The functor $\Grp$}\label{sec:grp}

Suppose $b:V\times V\to W$ is an alternating 
$\mathbb{Z}_{p}$-bilinear map.  Equip the set $V\times W$ with the product
\begin{equation*}\label{eq:inv-BCH-2}
	(u,w)*(v,x):=\left(u+v,w+x+\frac{1}{2}b(u,v)\right),\qquad \forall (u,w), 
	(v,x)\in V\times W.
\end{equation*}
The result is a group denoted $\Grp(b)$.  If $(\alpha,\hat{\alpha})$ is a morphism
from $b$ to $b':V'\times V'\to W'$ (see \eqref{eq:pseudo-def}), then 
$\Grp(\alpha,\hat{\alpha}):\Grp(b)\to \Grp(b')$
is $(v,w)\mapsto(v\alpha,w\hat{\alpha})$. 
 
By direct computation we verify that $\Grp(b)$ is a $p$-group of class $2$ and exponent
$p$ with center $\rad b\times W$ and commutator subgroup $0\times W$.  Furthermore, 
$\Grp$ is a functor.   Compare with \cite[Theorem 5.14]{Warfield:nil} and 
\cite[Theorem 2.1]{Baer:class-2}.

If $\varphi\in \Aut_{\gamma} P$ (cf. \defref{def:central-auto}) then $\varphi$ induces
the identity on $V=P/P'$ and $W=P'$.  So write $\varphi-1$ for the induced $\mathbb{Z}_p$-linear map $V\to W$ defined by $P'x(\varphi-1)=x^{-1}(x\varphi)$. 

\begin{prop}\label{prop:auto-isom}
Let $P=\Grp(b)$.  All the following hold:
\begin{enumerate}[(i)]
\item $\Aut_{\gamma} P\cong \hom(V,W)$ via the isomorphism
$\varphi\mapsto \varphi-1$, for all $\varphi\in\Aut_\gamma P$.
\item $\Aut P\cong \Isom^*(b)\ltimes \Aut_{\gamma} P$,
with $(1+\varphi)^{(\alpha,\hat{\alpha})}=1+\alpha^{-1} \varphi\hat{\alpha}$ for
each $\varphi\in\hom(V,W)$ and $(\alpha,\hat{\alpha})\in\Isom^*(b)$.
\item $C_{\Aut P}(P')\cong \Isom(b)\ltimes \Aut_{\gamma} P$.
\end{enumerate}
\end{prop}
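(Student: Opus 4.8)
The plan is to manufacture a single homomorphism $\Phi\colon\Aut P\to\Isom^*(b)$ and to read all three parts off its kernel, its image, and a functorial splitting. For $\alpha\in\Aut P$ let $\bar\alpha\in\GL(V)$ and $\hat\alpha\in\GL(W)$ be the maps induced on $V=P/P'$ and on the characteristic subgroup $W=P'$. Since $b(P'x,P'y)=[x,y]$ and $[x,y]\alpha=[x\alpha,y\alpha]$, the pair $(\bar\alpha,\hat\alpha)$ satisfies \eqref{eq:pseudo-def}, so $\Phi(\alpha):=(\bar\alpha,\hat\alpha)\in\Isom^*(b)$ and $\Phi$ is a homomorphism. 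Its kernel is exactly $\Aut_{\gamma}P$: if $\bar\alpha=1$ then $\alpha\in\Aut_{\gamma}P$ by \defref{def:central-auto}, and conversely $\bar\alpha=1$ forces $[x\alpha,y\alpha]=[x,y]$ (in a class-$2$ group a commutator depends only on the $P'$-cosets of its arguments), whence $\hat\alpha=1$ because $W=b(V,V)$. Surjectivity is free from the functor $\Grp$: for $(\alpha,\hat\alpha)\in\Isom^*(b)$ the map $\Grp(\alpha,\hat\alpha)\colon(v,w)\mapsto(v\alpha,w\hat\alpha)$ is an automorphism of $P=\Grp(b)$ (a one-line check against the product $*$ using \eqref{eq:pseudo-def}) and $\Phi(\Grp(\alpha,\hat\alpha))=(\alpha,\hat\alpha)$. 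As $\Grp$ is a functor, $(\alpha,\hat\alpha)\mapsto\Grp(\alpha,\hat\alpha)$ is a homomorphic section of $\Phi$, so the extension splits and $\Aut P\cong\Isom^*(b)\ltimes\Aut_{\gamma}P$; together with part $(i)$ this gives part $(ii)$.

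For part $(i)$ I send $\varphi\in\Aut_{\gamma}P$ to $\varphi-1\colon P'x\mapsto x^{-1}(x\varphi)$. As $\varphi$ fixes every $P'$-coset, $x^{-1}(x\varphi)\in P'=W$; and since $P'$ is central, a short collection gives $(xy)^{-1}(xy)\varphi=(x^{-1}x\varphi)(y^{-1}y\varphi)$, so $\varphi-1$ is additive, kills $P'$, and descends to a $\mathbb{Z}_p$-linear map $V\to W$. Conversely, $f\in\hom(V,W)$ yields the assignment $x\mapsto x\cdot(P'x)f$, which lies in $\Aut_{\gamma}P$ and has inverse built from $-f$; the same collection shows $\varphi\mapsto\varphi-1$ is a homomorphism, and the two assignments are mutually inverse, giving $\Aut_{\gamma}P\cong\hom(V,W)$.

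The action formula and part $(iii)$ are then brief computations in $\Grp(b)$-coordinates. The element $1+\varphi\in\Aut_{\gamma}P$ attached to $\varphi\in\hom(V,W)$ acts by $(v,w)\mapsto(v,\,w+v\varphi)$; conjugating by $\Grp(\alpha,\hat\alpha)$, that is applying $\Grp(\alpha,\hat\alpha)^{-1}$, then $1+\varphi$, then $\Grp(\alpha,\hat\alpha)$, sends $(v,w)$ to $(v,\,w+v(\alpha^{-1}\varphi\hat\alpha))$, which is exactly $(1+\varphi)^{(\alpha,\hat\alpha)}=1+\alpha^{-1}\varphi\hat\alpha$. Finally, $\alpha\in\Aut P$ centralizes $P'=W$ precisely when $\hat\alpha=1$, i.e. when $\Phi(\alpha)\in\Isom(b)$ (viewed inside $\Isom^*(b)$ via \propref{prop:pseudo-isom}.$(ii)$); restricting the split sequence to $\Phi^{-1}(\Isom(b))$, which still surjects onto $\Isom(b)$ via $\Grp(\alpha,1)$ and still has kernel $\Aut_{\gamma}P$, yields $C_{\Aut P}(P')\cong\Isom(b)\ltimes\Aut_{\gamma}P$.

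I expect the only load-bearing point, as opposed to routine bookkeeping, to be the identification $\ker\Phi=\Aut_{\gamma}P$ with no leftover action on $W$: it is here that the normalization $W=b(V,V)$ is used, forcing $\hat\alpha=1$ as soon as $\bar\alpha=1$. Everything else is the standard split-extension pattern, rendered automatic by the functoriality of $\Grp$ and by part $(i)$.
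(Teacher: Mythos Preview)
Your proof is correct and is precisely the unpacking of what the paper leaves implicit: the paper's entire proof reads ``These follow directly from the definition of $\Grp(b)$,'' and you have simply written out that direct verification via the homomorphism $\Phi$, its functorial section $\Grp(-,-)$, and the coordinate computation of the conjugation action. Your observation that the normalization $W=b(V,V)$ is what forces $\hat\alpha=1$ once $\bar\alpha=1$ is the one genuinely non-formal point, and you have it right.
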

\begin{proof}
These follow directly from the definition of $\Grp(b)$.
\end{proof}

If $U\leq V$ then define $\Grp(b_U)$ as $U\times b(U,U)\leq \Grp(b)$.  It is evident
that this determines a subgroup.  Similarly, given a set of subspaces $\mathcal{X}$
of $V$ define $\Grp(\mathcal{X})=\{\Grp(b_U) : U\in\mathcal{X}\}$.
\begin{prop}\label{prop:perp-central}
If $\mathcal{X}$ is a $\perp$-decomposition of $b$ then 
$\Grp(\mathcal{X})$ is a central decomposition of $\Grp(b)$.
\end{prop}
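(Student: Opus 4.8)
The plan is to check the two defining properties of a central decomposition directly from the group law of $\Grp(b)$ together with the two axioms of a $\perp$-decomposition. Write $P=\Grp(b)$, and for each $U\in\mathcal{X}$ set $H_U:=\Grp(b_U)=U\times b(U,U)$, which the preceding paragraph already records as a subgroup of $P$; put $\mathcal{H}:=\Grp(\mathcal{X})=\{H_U:U\in\mathcal{X}\}$. Since $H_U$ projects onto $U$ in the first coordinate, distinct subspaces in $\mathcal{X}$ yield distinct subgroups, so $\mathcal{H}$ is genuinely indexed by $\mathcal{X}$. Recall also that $0\times W$ is the commutator subgroup of $P$ and that $(v,w)\mapsto v$ is a homomorphism $P\to V$ with kernel $0\times W$; this projection will do most of the work.

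First I would show that distinct members commute. For $U\neq U'$ in $\mathcal{X}$, take $(u,w)\in H_U$ and $(v,x)\in H_{U'}$, so $u\in U$ and $v\in U'$. A short computation with the product $(u,w)*(v,x)=(u+v,\,w+x+\tfrac12 b(u,v))$, using that $b$ is alternating, shows that the commutator of these two elements equals $(0,b(u,v))$. Axiom $(a)$ of a $\perp$-decomposition gives $b(U,U')=0$, hence $b(u,v)=0$ and the commutator is trivial; thus $[H_U,H_{U'}]=1$.

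Next I would prove $\langle\mathcal{H}\rangle=P$. Under the first-coordinate projection, $\langle\mathcal{H}\rangle$ maps onto $\langle U:U\in\mathcal{X}\rangle=\langle\mathcal{X}\rangle=V$ by axiom $(b)$. It therefore suffices to show $0\times W\leq\langle\mathcal{H}\rangle$, for then $\langle\mathcal{H}\rangle$ contains the full kernel $0\times W$ of the projection while surjecting onto $V$, which forces $\langle\mathcal{H}\rangle=V\times W=P$. For this containment I use that the off-diagonal terms vanish: since $V=\sum_{U\in\mathcal{X}}U$ and $b(U,U')=0$ for distinct $U,U'$, bilinearity collapses $W=b(V,V)$ to the internal sum $\sum_{U\in\mathcal{X}}b(U,U)$. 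Each generator $b(u,u')$ with $u,u'\in U$ satisfies $(0,b(u,u'))\in U\times b(U,U)=H_U$, so $0\times b(U,U)\leq\langle\mathcal{H}\rangle$ for every $U$, and taking products over $U\in\mathcal{X}$ yields $0\times W\leq\langle\mathcal{H}\rangle$.

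Finally, for minimality I would take any proper subset $\mathcal{J}\subsetneq\mathcal{H}$ and observe that the first-coordinate projection of $\langle\mathcal{J}\rangle$ is $\langle U:H_U\in\mathcal{J}\rangle$, a proper subspace of $V$ by axiom $(b)$; hence $\langle\mathcal{J}\rangle\neq P$. The one step demanding genuine care is the containment $0\times W\leq\langle\mathcal{H}\rangle$: the orthogonality condition $(a)$ is exactly what reduces $b(V,V)$ to $\sum_U b(U,U)$ with each summand already sitting inside a single factor $H_U$, so that $W$ is recovered without appealing to any cross-commutators between distinct factors. The remaining verifications are routine applications of the group law and the projection $P\to V$.
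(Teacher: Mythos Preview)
Your proof is correct and follows essentially the same approach as the paper's. The only minor difference is in the generation step: the paper observes that $V\times 0$ generates $P$ (since $P'=0\times W=\Phi(P)$), whereas you explicitly verify $0\times W\leq\langle\mathcal{H}\rangle$ via $W=\sum_{U}b(U,U)$ before invoking the projection; both routes yield the same conclusion.
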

\begin{proof}
Let $X$ and $Y$ be distinct members of $\mathcal{X}$.  Set $H:=\Grp(b_X)$,
$K:=\Grp(b_Y)$ and $P=\Grp(b)$.  Since $b(X,Y)=0$ it follows that $[H,K]=1$.
Also, $V$ is generated by $\mathcal{X}$, and $V\times 0$ generates $P$, so that $P$ is generated by $\mathcal{H}:=\Grp(\mathcal{X})$.

Let $\mathcal{J}$ be a proper subset of $\mathcal{H}$.  Define 
$\mathcal{Y}=\{X\in\mathcal{X} : \Grp(b_X)\in\mathcal{J}\}$.  As 
$\mathcal{J}\neq\mathcal{H}$ it follows that $\mathcal{X}\neq\mathcal{Y}$ 
and therefore $U:=\langle \mathcal{Y}\rangle\neq V$.  Furthermore,
$\langle \mathcal{J}\rangle=\Grp(b_U)=U\times b(U,U)\neq V\times b(V,V)=P$.
So indeed, $\mathcal{H}$ is a central decomposition.
\end{proof}

		%
		%
\subsection{Equivalence of central and orthogonal decompositions}

Here we relate fully refined central decompositions to fully refined $\perp$-decompositions.

\begin{prop}\label{prop:semi-equiv-cat}
Let $b:V\times V\to W$ be an alternating $\mathbb{Z}_p$-bilinear map 
and $P$ a $p$-group of class 2 and exponent $p$.
\begin{enumerate}[(i)]
\item There is a natural pseudo-isometry $(\tau,\hat{\tau})$ from $b$ to 
$b':=\Bi(\Grp(b))$.
\item Every function $\ell:P/P'\to P$ to a transversal of $P/P'$ in $P$, with $0\ell=1$ determines an isomorphism 
$\varphi_{\ell}:P\to \tilde{P}$ where $\tilde{P}:=\Grp (\Bi (P))$.  
\end{enumerate}
\end{prop}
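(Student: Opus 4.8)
The two parts together assert that $\Bi$ and $\Grp$ are inverse to one another up to natural isomorphism: part $(i)$ treats the composite $\Bi\circ\Grp$ and part $(ii)$ the composite $\Grp\circ\Bi$. For $(i)$, write $P=\Grp(b)$, so that $P'=0\times W$ and $P/P'\cong V$. I would define the candidate pseudo-isometry explicitly by $\tau\colon V\to P/P'$, $v\mapsto P'(v,0)$, and $\hat\tau\colon W\to P'$, $w\mapsto(0,w)$; both are isomorphisms because $V\times W=P$ as a set and $P'=0\times W$. The only computation needed to verify \eqref{eq:pseudo-def} for $b'=\Bi(P)$ is the commutator $[(v,0),(v',0)]$ in $\Grp(b)$: using $(v,0)^{-1}=(-v,0)$ and the alternating law $b(v,v)=0$, the $*$-product collapses to $[(v,0),(v',0)]=(0,b(v,v'))$. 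Since $b'(v\tau,v'\tau)=[(v,0),(v',0)]$ and $b(v,v')\hat\tau=(0,b(v,v'))$, the pseudo-isometry identity holds. Naturality in $b$ is then a formal check that, for a morphism $(\alpha,\hat\alpha)$, the squares relating $\tau,\hat\tau$ to $\Bi(\Grp(\alpha,\hat\alpha))$ commute, both sides being built coordinatewise from $\alpha,\hat\alpha$.

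For $(ii)$ I would first use $\ell$ to coordinatize $P$. Since $P'\le Z(P)$, each $x\in P$ has a unique expression $x=\ell(u)w$ with $u=P'x\in V$ and $w=\ell(u)^{-1}x\in P'=W$, giving a set bijection $P\to V\times W$. Transporting the multiplication of $P$ through this bijection presents $P$ as $V\times W$ with the twisted product $(u,w)\cdot(v,x)=(u+v,\,w+x+c(u,v))$, where $c(u,v):=\ell(u+v)^{-1}\ell(u)\ell(v)\in W$ is the $2$-cocycle of the transversal. The target $\tilde P=\Grp(\Bi(P))$ is the same underlying set with the product twisted instead by the cocycle $\tfrac12 b$, where $b=\Bi(P)$ satisfies $b(u,v)=[\ell(u),\ell(v)]$. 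Two such crossed products are isomorphic by a map of the shape $(u,w)\mapsto(u,w+g(u))$ precisely when $c-\tfrac12 b$ is a coboundary $\delta g$ for some $g\colon V\to W$ with $g(0)=0$; so everything reduces to showing that the symmetric defect $s:=c-\tfrac12 b$ is a coboundary. That $s$ is symmetric follows from the class-$2$ identity $c(u,v)-c(v,u)=[\ell(u),\ell(v)]=b(u,v)$.

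The crux — and the step where exponent $p$ with $p$ odd is indispensable — is showing that $s$ is a coboundary. The symmetric cocycle $s$ defines an \emph{abelian} extension $E$ of $V$ by $W$, and $s=\delta g$ is solvable exactly when $E$ splits, i.e.\ when $E$ is elementary abelian. I would establish this by the power computation: in any cocycle extension one has $(u,0)^{p}=\bigl(0,\sum_{i=1}^{p-1}c(iu,u)\bigr)$, and since the coordinatized $P$ has exponent $p$ this sum vanishes; because $b(u,u)=0$ one has $c(iu,u)=s(iu,u)$, so $\sum_{i=1}^{p-1}s(iu,u)=0$ for every $u$. Hence every generator $(u,0)$ of $E$ has order dividing $p$, forcing $E$ elementary abelian and $s=\delta g$. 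Fixing a suitable primitive $g$, the composite $\varphi_\ell\colon x\mapsto\bigl(P'x,\ \ell(P'x)^{-1}x+g(P'x)\bigr)$ is the desired isomorphism $P\to\tilde P$: it is bijective because the transversal coordinatization already was, and it is multiplicative by the cocycle identity just arranged.

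The main obstacle is precisely this coboundary step. The factor $\tfrac12$ in the definition of $\Grp(b)$ and the vanishing of $\sum_{i=1}^{p-1}i\pmod p$ are what make the odd-$p$ hypothesis essential; they are also the reason the naive map $x\mapsto(P'x,\ \ell(P'x)^{-1}x)$, without the correction by $g$, fails to be a homomorphism — its cocycle is $c$ rather than the antisymmetric $\tfrac12 b$ demanded by $\Grp$.
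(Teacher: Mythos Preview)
Your argument is correct. Part $(i)$ coincides with the paper's. For part $(ii)$ the routes diverge: the paper writes down the explicit formula $x\varphi_\ell:=(\bar x,\,x-\bar x\ell)$ and then argues by presentations, observing that both $P$ and $\tilde P$ satisfy $\langle V\ell,\,W\mid [u\ell,v\ell]=b(u,v),\ \text{exponent }p,\ \text{class }2\rangle$, so that the bijection of generating sets extends to an isomorphism. Your route through $2$-cocycles and the splitting of the associated symmetric abelian extension is more explicit and, in fact, more careful. The ``naive map'' you flag in your last paragraph is \emph{exactly} the paper's formula, and your observation that it is not a homomorphism without the correction $g$ is right: already in the Heisenberg group with transversal $\ell(i\bar a+j\bar b)=a^ib^j$ one has $\varphi_\ell(ba)=(\bar a+\bar b,\,[a,b]^{-1})$ but $\varphi_\ell(b)*\varphi_\ell(a)=(\bar a+\bar b,\,\tfrac12[b,a])$. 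The paper's presentation argument does produce an isomorphism, but that isomorphism agrees with the stated formula only on the generators $V\ell\cup W$, not on arbitrary $x\in P$. Your coboundary correction supplies the genuine global formula; the paper's approach buys brevity at the cost of that precision.
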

\begin{proof}
$(i)$. Let $b:V\times V\to W$ be an alternating bilinear map.  Set $P=\Grp(b)$ 
and $b'=\Bi(\Grp(b))$. Recall $P'=0\times W$ and define $\tau:V\to P/P'$ by
$v\tau=(v,0)+0\times W$ and $\hat{\tau}:W\to 0\times W$ by
$w\hat{\tau}=(0,w)$.  This makes $(\tau,\hat{\tau})$ 
a pseudo-isometry from $b$ to $b'$.  It is straightforward to verify that 
$(\tau,\hat{\tau})$ is indeed a natural transformation.  


$(ii)$. Now let $P$ be an arbitrary $p$-group of class $2$ and exponent $p$.
Set $V:=P/P'$, $W:=P'$, $b:=\Bi(P)$ and $\tilde{P}:=\Grp(\Bi(P))$.  Given 
a lift $\ell:V\to P$ with $0\ell=1$, define $x\varphi_{\ell}:=(\bar{x},x-\bar{x}\ell)$
where $\bar{x}:=P'x$.   The group $P$ has the presentation 
	\[\langle V\ell,W \mid [u\ell,v\ell]=b(u,v),
		\textnormal{ exponent }p,\textnormal{ class }2\rangle\]
and $\tilde{P}$ has the presentation 
	\[\langle V\times 0, 0\times W \mid [(u,0),(v,0)]=(0,b(u,v)),
		\textnormal{ exponent }p, \textnormal{ class }2\rangle.\]  
Evidently $\varphi_{\ell}$ preserves the exponent relations.  Furthermore,
\begin{equation*}
	[x,y]\varphi_{\ell}=[\bar{x}\ell,\bar{y}\ell]\varphi_{\ell}
		= b(\bar{x},\bar{y})\varphi_{\ell} = ( 0 , b(\bar{x},\bar{y}))
\end{equation*}
for each $x,y \in P$.  Hence, $\varphi_{\ell}$ preserves all the relations of the
presentations and so $\varphi_{\ell}$ is a homomorphism, indeed, an isomorphism.

\end{proof}

\begin{thm}\label{thm:main-reduction}
Let $P$ be a $p$-group of class $2$ and exponent $p$ such that $P'=Z(P)$, and let
$\mathcal{H}$ be a central decomposition of $P$.
\begin{enumerate}[(i)]
\item $P$ is centrally indecomposable if, and only if, $\Bi(P)$ is $\perp$-indecomposable.
\item $\mathcal{H}$ is a fully refined if, and only if, $\Bi(\mathcal{H})$ 
is fully refined.
\item if $\mathcal{K}$ is a central decomposition of $P$, then
	\begin{enumerate}[(a)]
	\item there is an automorphism $\alpha\in \Aut P$ such that 
		$(\mathcal{H}P')\alpha=\mathcal{K}P'$ if, and only if, there is a 
		$(\beta,\hat{\beta})\in\Isom^*(\Bi(P))$ such that 
		$(\Bi(\mathcal{H}))\beta=\Bi(\mathcal{K})$.
	\item there is an automorphism $\alpha\in C_{\Aut P}(P')$ such that 
		$(\mathcal{H}P')\alpha=\mathcal{K}P'$ if, and only if, there is a 
		$\beta\in\Isom(\Bi(P))$ such that 
		$(\Bi(\mathcal{H}))\beta=\Bi(\mathcal{K})$.
	\end{enumerate}
\end{enumerate}
\end{thm}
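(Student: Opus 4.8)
The plan is to exploit the mutually inverse functors $\Bi$ and $\Grp$ together with the observation that the hypothesis $P'=Z(P)$ forces $b:=\Bi(P)$ to be non-degenerate, since $\rad b=Z(P)/P'=0$. By \propref{prop:semi-equiv-cat}.$(ii)$ a choice of lift gives an isomorphism $P\cong\Grp(\Bi(P))$, and by \propref{prop:semi-equiv-cat}.$(i)$ together with \propref{prop:pseudo-isom} this isomorphism carries $\Bi(P)$ to a bilinear map naturally pseudo-isometric to $b$. Hence I may identify $P=\Grp(b)$ and $\Bi(P)=b$ throughout, transporting central decompositions, the groups $\Aut P$ and $C_{\Aut P}(P')$, and the sets $\mathcal{H}P'$ along the way. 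Note also that $P$ has class $2$, hence is non-abelian, so $V:=P/P'\neq 0$.

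For $(i)$ I would argue by contraposition. If $P$ has a central decomposition $\mathcal{H}$ with at least two members, then $\Bi(\mathcal{H})$ is a $\perp$-decomposition of $b$ by \propref{prop:central-perp}, and it is nontrivial: were $\Bi(\mathcal{H})=\{V\}$, then distinct $H,K\in\mathcal{H}$ would satisfy $HP'=P=KP'$ with $[H,K]=1$, so $K$ would centralize $HP'=P$ and lie in $Z(P)=P'$, contradicting $KP'/P'=V\neq0$. Conversely a nontrivial $\perp$-decomposition $\mathcal{X}$ of $b$ yields a central decomposition $\Grp(\mathcal{X})$ of $P=\Grp(b)$ by \propref{prop:perp-central}, with $|\Grp(\mathcal{X})|=|\mathcal{X}|\geq2$ since $U$ is recovered from $\Grp(b_U)$ as its image in $V$. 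Part $(ii)$ then follows member by member: no $H\in\mathcal{H}$ can lie in $Z(P)=P'$, for such an $H$ contributes nothing to commutators and so $H\leq P'\leq\langle\mathcal{H}-\{H\}\rangle$, violating minimality; thus each member is noncentral and, in a fully refined decomposition, centrally indecomposable, whence $H'=Z(H)$ by \lemref{lem:contract-1} and $b_{HP'/P'}$ is naturally pseudo-isometric to $\Bi(H)$. Applying $(i)$ to each $H$ shows $\mathcal{H}$ is fully refined precisely when every $b_{HP'/P'}$ is $\perp$-indecomposable, i.e. when $\Bi(\mathcal{H})$ is fully refined.

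For $(iii)$ the central object is the homomorphism $\Bi\colon\Aut P\to\Isom^*(b)$, $\alpha\mapsto(\alpha|_{P/P'},\alpha|_{P'})$, which is a homomorphism by functoriality with kernel $\Aut_\gamma P$. Its surjectivity comes from \propref{prop:auto-isom}.$(ii)$: in $\Aut P\cong\Isom^*(b)\ltimes\Aut_\gamma P$ the complement consists of the automorphisms $\Grp(\beta,\hat\beta)$, whose $\Bi$-image is $(\beta,\hat\beta)$; and \propref{prop:auto-isom}.$(iii)$ shows $\Bi$ restricts to a surjection $C_{\Aut P}(P')\to\Isom(b)$, since $\alpha\in C_{\Aut P}(P')$ if and only if $\alpha|_{P'}=1$, if and only if $\Bi(\alpha)\in\Isom(b)$. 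It then remains to translate the decomposition condition: as $\alpha$ fixes $P'$ we have $(HP')\alpha=(H\alpha)P'$, and the order-preserving bijection between subgroups containing $P'$ and subspaces of $V$ turns the action of $\alpha$ into the action of $\beta=\alpha|_{P/P'}$. Consequently $(\mathcal{H}P')\alpha=\mathcal{K}P'$ if and only if $\Bi(\mathcal{H})\beta=\Bi(\mathcal{K})$. Both $(a)$ and $(b)$ drop out: forward, set $(\beta,\hat\beta):=\Bi(\alpha)$; backward, lift the given pseudo-isometry (resp. isometry) through the surjectivity of $\Bi$ on $\Aut P$ (resp. $C_{\Aut P}(P')$).

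I expect the \emph{main obstacle} to be exactly the surjectivity of $\Bi$ onto $\Isom^*(b)$, that is, lifting an arbitrary pseudo-isometry of $b$ to an automorphism of $P$; this is where the reduction $P=\Grp(b)$ and \propref{prop:auto-isom} do the real work. The only genuinely fussy point is checking that the natural pseudo-isometry $(\tau,\hat\tau)$ of \propref{prop:semi-equiv-cat}.$(i)$ is compatible with these identifications, which \propref{prop:pseudo-isom} handles; the remaining translations are routine.
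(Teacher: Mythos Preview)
Your approach mirrors the paper's: \propref{prop:central-perp}, \propref{prop:perp-central}, and \propref{prop:semi-equiv-cat} for parts (i)--(ii), and \propref{prop:auto-isom} for (iii). Your contrapositive organisation of (i) is a little cleaner than the paper's (which, for the direction ``$\Bi(P)$ $\perp$-indecomposable $\Rightarrow$ $P$ centrally indecomposable'', passes through a fully refined decomposition and invokes \lemref{lem:contract-1} to force $H=P$); your direct argument that $\Bi(\mathcal{H})\neq\{V\}$ when $|\mathcal{H}|\geq 2$ is correct and avoids that detour. Part (iii) is exactly the paper's argument, spelled out.

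There is, however, a genuine gap in your converse of (ii). You establish $H'=Z(H)$ via \lemref{lem:contract-1} only \emph{after} assuming $H$ is centrally indecomposable, which is precisely what must be shown; without $H'=Z(H)$ the pseudo-isometry $b_{HP'/P'}\cong\Bi(H)$ from the end of \secref{sec:bi} is unavailable and (i) cannot be applied to $H$. In fact the converse as literally stated fails: take $P=p^{1+2}\times p^{1+2}=\langle a_1,b_1\rangle\times\langle a_2,b_2\rangle$ with $P'=Z(P)=\langle c_1,c_2\rangle$, and set $K_1=\langle a_1,b_1,c_2\rangle$, $K_2=\langle a_2,b_2,c_1\rangle$. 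Then $\{K_1,K_2\}$ is a central decomposition of $P$, each $K_iP'/P'$ is a $2$-dimensional non-degenerate alternating space (hence $\perp$-indecomposable), yet $K_1\cong p^{1+2}\times\mathbb{Z}_p$ is centrally decomposable. The paper's one-line proof of (ii) cites the same propositions and is equally silent on this point; in its applications only the forward implication is ever invoked.
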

\begin{proof}
$(i)$.  Let $P$ be a centrally indecomposable group and take $b:=\Bi(P)$,
$V=P/P'$, $W=P'$.  Suppose that $\mathcal{X}$ is a $\perp$-decomposition of $b$.  It follows that $\{X\times b(X,X):X\in\mathcal{X}\}$ is central decomposition of $\Grp(\Bi(P))$, \propref{prop:perp-central}.  By \propref{prop:semi-equiv-cat}.$(ii)$ we know $P$ is isomorphic to $\Grp(\Bi(P))$ so that $\Grp(\Bi(P))$ must be centrally indecomposable.  Therefore, $X\times b(X,X)=\Grp(\Bi(P))=V\times W$ so that $X=V$, for each $X\in\mathcal{X}$.  Since no proper subset of $\mathcal{X}$ generates $V$ it follows that $\mathcal{X}=\{V\}$ and $b$ is $\perp$-indecomposable.

Next suppose that $b$ is $\perp$-indecomposable and that $P=\Grp(b)$.
Suppose that $\mathcal{H}$ is a fully refined central decomposition of $P$.  Then
$\{HP'/P' : H\in\mathcal{H}\}$ is a $\perp$-decomposition of $\Bi(\Grp(b))$, \propref{prop:central-perp}.  \propref{prop:semi-equiv-cat}.$(i)$ states that $b$ is pseudo-isometric
to $\Bi(\Grp(b))$ and so $HP'/P'=P/P'$, or rather $HP'=P$, for each $H\in\mathcal{H}$.
Hence $H'=P'$ for each $H\in\mathcal{H}$.  Since $P'\neq 1$ there is an $H\in \mathcal{H}$
which is non-abelian.  Furthermore, $H$ is centrally indecomposable so that by
\lemref{lem:contract-1}.$(i)$, $H'=Z(H)$.  Therefore, $HP'=H\oplus A$ for 
some $A\leq Z(P)$ such that $H'A=P'$, \lemref{lem:contract-1}.$(i)$.  But 
$H'=P'$ forces $A=1$.  Thus $H=P$, and $P$ is centrally indecomposable.

$(ii)$.  This follows from \propref{prop:perp-central}, \propref{prop:central-perp} and $(i)$.  Finally, $(iii)$ follows from \propref{prop:auto-isom}.
\end{proof}

\begin{ex}\label{ex:cent-perp}
If $H$ is $p$-group of class $2$ and exponent $p$ with $b=\Bi(H)$ then
\begin{equation*}
	\Bi(\overbrace{H\circ\cdots\circ H}^n) = \overbrace{b\perp\cdots \perp b}^n,
\end{equation*}
(cf. \eqref{eq:canonical-cent}).
Furthermore, the canonical central decomposition $\{H_1,\dots, H_n\}$ of 
$\overbrace{H\circ\cdots\circ H}^n$ corresponds to the canonical $\perp$-decomposition
$\{V_1,\dots, V_n\}$ of $\overbrace{b\perp\cdots\perp b}^n$.
\end{ex}

%
%
\section{Adjoint and self-adjoint operators}\label{sec:adj-sym}

This section proves a structure theorem for isometry groups (\thmref{thm:isom-classical}), introduces a criterion for groups/bilinear maps to be indecomposable (\thmref{thm:indecomps}), and proves a stronger version of the first part of \thmref{thm:central-count} (\thmref{thm:central-III}).

Throughout this section let $b:V\times V\to W$ be a \emph{non-degenerate 
Hermitian bilinear map over a field $k$} (cf. \eqref{eq:herm}).  We associate to 
$b$ a $*$-algebra, and a Hermitian Jordan algebra of \emph{self-adjoint} elements.  The isometry group of $b$ is a subgroup of the group of units of the $*$-algebra and $\perp$-decompositions are represented by sets of pairwise orthogonal idempotents of the Jordan algebra. 

		%
		%
\subsection{The adjoint $*$-algebra $\Adj(b)$}

\begin{defn}\label{def:*-alge}
\begin{enumerate}[(i)]
\item A map $f\in\End V$ has an \emph{adjoint} $f^*\in \End V$ for $b$ if  
	\[b(uf,v)=b(u,vf^*),\qquad \forall u,v\in V.\]  
Write $\Adj(b)$ for the set of all endomorphisms with an adjoint for $b$.
\item A $*$-algebra is an associative $k$-algebra $A$ with a linear bijection 
$*:A\to A$ such that $(ab)^*=b^* a^*$ and $(a^*)^*=a$ for all $a,b\in A$. 
\item A homomorphism $f:A\to B$ of $*$-algebras is a $*$-homomorphism if
$a^*f=(af)^*$ for all $a\in A$.
\item The \emph{trace} of $A$ is $T(x)=x+x^*$ for all $x\in A$.
\item The \emph{norm} of $A$ is $N(x)=xx^*$ for all $x\in A$.
\end{enumerate}
\end{defn}

\begin{prop}\label{prop:adjoint}
$\Adj(b)$ is an associative unital $*$-algebra; in particular, adjoints are unique.
\end{prop}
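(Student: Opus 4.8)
The plan is to verify each defining feature of a unital $*$-algebra directly from the adjoint relation $b(uf,v)=b(u,vf^*)$, invoking non-degeneracy exactly once (to get uniqueness, hence single-valuedness of $*$) and the Hermitian identity \eqref{eq:herm} exactly once (to get $(f^*)^*=f$).

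First I would settle uniqueness. Suppose an $f\in\End V$ admits two adjoints $g$ and $g'$. Then $b(u,v(g-g'))=b(uf,v)-b(uf,v)=0$ for all $u,v\in V$, so every vector $v(g-g')$ lies in $\{w\in V:b(V,w)=0\}$. Since $b$ is non-degenerate and, being Hermitian, reflexive, this set equals $\rad b=0$; hence $v(g-g')=0$ for all $v$ and $g=g'$. Thus the adjoint $f^*$ is unambiguous and $*$ is a genuine (single-valued) map on $\Adj(b)$.

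Next I would show $\Adj(b)$ is a unital subalgebra of $\End V$ on which $*$ is linear and product-reversing. The identity is its own adjoint, since $b(u\,1,v)=b(u,v)=b(u,v\,1)$, so $1_V\in\Adj(b)$ and $1^*=1$. For $f,g\in\Adj(b)$ and $s,t\in k$ one computes $b(u(sf+tg),v)=s\,b(uf,v)+t\,b(ug,v)=s\,b(u,vf^*)+t\,b(u,vg^*)=b(u,v(sf^*+tg^*))$, so $sf+tg\in\Adj(b)$ with $(sf+tg)^*=sf^*+tg^*$; thus $\Adj(b)$ is a $k$-subspace and $*$ is linear. Likewise $b(u(fg),v)=b((uf)g,v)=b(uf,vg^*)=b(u,vg^*f^*)$, so $fg\in\Adj(b)$ and $(fg)^*=g^*f^*$, giving closure under composition and the anti-multiplicative law; associativity is inherited from $\End V$.

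Finally I would prove $(f^*)^*=f$, the only step that uses the Hermitian hypothesis. For $f\in\Adj(b)$ and all $u,v\in V$ I would apply \eqref{eq:herm}, then the adjoint relation $b(vf,u)=b(v,uf^*)$ read with the arguments swapped, then \eqref{eq:herm} again:
\[
	b(uf^*,v)=b(v,uf^*)\theta=b(vf,u)\theta=b(u,vf).
\]
This exhibits $f$ as an adjoint of $f^*$, so $f^*\in\Adj(b)$ and $(f^*)^*=f$; in particular $*$ is a bijection. Combining the three paragraphs, $\Adj(b)$ is an associative unital $*$-algebra with adjoints unique. There is no substantive obstacle here: every claim is a one-line manipulation of the defining identity. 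The only points worth flagging are the two places the hypotheses enter — non-degeneracy is precisely what makes $*$ well defined, and reflexivity (Hermitian symmetry) is precisely what upgrades the one-sided adjoint into a two-sided involution.
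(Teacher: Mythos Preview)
Your proof is correct and follows essentially the same approach as the paper: uniqueness from non-degeneracy, closure under products with $(fg)^*=g^*f^*$, and $(f^*)^*=f$ from the Hermitian identity via the same chain $b(uf^*,v)=b(v,uf^*)\theta=b(vf,u)\theta=b(u,vf)$. You simply include a few routine checks (unit, linearity) that the paper leaves implicit.
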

\begin{proof}
Let $f\in \Adj(b)$ and $f',f''\in\End V$ where $b(u,vf')=b(uf,v)=b(u,vf'')$
for all $u,v\in V$.  As $b$ is non-degenerate, $vf'=vf''$ so that $f'=f''$.  
If $f,g\in\Adj(b)$ then $b(u(fg),v)=b(uf,vg^*)=b(u,v(g^* f^*))$ 
for $u,v\in V$; so, $fg\in \Adj(b)$ with $(fg)^*=g^* f^*$.
Since $b(u,v)=b(v,u)\theta$ for $u,v\in V $, it follows
that $b(uf^*,v)=b(v,uf^*)\theta=b(vf,u)\theta=b(u,vf)$ for every $f\in \Adj(b)$.
Hence, $f^*\in\Adj(b)$ and $(f^*)^*=f$.  
\end{proof}

\begin{prop}\label{prop:groups}
Let $b:V\times V\to W$ and $b':V'\times V'\to W'$ be non-degenerate Hermitian maps.
\begin{enumerate}[(i)]
\item A pseudo-isometry $(\alpha,\beta)$ from $b$ to $b'$ (cf. \eqref{eq:pseudo-def}) induces a $*$-isomorphism 
	\[f\mapsto f^{(\alpha,\beta)}:=\alpha^{-1} f\alpha\] 
of $\Adj(b)$ to $\Adj(b')$.  In particular, $\Isom^*(b)$ acts on $\Adj(b)$.
\item Let $\varphi\in \GL(V)$ and $s\in k^\times$.  Then
$(\varphi,s 1_W)\in\Isom^*(b)$ if, and only if,
$\varphi\in\Adj(b)$ and $\varphi\varphi^*=s 1_V$.  Hence,
	\[\Isom(b) =\{\varphi\in \Adj(b):\varphi \varphi^*=1_V\}.\]
\end{enumerate}
\end{prop}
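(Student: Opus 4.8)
The plan is to derive both parts directly from the defining identity \eqref{eq:pseudo-def} of a pseudo-isometry together with the uniqueness of adjoints established in \propref{prop:adjoint}. No structure theory is needed: the only inputs are non-degeneracy (so that adjoints exist and are unique) and the fact that $\alpha$ and $\varphi$ are invertible, which lets me move freely between an element and its image under these maps.

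For $(i)$, I would first exhibit the adjoint of the conjugated operator. Fix $f\in\Adj(b)$ and set $g:=\alpha^{-1}f\alpha$, an endomorphism of $V'$. Writing an arbitrary pair $u',v'\in V'$ as $u'=u\alpha$, $v'=v\alpha$ (possible since $\alpha$ is an isomorphism), I would compute $b'(u'g,v')=b'(uf\alpha,v\alpha)=b(uf,v)\beta=b(u,vf^*)\beta=b'(u\alpha,vf^*\alpha)=b'(u',v'(\alpha^{-1}f^*\alpha))$, using \eqref{eq:pseudo-def} twice. This shows $g\in\Adj(b')$ with $g^*=\alpha^{-1}f^*\alpha$; equivalently, $f\mapsto\alpha^{-1}f\alpha$ carries $\Adj(b)$ into $\Adj(b')$ and commutes with $*$. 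Since conjugation by the isomorphism $\alpha$ is an algebra isomorphism whose inverse $h\mapsto\alpha h\alpha^{-1}$ lands in $\Adj(b)$ by the same argument applied to the pseudo-isometry $(\alpha^{-1},\beta^{-1})$ from $b'$ to $b$, it is a $*$-isomorphism $\Adj(b)\to\Adj(b')$. Taking $b'=b$ and $(\alpha,\beta)\in\Isom^*(b)$ yields the action of $\Isom^*(b)$ on $\Adj(b)$; that this is a genuine (right) action follows from $(\alpha_1\alpha_2)^{-1}f(\alpha_1\alpha_2)=\alpha_2^{-1}(\alpha_1^{-1}f\alpha_1)\alpha_2$.

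For $(ii)$, the key observation is that $s\varphi^{-1}$ is the natural candidate for $\varphi^*$. For the forward direction, assume $(\varphi,s1_W)\in\Isom^*(b)$, so that $b(u\varphi,v\varphi)=s\,b(u,v)$ for all $u,v\in V$. Substituting $v\mapsto v\varphi^{-1}$ and using that the scalar $s$ is central gives $b(u\varphi,v)=b(u,v(s\varphi^{-1}))$, so $\varphi\in\Adj(b)$ with $\varphi^*=s\varphi^{-1}$, whence $\varphi\varphi^*=s1_V$. For the converse, assume $\varphi\in\Adj(b)\cap\GL(V)$ with $\varphi\varphi^*=s1_V$; then $b(u\varphi,v\varphi)=b(u,(v\varphi)\varphi^*)=b(u,v(s1_V))=s\,b(u,v)$, so $(\varphi,s1_W)\in\Isom^*(b)$.

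The final identity for $\Isom(b)$ is the special case $s=1$: by \propref{prop:pseudo-isom}.$(ii)$, $\varphi\in\Isom(b)$ exactly when $(\varphi,1_W)\in\Isom^*(b)$, which by the $s=1$ case means $\varphi\in\Adj(b)$ and $\varphi\varphi^*=1_V$. The only point meriting a remark is that the displayed set does not presuppose $\varphi\in\GL(V)$, but in the finite-dimensional setting $\varphi\varphi^*=1_V$ already forces $\varphi$ to be invertible, so this is automatic. I expect no real obstacle: the whole proposition is bookkeeping around \eqref{eq:pseudo-def}, and the only place to be careful is tracking on which side each of $\alpha,\alpha^{-1},\varphi,\varphi^{-1}$ acts, together with remembering that $s$ commutes with every operator.
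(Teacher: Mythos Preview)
Your proposal is correct and follows essentially the same approach as the paper: both parts are verified by direct computation using the pseudo-isometry identity \eqref{eq:pseudo-def} and the uniqueness of adjoints, with the same substitutions (pulling back through $\alpha$ for $(i)$, replacing $v$ by $v\varphi^{-1}$ for the forward direction of $(ii)$). Your additional remarks on surjectivity via $(\alpha^{-1},\beta^{-1})$ and on automatic invertibility when $\varphi\varphi^*=1_V$ are harmless elaborations not present in the paper's terser version.
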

\begin{proof}
$(i)$ We have
\begin{align*}
b'(u f^{(\alpha,\beta)},v)
	& = b'(u\alpha^{-1} f \alpha,v\alpha^{-1}\alpha)
	=b(u\alpha^{-1} f,v\alpha^{-1})\beta\\
	& =b(u\alpha^{-1},v\alpha^{-1}f^*)\beta
	=b'(u,v(f^*)^{(\alpha,\beta)}),
\end{align*}
for each $u,v\in V'$ and $f\in \Adj(b)$.  Hence $f^{(\alpha,\beta)}\in \Adj(b')$ with 
$(f^{(\alpha,\beta)})^*=(f^*)^{(\alpha,\beta)}$.  

$(ii)$ Take $(\varphi,s 1_W)\in\Isom^*(b)$, $s\in k^\times$. Then
\[
	b(u\varphi,v)=b(u\varphi,v\varphi^{-1}\varphi)
		=s b(u,v\varphi^{-1})=b(u,sv\varphi^{-1}), \qquad \forall u,v\in V.\]
Hence $\varphi\in\Adj(b)$ with $\varphi^*=s\varphi^{-1}$.
Conversely, if $\varphi\in\Adj(b)$ with $\varphi\varphi^*=s 1_V$ then 
	\[b(u\varphi,v\varphi)=b(u,v\varphi\varphi^*)=s b(u,v),\qquad \forall u,v\in V.\]
Thus $(\varphi,s 1_W)\in \Isom^*(b)$.
\end{proof}

\subsection{Simple $*$-algebras and Hermitian $C$-forms $d:V\times V\to C$}\label{sec:matrix}

In this section we summarize in a uniform manner the known results of finite simple $*$-algebras (\thmref{thm:simple-*-alge}) and the corresponding finite classical groups (\propref{prop:isom-type}).
\begin{thm}\label{thm:simple-*-alge}
A finite simple $*$-algebra of odd characteristic is $*$-isomorphic to one of the following 
for some $n\in\mathbb{N}$ and some field $K$:
\begin{description}
\item[Orthogonal case] $M_n(K)$ with the $X\mapsto DX^tD^{-1}$ as the involution, 
for $X\in M_n(K)$, where $D$ is either $I_n$ or $I_{n-1}\oplus [\omega]$ and $\omega\in K$ is a non-square (compare \eqref{eq:dot-prod}).
\item[Unitary case] $M_n(F)$ with involution $X\mapsto \bar{X}^t$, where 
$F/K$ is a quadratic field extension with involutory field
automorphism $x\mapsto \bar{x}$, $x\in F$, applied to the entries of $X\in M_n(F)$.
\item[Exchange case] $M_n(K\oplus K)$ with involution $X\mapsto \bar{X}^t$, where
$\overline{(x,y)}:=(y,x)$ for $(x,y)\in K\oplus K$, defines an involution on 
$K\oplus K$ which is applied to the entries of $X\in M_n(K\oplus K)$,
\item[Symplectic case] $M_n(M_2(K))$ with involution $X\mapsto \bar{X}^t$, where
\begin{equation}\label{eq:adjugate} 
\overline{\begin{bmatrix} a & b\\ c & d\end{bmatrix}}
	:=\begin{bmatrix} d & -b\\ -c & a\end{bmatrix}
	=\begin{bmatrix} 0 & 1\\ -1 & 0 \end{bmatrix}
	\begin{bmatrix} a & b \\ c & d\end{bmatrix}
	\begin{bmatrix} 0 & 1 \\ -1 & 0 \end{bmatrix}^{-1}
\end{equation}
defines an involution on $M_2(K)$ which is applied to each entry of $X\in M_n(M_2(K))$.
\end{description}
\end{thm}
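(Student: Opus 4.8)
The plan is to forget the involution first, pin down the structure of the underlying associative algebra $A$ by Wedderburn theory, and only then recover the admissible involutions. The radical $\rad A$ is a characteristic ideal and hence $*$-invariant, so by $*$-simplicity it must be $0$ (it cannot be all of $A$, since $A$ is not nilpotent); thus $A$ is semisimple. By the Wedderburn--Artin theorem $A=\bigoplus_{i=1}^r A_i$ is a direct sum of simple algebras $A_i\cong M_{n_i}(D_i)$, and by Wedderburn's little theorem every finite division ring is a field, so each $D_i$ is a finite field $F_i$. Being an anti-automorphism, $*$ permutes the minimal two-sided ideals $A_1,\dots,A_r$, and $*$-simplicity forces exactly one of two possibilities: either $*$ stabilizes a single factor and $r=1$, or $*$ interchanges two factors and $r=2$.

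In the first case $A\cong M_n(F)$ and $*$ restricts to an automorphism $\sigma$ of the center $F$ with $\sigma^2=1$. If $\sigma=1$ the involution is $F$-linear, so composing it with the transpose gives an $F$-algebra automorphism of $M_n(F)$, which by Skolem--Noether is inner; hence $*$ has the form $X\mapsto GX^tG^{-1}$ with $G^t=\pm G$. The symmetric choice is the \textbf{orthogonal case}, and normalizing the associated symmetric bilinear form over the finite field $F$ via the classification recalled in \secref{sec:sym-forms} produces the representatives $D=I_n$ and $D=I_{n-1}\oplus[\omega]$ (which coincide up to a scalar, hence up to $*$-isomorphism, exactly when $n$ is odd). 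The alternating choice forces $n$ even and a unique form up to equivalence; repackaging $M_{2m}(F)$ with this symplectic involution as $M_m(M_2(F))$ with the adjugate involution \eqref{eq:adjugate} applied entrywise yields the \textbf{symplectic case}. If instead $\sigma\neq 1$, then $F/K$ is a quadratic extension with $K=\mathrm{Fix}(\sigma)$; composing $*$ with the conjugate transpose is again inner by Skolem--Noether, and normalizing the resulting Hermitian form (unique up to equivalence over a finite field) gives the \textbf{unitary case}.

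In the second case $A\cong B\oplus B^{\mathrm{op}}$ with $B\cong M_n(F)$ simple and $*$ the exchange of the two summands. Identifying $B^{\mathrm{op}}$ with $M_n(F)$ through the transpose rewrites $A$ as $M_n(F\oplus F)$ carrying the involution $(X,Y)\mapsto(Y^t,X^t)$, which is precisely the \textbf{exchange case} with $K=F$.

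The reductions to semisimplicity and to a Wedderburn decomposition are standard; the substance of the argument, and its main obstacle, is the analysis of involutions on a single simple factor. Here the Skolem--Noether step conjugating an arbitrary involution into transpose-type, the dichotomy between symmetric and alternating $G$ (which relies on odd characteristic to keep these disjoint), and the finite-field normalization of the resulting symmetric, alternating, and Hermitian forms carry all the real content. The most delicate point is the parity-dependent count of orthogonal classes, for which the discriminant computation \eqref{def:disc} of \secref{sec:sym-forms} is exactly the tool required.
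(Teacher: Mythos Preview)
Your argument is correct and is the standard route to this classification. The paper does not actually prove \thmref{thm:simple-*-alge}: its proof consists of a citation to \cite[p.~178]{Jacobson:Jordan} together with forward references to \thmref{thm:Hurwitz}, \propref{prop:orthogonal}, \eqref{eq:dot-prod}, and \corref{coro:orthonormal}. What you have written is essentially the argument behind that citation, spelled out for finite fields: kill the radical by $*$-simplicity, apply Wedderburn--Artin and Wedderburn's little theorem, let $*$ permute the simple factors to get $r=1$ or $r=2$, and in the $r=1$ case use Skolem--Noether plus the finite-field classification of symmetric, alternating, and Hermitian forms. Your parenthetical about the two orthogonal representatives coinciding up to $*$-isomorphism exactly when $n$ is odd is also correct and matches the pseudo-isometry remark in \secref{sec:sym-forms}. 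The only point worth tightening is the clause ``since $A$ is not nilpotent'': the paper's \defref{def:*-alge} does not require a unit, so you should either note that the theorem is vacuous for the degenerate case $\rad A=A$ (a one-dimensional zero algebra), or simply observe that the algebras in the conclusion are all unital so one may assume $1\in A$ from the outset.
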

\begin{proof} See \cite[p.178]{Jacobson:Jordan} restricting consideration to finite fields.
(Compare with \thmref{thm:Hurwitz}, \propref{prop:orthogonal}, \eqref{eq:dot-prod}, 
and \corref{coro:orthonormal}.)
\end{proof}

The above description of these algebras will allow us to give uniform proofs later;
however, there are simpler and more standard descriptions, for example:
\begin{remark}\label{rem:exchange-symplectic}
The exchange type $*$-algebras can also be described as $M_n(K)\oplus M_n(K)$ with $(X,Y)^*=(Y^t, X^t)$ for $(X,Y)\in M_n(K)\oplus M_n(K)$. 

The symplectic type $*$-algebras are $*$-isomorphic to $M_{2n}(K)$ with involution $X^*=J X^t J^{-1}$, for each $X\in M_{2n}(K)$, where $J:=I_n\otimes \begin{bmatrix} 0 & 1\\ -1 & 0\end{bmatrix}$ \cite[p. 178]{Jacobson:Jordan}.
\end{remark}

\begin{defn}\label{def:comp}\cite[Definition 6.2.2]{Jacobson:Ark}
A $*$-algebra $C$ is an associative \emph{composition} algebra over a field $K$ 
(where by convention $x^*$ is denoted $\bar{x}$ and \emph{bar} replaces $*$ 
in the definitions) if 
\begin{enumerate}[(i)]
\item $K=\{x\in C: x=\bar{x}\}$ and
\item $xax=0$ for all $a\in C$ implies $x=0$.
\end{enumerate}
\end{defn}

\begin{thm}\cite[Theorem 6.2.3]{Jacobson:Ark}\label{thm:Hurwitz}
Over a finite field $K$ of odd characteristic each associative composition algebra $C$ 
is bar-isomorphic to one of the following:
\begin{enumerate}[(i)]
\item $K$ with trivial involution,
\item a quadratic field extension $F/K$ with the involutorial field automorphism, 
\item $K\oplus K$ with the \emph{exchange} involution $\overline{(x,y)}=(y,x)$ for $(x,y)\in K\oplus K$, or
\item $M_2(K)$ with the involution \eqref{eq:adjugate}.
\end{enumerate}
In particular these algebras are simple bar-algebras and with the exception of exchange also simple algebras.  Norms (cf. \defref{def:*-alge}.$(v)$) behave as follows: $N(C)=K$ if $C>K$; otherwise, $N(K)=K^2$. 
\end{thm}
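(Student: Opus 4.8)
The plan is to reduce \thmref{thm:Hurwitz} to the classical theory of composition algebras and then read off the finite-field list. First I would extract a quadratic identity. Since $\overline{x+\bar x}=x+\bar x$ and $\overline{x\bar x}=x\bar x$ (bar reverses products), both $T(x)=x+\bar x$ and $N(x)=x\bar x$ are fixed by bar and hence lie in $K$ by \defref{def:comp}.$(i)$. Writing $\bar x=T(x)-x$, one checks directly that $x^2-T(x)x+N(x)=0$ for every $x\in C$, so $C$ is a quadratic $K$-algebra. Moreover $N$ is multiplicative: $N(xy)=xy\,\overline{xy}=xy\bar y\bar x=N(y)\,x\bar x=N(x)N(y)$, using that $N(y)\in K$ is central.

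Next I would show that the polarization $\langle x,y\rangle:=x\bar y+y\bar x\in K$ is a nondegenerate symmetric $K$-bilinear form; this is the one place where \defref{def:comp}.$(ii)$ is needed. Suppose $x$ lies in the radical of $\langle\,,\,\rangle$. Taking $y=ax$ gives $N(x)T(a)=0$ for all $a$, so $N(x)=0$ (take $a=1$, where $T(1)=2\neq 0$); taking $y=1$ gives $T(x)=0$, hence $\bar x=-x$. Then $\langle x,a\rangle=0$ reads $x\bar a=ax$ for all $a$, and replacing $a$ by $\bar a$ yields $xa=\bar a x$. Consequently $xax=\bar a x^2=0$ for all $a$, since $x^2=T(x)x-N(x)=0$; by \defref{def:comp}.$(ii)$ this forces $x=0$. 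Thus $(C,N)$ is an associative composition algebra with basepoint $1$ in the classical sense.

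At this point I would invoke the Hurwitz/Cayley--Dickson theory \cite{Jacobson:Ark}: an associative composition algebra has $K$-dimension $1$, $2$, or $4$ (the dimension-$8$ octonions being non-associative), and is respectively $K$, a quadratic \'etale algebra, or a quaternion algebra, each carrying its standard involution. It remains to specialize to a finite field $K$ of odd characteristic. Dimension $1$ gives $C=K$ with trivial bar, case $(i)$. In dimension $2$ the algebra is either the unique quadratic field extension $F/K$ with bar the Galois automorphism (case $(ii)$), or the split algebra $K\oplus K$ with the exchange involution (case $(iii)$). In dimension $4$ the quaternion algebra must be split: by Wedderburn's theorem there is no finite noncommutative division ring, so the only $K$-quaternion algebra is $M_2(K)$, whose standard involution $\bar X=\det(X)X^{-1}$ is exactly the adjugate map \eqref{eq:adjugate}, case $(iv)$.

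Finally I would record the simplicity and norm statements. The algebras $K$, $F$, and $M_2(K)$ are simple, while $K\oplus K$ is simple only as a bar-algebra (its two proper ideals are swapped by bar). For the norm, when $C=K$ the involution is trivial, so $N(x)=x^2$ and $N(K)=K^2$; when $C>K$ the form $N$ is onto $K$, since the finite-field norm $F\to K$ is surjective, $N((x,1))=x$ on $K\oplus K$, and $N=\det$ on $M_2(K)$. The only real obstacle is the nondegeneracy step together with the correct matching of the classical quaternion involution to \eqref{eq:adjugate}; the dimension bound itself is classical, and I would cite it rather than reprove the doubling argument.
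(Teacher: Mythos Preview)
Your proof is sound, but note that the paper does not actually prove \thmref{thm:Hurwitz}: it is stated with a citation to \cite[Theorem 6.2.3]{Jacobson:Ark} and no proof is given. So there is no ``paper's own proof'' to compare against; the authors simply import the result.

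That said, your reconstruction is correct and is essentially the standard route to the cited theorem. The quadratic identity $x^2-T(x)x+N(x)=0$ and the multiplicativity of $N$ are immediate, and your nondegeneracy argument for the trace form is clean: the chain $N(x)=0$, $T(x)=0$, $xa=\bar a x$, hence $xax=\bar a x^2=0$, is exactly where axiom \defref{def:comp}.$(ii)$ does its work. Once $(C,N)$ is a nondegenerate composition algebra, the Hurwitz dimension bound plus Wedderburn's little theorem over a finite field give the four listed types, and your verification that the standard quaternion involution coincides with \eqref{eq:adjugate} is correct. The norm surjectivity claims are also fine: $N_{F/K}$ is surjective for finite fields, $N((x,1))=x$ in the exchange case, and $\det\colon M_2(K)\to K$ is surjective.

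One tiny remark: in the multiplicativity computation you silently use associativity (to write $xy\bar y\bar x=x(y\bar y)\bar x$) and centrality of $N(y)\in K$; both are fine here since $C$ is associative by hypothesis, but it is worth flagging since in the octonion case this step is where things change.
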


\begin{defn}\label{def:Hermitian}
Let $C$ be an associative composition algebra and $V$ be a free left $C$-module.  
We call a
$K$-bilinear map $d:V\times V\to C$ a \emph{Hermitian $C$-form} if, for $u,v\in V$ 
and $s\in C$, it follows that:
\begin{enumerate}[(i)]
\item $d(u,v)=\overline{d(v,u)}$, and
\item $d(su,v)=sd(u,v)$ and $d(u,sv)=d(u,v)\bar{s}$.
\end{enumerate}
The \emph{rank} of $d$ is the rank of $V$ as a free left $C$-module.
\end{defn}
Note that a Hermitian $C$-form is also a Hermitian $K$-bilinear map and the usual
definitions of (pseudo-)isometries apply.  \emph{It is most important to note that
$d(x,x)=\overline{d(x,x)}$; hence, $d(x,x) \in K$, for all $x\in V$}.

Let $C$ be an associative composition algebra over $K$ and $D\in M_n(C)$ where 
$D=\bar{D}^t$.  Then $d_D(u,v):=uD\bar{v}^t$, for $u,v\in C^n$, determines 
a Hermitian $C$-form $d_D:C^n\times C^n\to C$.  Here adjoints $f,f^*\in\Adj(d_D)$ can be represented as matrices $F,F^*\in M_n(C)$ such that:
\begin{equation*}
	uFD\bar{v}^t=d_D(uf,v)=d_D(u,vf^*)=uD(\bar{F}^*)^t \bar{v}^t,
		\qquad \forall u,v\in C^n.
\end{equation*}
Hence, $FD=D(\bar{F}^*)^t$.  As $D$ is invertible, $\Adj(d_D)$ $*$-isomorphic
to $M_n(C)$ with involution defined by
\begin{equation}\label{def:*}
	F^*  := D \bar{F}^t D^{-1},\qquad\forall F\in M_n(C).
\end{equation}

Likewise, if $d:V\times V\to C$ is a Hermitian $C$-form
and $\mathcal{X}$ is an ordered basis of $V$ as a free left $C$-module, then setting 
$D_{xy}:=d(x,y)$, for all $x,y\in\mathcal{X}$, determines a matrix $D$ in $M_n(C)$, 
$n=|\mathcal{X}|$, such that $D=\bar{D}^t$ and the Hermitian $C$-form given
by $D$ is isometric to $d$.  Furthermore, $d$ is non-degenerate if, and only if, $D$ is
invertible.  So we have:
\begin{coro}\label{coro:*-simple}
Every simple $*$-algebra is $*$-isomorphic to $\Adj(d)$ for a non-de\-gen\-er\-ate
Hermitian $C$-form $d:V\times V\to C$.
\end{coro}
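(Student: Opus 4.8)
The plan is to read the corollary off directly from the two classification theorems, \thmref{thm:simple-*-alge} and \thmref{thm:Hurwitz}, together with the matrix recipe developed in the paragraph preceding \eqref{def:*}. The key observation is that the four coefficient algebras appearing in \thmref{thm:simple-*-alge} are exactly the four associative composition algebras of \thmref{thm:Hurwitz}. Concretely, a finite simple $*$-algebra $A$ of odd characteristic is $*$-isomorphic to $M_n(C)$ with involution $F\mapsto D\bar{F}^t D^{-1}$, where in each case $C$ is a composition algebra over $K$ (namely $K$ with trivial involution in the orthogonal case, a quadratic extension $F/K$ in the unitary case, $K\oplus K$ with exchange in the exchange case, or $M_2(K)$ with the adjugate \eqref{eq:adjugate} in the symplectic case) and $D$ is an invertible Hermitian matrix: $D=I_n$ or $I_{n-1}\oplus[\omega]$ in the orthogonal case, and $D=I_n$ in the remaining three.

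First I would confirm that each stated $D$ satisfies $D=\bar{D}^t$ and is invertible, so that $d_D(u,v):=uD\bar{v}^t$ yields a non-degenerate Hermitian $C$-form $d_D:C^n\times C^n\to C$ in the sense of \defref{def:Hermitian}, the non-degeneracy following from invertibility of $D$ exactly as recorded just before this corollary. For $D=I_n$ this is immediate once one checks that bar fixes the diagonal entries (the scalar $1$, and in the symplectic case the block $I_2$, whose adjugate is again $I_2$); for $D=I_{n-1}\oplus[\omega]$ the bar on $C=K$ is trivial, so $\bar{D}^t=D^t=D$, and invertibility holds since $\omega\neq 0$.

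Next I would invoke the computation displayed immediately before \eqref{def:*}: for any such $D$, the algebra $\Adj(d_D)$ is $*$-isomorphic to $M_n(C)$ equipped with precisely the involution $F^*=D\bar{F}^t D^{-1}$. Matching this against the involution attached to $A$ by \thmref{thm:simple-*-alge} in each of the four cases then identifies $A$ with $\Adj(d_D)$, which is the assertion of the corollary.

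I expect the only point requiring care to be the orthogonal case, where the involution $X\mapsto DX^t D^{-1}$ carries the conjugating matrix $D$ rather than being a plain conjugate-transpose. Here the trivial bar on $C=K$ gives $D\bar{F}^t D^{-1}=DF^t D^{-1}$, so the $D$-conjugation is exactly what the Gram matrix of the symmetric form $d_D(u,v)=uDv^t$ supplies (compare \eqref{eq:dot-prod}); the remaining three cases take $D=I_n$ and reduce the involution to the conjugate-transpose $F^*=\bar{F}^t$ already written in \thmref{thm:simple-*-alge}. The fact that the exchange type is simple only as a $*$-algebra, not as an algebra, is harmless, since $K\oplus K$ is likewise only a simple bar-algebra, and the matrix realization does not use simplicity of the coefficient algebra.
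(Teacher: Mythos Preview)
Your proposal is correct and matches the paper's own argument; indeed, the paper does not even write a separate proof, simply saying ``So we have:'' after the paragraph deriving \eqref{def:*}, since the corollary is the direct read-off from \thmref{thm:simple-*-alge} and \thmref{thm:Hurwitz} that you spell out. Your case check that each listed $D$ is invertible and satisfies $D=\bar{D}^t$ (in particular your remark that $\bar{I}_2=I_2$ for the adjugate in the symplectic case) is exactly the verification the paper leaves implicit.
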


In the cases where $C$ has orthogonal or unitary type we have the usual symmetric and Hermitian forms, respectively.  Suppose instead the $C=M_2(K)$ and that 
$d:V\times V\to C$ is the non-degenerate Hermitian $C$-from $d(u,v):=u\bar{v}^t$,
where  $V=C^n$.  There is a natural submodule $U$ of $V$ defined by:
\begin{align*}
	U:=
	\overbrace{
	\left\{\begin{bmatrix} * & * \\ 0 & 0 \end{bmatrix}\right\}\oplus\cdots\oplus
	\left\{\begin{bmatrix} * & * \\ 0 & 0 \end{bmatrix}\right\}}^n\leq V.
\end{align*}
Furthermore, $d(U,U)\cong K$; hence, the restriction $d_U:U\times U\to K$ is a bilinear form.  It is easily checked that $d_U$ is alternating and non-degenerate.  The case when $C$ has exchange type is not usually handled as a form but for a uniform treatment we find it convenient.  In particular we may state: 

\begin{defn}\label{def:non-singular}
Given a non-degenerate Hermitian $C$-form $d:V\times V\to C$, an element $x\in V$
is \emph{non-singular} if $d(x,x)\neq 0$ and $\dim Cx=\dim C$.
\end{defn}

\begin{prop}\label{prop:orthogonal}
Every non-degenerate Hermitian $C$-form $d:V\times V\to C$ has an orthogonal 
$C$-basis $\mathcal{X}$ (i.e.: $\mathcal{X}$ is a $C$-basis for $V$ and $d(x,y)=0$ if $x\neq y$, $x,y\in \mathcal{X}$).  Furthermore, every fully refined $\perp$-decomposition of $d$ determines an orthogonal basis and so every $\perp$-indecomposable has rank $1$.
\end{prop}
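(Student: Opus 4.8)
The plan is to isolate a single ``Gram--Schmidt'' statement that makes no freeness assumption and then read off both assertions of the proposition. Concretely, I would first prove: \emph{if $d:X\times X\to C$ is a non-degenerate Hermitian $C$-form on a finitely generated left $C$-module $X$ (not assumed free), then $X$ has an orthogonal $C$-basis; in particular $X$ is free.} The entire proposition follows from this.

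The heart of the matter is producing a non-singular vector (cf. \defref{def:non-singular}). Since $d(x,x)=\overline{d(x,x)}\in K$ for every $x$, the map $q(x):=d(x,x)$ is a $K$-valued quadratic form on $X$, and I claim $q\not\equiv 0$. If $q$ were identically zero, polarizing gives $T(d(x,y))=q(x+y)-q(x)-q(y)=0$ for all $x,y$; replacing $y$ by $sy$ (legal since $X$ is a $C$-module) and using $d(x,sy)=d(x,y)\bar s$ yields $T(d(x,y)\bar s)=0$ for all $s\in C$. The key algebraic input is that the $K$-bilinear form $(c,s)\mapsto T(c\bar s)$ on $C$ is non-degenerate in odd characteristic — this is the non-degeneracy of the norm of the composition algebra and is checked case-by-case in \thmref{thm:Hurwitz}. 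Hence $d(x,y)=0$ for all $x,y$, forcing $X\leq \rad d=0$; since $X\neq 0$ this is absurd. So some $x$ has $d(x,x)\neq 0$, and such an $x$ is non-singular: if $cx=0$ then $0=d(cx,x)=c\,d(x,x)$, and $d(x,x)\in K^\times$ is a central unit of $C$, whence $c=0$, so $\{c\in C:cx=0\}=0$ and $Cx\cong C$ is free of rank $1$.

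Given a non-singular $x$, the restriction of $d$ to $Cx$ is non-degenerate (as $d(cx,x)=c\,d(x,x)$ with $d(x,x)$ invertible), and the usual projection $v\mapsto v-\bigl(d(v,x)\,d(x,x)^{-1}\bigr)x$ lands in $(Cx)^\perp=\{w:d(x,w)=0\}$, giving $X=Cx\oplus (Cx)^\perp$ with $d(Cx,(Cx)^\perp)=0$. The complement $(Cx)^\perp$ is again a finitely generated module carrying a non-degenerate Hermitian $C$-form, now of smaller $K$-dimension, so induction on $\dim_K X$ produces an orthogonal $C$-basis and exhibits $X$ as free. Taking $X=V$ proves the first sentence of the proposition.

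For the second sentence, let $\mathcal{X}$ be a fully refined $\perp$-decomposition and fix $X\in\mathcal{X}$. Non-degeneracy gives $X\intersect X^\perp\leq\rad d=0$ (where $X^\perp=\langle\mathcal{X}-\{X\}\rangle$), so $d|_X$ is non-degenerate and $V=X\oplus X^\perp$. The crucial point is that the a priori merely $K$-linear factor $X$ is in fact a $C$-submodule: one first checks $X=\{v\in V: d(X^\perp,v)=0\}$ (for $\supseteq$, write $v=x_0+w$ with $x_0\in X$, $w\in X^\perp$, and note $w$ lies in the right-hand set and in $X^\perp$, hence in $\rad d=0$), and then for $x\in X$, $s\in C$ one has $d(X^\perp,sx)=d(X^\perp,x)\bar s=0$, so $sx\in X$. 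Thus $d|_X$ is a non-degenerate Hermitian $C$-form, and the main claim applies: if $X$ had rank $\geq 2$, its orthogonal basis would split $d|_X$ into a nontrivial $\perp$-decomposition, contradicting $\perp$-indecomposability. Hence every $\perp$-indecomposable factor has rank $1$, and the union of the (single-element) orthogonal bases of the members of $\mathcal{X}$ is an orthogonal $C$-basis of $V$. I expect the two genuinely nontrivial points to be the uniform non-degeneracy of $(c,s)\mapsto T(c\bar s)$ across the orthogonal, unitary, exchange, and symplectic cases (the latter two have zero divisors, so this is where care is needed), and the upgrade of a $\perp$-factor from a $K$-subspace to a $C$-submodule, for which the identity $X=\{v:d(X^\perp,v)=0\}$ is the essential device.
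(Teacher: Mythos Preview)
Your argument is correct, and the overall architecture (find a non-singular vector, split off the rank-one summand via the usual projection, induct) matches the paper. The genuine difference lies in how you produce a non-singular vector. The paper argues by contradiction using the bar-ideal structure of $C$: assuming $d(v,v)=0$ for all $v$, it shows $Cd(u,V)+d(V,u)C$ is a bar-ideal, invokes bar-simplicity of $C$, and then splits into cases (with the exchange case handled separately because $C$ is not simple as a ring). You instead polarize the vanishing quadratic form to get $T(d(x,y)\bar s)=0$ for all $s\in C$, and conclude via non-degeneracy of the trace pairing $(c,s)\mapsto T(c\bar s)$ on $C$. Your route is more uniform: once that single non-degeneracy fact is checked across the four composition algebras of \thmref{thm:Hurwitz} (it is the polarization of the norm form, and in odd characteristic it is indeed non-degenerate in all four cases), no further case analysis is needed.

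Your treatment of the second assertion is also more thorough than the paper's. You correctly observe that a $\perp$-factor $X$ is, a priori, only a $K$-subspace (cf.\ \defref{def:perp}), and you supply the missing step $X=\{v:d(X^\perp,v)=0\}$ to upgrade it to a $C$-submodule; in the exchange and symplectic cases $C$ has zero divisors, so this is not automatic. Your decision to prove the splitting lemma for finitely generated $C$-modules not assumed free, inducting on $\dim_K X$, then pays off: it applies directly to a $\perp$-indecomposable factor and simultaneously shows such a factor is free of rank $1$. The paper's proof, by contrast, establishes only the existence of an orthogonal basis and leaves the ``Furthermore'' clause implicit.
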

\begin{proof}
First we show that there is always a non-singular vector $x\in V$.  

Suppose otherwise: $d(x,x)=0$ for any $x\in V$ such that $\dim Cx=\dim C$.  Immediately,
$d(v,v)=0$ for all $v\in V$ and thus $-d(v,u)=d(u,v)=\overline{d(v,u)}$ for $u,v\in V$.  

For each $u\in V$, $Cd(u,V)+d(V,u)C$ is a bar-ideal of $C$.
As $C$ is a simple bar-algebra (\thmref{thm:Hurwitz}), $Cd(u,V)+d(V,u)C=0$ or $C$.
If $Cd(u,V)+d(V,u)C=0$ then $Cd(u,V)=0$ and $d(V,u)C=0$; hence, $u\in\rad d=0$.  Thus, $C=Cd(u,V)+d(V,u)C$ for all $u\in V-\{0\}$.  We split into two cases.

If $C=Cd(u,V)$ then $1=d(su,v)$ for some $s\in C$ and $v\in V$.  Then
$1=\bar{1}=\overline{d(su,v)}=-d(su,v)=-1$, so $\chr K=2$, which we exclude.  
Similarly, $d(V,u)C\neq C$. 

Now suppose $C\neq Cd(u,V), d(V,u)C$.  Then $Cd(u,V)$ is a proper ideal of $C$.
Form \thmref{thm:Hurwitz} this requires $C=K\oplus K$ with the exchange involution.
Without loss of generality, take $Cd(u,V)=K\oplus 0$.  Hence $(1,0)=sd(u,v)$ 
for some $s\in C$ and $v\in V$.  Thus,
$(1,1)=d(su,v)+\overline{d(su,v)}=d(su,v)-d(su,v)=0$, which is a lie.
Therefore, there exists a non-singular vector $x\in V$.  

As $0\neq d(x,x)=\overline{d(x,x)}$ it follows that $d(x,x)\in K^\times$.
Then 
$d\left(v-\frac{d(v,x)}{d(x,x)}x,x\right)
=d(v,x)-\frac{d(v,x)}{d(x,x)}d(x,x)=0$, for $v\in V$.
That is, $v-\frac{d(v,x)}{d(x,x)}x\in x^\perp$; hence, 
$v=\frac{d(v,x)}{d(x,x)}x+\left(v-\frac{d(v,x)}{d(x,x)}x\right)$ shows that 
$V=Cx+x^\perp$.  Since
$Cx\intersect x^\perp=0$ it follows that $V=Cx\oplus x^\perp$.  
Restrict $d$ to $x^\perp$ and induct to 
exhibit an orthogonal basis $\mathcal{X}$ for $d$ on $x^\perp$.  Thus
$\mathcal{X}\union\{x\}$ is an orthogonal basis of $d$ on $V$.
\end{proof}

Notice in the case of symplectic type, if $\{x_1,\dots, x_n\}$ is an orthogonal $C$-basis for $d$, then $V=Cx_1\perp\cdots\perp Cx_n$.  Translating to the associated alternating bilinear form $d'$, the orthogonal basis becomes a hyperbolic basis:
$U=H_1\perp\cdots\perp H_n$ where each $H_i$ is a hyperbolic line (cf. 
\cite[Definition 3.5]{Artin:geometry}).  In the case of exchange type, a natural orthogonal
basis is given by $\{(x,x):x\in \mathcal{X}\}$ where $\mathcal{X}$ is a $K$-basis of 
$U$ and $V=U\oplus U$, $U=K^n$.

\begin{coro}\label{coro:orthonormal}
If $C$ does not have orthogonal type then $d$ has an orthonormal $C$-basis (i.e.: a basis $\mathcal{X}$ where $d(x,y)=\delta_{xy}$, for all $x,y\in\mathcal{X}$).
In particular, $d$ is pseudo-isometric to the \emph{$C$-dot product} 
$d:C^n\times C^n\to C$ where $d(u,v):=u\bar{v}^t$, for all $u,v\in V$.
\end{coro}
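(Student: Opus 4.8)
The plan is to leverage Proposition~\ref{prop:orthogonal}, which already produces an orthogonal $C$-basis $\mathcal{X}=\{x_1,\dots,x_n\}$ with $d(x_i,x_j)=0$ for $i\neq j$ and $d(x_i,x_i)\in K^\times$. Writing $\lambda_i:=d(x_i,x_i)$, the goal is to rescale each basis vector so that the diagonal entries become $1$. If I can find, for each $i$, a unit $c_i\in C$ with $c_i\lambda_i\bar{c}_i=1$, then the vectors $y_i:=c_i x_i$ satisfy $d(y_i,y_i)=c_i d(x_i,x_i)\bar{c}_i=c_i\lambda_i\bar{c}_i=1$ (using the $C$-sesquilinearity from Definition~\ref{def:Hermitian}$(ii)$), while orthogonality is preserved since $d(y_i,y_j)=c_i d(x_i,x_j)\bar{c}_j=0$ for $i\neq j$. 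Thus $\{y_1,\dots,y_n\}$ is an orthonormal $C$-basis.

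The crux is therefore the norm-surjectivity statement: every $\lambda\in K^\times$ is of the form $c\lambda'\bar c$ — equivalently, every element of $K^\times$ lies in the norm group $N(C)=\{c\bar c:c\in C\}$ — provided $C$ is \emph{not} of orthogonal type. This is exactly what Theorem~\ref{thm:Hurwitz} supplies: it records that $N(C)=K$ whenever $C>K$, and $C>K$ precisely in the unitary, exchange, and symplectic cases (the orthogonal case being $C=K$ with trivial involution, where $N(K)=K^2$ and the argument breaks down because not every scalar is a square). So for the non-orthogonal types, given $\lambda_i\in K^\times$ I can solve $c_i\bar{c_i}=\lambda_i^{-1}$ for some unit $c_i\in C$, and then $c_i\lambda_i\bar{c_i}=\lambda_i(c_i\bar{c_i})=\lambda_i\cdot\lambda_i^{-1}=1$, using that $\lambda_i\in K$ is central and fixed by bar. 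I should check that $c_i$ is genuinely invertible in $C$: since $c_i\bar{c_i}=\lambda_i^{-1}\in K^\times$ is a unit, $c_i$ has a two-sided inverse, so $y_i=c_i x_i$ is again a free generator and $\{y_i\}$ remains a $C$-basis.

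Once the orthonormal basis $\{y_1,\dots,y_n\}$ is in hand, the identification with the $C$-dot product is immediate: the matrix $D$ of $d$ in this basis is $I_n$, so by the matrix discussion preceding Corollary~\ref{coro:*-simple} the form $d$ is isometric (hence pseudo-isometric) to $d_{I_n}(u,v)=uI_n\bar{v}^t=u\bar{v}^t$ on $C^n$, which is the asserted $C$-dot product. The main obstacle is really just invoking the correct half of Theorem~\ref{thm:Hurwitz} and confirming that the non-orthogonal hypothesis is exactly what forces $N(C)=K$; the rescaling computation itself is routine sesquilinear bookkeeping. One point warranting a line of care is the symplectic case, where $C=M_2(K)$ is noncommutative and $\lambda_i\in K$ sits in the center, so the manipulation $c_i\lambda_i\bar{c_i}=\lambda_i c_i\bar{c_i}$ is valid; I would note explicitly that $K$ is central in every composition algebra $C$ so the scalars commute past the units.
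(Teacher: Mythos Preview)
Your proposal is correct and follows essentially the same route as the paper: invoke \propref{prop:orthogonal} to get an orthogonal $C$-basis, use \thmref{thm:Hurwitz} to write each diagonal value $d(x,x)\in K^\times$ as a norm $s\bar{s}$ (valid precisely because $C>K$ in the non-orthogonal cases), and rescale each basis vector by the corresponding $s^{-1}$. Your extra remarks on invertibility of the scaling units and centrality of $K$ in the symplectic case are sound but not strictly needed, since the paper's computation $d(s^{-1}x,s^{-1}x)=s^{-1}(s\bar{s})\bar{s}^{-1}=1$ avoids commuting anything past $\lambda$ altogether.
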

\begin{proof}
From \thmref{thm:Hurwitz}, $N(C)=K$ whenever $C>K$.  Therefore if $v\in V$
such that $d(v,v)\neq 0$ then $d(v,v)=N(s)=s\bar{s}$ for some $s\in C^\times$.
Let $u=s^{-1} v$ so that $d(u,u)=s^{-1}d(v,v)\bar{s}^{-1}=s^{-1}N(s)\bar{s}^{-1}=1$.  By \propref{prop:orthogonal}, we have an orthogonal basis $\mathcal{X}$ for $d$.  Replace each $x\in\mathcal{X}$ with $s_x^{-1} x$ so that $d(s^{-1}_x x,s^{-1}_x x)=1$ and $\{s^{-1}x:x\in\mathcal{X}\}$ is still an orthogonal $C$-basis.
\end{proof}

\begin{prop}\label{prop:isom-type}
Let $d:V\times V\to C$ be a non-degenerate Hermitian $C$-form.  Then
$\Adj(d)=\End V\cong M_n(C)$ as an algebra, and the following hold:
\begin{description}
\item[Orthogonal case] $C=K$ and $\Isom(d)=\GO(d)$;
\item[Unitary case] $C=F$ and $\Isom(d)=\GU(d)$;
\item[Exchange case]  $C=K\oplus K$, $\Isom(d)\cong\GL(U)$, $V=U\oplus U$; and
\item[Symplectic case] $C=M_2(K)$ and $\Isom(d)\cong\Sp(U)$, $V=U\oplus U$.
\end{description}
\end{prop}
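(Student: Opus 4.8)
The plan is to establish the isomorphism $\Adj(d)\cong M_n(C)$ first, and then treat the isometry group case-by-case using the matrix description from \eqref{def:*}. By \corref{coro:*-simple} and the surrounding discussion, once we fix an orthogonal basis of $V$ the form $d$ is given by a Hermitian matrix $D=\bar D^t$ and $\Adj(d)$ is $*$-isomorphic to $M_n(C)$ with involution $F^*=D\bar F^t D^{-1}$. Since $C$ does not have orthogonal type precisely when we can normalize to an orthonormal basis (\corref{coro:orthonormal}), in the unitary, exchange, and symplectic cases I would take $D=I_n$, so the involution becomes simply $F^*=\bar F^t$. Observing that $V$ is a free $C$-module of rank $n$ and $\End V$ (as endomorphisms of the underlying additive group commuting with the $C$-action) is all of $M_n(C)$, together with \propref{prop:adjoint} giving $\Adj(d)\subseteq \End V$, the point is that \emph{every} $C$-linear endomorphism has an adjoint, forcing $\Adj(d)=\End V\cong M_n(C)$. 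This last equality is the technical crux: one must check that the formula $F\mapsto \bar F^t$ (or $D\bar F^t D^{-1}$) actually produces an adjoint for an arbitrary matrix, which is the computation displayed just before \eqref{def:*}.

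With the algebra identified, \propref{prop:groups}.$(ii)$ reduces everything to computing $\{F\in M_n(C):FF^*=1\}$, i.e.\ the $F$ with $F\bar F^t=I_n$ (after normalization). I would now go through the four types. In the orthogonal case $C=K$ with trivial bar, so the condition is $FF^t=I$, which is by definition $\GO(d)$; here we cannot normalize $D$ away, but $\Isom(d)=\GO(d)$ holds tautologically from \propref{prop:dot-isom-*} and the very definition of the general orthogonal group. In the unitary case $C=F$ with the field involution $x\mapsto\bar x$, the condition $F\bar F^t=I$ is exactly the defining relation for the general unitary group $\GU(d)$. These two cases are essentially definitional once the $*$-algebra is in hand.

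The remaining two cases require identifying $V=U\oplus U$ and unwinding the split structure of $C$. In the exchange case $C=K\oplus K$ with $\overline{(x,y)}=(y,x)$: using \remref{rem:exchange-symplectic}, a matrix over $K\oplus K$ is a pair $(X,Y)$ with $(X,Y)^*=(Y^t,X^t)$, so the isometry condition $(X,Y)(Y^t,X^t)=(XY^t,YX^t)=(I,I)$ forces $Y=(X^t)^{-1}$; thus an isometry is determined freely by $X\in\GL_n(K)$, giving $\Isom(d)\cong\GL(U)$ with $U=K^n$ the first summand. In the symplectic case $C=M_2(K)$, I would pass through \remref{rem:exchange-symplectic} to realize the $*$-algebra as $M_{2n}(K)$ with $X^*=JX^tJ^{-1}$ for the alternating $J$; the condition $XX^*=I$ becomes $XJX^t=J$, which defines $\Sp_{2n}(K)=\Sp(U)$ on $U=K^{2n}$. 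I expect \textbf{the main obstacle} to be purely bookkeeping rather than conceptual: one must keep the identifications $V=U\oplus U$ consistent with the submodule $U$ exhibited before \defref{def:non-singular}, and verify that the abstract $*$-isomorphisms of \remref{rem:exchange-symplectic} carry the norm-one condition $FF^*=1$ to the classical defining equations. Since these isomorphisms are already asserted in the paper, the work is checking compatibility of the involutions, which is a direct matrix computation.
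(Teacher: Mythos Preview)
Your proposal is correct and follows essentially the same route as the paper: the orthogonal and unitary cases are dismissed as definitional, and the exchange and symplectic cases are handled via the alternative descriptions in \remref{rem:exchange-symplectic} together with the norm-one characterization $\varphi\varphi^*=1$ from \propref{prop:groups}.$(ii)$. The paper is slightly terser---it does not re-argue $\Adj(d)=\End V$ in the proof itself (that is the content of the paragraph preceding \eqref{def:*}), and in the exchange case it uses the involution $(f\oplus g)^*=g\oplus f$ rather than $(X,Y)^*=(Y^t,X^t)$, but these differ only by a transpose in one factor and lead to the same computation.
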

\begin{proof}
The first two cases are by definition alone.
If $C=K\oplus K$ then $\Adj_C(d)\cong \End U\oplus \End U$ with 
$(f\oplus g)^*=g\oplus f$.  Hence, the isometry group is:
\begin{equation*}
\begin{split}
\Isom(d) & =\{f\oplus g\in \GL(U)\oplus \GL(U): (f\oplus g)(f\oplus g)^*=1\oplus 1\}\\
	& =\{f\oplus f^{-1}: f\in \GL(U)\}\cong \GL(U).
\end{split}
\end{equation*}
Finally, if $C=M_2(K)$ then $\Adj(d)\cong \Adj(d')$ where $d'$ is the non-degenerate alternating $K$-bilinear form on $U$, \remref{rem:exchange-symplectic}.  Therefore $\Isom(d)\cong\Isom(d')$ as both are the set of elements defined by $\varphi\varphi^*=1$ (\propref{prop:groups}.$(ii)$).  The latter group is by definition $\Sp(U)$.
\end{proof}

		%
		%
\subsection{Radical and semi-simple structure of $*$-algebras}

\begin{defn}
\begin{enumerate}[(i)]
\item A \emph{$*$-ideal} is an ideal $I$ of a $*$-algebra $A$ such that $I^*= I$.
\item $\spec_0 A$ is the set of all maximal $*$-ideals of $A$.
\item A \emph{$*$-simple} algebra is a $*$-algebra with exactly two $*$-ideals.  
\item A \emph{$*$-semi-simple} algebra is a direct product of simple $*$-algebras.
\item A $*$-ideal is \emph{nil} if it consists of nilpotent elements.
\end{enumerate}
\end{defn}

\begin{thm}[$*$-algebra structure theorem]\label{thm:*-structure}
Let $A$ be a $*$-algebra with Jacobson radical $\rad A$.  Then
\begin{enumerate}[(i)]
\item $\rad A$ is a nil $*$-ideal,
\item $A/\rad A$ is $*$-semi-simple, and
\item if $A$ is $*$-simple then $A\cong \Adj(d)$ for a non-degenerate Hermitian $C$-form $d$.
\end{enumerate}
\end{thm}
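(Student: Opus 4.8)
The plan is to reduce everything to classical Wedderburn--Artin theory, exploiting that $*$ is a bijective anti-automorphism and hence respects all intrinsic ideal-theoretic structure. The genuinely new content is in parts $(i)$ and $(ii)$; part $(iii)$ is already packaged in the earlier sections.

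For $(i)$ I would first recall that, $A$ being finite-dimensional, $\rad A$ is the unique largest nilpotent two-sided ideal and in particular is nil; what needs proof is only that $(\rad A)^* = \rad A$. Since $*$ reverses products, the image of a two-sided ideal under $*$ is again a two-sided ideal (for $a\in A$, $i\in I$ one has $a i^* = (i a^*)^*\in I^*$ because $ia^*\in I$), and the identity $((\rad A)^*)^n = ((\rad A)^n)^*$ shows nilpotency is preserved. Thus $(\rad A)^*$ is a nilpotent ideal, so maximality gives $(\rad A)^* \subseteq \rad A$, and applying $*$ once more forces equality. Hence $\rad A$ is a nil $*$-ideal.

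For $(ii)$, because $\rad A$ is now a $*$-ideal, $*$ descends to $\bar A := A/\rad A$, which is semisimple Artinian. I would write $\bar A = B_1 \oplus \cdots \oplus B_r$ as the sum of its minimal two-sided ideals (the Wedderburn blocks, each simple as an algebra) and observe that $*$, preserving the lattice of two-sided ideals bijectively, permutes this finite set of blocks. Partitioning into $*$-orbits: a fixed block $B_i$ has all its $*$-ideals among its ordinary ideals, hence only $0$ and $B_i$, so it is $*$-simple; a transposed pair $B_i^* = B_j$ ($i\neq j$) assembles into $C := B_i \oplus B_j$, whose $*$-ideals are ideals of the form $(I\cap B_i)\oplus(I\cap B_j)$ with $*$-symmetry forcing the two blockwise choices to coincide, leaving only $0$ and $C$; so $C$ is $*$-simple of exchange type. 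Grouping blocks by orbit therefore exhibits $\bar A$ as a direct product of $*$-simple algebras, i.e.\ $*$-semi-simple.

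For $(iii)$ there is essentially nothing new: if $A$ is $*$-simple it is one of the four types of \thmref{thm:simple-*-alge}, and \corref{coro:*-simple} already presents each of these as $\Adj(d)$ for a non-degenerate Hermitian $C$-form, so the claim follows at once. The step I expect to demand the most care is the orbit analysis in $(ii)$: verifying that $*$ actually permutes the Wedderburn blocks rather than merely preserving their direct sum, and that a transposed pair fuses into a single $*$-simple component rather than splitting. Everything else rests on the single, repeatedly used observation that a $*$-ideal is in particular an ideal, so the algebra-simplicity of the blocks tightly constrains which $*$-ideals can occur.
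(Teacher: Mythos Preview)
Your proposal is correct and follows essentially the same route as the paper's proof: for $(i)$ you show $(\rad A)^*\subseteq\rad A$ via a characterization of the radical (you use ``largest nilpotent ideal'' where the paper uses the quasi-regular description, an immaterial difference in finite dimensions); for $(ii)$ both of you pass to the Wedderburn decomposition of $A/\rad A$ and group the simple blocks into $*$-orbits of size one or two to obtain the $*$-simple factors; and for $(iii)$ both of you defer to the classification already assembled in Section~\ref{sec:matrix} and \corref{coro:*-simple}. Your write-up is more detailed than the paper's terse argument, but the underlying ideas coincide.
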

\begin{proof}
$(i)$ Since $*$ is an anti-automorphism of $A$, every left quasi-regular element is mapped
to a right quasi-regular element.  Thus $(\rad A)^*\subseteq \rad A$.  Since $A$ is
finite dimensional, the Jacobson radical is nilpotent.

$(ii)$ We induce $*$ on $A/\rad A$, so that $A/\rad A$ is a $*$-algebra
which is product of uniquely determined minimal ideals. If $I$ is a minimal ideal of 
$A/\rad A$ then either $I^*=I$ or $I\intersect I^*=0$ so that 
$\langle I,I^*\rangle=I\oplus I^*$ is a minimal $*$-closed ideal.  Thus 
$A/\rad A$ is a product of simple $*$-algebras.

For $(iii)$ see Section \ref{sec:matrix}.
\end{proof}

		%
		%
\subsection{Isometry groups are unipotent-by-classical}

We describe the structure of the isometry group of a Hermitian bilinear map.  
To do this we invoke the following generalization 
of the Wedderburn Principal Theorem for finite dimensional $*$-algebras over fields not of characteristic 2 (cf. \cite{Lewis:*-survey}).
\begin{thm}\cite[Theorem 1]{Taft:Wedderburn}\label{thm:Taft}
Given a finite dimensional $*$-algebra $A$ over a separable field $k$, there is
a subalagebra $B$ of $A$ such that $B^*=B$, $A=B\oplus \rad A$ as a $k$-vector space,
and $B\cong A/\rad A$.
\end{thm}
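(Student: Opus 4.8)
The plan is to combine the classical (non-involutory) Wedderburn Principal Theorem with an averaging argument over the two-element group $\langle *\rangle$, exploiting that $\operatorname{char} k\neq 2$. Write $R:=\rad A$. By \thmref{thm:*-structure} we already know $R^*=R$ and that $S:=A/R$ is $*$-semi-simple, so $*$ descends to an involution $\bar *$ on $S$ making the quotient map $q\colon A\to S$ a $*$-homomorphism. A subalgebra $B$ with $B^*=B$ and $A=B\oplus R$ is exactly the image of an algebra-section $\sigma\colon S\to A$ of $q$ satisfying $\sigma(s)^*=\sigma(s^{\bar *})$ for all $s\in S$: applying $q$ to $\sigma(s)^*$ forces this identity, and conversely it makes $\sigma(S)$ closed under $*$. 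So the whole problem is to produce such a $*$-equivariant section.

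First I would invoke the ordinary Wedderburn Principal Theorem: since $k$ is separable, $S$ is a separable $k$-algebra and $q$ admits an algebra-section $\sigma_0$, giving a (generally non-$*$) complement $B_0$. Then $\sigma_0'(s):=\sigma_0(s^{\bar *})^*$ is again an algebra-section of $q$ --- multiplicativity follows because both $*$ and $\bar *$ reverse products, and it is a section because $q$ is a $*$-map --- whose image is exactly $B_0^*$. Thus $B_0$ and $B_0^*$ are two Wedderburn complements, and by Malcev's conjugacy theorem they are conjugate by a unit $1+r$ with $r\in R$. In other words $*$ acts as an involution on the set of complements, intertwining the conjugation action of the unipotent group $U:=1+R$ through the involutive automorphism $g\mapsto (g^*)^{-1}$.

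A $*$-invariant complement is precisely a fixed point of this involution, which unwinds to a twisted equation in $U$; its solvability is the vanishing of the nonabelian $H^1(\langle *\rangle,U)$. I would establish this by dévissage along the radical filtration $R\supseteq R^2\supseteq\cdots\supseteq R^{m+1}=0$: each graded quotient $R^i/R^{i+1}$ is a $k$-vector space, on which the order-two group $\langle *\rangle$ acts, and since $2$ is invertible the corresponding cohomology vanishes layer by layer. Concretely this means correcting $\sigma_0$ one filtration step at a time, at each stage choosing a correction that is $\langle *\rangle$-symmetric (possible because one may average $\tfrac12(x+x^{*})$) so as to restore $*$-equivariance modulo the next power of $R$ without spoiling the homomorphism property.

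The main obstacle is that these two requirements must be met simultaneously at every layer: the correction term has to be a Hochschild coboundary (to keep $\sigma$ an algebra map, which is exactly where separability of $S$ enters) and at the same time $*$-symmetric (to enforce $B^*=B$). The reconciliation works because over a separable semi-simple $S$ both conditions are linear-algebra/averaging conditions and both relevant orders --- $|\langle *\rangle|=2$ and the obstruction governing separability --- are invertible in $k$. The genuinely $*$-theoretic subtlety, responsible for the twist $g\mapsto(g^*)^{-1}$ and the appearance of $s^{\bar *}$, is that $*$ is an anti-automorphism rather than an automorphism; and I would deliberately argue by the filtration induction rather than a single exponential conjugation, since in characteristic $p$ the nilpotency index of $R$ may exceed $p$ and no $\exp/\log$ is available.
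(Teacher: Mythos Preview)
The paper does not supply a proof of this statement: \thmref{thm:Taft} is quoted verbatim from Taft's paper \cite[Theorem 1]{Taft:Wedderburn} and is used as a black box (specifically, only in the proof of \thmref{thm:isom-classical}). So there is no ``paper's own proof'' to compare against.

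That said, your sketch is essentially the strategy of Taft's original argument. Taft proves more generally that if $G$ is a finite group of automorphisms and anti-automorphisms of $A$ with $|G|$ invertible in $k$, then a $G$-invariant Wedderburn complement exists; your case is $G=\langle *\rangle$ with $|G|=2$, which is why you need $\operatorname{char} k\neq 2$ (consistent with the paper's standing hypothesis of odd characteristic). The ingredients you identify --- the ordinary Wedderburn Principal Theorem to produce one complement, Malcev conjugacy to compare $B_0$ and $B_0^*$, and a filtration/averaging argument through the powers of $R$ to locate a $*$-stable complement --- are exactly the right ones. The one place to be careful in a full write-up is the step where you simultaneously enforce the Hochschild-coboundary condition and $*$-symmetry on the correction term: the averaging $\tfrac12(x+x^*)$ does preserve the coboundary property, but this uses that $*$ intertwines the two $S$-bimodule structures on $R^i/R^{i+1}$ (left and right get swapped), so the space of inner derivations is itself $*$-stable. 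With that observation the layer-by-layer induction goes through.
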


Recall that the $p$-\emph{core} of a finite group $G$, denoted $O_p(G)$, 
is the largest normal $p$-subgroup of $G$.
\begin{thm}\label{thm:isom-classical}
If  $\Adj(b)/\rad \Adj(b)\cong \Adj(d_1)\oplus\cdots\oplus \Adj(d_s)$
where $d_i$ is a non-degenerate Hermitian $C_i$-form, for some
associative composition algebra $C_i$, for each $1\leq i\leq s$, then 
\begin{equation*}
\Isom(b)\cong (\Isom(d_1)\times\cdots\times \Isom(d_s))\ltimes
	O_p(\Isom(b)),
\end{equation*}
where $p$ is the characteristic of $\Adj(b)$.
\end{thm}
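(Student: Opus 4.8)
The theorem splits the isometry group as a semidirect product of a reductive/classical part (the product of the $\Isom(d_i)$) and a normal unipotent $p$-subgroup.  The natural engine is the Wedderburn Principal Theorem for $*$-algebras, \thmref{thm:Taft}, which furnishes a $*$-subalgebra $B\cong \Adj(b)/\rad\Adj(b)$ with $\Adj(b)=B\oplus\rad\Adj(b)$ as $k$-spaces.  By \propref{prop:groups}.$(ii)$ we have $\Isom(b)=\{\varphi\in\Adj(b):\varphi\varphi^*=1\}$, so the entire problem lives inside the group of norm-one units of the $*$-algebra $A:=\Adj(b)$.  First I would set $J:=\rad A$ and observe that $J$ is a nilpotent $*$-ideal (\thmref{thm:*-structure}.$(i)$), so $1+J$ is a normal $p$-subgroup of the unit group $A^\times$: it is a subgroup because $J$ is nilpotent, it is a $p$-group because $(1+x)^{p^m}=1+x^{p^m}$ for $x$ in the nil ideal $J$ of characteristic $p$, and it is normalized by every unit since $J$ is an ideal.

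\textbf{The two subgroups.}  Next I would identify the unipotent radical as $N:=\Isom(b)\cap(1+J)$ and show $N=O_p(\Isom(b))$.  That $N$ is a normal $p$-subgroup of $\Isom(b)$ is immediate from the previous paragraph.  For the reverse containment $O_p(\Isom(b))\leq N$ one argues via the reduction map $\pi:A\to A/J$: the image of $\Isom(b)$ under $\pi$ consists of norm-one units of the $*$-semisimple algebra $A/J\cong\bigoplus_i \Adj(d_i)$, and by \propref{prop:groups}.$(ii)$ applied factorwise this image is exactly $\prod_i\Isom(d_i)$, whose order is prime to $p$ (each is a finite classical group over a field of characteristic $p$, so its order is a $p'$-multiple of a power of $p$ coming only from the unipotent part — here I must be careful, since classical groups do have order divisible by $p$).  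The clean statement is that $\ker(\pi|_{\Isom(b)})=N$, and since any normal $p$-subgroup of $\Isom(b)$ must map into a normal $p$-subgroup of the product of classical groups, I use that the $\Isom(d_i)$ each have trivial $p$-core (they are the full classical groups $\GO,\GU,\GL,\Sp$, which are not themselves $p$-groups and have $O_p=1$) to conclude $\pi(O_p(\Isom(b)))=1$, i.e. $O_p(\Isom(b))\leq N$.

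\textbf{The splitting.}  The heart is producing a complement to $N$ in $\Isom(b)$ isomorphic to $\prod_i\Isom(d_i)$.  Using the Taft subalgebra $B$, which is itself a $*$-algebra isomorphic to $A/J$, I would set $L:=\{\varphi\in B:\varphi\varphi^*=1\}$, the norm-one units of $B$.  By \propref{prop:groups}.$(ii)$ and \propref{prop:isom-type}, $L\cong\prod_i\Isom(d_i)$.  Because $\pi|_B:B\to A/J$ is a $*$-isomorphism, $\pi$ carries $L$ isomorphically onto $\prod_i\Isom(d_i)=\pi(\Isom(b))$, so $L\leq\Isom(b)$, the composite $L\hookrightarrow\Isom(b)\xrightarrow{\pi}\prod_i\Isom(d_i)$ is an isomorphism, and $L\cap N=L\cap\ker\pi=1$.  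Since $\pi$ maps $\Isom(b)$ onto $\prod_i\Isom(d_i)$ with kernel $N$ and $L$ is a complement to the kernel, $\Isom(b)=L\ltimes N$, which is the asserted decomposition.

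\textbf{The main obstacle.}  I expect the delicate point to be the surjectivity and compatibility of $\pi$ on isometries: one must check that a norm-one unit of $A/J$ lifts to a norm-one unit of $A$ (not merely to a unit), and dually that the Taft complement $B$ can be chosen so that its norm-one units really do surject onto those of $A/J$.  The Taft theorem gives a $*$-subalgebra mapping isomorphically to $A/J$, which resolves the lifting of \emph{semisimple} norm-one units cleanly via the $*$-isomorphism $\pi|_B$; the subtlety is ensuring that every element of $\Isom(b)$ factors as (element of $L$)$\cdot$(element of $N$), i.e. that the norm-one condition $\varphi\varphi^*=1$ descends and lifts correctly through the splitting $A=B\oplus J$.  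This amounts to verifying that $\varphi\in\Isom(b)$ has Wedderburn component in $B$ again norm-one and that the discrepancy lies in $1+J$, which follows once one knows $\pi$ restricts to a surjection $\Isom(b)\twoheadrightarrow L$ with the prescribed kernel.
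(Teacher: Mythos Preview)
Your proposal is correct and follows essentially the same route as the paper: invoke Taft's theorem to obtain a $*$-closed Wedderburn complement $B$, take $L$ (the paper's $G$) to be the norm-one units of $B$ and $N$ to be the norm-one units lying in $1+\rad A$, and verify $\Isom(b)=L\ltimes N$ with $N=O_p(\Isom(b))$ via $O_p(\prod_i\Isom(d_i))=1$. The ``main obstacle'' you flag---surjectivity of $\pi$ on norm-one units---is resolved exactly as you indicate and as the paper does: since $L\subseteq B\subseteq \Isom(b)$ and $\pi|_B$ is a $*$-isomorphism onto $A/\rad A$, the composite $L\to\Isom(b)\to\prod_i\Isom(d_i)$ is already bijective, so no separate lifting argument is needed.
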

\begin{proof}
Let $A:=\Adj(b)$.  By \thmref{thm:Taft} we have $A=B\oplus \rad A$ where the
projection map $\pi:A\to B$ is a surjective $*$-homomorphism with kernel $\rad A$.
Now set $G=\{\varphi\in B:\varphi\varphi^*=1\}$ and $N=\{\varphi\in A:
\varphi\varphi^*=1, \varphi-1\in\rad A\}$.  If $\varphi=1+z,\tau=1+z'\in N$, $z,z'\in\rad A$, then $\varphi\tau-1=z+z'+zz'\in \rad A$ so that $\varphi\tau\in N$.  Hence, $G$ and $N$ are subgroups of $\Isom(b)$ and $G\intersect N=1$.  As $\pi$ is a $*$-homomorphism, $(\varphi\pi)(\varphi\pi)^*=(\varphi\varphi^*)\pi=1$ for all $\varphi\in \Isom(b)\subset A$ (\propref{prop:groups}.$(ii)$).   Hence, $\Isom(b)\pi=G$.  Finally, the kernel of $\pi$ restricted to $\Isom(b)$ is $N$.  Thus $\Isom(b)=G\ltimes N$.  Since $B\cong A/\rad A\cong \Adj(d_1)\oplus\cdots\oplus\Adj(d_s)$ it follows that $G\cong \Isom(d_1)\times\cdots\times \Isom(d_s)$ (\propref{prop:groups}.$(ii)$).  By \propref{prop:isom-type}, $O_p(G)=1$.  Thus, $O_p(\Isom(b))=N$.
\end{proof}

		%
		%
\subsection{The Jordan algebra $\Sym(b)$ of self-adjoint operators}
		\label{sec:Jordan}
		
At last we introduce the Jordan algebras associated to our bilinear maps (and 
thus to our $p$-groups as well).

\begin{defn}
For a $k$-bilinear map $b:V\times V\to W$, define
	\[\Sym(b) := \{f\in \End V: b(uf,v)=b(u,vf), \forall u,v\in V\}\]
\end{defn}
(The notation $\Sym(b)$ has no relationship to symmetric groups.)
This is an instance of a broader class of objects (see \thmref{thm:jordan}):
\begin{defn}\label{def:H(A,*)}
Given a $*$-algebra $A$, the \emph{special Hermitian Jordan algebra} of $A$ is
the set
	\[\mathfrak{H}(A,*)=\{a\in A: a=a^*\}\] 
equipped with the special Jordan 
product $x\bullet y=\frac{1}{2}(xy+yx)$ \cite[pp. 12-13]{Jacobson:Jordan}.  
\end{defn}
Special Hermitian Jordan algebras are part of the family of unital Jordan algebras, which are algebras $J$ with a binary product $\bullet$ such that:
\begin{enumerate}[(i)]
\item\label{ax:1} $x\bullet y=y\bullet x$,
\item\label{ax:2} $x^{\bullet 2}\bullet (y\bullet x)=(x^{\bullet 2}\bullet y)\bullet x$ 
where $x^{\bullet 2}=x\bullet x$, and
\item\label{ax:3} $x\bullet 1=1\bullet x=x$
\end{enumerate}
for all $x,y\in J$ \cite[Definition I.2]{Jacobson:Jordan}.  \emph{Unless stated otherwise,
our use of Jordan algebras is restricted to finite special Hermitian Jordan algebras.}  As we deal only with odd characteristic, the definitions we provide for ideals, powers, and related properties are in terms of the classical $x\bullet y$ product rather than the quadratic Jordan definitions.  This said, we still have many uses for the \emph{quadratic
Jordan product} which in a special Hermitian Jordan algebra $J:=\mathfrak{H}(A,*)$ is
simply:
\begin{equation}\label{eq:quadratic-prod}
	yU_x := xyx \in J,\qquad x,y\in J.
\end{equation}
Evidently the Jordan product $\bullet$ need not be associative.  However, 
we always have $x^i\bullet x^j=\frac{1}{2}(x^{i+j}+x^{j+i})=x^{i+j}$, 
$i,j\in\mathbb{N}$ (cf. \cite[p. 5]{Jacobson:Jordan}).  As $J=\mathfrak{H}(A,*)$ and $1^*=1$, the identity of $J$ is the identity of $A$.  Furthermore, if $x\in J$ is invertible in $A$ then $(x^{-1})^*=(x^*)^{-1}=x^{-1}$ proving that $x^{-1}\in J$.  Hence we omit the $\bullet$ notation in the exponents of our Jordan algebra products. 

From our discussion thus far we have:
\begin{thm}\label{thm:jordan}
For every non-degenerate Hermitian bilinear map $b$,
$\Sym(b)$ is the special Hermitian Jordan algebra $\mathfrak{H}(\Adj(b))$.
Furthermore, $\Isom^*(b)$ acts on $\Sym(b)$ as in \propref{prop:groups}.
\end{thm}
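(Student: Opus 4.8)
The plan is to establish the two assertions of \thmref{thm:jordan} in turn, both of which should reduce to routine unwinding of the definitions already assembled in this section. First I would verify that $\Sym(b)=\mathfrak{H}(\Adj(b))$ as sets. The inclusion $\mathfrak{H}(\Adj(b))\subseteq \Sym(b)$ is immediate: if $f\in\Adj(b)$ and $f=f^*$, then $b(uf,v)=b(u,vf^*)=b(u,vf)$ for all $u,v\in V$, which is exactly the defining condition of $\Sym(b)$. For the reverse inclusion, suppose $f\in\Sym(b)$, so $b(uf,v)=b(u,vf)$ for all $u,v$. This equation says precisely that $f$ serves as its own adjoint for $b$, so $f\in\Adj(b)$ with $f^*=f$; here I would invoke \propref{prop:adjoint}, which guarantees that $\Adj(b)$ is a $*$-algebra with \emph{unique} adjoints, so the candidate adjoint is forced to equal $f$ and hence $f\in\mathfrak{H}(\Adj(b))$.

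Next I would confirm that this set is the \emph{special Hermitian Jordan algebra} of $\Adj(b)$ in the sense of \defref{def:H(A,*)}, not merely the underlying set of self-adjoint elements. Since \propref{prop:adjoint} shows $\Adj(b)$ is an associative unital $*$-algebra, \defref{def:H(A,*)} attaches to it the product $x\bullet y=\tfrac12(xy+yx)$, and $\mathfrak{H}(\Adj(b),*)$ is closed under this product because $(x\bullet y)^*=\tfrac12(y^*x^*+x^*y^*)=\tfrac12(yx+xy)=x\bullet y$ whenever $x=x^*$ and $y=y^*$ (this uses $p$ odd so that $\tfrac12$ makes sense, which is standing hypothesis). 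That the resulting structure satisfies the Jordan axioms \eqref{ax:1}--\eqref{ax:3} is the general fact cited from \cite{Jacobson:Jordan}, so I would simply appeal to it rather than reprove commutativity and the Jordan identity.

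Finally I would address the action of $\Isom^*(b)$ on $\Sym(b)$. \propref{prop:groups}.$(i)$ already produces, for each pseudo-isometry $(\alpha,\beta)$ from $b$ to itself, a $*$-automorphism $f\mapsto f^{(\alpha,\beta)}=\alpha^{-1}f\alpha$ of $\Adj(b)$; in particular $\Isom^*(b)$ acts on $\Adj(b)$ by $*$-automorphisms. Since a $*$-automorphism preserves the involution, it carries self-adjoint elements to self-adjoint elements, hence restricts to an automorphism of $\mathfrak{H}(\Adj(b),*)=\Sym(b)$; and because it is multiplicative it respects the Jordan product $\bullet$. This gives the asserted action.

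The whole statement is essentially a repackaging of results already in place, so I do not anticipate a genuine obstacle. The one point requiring mild care is the reverse set-theoretic inclusion $\Sym(b)\subseteq\mathfrak{H}(\Adj(b))$: one must recognize that membership in $\Sym(b)$ does not merely assert \emph{some} adjoint relation but forces $f$ itself to be the adjoint, and then lean on the \emph{uniqueness} of adjoints from \propref{prop:adjoint} to conclude $f^*=f$. That, together with remembering that the standing assumption of odd $p$ is what lets the factor $\tfrac12$ in the Jordan product be defined, is the only substantive checkpoint.
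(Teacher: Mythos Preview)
Your proposal is correct and follows exactly the approach the paper takes: the paper's proof consists of the single sentence ``This follows directly from the definitions,'' and you have simply spelled out what those definitions yield. Your care with the reverse inclusion $\Sym(b)\subseteq\mathfrak{H}(\Adj(b))$ via uniqueness of adjoints (\propref{prop:adjoint}) is the only nontrivial verification, and it is handled properly.
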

\begin{proof}
This follows directly from the definitions.
\end{proof}

\begin{defn}\cite[4.1-4.2]{Jacobson:Ark}
Let $J$ be a Jordan algebra.
\begin{enumerate}[(i)]
\item A subspace $I$ of $J$ is an \emph{ideal} if $I\bullet J\subseteq I$.  
Then, in the usual way, $J/I$ becomes a Jordan algebra.
\item A \emph{nil} ideal is an ideal that consists of nilpotent elements.
\item A subspace $I$ is an \emph{inner} ideal if $JU_I=\{aU_b:a\in J, b\in I\}\subseteq I$.  
\item The \emph{radical}, denoted $\rad J$, is the intersection 
of all maximal inner ideals \cite[4.4.10]{Jacobson:Ark}.
\item $J$ is \emph{simple} if it has exactly two ideals, and \emph{semi-simple} 
if it is a direct product of simple Jordan algebras.
\end{enumerate}
\end{defn}

In Jordan algebras, the inner ideals often play the r\^{o}le that left/right ideals play
for associative algebras.  Every ideal of a Jordan algebra is also an inner ideal.  As $J=\mathfrak{H}(A,*)$ (cf. \defref{def:H(A,*)}) each ideal $I$ of $A$ determines an ideal $I\intersect J$ of $J$.  Likewise, if $I$ is a left or right ideal of $A$ then $I\intersect J$ is an inner ideal.    For further details see \cite[4.1-4.2]{Jacobson:Ark}.

We can account for all the special simple Hermitian Jordan algebras (also called
\emph{special Jordan matrix algebras}) in much the same way
as we have describe the simple $*$-algebras.
\begin{defn}\cite[III.2]{Jacobson:Jordan}
Let $C$ be a finite associative composition algebra over a field $K$ and 
$D=\Diag[\omega_1,\dots, \omega_n]$ a matrix in $M_n(C)$ with
entries in $K^\times$.  Then the \emph{special Jordan matrix algebra} with respect 
to $D$ is
	\[\mathfrak{H}(D)=\{X\in M_n(C): X=D\bar{X}^tD^{-1}\}\]
whose product is $X\bullet Y=\frac{1}{2}(XY+YX)$ and where $XU_Y=YXY$ for
$X,Y\in\mathfrak{H}(D)$.
\end{defn}

Following Section \ref{sec:matrix} we know $d(u,v):=uD\bar{v}^t$, $u,v\in C^n$, 
determines a non-degenerate Hermitian $C$-form and
\begin{equation}
	\mathfrak{H}(D)=\mathfrak{H}(\Adj(d))=\Sym(d).
\end{equation}
By \cite[p.178-179]{Jacobson:Jordan}, $\mathfrak{H}(D)$ is a special simple Hermitian Jordan algebra (though typically the case of $C=K\oplus K$ is not specified in this manner).

\begin{thm}[Hermitian Jordan algebra structure theorem]\label{thm:herm-structure}
Let $A$ be a finite $*$-algebra with Jacobson radical $\rad A$, and let 
$J=\mathfrak{H}(A,*)$.
\begin{enumerate}[(i)]
\item $\rad J=J\intersect\rad A$ and is a nil ideal of $J$,
\item $J/\rad J$ is a semi-simple Jordan algebra,
\item every special simple Hermitian Jordan algebra is isomorphic to $\Sym(d)$ 
for some non-degenerate Hermitian $C$-form $d$.
\item for every $I\in\spec_0 A$, $J\intersect I$ is a maximal ideal of $J$.
\end{enumerate}
\end{thm}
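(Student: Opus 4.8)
The plan is to establish part (ii) first, deduce part (i) from it, and then dispatch parts (iii) and (iv) by the same quotient technique. Write $J=\mathfrak{H}(A,*)$ throughout, and record at the outset the two facts on which everything rests. First, since $\chr k\neq 2$, for any $*$-ideal $I$ of $A$ the projection $A\to A/I$ carries $J$ onto $\mathfrak{H}(A/I,*)$ with kernel $J\intersect I$: surjectivity holds because any $\bar{a}=\bar{a}^*$ lifts to the self-adjoint element $\frac{1}{2}(a+a^*)\in J$, which maps to $\bar{a}$. Second, by the Jordan radical theory of \cite{Jacobson:Ark}, in finite dimension $\rad J$ is the largest nil ideal of $J$; hence it contains every nil ideal, and any surjective Jordan homomorphism carries $\rad J$ into the radical of the image.

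For part (ii) I would start from \thmref{thm:*-structure}, which expresses $A/\rad A$ as a product of simple $*$-algebras $A_1\times\cdots\times A_r$. Self-adjoint elements of a product are tuples of self-adjoint elements, so $\mathfrak{H}(A/\rad A,*)\cong\mathfrak{H}(A_1,*)\times\cdots\times\mathfrak{H}(A_r,*)$, and it suffices to see each factor is a simple Jordan algebra. By \thmref{thm:*-structure}.$(iii)$ each $A_i\cong\Adj(d_i)\cong M_{n_i}(C_i)$, so $\mathfrak{H}(A_i,*)=\mathfrak{H}(D_i)=\Sym(d_i)$, and the classification of \cite[p.~178-179]{Jacobson:Jordan} shows each of the four types is a simple Jordan algebra. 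Hence $\mathfrak{H}(A/\rad A,*)$ is semi-simple. Part (iii) is then essentially bookkeeping: a special simple Hermitian Jordan algebra is by definition some $\mathfrak{H}(B,*)$, the same classification identifies it with a special Jordan matrix algebra $\mathfrak{H}(D)$, and this is exactly $\Sym(d)$ for the non-degenerate Hermitian $C$-form $d(u,v)=uD\bar{v}^t$.

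For part (i), first note that $J\intersect\rad A$ is an ideal of $J$ (the intersection of an ideal of $A$ with $J$) consisting of nilpotent elements, since $\rad A$ is nil by \thmref{thm:*-structure}.$(i)$; by the radical characterization above, $J\intersect\rad A\subseteq\rad J$. For the reverse inclusion I would apply the quotient map with $I=\rad A$, identifying $J/(J\intersect\rad A)$ with $\mathfrak{H}(A/\rad A,*)$, which has zero radical by part (ii). Since the projection sends $\rad J$ into the radical of the image, $\rad J$ maps to $0$, giving $\rad J\subseteq J\intersect\rad A$. Thus $\rad J=J\intersect\rad A$, a nil ideal.

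Part (iv) follows the same pattern: for $I\in\spec_0 A$ the quotient $A/I$ is $*$-simple, so the projection identifies $J/(J\intersect I)$ with $\mathfrak{H}(A/I,*)$, which by \thmref{thm:*-structure}.$(iii)$ is $\Sym(d)=\mathfrak{H}(D)$, a simple Jordan algebra by part (iii); a simple quotient forces $J\intersect I$ to be a maximal ideal of $J$. The main obstacle is not any single computation but the imported structural input: the finite-dimensional identification of $\rad J$ with the maximal nil ideal, so that it transports correctly across the projections, together with the simplicity of the four special Jordan matrix algebras. The subtlest of these is the exchange case, where the $*$-simplicity of $M_n(K\oplus K)$ does not come from algebra-simplicity and $\mathfrak{H}$ must instead be recognized as a full matrix Jordan algebra via \remref{rem:exchange-symplectic}; both points are supplied by \cite{Jacobson:Ark,Jacobson:Jordan}.
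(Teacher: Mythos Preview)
Your argument is correct and follows essentially the same route as the paper: both identify $J/(J\intersect\rad A)$ with $\mathfrak{H}(A/\rad A,*)$, invoke the classification in \cite[pp.~178--179]{Jacobson:Jordan} to see the factors are simple Jordan matrix algebras, and then deduce $\rad J=J\intersect\rad A$ from the semi-simplicity of that quotient together with the nil characterization of the Jordan radical. The paper orders the parts as $(iii)$, $(ii)$, $(i)$, $(iv)$ and is terser (citing the First Structure Theorem rather than spelling out the radical-transport step), while you make explicit the surjectivity via $\tfrac{1}{2}(a+a^*)$ and the fact that $\rad J$ is the largest nil ideal---but these are presentational differences, not substantive ones.
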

\begin{proof}
$(iii)$. This follows from \cite[pp.178-179, Second Structure Theorem]{Jacobson:Jordan}.  

$(ii)$. This follows from $(iii)$ and \thmref{thm:*-structure}.$(ii)$, 
$J/(J\intersect \rad A)=\mathfrak{H}(A/\rad A,*)$ is semi-simple. 

$(i)$. By \cite[p.161, First Structure Theorem]{Jacobson:Jordan} (interpreted in radical vocabulary in \cite[4.2.7,4.2.15]{Jacobson:Ark}),  $\rad (J/\rad J)=0$ and also $\rad J=0$ if, and only if, $J$ is semi-simple.  Thus, by $(iii)$, it follows that $J\intersect\rad A=\rad J$.  By \thmref{thm:*-structure}.$(i)$, $\rad A$ is a nil ideal, and so $\rad J=J\intersect \rad A$ is also a nil ideal.

$(iv)$.  This is immediate from $(iii)$ and \thmref{thm:*-structure}.$(iii)$.
\end{proof}

		%
		%
\subsection{Decompositions, idempotents, and frames: $\mathcal{E}(\mathcal{X})$}
		\label{sec:frames}
	
We show how idempotents of $\Sym(b)$ parameterize $\perp$-decompositions of 
a Hermitian $k$-bilinear map $b:V\times V\to W$.  We start with the elementary
\begin{lemma}\label{lem:im-ker}
If $f\in\Sym(b)$ then $b(\im f,\ker f)=0$.
\end{lemma}
\begin{proof}
Let $u\in V$ and $v\in \ker f$.  Then $b(uf,v)=b(u,vf)=0$.
\end{proof}

By standard linear algebra, an idempotent $e$ in $\End V$ decomposes $V$ as  
$\im e \oplus \ker e$.  In light of \lemref{lem:im-ker}, if $e\in \Sym(b)$ 
then $b(\im e,\ker e)=0$, so we arrive at a $\perp$-decomposition $\{\ker f,\im f\}$. 

\begin{defn}\cite[pp.117-118]{Jacobson:Jordan}\label{def:idem-jord}
Let $J$ be a Jordan algebra.
\begin{enumerate}[(i)]
\item An \emph{idempotent} is an element $e$ in $J$ such that $e^2=e$.
It is \emph{proper} if it is neither $0$ nor $1$.
\item The Peirce-1-space of an idempotent $e$ is the subspace $JU_{e}$.
 The Peirce-0-space is $JU_{1-e}$.  These are Jordan algebras (in fact
 inner ideals) with identity $e$ and $1-e$,
 respectively (cf. \propref{prop:perp-idemp}).
\end{enumerate}
\end{defn}

\begin{prop}\label{prop:perp-idemp}
Let $e\in \End V$ with $e^2=e$, $E:=Ve$ and $F:=V(1-e)$.
\begin{enumerate}[(i)]
\item\label{id:1} $e\in \Sym(b)$ if, and only if, $b(E,F)=0$.
\item\label{id:2} If $e\in \Sym(b)$ then $\Sym(b)U_e$ is isomorphic as a Jordan
algebra to $\Sym(b_E)$ via the restriction of $f\in\Sym(b)U_e$ to $(fU_e)|_E:E\to E$.
\end{enumerate}
\end{prop}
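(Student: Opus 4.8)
The plan is to treat the two parts separately, with part (ii) resting on the concrete description of the Peirce-1-space. For part (i), the forward implication is immediate: if $e\in\Sym(b)$ then $b(\im e,\ker e)=0$ by \lemref{lem:im-ker}, and since $e^2=e$ we have $\im e=E$ and $\ker e=F$, so $b(E,F)=0$. For the converse, I would decompose an arbitrary $u\in V$ as $u=ue+u(1-e)$ with $ue\in E$ and $u(1-e)\in F$, and similarly for $v$. Expanding $b(ue,v)$ and $b(u,ve)$ and discarding the cross terms valued in $b(E,F)$ and $b(F,E)$ --- the latter vanishing because a Hermitian $b$ is reflexive, so $b(E,F)=0$ forces $b(F,E)=0$ --- both reduce to $b(ue,ve)$, giving $b(ue,v)=b(u,ve)$ and hence $e\in\Sym(b)$.

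For part (ii), first I would unwind the quadratic product \eqref{eq:quadratic-prod}: an element of $\Sym(b)U_e$ has the form $f=hU_e=ehe$ for $h\in\Sym(b)$, and such an $f$ satisfies $ef=fe=f$. Reading compositions on the right, $f$ sends $V$ into $E$ and annihilates $F$, so it restricts to an operator $f|_E\in\End E$; note also $fU_e=f$, so the stated map is simply $f\mapsto f|_E$. Since $\Sym(b)U_e\subseteq\Sym(b)$ and $\im f\subseteq E$, the identity $b(xf,y)=b(x,yf)$ for $x,y\in E$ shows $f|_E\in\Sym(b_E)$ (cf. \eqref{eq:b-X}). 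That this restriction is a unital Jordan homomorphism follows because $f$ and $g$ in the Peirce-1-space both map into $E$ and kill $F$, whence $(fg)|_E=(f|_E)(g|_E)$; thus $(f\bullet g)|_E=f|_E\bullet g|_E$, and $e|_E=1_E$ is the identity of $\Sym(b_E)$.

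It remains to show the restriction map is bijective. Injectivity is clean: every $u\in V$ satisfies $uf=(ue)(f|_E)$, so $f$ is determined by $f|_E$. Surjectivity is the main obstacle and the only genuinely new computation. Given $\phi\in\Sym(b_E)$, I would define $\hat{\phi}\in\End V$ by $u\hat{\phi}:=(ue)\phi$, so that $\hat{\phi}=e\hat{\phi}e\in\Sym(b)U_e$ (once membership in $\Sym(b)$ is established) and $\hat{\phi}|_E=\phi$. The crux is verifying $\hat{\phi}\in\Sym(b)$: expanding $b(u\hat{\phi},v)$ and $b(u,v\hat{\phi})$, the off-$E$ components are killed by $b(E,F)=b(F,E)=0$, and the surviving terms coincide precisely because $\phi$ is self-adjoint for $b_E$. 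This is where part (i), reflexivity, and the hypothesis $\phi\in\Sym(b_E)$ must be combined; once it is done, $\hat{\phi}$ is a preimage of $\phi$ and the restriction map is an isomorphism of Jordan algebras.
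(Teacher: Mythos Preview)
Your proof is correct and follows the same approach as the paper: part~(i) via the decomposition $u=ue+u(1-e)$ together with reflexivity, and part~(ii) via the observation $\Sym(b)U_e\subseteq e\Adj(b)e$, which acts faithfully on $E$ and lands in $\Sym(b_E)$. You are in fact more thorough than the paper on~(ii), explicitly constructing the inverse $\phi\mapsto\hat{\phi}$ (extension by zero on $F$) to establish surjectivity, which the paper's proof leaves implicit.
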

\begin{proof}
$(\ref{id:1})$  \lemref{lem:im-ker} proves the forward direction. 
For the converse, as $b(E,F)=0$ it follows that $b(ue,v(1-e))=0=b(u(1-e),ve)$ 
for all $u,v\in V$.  Hence
\begin{multline*}
	b(ue,v)=b(ue,ve+v(1-e))=b(ue,ve)
		=b(ue+u(1-e),ve)=b(u,ve),
\end{multline*}
for all $u,v\in V$; thus, $e\in \Sym(b)$.

For (\ref{id:2}), note that $\Sym(b)U_e\subseteq e\Adj(b)e$ and so
$\Sym(b)U_e$ is faithfully represented in $\End E$ by restriction.
Furthermore, $b(uexe,v)=b(u,vexe)$ for all $u,v\in E$ and $x\in\Sym(b)$.
Thus the restriction of $\Sym(b)U_e$ is $\Sym(b_E)$.
\end{proof}

From \propref{prop:perp-idemp}.$(i)$ we see that $F=E^\perp$ (cf. \defref{def:perp}.$(ii)$).

\begin{defn}\cite[pp.117-118]{Jacobson:Jordan}
Let $J$ be a Jordan algebra.
\begin{enumerate}[(i)]
\item Two idempotents $e,f$ in $J$ are \emph{orthogonal} if $e\bullet f=fU_e=eU_f=0$ \cite[5.1]{Jacobson:Ark}.
\item An idempotent is \emph{primitive} if it is not the sum of two proper orthogonal idempotents.
\item A set of idempotents is \emph{supplementary} if the idempotents are pairwise 
orthogonal and sum to $1$.
\item A \emph{frame} $\mathcal{E}$ of $J$
is a set of primitive pairwise orthogonal idempotents which sum to $1$.
\end{enumerate}
\end{defn}

Idempotents in special Jordan algebras are idempotents in the associative algebra
as well.  If $e,f\in \Sym(b)$ then $e$ and $f$ are orthogonal idempotents
in $\Sym(b)$ if, and only if, they are orthogonal in $\Adj(b)$.  To see this, if 
$0=e\bullet f=\frac{1}{2}(ef+fe)$
and $efe=fU_e=0$ then $ef=ef+efe=e(ef+fe)=0$ and also $fe=0$.  If  
$ef=0=fe$ then $e\bullet f=\frac{1}{2}(ef+fe)=0$ (cf. \cite[p. 5.4]{Jacobson:Ark}).  
However, if $e$ is a primitive
idempotent in $\Sym(b)$ it need not follow that $e$ is primitive in $\Adj(b)$
since there may be orthogonal idempotents in $\Adj(b)$ which sum to $e$ but
do not lie in $\Sym(b)$.

The following definition is based on standard uses of idempotents in linear algebra.
\begin{defn}\label{def:perp-idemp}
Let $V$ be a vector space over $k$.
\begin{enumerate}[(i)]
\item Let $\mathcal{E}(\mathcal{Y})$ be the set of supplementary idempotents
parameterizing a given $\oplus$-decomposition $\mathcal{Y}$ of $V$.
\item Let $\mathcal{X}(\mathcal{F})$ be the $\oplus$-decomposition
arising from a set of supplementary idempotents $\mathcal{F}$ of $\End V$.
\end{enumerate}
\end{defn}

\begin{thm}\label{thm:perp-idemp}
Let $\mathcal{X}$ be a $\oplus$-decomposition of $V$ and let $\mathcal{E}
=\mathcal{E}(\mathcal{X})$.
\begin{enumerate}[(i)]
\item\label{fr:1} $\mathcal{E}(\mathcal{X})\subseteq \Sym(b)$ if, and only if,
$\mathcal{X}$ is a $\perp$-decomposition of $b$.
\item\label{fr:2} $\mathcal{X}$ is a fully refined $\perp$-decomposition 
if, and only if, $\mathcal{E}$ is a frame.
\item\label{fr:3} Let $\mathcal{X}$ be a $\perp$-decomposition.  
If $(\alpha,\hat{\alpha})\in \Isom^*(b)$, then 
$\mathcal{X}\alpha=\mathcal{X}(\mathcal{E}^{(\alpha,\hat{\alpha})})$
and $\mathcal{E}^{(\alpha,\hat{\alpha})}=\mathcal{E}(\mathcal{X}\alpha)$.
In particular, $\Isom^*(b)$ acts on the set of all frames of $\Sym(b)$.
\end{enumerate}
\end{thm}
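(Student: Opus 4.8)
The plan is to prove the three parts in order, since each is essentially a faithful-translation argument linking the linear-algebraic decompositions of $V$ to membership in the Jordan algebra $\Sym(b)$.

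For part $(\ref{fr:1})$, I would argue one factor at a time. Given the $\oplus$-decomposition $\mathcal{X}=\{X_1,\dots,X_n\}$ with supplementary idempotents $\mathcal{E}=\{e_1,\dots,e_n\}$, each $e_i$ is the projection onto $X_i$ with kernel $X_i^\perp:=\bigoplus_{j\neq i}X_j$. I would invoke \propref{prop:perp-idemp}.$(\ref{id:1})$, which says $e_i\in\Sym(b)$ if and only if $b(Ve_i, V(1-e_i))=b(X_i,\bigoplus_{j\neq i}X_j)=0$. Thus $\mathcal{E}\subseteq\Sym(b)$ holds exactly when $b(X_i,X_j)=0$ for all $i\neq j$, which is precisely condition $(a)$ in \defref{def:perp} defining a $\perp$-decomposition; condition $(b)$ is already guaranteed since $\mathcal{X}$ is a direct decomposition (no proper subset spans $V$). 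This gives the equivalence.

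For part $(\ref{fr:2})$, assume $\mathcal{X}$ is a $\perp$-decomposition (so by $(\ref{fr:1})$, $\mathcal{E}\subseteq\Sym(b)$ and the $e_i$ are pairwise orthogonal idempotents summing to $1$). I want to show $\mathcal{X}$ is fully refined precisely when $\mathcal{E}$ is a frame, i.e. when each $e_i$ is primitive. The key is \propref{prop:perp-idemp}.$(\ref{id:2})$: the Peirce-1-space $\Sym(b)U_{e_i}$ is isomorphic as a Jordan algebra to $\Sym(b_{X_i})$. Under this identification, an idempotent $e_i$ fails to be primitive exactly when it splits as a sum of two proper orthogonal idempotents inside $\Sym(b)U_{e_i}\cong\Sym(b_{X_i})$; by $(\ref{fr:1})$ applied to $b_{X_i}$, such a splitting corresponds to a nontrivial $\perp$-decomposition of $X_i$ under $b_{X_i}$. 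Hence $e_i$ is primitive if and only if $b_{X_i}$ is $\perp$-indecomposable, and so $\mathcal{E}$ is a frame if and only if every $b_{X_i}$ is $\perp$-indecomposable, which is the definition of $\mathcal{X}$ being fully refined (\defref{def:perp}.$(iv)$).

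For part $(\ref{fr:3})$, the content is that the action of $\Isom^*(b)$ on $\perp$-decompositions is compatible with its action on idempotents from \propref{prop:groups}.$(i)$, namely $f^{(\alpha,\hat\alpha)}=\alpha^{-1}f\alpha$. I would check directly that if $e$ is the projection onto $X_i$ along $X_i^\perp$, then $\alpha^{-1}e\alpha$ is the projection onto $X_i\alpha$ along $(X_i^\perp)\alpha$: indeed $(X_i\alpha)(\alpha^{-1}e\alpha)=X_ie\alpha=X_i\alpha$ and $(X_i^\perp\alpha)(\alpha^{-1}e\alpha)=0$, and these conjugates remain self-adjoint by \propref{prop:groups}.$(i)$ since $(\alpha,\hat\alpha)\in\Isom^*(b)$ induces a $*$-isomorphism of $\Adj(b)$. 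Summing over $i$ shows $\mathcal{E}^{(\alpha,\hat\alpha)}$ is exactly the supplementary set of idempotents for the decomposition $\mathcal{X}\alpha$, giving both displayed identities. Since $\mathcal{X}\alpha$ is again a $\perp$-decomposition (the action preserves orthogonality), and by $(\ref{fr:2})$ frames correspond to fully refined decompositions, it follows that $\Isom^*(b)$ permutes frames among themselves.

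The main obstacle I anticipate is the Peirce-space identification underlying part $(\ref{fr:2})$: I must ensure that primitivity of an idempotent in the full Jordan algebra $\Sym(b)$ genuinely matches $\perp$-indecomposability of the restricted form $b_{X_i}$, and not merely indecomposability in the associative algebra $\Adj(b)$. The excerpt explicitly warns that a primitive idempotent in $\Sym(b)$ need not be primitive in $\Adj(b)$, so the argument must stay entirely within the Jordan/self-adjoint framework and lean on the isomorphism $\Sym(b)U_{e_i}\cong\Sym(b_{X_i})$ rather than on any associative splitting. Once that correspondence is handled carefully, the remaining verifications are routine linear algebra.
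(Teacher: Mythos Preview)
Your proposal is correct and matches the paper's approach exactly: the paper likewise derives $(\ref{fr:1})$ from \propref{prop:perp-idemp}, reduces $(\ref{fr:2})$ to the observation that $e\in\Sym(b)$ is primitive iff $b_{Ve}$ is $\perp$-indecomposable, and verifies $(\ref{fr:3})$ by the direct computation $V(e^{(\alpha,\hat\alpha)})=Ve\alpha$. Your version simply spells out more of the details, including the Peirce-space identification underlying $(\ref{fr:2})$, which the paper leaves implicit.
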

\begin{proof}
Part (\ref{fr:1}) follows from \propref{prop:perp-idemp}.  Part (\ref{fr:2})
follows from observing that an idempotent $e\in\Sym(b)$ is primitive if,
and only if, $b_{Ve}$ is $\perp$-indecomposable.

For part (\ref{fr:3}), if $e\in\mathcal{E}$ and $x\in Ve\alpha$,
then $x(e^{(\alpha,\hat{\alpha})})=((x\alpha^{-1})e)\alpha=x\alpha^{-1}\alpha=x$. 
Therefore $V(e^{(\alpha,\hat{\alpha})})=Ve\alpha$.
\end{proof}

		%
		%
\subsection{Linking central decompositions, $\perp$-decompositions, 
		frames, and orthogonal bases: $\mathcal{H}_I$, $\mathcal{X}_I$, 
		$\mathcal{E}_I$, and $\mathcal{X}_{d(I)}$.}\label{sec:links}

We use the following notation repeatedly as a means to track the changes from 
$p$-groups, to bilinear maps, to $*$-algebras, to Hermitian forms, and then back.
As usual, we assume that $P$ has class $2$, exponent $p$, and $P'=Z(P)$.

Let $\mathcal{H}$ be a fully refined central decomposition of $P$, $\mathcal{X}$
a fully refined $\perp$-decomposition of $b:=\Bi(P)$, $\mathcal{E}$ a frame of
$J:=\Sym(b)$, $A :=\Adj(b)$, and $I\in \spec_0 A$ (that is, $I$ is a maximal 
$*$-ideal of $A$).  Define:
\begin{align}
	\mathcal{E}_I & = \{e\in\mathcal{E} : e\notin I\},\\
	\mathcal{X}_I & = \{ X\in\mathcal{X} : 
					e\in\mathcal{E}(\mathcal{X})_I, X=Ve\},\\
	\mathcal{H}_I & = \{ H\in\mathcal{H} : HP'/P'\in \Bi(\mathcal{H})_I \}.
\end{align}
Since $A/I\cong \Adj(d(I))$ for some non-degenerate Hermitian $C$-form 
$d:=d(I):U\times U\to C$, (\thmref{thm:*-structure}.$(iii)$)), it follows that
 $J/(I\intersect J)\cong \Sym(d)$.  Hence, $I\intersect J$ is a maximal ideal 
 of $J$ (\thmref{thm:herm-structure}.$(iv)$). Therefore, $\mathcal{E}_I$ 
 parameterizes a frame 
\begin{equation}
	\mathcal{E}_{J/(I\intersect J)}:=\{(I\intersect J)+e: e\in\mathcal{E}_I\}
\end{equation}
of $J/(I\intersect J)$.  Furthermore, this gives rise to a fully refined $\perp$-decomposition
\begin{equation}
	\mathcal{X}_{d(I)} := \{ Ue\tau : e\in\mathcal{E}_I\}
\end{equation}
of $d(I)$ where $\tau:A/I\to \Adj(d(I))$ is a $*$-isomorphism.  Certainly, 
$\mathcal{X}_{d(I)}$ depends on the choice of $\tau$ but we consider $\tau$
fixed.  This influences the definition of address in Section \ref{sec:address}.

\begin{prop}\label{prop:links}
Let $\mathcal{H}$ be a fully refined central decomposition of $P$, $\mathcal{X}:=\Bi(\mathcal{H})$, and $\mathcal{E}:=\mathcal{E}(\mathcal{X})$.  The sets
 $\mathcal{H}_I$, $\mathcal{X}_I$, $\mathcal{E}_I$, 
 $\mathcal{E}_{J/(I\intersect J)}$, and $\mathcal{X}_{d(I)}$ are in bijection.
\end{prop}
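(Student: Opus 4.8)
The plan is to route all five sets through the single index set $\mathcal{E}_I$, exhibiting at each stage a map that is a bijection onto its image, and then to compose these. Three of the sets — $\mathcal{E}_I$, $\mathcal{X}_I$, and $\mathcal{H}_I$ — are linked on the ``$p$-group side'' by the correspondences $e\mapsto Ve$ and $H\mapsto HP'/P'$, while the remaining two, $\mathcal{E}_{J/(I\intersect J)}$ and $\mathcal{X}_{d(I)}$, are images of $\mathcal{E}_I$ under reduction modulo $I$ and under the $*$-isomorphism $\tau$. Since each of the five sets is \emph{defined} as the image (or preimage) of $\mathcal{E}_I$ under one of these maps, it suffices to verify that every map in the chain is injective.

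On the $p$-group side I would first observe that $\mathcal{E}=\mathcal{E}(\mathcal{X})$ is a frame of $J=\Sym(b)$ (\thmref{thm:perp-idemp}.$(ii)$, using that $\mathcal{X}=\Bi(\mathcal{H})$ is fully refined) and that $b$ is non-degenerate, since $P'=Z(P)$ gives $\rad b=0$. Hence $e\mapsto Ve$ is a bijection from $\mathcal{E}$ onto $\mathcal{X}$, and by the definition of $\mathcal{X}_I$ it restricts to a bijection $\mathcal{E}_I\to\mathcal{X}_I$. Next, $H\mapsto HP'/P'$ is a bijection $\mathcal{H}\to\Bi(\mathcal{H})=\mathcal{X}$: it is surjective by definition, and injective because no $H\in\mathcal{H}$ can lie in $P'=Z(P)$ — otherwise $P'\leq\langle\mathcal{H}-\{H\}\rangle$ would let a proper subset generate $P$ — and if $HP'=KP'$ for distinct $H,K$ then $K\leq HP'\leq\langle\mathcal{H}-\{K\}\rangle$, again contradicting that $\mathcal{H}$ is a central decomposition. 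By the definition of $\mathcal{H}_I$ this restricts to a bijection $\mathcal{H}_I\to\mathcal{X}_I$.

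On the algebra side I would check that $e\mapsto (I\intersect J)+e$ is injective on $\mathcal{E}_I$: its members are pairwise orthogonal idempotents of $A=\Adj(b)$, none lying in $I$, so for distinct $e,e'$ we have $ee'=0$ in $A$; were their images in $A/I$ equal, we would obtain $\bar e=\bar e^{\,2}=\bar e\,\bar e'=0$, that is $e\in I$, a contradiction. Thus $\mathcal{E}_I\to\mathcal{E}_{J/(I\intersect J)}$ is a bijection. Finally, since $\tau$ carries $A/I$ isomorphically to $\Adj(d(I))$ and hence $J/(I\intersect J)$ to $\Sym(d(I))$, it sends the frame $\mathcal{E}_{J/(I\intersect J)}$ to a frame of $\Sym(d(I))$; applying the frame-to-decomposition correspondence of \thmref{thm:perp-idemp}.$(ii)$ to $d(I)$ then makes $\bar e\mapsto U(\bar e\tau)$ a bijection onto $\mathcal{X}_{d(I)}$.

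The only genuinely group-theoretic input is the injectivity of $H\mapsto HP'/P'$, and the sole algebraic subtlety is that orthogonality of the idempotents persists after reduction modulo $I$; I expect the former to be the main point to get right, though both are short. Composing the four bijections then yields the asserted bijections among $\mathcal{H}_I$, $\mathcal{X}_I$, $\mathcal{E}_I$, $\mathcal{E}_{J/(I\intersect J)}$, and $\mathcal{X}_{d(I)}$.
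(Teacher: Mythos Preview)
Your proposal is correct and takes essentially the same approach as the paper, which dispatches the proposition in one line by citing \thmref{thm:main-reduction}.$(ii)$, \thmref{thm:perp-idemp}.$(ii)$, \thmref{thm:herm-structure}.$(iii)$, and \propref{prop:orthogonal}; you have simply unpacked those citations into the explicit chain of bijections indexed by $\mathcal{E}_I$. One small point worth tightening: in arguing $HP'\leq\langle\mathcal{H}-\{K\}\rangle$ you are implicitly using $P'\leq\langle\mathcal{H}-\{K\}\rangle$, which follows since $K'\leq (KP')'=(HP')'=H'\leq H\leq\langle\mathcal{H}-\{K\}\rangle$ and $P'=\prod_{J\in\mathcal{H}}J'$.
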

\begin{proof}
This follows from \thmref{thm:main-reduction}.$(ii)$, \thmref{thm:perp-idemp}.$(ii)$, \thmref{thm:herm-structure}.$(iii)$, and \propref{prop:orthogonal}.
\end{proof}

\begin{prop}\label{prop:partition}
For every fully refined central decomposition $\mathcal{H}$ of $P$ with $P'=Z(P)$, 
the set $\{\mathcal{H}_I : I\in\spec_0 \Adj(\Bi(P))\}$ partitions
$\mathcal{H}$.  Furthermore, $|\mathcal{H}_I |$ depends only on 
$P$ and $I\in\spec_0 \Adj(\Bi(P))$.
\end{prop}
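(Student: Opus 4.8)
The plan is to prove two assertions: first, that the sets $\mathcal{H}_I$ partition $\mathcal{H}$ as $I$ ranges over $\spec_0 A$ where $A=\Adj(\Bi(P))$; and second, that the cardinality $|\mathcal{H}_I|$ depends only on $P$ and $I$, not on the choice of fully refined central decomposition $\mathcal{H}$. By \propref{prop:links} the sets $\mathcal{H}_I$, $\mathcal{X}_I$, and $\mathcal{E}_I$ are all in bijection, so I would work entirely on the frame side with $\mathcal{E}:=\mathcal{E}(\Bi(\mathcal{H}))$ and prove both statements for $\{\mathcal{E}_I : I\in\spec_0 A\}$, transporting the conclusion back via the bijection.

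For the partition claim, recall $\mathcal{E}_I=\{e\in\mathcal{E}: e\notin I\}$. I must show every primitive idempotent $e\in\mathcal{E}$ lies outside exactly one maximal $*$-ideal $I$. First I would observe that $e\notin\rad A$: since $e$ is a nonzero idempotent and $\rad A$ is nil (\thmref{thm:*-structure}.$(i)$), $e$ cannot be nilpotent. Passing to $\bar{A}:=A/\rad A\cong\Adj(d_1)\oplus\cdots\oplus\Adj(d_s)$, each maximal $*$-ideal $I$ corresponds to omitting one simple factor $\Adj(d_j)$, so $e\notin I$ is equivalent to the image $\bar{e}$ having a nonzero component in the factor $\Adj(d_j)$. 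The key point is that a \emph{primitive} idempotent $\bar{e}$ in the semisimple $*$-algebra $\bar A$ has nonzero component in exactly one simple $*$-factor: if $\bar e$ had nonzero components $\bar e_j$ and $\bar e_k$ in two distinct factors, these would be nonzero orthogonal idempotents summing to $\bar e$, and lifting through the splitting of \thmref{thm:Taft} would contradict primitivity of $e$. Hence $e\notin I$ holds for exactly one $I\in\spec_0 A$, which gives the partition.

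For the invariance of $|\mathcal{H}_I|$, I would transport to $\mathcal{E}_I$ and then to the frame $\mathcal{E}_{J/(I\cap J)}$ of the semisimple Jordan algebra $J/(I\cap J)\cong\Sym(d(I))$, where $d(I):U\times U\to C$ is the non-degenerate Hermitian $C$-form attached to $I$ (which depends only on $P$ and $I$, being read off from $A=\Adj(\Bi(P))$). By \propref{prop:orthogonal} every fully refined $\perp$-decomposition of $d(I)$ consists of rank-$1$ factors and so determines an orthogonal $C$-basis; thus the number of members equals the rank $n$ of $d(I)$. Since $|\mathcal{E}_I|=|\mathcal{E}_{J/(I\cap J)}|$ equals this rank, and the rank is an invariant of $d(I)$ and hence of the pair $(P,I)$, I conclude $|\mathcal{H}_I|=\rank d(I)$ independent of $\mathcal{H}$.

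The main obstacle I anticipate is the primitivity argument in the partition step. The subtlety is that primitivity of $e$ is asserted in the Jordan algebra $\Sym(b)$, whereas the factorization of $\bar A$ into simple $*$-algebras is an associative-algebra statement; as the excerpt warns, a primitive Jordan idempotent need not be primitive in $\Adj(b)$. I would therefore argue primitivity at the level of frames of $\Sym(d(I))$ and use \thmref{thm:perp-idemp}.$(ii)$ to identify primitivity of $\bar e$ in the Jordan quotient with $\perp$-indecomposability of the corresponding factor, rather than appealing to associative primitivity. The cleanest route is to note that an idempotent of $\mathcal{E}$ with nonzero image in two distinct simple Jordan factors of $J/\rad J$ would force its associated $\perp$-factor to decompose, contradicting that $\mathcal{E}$ is a frame via \thmref{thm:perp-idemp}.$(ii)$.
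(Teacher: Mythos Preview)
Your approach matches the paper's: reduce via \propref{prop:links} to the frame $\mathcal{E}$ and argue that each primitive idempotent lies outside exactly one maximal $*$-ideal, using the semisimple structure of $A/\rad A$; the paper's proof is a terse three lines that simply asserts $\{\mathcal{E}_I\}$ partitions $\mathcal{E}$, leaving implicit the details you spell out, and the invariance of $|\mathcal{H}_I|$ via the rank of $d(I)$ is precisely what \propref{prop:links} together with \propref{prop:orthogonal} provides.

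One small technical wobble: invoking Taft's splitting (\thmref{thm:Taft}) to lift the decomposition $\bar e=\bar e_j+\bar e_k$ does not quite work as stated, because $e$ need not lie in the Wedderburn complement $B$, so the lifts $b_j,b_k\in B$ are orthogonal self-adjoint idempotents summing to $b=e-r$ (with $r\in\rad A$), not to $e$ itself. You correctly flag this as the delicate point, and your fallback is the right repair: the components $\bar e_j,\bar e_k$ already lie in $J/\rad J$ (the simple $*$-factors are $*$-closed), so this is a decomposition of $\bar e$ into orthogonal idempotents in the Jordan quotient, and standard lifting of idempotents through the nil ideal $\rad J$ (implicit in the machinery of \S\ref{sec:radicals}, e.g.\ \lemref{lem:nil-fix}) produces orthogonal idempotents in $J$ summing to $e$, contradicting primitivity. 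With that adjustment your argument is complete.
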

\begin{proof}
By \propref{prop:links} we know $\mathcal{H}_I$ is in bijection with $\mathcal{E}_I$
for each maximal $*$-ideal of $\Adj(\Bi(P))$.  As $\mathcal{E}$ is partitioned by
$\mathcal{E}_I$, as $I$ ranges over the maximal $*$-ideals of $\Adj(\Bi(P))$, it follows
that $\{\mathcal{H}_I: I\in\spec_0\Adj(\Bi(P))\}$ partition $\mathcal{H}$.
\end{proof}

		%
		%
\subsection{All fully refined central decompositions have the same size}
		\label{sec:same-size}
		
We now prove the first part of \thmref{thm:central-count}.$(i)$ -- that fully refined central
decompositions of a $p$-group $P$ of exponent $p$ and class $2$ have the same size.

\begin{thm}\label{thm:central-III}
Let $P$ be a finite $p$-group of class 2 and exponent $p$ and $\mathcal{H}$
a fully refined central decomposition.  Let $Q:=\langle \mathcal{K}\rangle$, 
$\mathcal{K}:= \mathcal{H}-Z(\mathcal{H})$. Then $\mathcal{H}$ is partitioned into
\begin{equation}
	Z(\mathcal{H})\sqcup \{\mathcal{K}_I : I\in \spec_0  \Adj(\Bi(Q))\}.
\end{equation}
Furthermore, $|Z(\mathcal{H})|$ and $|\mathcal{K}|$ are uniquely determined by
$P$, and $|\mathcal{H}|$ is uniquely determined by $P$.
\end{thm}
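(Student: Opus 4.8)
The strategy is to push the whole question onto the subgroup $Q=\langle\mathcal{K}\rangle$, where $Q'=Z(Q)$ and so the non-degenerate machinery culminating in \propref{prop:partition} applies verbatim, and then to argue that the resulting counts do not depend on the choice of $\mathcal{H}$. First I would apply \lemref{lem:contract-2} to obtain $P=Q\times A$ with $A=\langle Z(\mathcal{H})\rangle\leq Z(P)$ and $Q'=Z(Q)=P'$; consequently $Z(P)=Z(Q)\times A=P'\times A$, so $A\cong Z(P)/P'$. Since every member of $Z(\mathcal{H})$ is a centrally indecomposable abelian subgroup of exponent $p$, it is a line in the $\mathbb{Z}_p$-space $A$, and a fully refined, hence irredundant, decomposition of $A$ into lines is a basis. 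Therefore $|Z(\mathcal{H})|=\dim_{\mathbb{Z}_p} A=\dim_{\mathbb{Z}_p}(Z(P)/P')$, which is manifestly an invariant of $P$.

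Next I would observe that $\mathcal{K}=\mathcal{H}-Z(\mathcal{H})$ is a fully refined central decomposition of $Q$: its members pairwise commute and are centrally indecomposable, both inherited from $\mathcal{H}$, and no proper subset generates $Q$, since adjoining $Z(\mathcal{H})$ to such a subset would give a proper generating subset of $\mathcal{H}$. As $Q'=Z(Q)$, \propref{prop:partition} applied to $Q$ and $\mathcal{K}$ shows that $\{\mathcal{K}_I : I\in\spec_0\Adj(\Bi(Q))\}$ partitions $\mathcal{K}$, with each $|\mathcal{K}_I|$ depending only on $Q$ and $I$. Combined with $\mathcal{H}=Z(\mathcal{H})\sqcup\mathcal{K}$, this gives the asserted partition.

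\textbf{Main obstacle.} The delicate point is that $|\mathcal{K}|$ must be an invariant of $P$ rather than merely of $Q$, and a second fully refined central decomposition $\mathcal{H}'$ yields a possibly different subgroup $R=\langle\mathcal{H}'-Z(\mathcal{H}')\rangle$, to which \propref{prop:partition} cannot be compared directly. I would resolve this with \lemref{lem:contract-1}.$(ii)$: since $Z(Q)=Q'=P'=R'=Z(R)$ and $P=QZ(P)=RZ(P)$, there is an upper central automorphism $\beta$ of $P$ with $Q\beta=R$. Then $\mathcal{K}\beta$ is a fully refined central decomposition of $R$, and applying \propref{prop:partition} to $R$ gives $|(\mathcal{K}\beta)_I|=|\mathcal{K}'_I|$ for each $I\in\spec_0\Adj(\Bi(R))$; summing over $\spec_0\Adj(\Bi(R))$ and using $|\mathcal{K}\beta|=|\mathcal{K}|$ yields $|\mathcal{K}|=|\mathcal{K}'|$. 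The only care needed is that one sums over the whole of $\spec_0\Adj(\Bi(R))$, so no explicit matching of the maximal $*$-ideals of $\Adj(\Bi(Q))$ with those of $\Adj(\Bi(R))$ is required. Finally $|\mathcal{H}|=|Z(\mathcal{H})|+|\mathcal{K}|$ is determined by $P$, completing the argument.
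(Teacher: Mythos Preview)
Your proposal is correct and follows essentially the same route as the paper: reduce to $Q$ via \lemref{lem:contract-2}, compute $|Z(\mathcal{H})|=\log_p[Z(P):P']$, and invoke \propref{prop:partition} on $Q$. The paper's proof is extremely terse and does not spell out the point you flag as the main obstacle---that a second decomposition yields a different $Q$---whereas you handle it explicitly with \lemref{lem:contract-1}.$(ii)$; this is a genuine clarification of what the paper leaves implicit.
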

\begin{proof}  
 By \lemref{lem:contract-1} we know $P=Q\oplus A$ with $A\leq Z(P)$ and
  $Q'=P'=Z(Q)$.  Furthermore, $|Z(\mathcal{H})|=\log_p |A|=\log_p [Z(P):P']$.  Therefore, \lemref{lem:contract-2} and \propref{prop:partition} complete the proof.
\end{proof}

		%
		%
\subsection{The five classical indecomposable families}

By \thmref{thm:perp-idemp}, a bilinear map $b$ has no proper $\perp$-decompositions 
if, and only if, $0$ and $1$ are the only idempotents of $\Sym(b)$.  But more 
can be said if $\Adj(b)$ is considered as well:
\begin{lemma}[Fitting's Lemma for bilinear maps]
If $b$ is $\perp$-indecomposable then, for every $x\in \Adj(b)$,
$T(x)=x+x^*$ is either invertible or nilpotent.  In particular,
every $x\in\Sym(b)$ is either invertible or nilpotent.
\end{lemma}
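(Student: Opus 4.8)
The plan is to prove the statement by applying Fitting's Lemma to a self-adjoint operator and showing that the resulting Fitting decomposition is automatically a $\perp$-decomposition. First I would reduce to the self-adjoint case. Since $T(x)^{*}=(x+x^{*})^{*}=x^{*}+x=T(x)$, the element $T(x)$ always lies in $\Sym(b)$ for every $x\in\Adj(b)$; hence it suffices to show that every $y\in\Sym(b)$ is either invertible or nilpotent. This single claim delivers both parts of the lemma: the main assertion by applying it to $y=T(x)$, and the ``in particular'' clause since it is literally that statement.

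So I would fix $y\in\Sym(b)$ and invoke Fitting's Lemma: because $V$ is finite-dimensional, there is an $n$ (for instance $n=\dim V$) giving an $y$-invariant direct sum $V=V_{0}\oplus V_{1}$ with $V_{0}=\ker y^{n}$ and $V_{1}=\im y^{n}$, on which $y$ acts nilpotently and invertibly, respectively. Let $e\in\End V$ be the projection onto $V_{1}$ with kernel $V_{0}$, an idempotent of $\End V$ with $\im e=V_{1}$ and $\ker e=V_{0}$. The crux is to show $b(V_{1},V_{0})=0$, because then $e\in\Sym(b)$ by \propref{prop:perp-idemp}.$(i)$. Here I would exploit self-adjointness by sliding powers of $y$ across $b$: for $u\in V_{0}$ and $v=w y^{n}\in V_{1}=\im y^{n}$, the identity $b(ay,c)=b(a,cy)$ applied $n$ times gives $b(u,v)=b(u,w y^{n})=b(u y^{n},w)=0$, since $u y^{n}=0$. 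Reflexivity of the Hermitian map $b$ then upgrades $b(V_{0},V_{1})=0$ to $b(V_{1},V_{0})=0$ as required.

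Finally I would conclude using $\perp$-indecomposability: by \thmref{thm:perp-idemp} the only idempotents of $\Sym(b)$ are $0$ and $1$. Therefore $e=1$, which forces $V_{0}=0$ and makes $y$ invertible, or $e=0$, which forces $V_{1}=0$ and makes $y$ nilpotent. The only genuinely substantive step is the orthogonality computation $b(V_{1},V_{0})=0$; once the ``sliding'' identity for self-adjoint operators is in hand it is routine, and the remainder is bookkeeping around the Fitting decomposition and the correspondence between idempotents and $\perp$-factors.
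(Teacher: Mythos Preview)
Your proof is correct and follows essentially the same route as the paper: reduce to $y\in\Sym(b)$, apply Fitting's lemma to obtain $V=\im y^{n}\oplus\ker y^{n}$, verify orthogonality of the two summands via the self-adjoint sliding identity (the paper packages this as \lemref{lem:im-ker} applied to $y^{n}\in\Sym(b)$), and conclude from $\perp$-indecomposability. The only cosmetic difference is that the paper speaks directly of the resulting $\perp$-decomposition rather than passing through the projection idempotent and \propref{prop:perp-idemp}.
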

\begin{proof}
Set $y=x+x^*$ and note $y^r\in\Sym(b)$ for all $r\in\mathbb{N}$.  By 
Fitting's lemma there is some $r>0$ such that $V=\im y^r\oplus \ker y^r$.  
By \lemref{lem:im-ker}, $b(\im y^r,\ker y^r)=0$.  So we have a $\perp$-decomposition
of $b$.  Since $b$ is $\perp$-indecomposable, $y^r=0$ so that $y$ is nilpotent, 
or $\ker y^r=0$ and $\im y^r=V$ so that $y$ is invertible.
\end{proof}

\begin{thm}\cite[Theorem 2]{Osborn:jor-nil}\label{thm:local-jordan}
If $(A,*)$ is a $*$-algebra over a finite field of odd characteristic such that
$T(x)$ is either invertible or nilpotent for each $x\in A$, then $A/\rad A$ 
is an associative composition algebra.
\end{thm}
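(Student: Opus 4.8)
The plan is to pass to the $*$-semisimple quotient $\bar A := A/\rad A$ and translate the hypothesis into a statement about idempotents of the Jordan algebra $J := \mathfrak{H}(\bar A,*)$, via \thmref{thm:perp-idemp}. First I check that the invertible-or-nilpotent property for $T$ descends to $\bar A$. The quotient map $A\to\bar A$ is a surjective $*$-homomorphism, so it commutes with $T$; hence $T(\bar x)=\overline{T(x)}$, and the image of an invertible (resp.\ nilpotent) element of $A$ is invertible (resp.\ nilpotent) in $\bar A$. Since every element of $\bar A$ lifts to $A$, it follows that $T(\bar x)$ is invertible or nilpotent for every $\bar x\in\bar A$. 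By \thmref{thm:*-structure}.$(ii)$, $\bar A$ is $*$-semisimple, so it suffices to prove that a $*$-semisimple algebra with this property is an associative composition algebra.

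The crux is the observation that $J$ has only the trivial idempotents $0$ and $1$. Every $h\in J$ is self-adjoint, so $T(h)=2h$; as $2$ is a unit in odd characteristic, $h$ is itself invertible or nilpotent. An invertible idempotent equals $1$ and a nilpotent idempotent equals $0$, so $\bar A$ admits no proper self-adjoint idempotent.

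From this I deduce both the $*$-simplicity and the rank of $\bar A$. Write $\bar A=A_1\times\cdots\times A_s$ as a product of $*$-simple algebras (\thmref{thm:*-structure}.$(ii)$); the identity of each $A_i$ is a self-adjoint idempotent, so triviality of idempotents forces $s=1$ and $\bar A$ is $*$-simple. By \thmref{thm:*-structure}.$(iii)$, $\bar A\cong\Adj(d)$ for a non-degenerate Hermitian $C$-form $d:V\times V\to C$ of some rank $n$ over an associative composition algebra $C$, and the induced isomorphism carries $J$ onto $\Sym(d)$. By \propref{prop:orthogonal}, $d$ has an orthogonal $C$-basis, giving a $\perp$-decomposition of $V$ into $n$ rank-$1$ pieces; by \thmref{thm:perp-idemp}.$(i)$ the associated supplementary idempotents lie in $\Sym(d)$. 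If $n\geq 2$ the projection onto a single piece would be a proper idempotent, contradicting the previous paragraph, so $n=1$.

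A rank-$1$ Hermitian $C$-form yields $\Adj(d)\cong M_1(C)=C$ with involution $F^*=\bar F$ (cf.\ \eqref{def:*}), whence $\bar A\cong C$ is an associative composition algebra (\defref{def:comp}), as classified in \thmref{thm:Hurwitz}. I expect the only genuinely delicate points to be the descent of the invertible-or-nilpotent condition to $\bar A$ and the bookkeeping that identifies proper $\perp$-factors with proper self-adjoint idempotents of $J$; once these are in place the cited structure theorems do the rest, and the heart of the matter is simply that the $T$-condition annihilates every proper self-adjoint idempotent.
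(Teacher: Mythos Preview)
The paper does not prove this theorem; it is quoted from Osborn \cite{Osborn:jor-nil} and used as a black box. Your argument is correct and supplies a self-contained proof inside the paper's framework: once the trace condition is pushed down to $\bar A=A/\rad A$, the observation $T(h)=2h$ for self-adjoint $h$ forces $\mathfrak{H}(\bar A,*)$ to have only trivial idempotents; the $*$-semisimple decomposition (\thmref{thm:*-structure}) then collapses to a single simple factor, and the orthogonal $C$-basis of \propref{prop:orthogonal} together with \thmref{thm:perp-idemp} pins the rank to $1$, so $\bar A\cong C$. Osborn's original theorem is proved in greater generality (arbitrary associative and Jordan rings) by internal ring-theoretic methods; your route trades that generality for brevity by leaning on the classification of finite simple $*$-algebras already assembled in Section~\ref{sec:matrix}. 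None of the cited results depend on \thmref{thm:local-jordan}, so there is no circularity.
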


\begin{coro}\label{coro:indecomp}
For a $k$-bilinear map $b$ the following are equivalent:
\begin{enumerate}[(i)]
\item $b$ is $\perp$-indecomposable,
\item $\Sym(b)$ has only trivial idempotents,
\item $J/\rad J$ is isomorphic to a field extension of $k$. 
\item $A=\Adj(b)$ has $A/\rad A$ is isomorphic 
to an associative composition algebra.
\end{enumerate}
\end{coro}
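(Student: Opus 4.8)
The plan is to close the loop $(ii)\Leftrightarrow(i)\Rightarrow(iv)\Rightarrow(iii)\Rightarrow(ii)$, which makes all four statements equivalent. Throughout I would write $A=\Adj(b)$ and $J=\Sym(b)=\mathfrak{H}(A,*)$ (cf. \thmref{thm:jordan}). The equivalence $(i)\Leftrightarrow(ii)$ is immediate from the idempotent dictionary already in hand. If $J$ has a proper idempotent $e$ (so $e\neq 0,1$), then with $E=Ve$ and $F=V(1-e)$ we have $b(E,F)=0$ by \propref{prop:perp-idemp}.$(i)$, so $\{E,F\}$ is a proper $\perp$-decomposition and $b$ is $\perp$-decomposable; conversely, a proper $\perp$-decomposition $\mathcal{X}$ has $|\mathcal{X}|\geq 2$, and its parameterizing supplementary idempotents lie in $\Sym(b)$ by \thmref{thm:perp-idemp}.$(i)$, forcing a proper idempotent. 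Hence $b$ is $\perp$-indecomposable exactly when $J$ has only the trivial idempotents.

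Next I would prove $(i)\Rightarrow(iv)\Rightarrow(iii)$. Assuming $(i)$, the Fitting Lemma for bilinear maps shows that $T(x)=x+x^*$ is either invertible or nilpotent for every $x\in A$; this is precisely the hypothesis of Osborn's theorem (\thmref{thm:local-jordan}), whose conclusion is $(iv)$, namely that $C:=A/\rad A$ is an associative composition algebra. To pass from $(iv)$ to $(iii)$ I would use that taking self-adjoint elements commutes with reducing modulo the radical: as observed in the proof of \thmref{thm:herm-structure}.$(ii)$, in odd characteristic $J/\rad J=J/(J\intersect\rad A)\cong\mathfrak{H}(A/\rad A,*)=\mathfrak{H}(C,*)$. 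By \defref{def:comp}.$(i)$, the self-adjoint elements of a composition algebra $C$ are exactly its base field $K$, so $J/\rad J\cong K$, and since $K$ contains $k\cdot 1$ it is a field extension of $k$, which is $(iii)$.

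Finally, $(iii)\Rightarrow(ii)$ follows by lifting idempotents. Any idempotent $e\in J$ maps to an idempotent of the field $J/\rad J$, hence to $0$ or $1$. Because $\rad J$ is nil (\thmref{thm:herm-structure}.$(i)$) and the only nilpotent idempotent is $0$, the case $e\in\rad J$ gives $e=0$, while $1-e\in\rad J$ gives $e=1$; so $J$ has only trivial idempotents, which is $(ii)$. The argument rests entirely on the structure theory already assembled, so no step needs a fundamentally new idea. The one place I expect to require genuine care is $(iv)\Rightarrow(iii)$: one must verify that $\mathfrak{H}(A/\rad A,*)$ really is the image of $\mathfrak{H}(A,*)$ (which relies on $\tfrac12\in k$, available in odd characteristic) and correctly identify $\mathfrak{H}(C,*)$ with the base field $K$ of the composition algebra rather than with some larger self-adjoint subalgebra; the remaining implications should be routine bookkeeping.
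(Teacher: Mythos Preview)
Your proposal is correct and matches the paper's intended argument. The paper states the corollary without proof immediately after the Fitting Lemma and Osborn's \thmref{thm:local-jordan}, so your cycle $(ii)\Leftrightarrow(i)\Rightarrow(iv)\Rightarrow(iii)\Rightarrow(ii)$ is precisely the route the placement of the corollary implies, and the care you flag at $(iv)\Rightarrow(iii)$ (surjectivity of $J\to\mathfrak{H}(A/\rad A,*)$ via $x\mapsto\tfrac12(x+x^*)$) is the only nontrivial check.
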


\begin{thm}\label{thm:indecomps}
A $p$-group $P$ of class 2 and exponent $p$ is centrally indecomposable 
if, and only if, one of the following holds with 
$G:=C_{\Aut P}(Z(P))/ O_p (C_{\Aut P}(Z(P)))$:
\begin{description}
\item[Abelian] $|P|=p$,
\item[Orthogonal] $G\cong O(1,p^e)\cong \mathbb{Z}_2$ with $p\neq 3$, or
$p=3$ and \\
$C_{\Aut P\circ P}(P')/ O_p(C_{\Aut P\circ P}(P'))\cong \GO^\pm(2,3^e)$;
\item[Unitary] $G\cong U(1,p^e)\cong \mathbb{Z}_{p^e+1}$,
\item[Exchange] $|P|\neq p$ and 
$G\cong \GL(1,p^e)\cong \mathbb{Z}_{p^e-1}$, or
\item[Symplectic] $G\cong \Sp(2,p^e)\cong \SL(2,p^e)$;
\end{description}
for some $e>0$.  
\end{thm}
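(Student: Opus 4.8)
The plan is to translate central indecomposability of $P$ into $\perp$-indecomposability of $b:=\Bi(P)$, read off the admissible semisimple quotient $C:=\Adj(b)/\rad\Adj(b)$ from the classification of composition algebras, and then identify $G$ with the isometry group of the associated rank-$1$ Hermitian $C$-form in each case. First I would clear away the degenerate bookkeeping. If $P$ is abelian, then exponent $p$ forces $P\cong\mathbb{Z}_p^n$, which is centrally indecomposable exactly when $n=1$; this is the \textbf{Abelian} case. Otherwise $P$ is non-abelian, and since a centrally indecomposable group is directly indecomposable, \lemref{lem:contract-1}.$(i)$ yields $P'=Z(P)$. Hence $C_{\Aut P}(Z(P))=C_{\Aut P}(P')$ and $b$ is a non-degenerate alternating (so Hermitian) map, and the machinery of \secref{sec:adj-sym} applies.

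Next, \thmref{thm:main-reduction}.$(i)$ gives that $P$ is centrally indecomposable if, and only if, $b$ is $\perp$-indecomposable, and \corref{coro:indecomp} shows this is equivalent to $C$ being an associative composition algebra. By \thmref{thm:Hurwitz}, over $K=\mathbb{F}_{p^e}$ the algebra $C$ is precisely one of the orthogonal ($K$), unitary ($F$), exchange ($K\oplus K$), or symplectic ($M_2(K)$) types. Because $C$ is a single composition algebra, in the notation of \thmref{thm:isom-classical} we are in the case $s=1$ with $\Adj(d)\cong C$ for an associated Hermitian $C$-form $d$; matching $M_n(C')\cong C$ against the four types (the symplectic case being $M_1(M_2(K))$, not $M_2(K)$ with orthogonal involution) forces $n=1$, in agreement with \propref{prop:orthogonal}.

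Now I would compute $G$. By \propref{prop:auto-isom}.$(iii)$, $C_{\Aut P}(P')\cong\Isom(b)\ltimes\Aut_\gamma P$ with $\Aut_\gamma P\cong\hom(V,W)$ a normal elementary abelian $p$-subgroup; hence $G\cong\Isom(b)/O_p(\Isom(b))$, which by \thmref{thm:isom-classical} (with $s=1$) equals $\Isom(d)$. Evaluating \propref{prop:isom-type} on the rank-$1$ form $d$ then produces the four cases: orthogonal gives $\GO(1,p^e)=\{\pm1\}\cong\mathbb{Z}_2$; unitary gives the norm-one torus $\GU(1,p^e)\cong\mathbb{Z}_{p^e+1}$; exchange gives $\GL(1,p^e)\cong\mathbb{Z}_{p^e-1}$ (with $|P|\neq p$, since $C=K\oplus K$ forces $\dim V\geq 2$); and symplectic gives $\Sp(2,p^e)\cong\SL(2,p^e)$. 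For the converse I would observe that these four isomorphism types occur only at $s=1$ and rank $1$: a product with $s>1$ factors is a nontrivial direct product, while a single factor of rank $n>1$ yields $\GO(n)$, $\GU(n)$, $\GL(n)$, or $\Sp(2n)$ with $n>1$, none of which appears on the list. Thus any such $G$ forces $b$ to be $\perp$-indecomposable and $P$ centrally indecomposable.

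The step I expect to be the real obstacle is the orthogonal/exchange collision at $p=3$: when $p=3$ and $e=1$ one has $\GO(1,3)\cong\GL(1,3)\cong\mathbb{Z}_2$, so $G$ alone cannot separate the two types. To break the tie I would pass to the central square $P\circ P$. By \exref{ex:cent-perp}, $\Bi(P\circ P)=b\perp b$, whose semisimple adjoint quotient is $\Adj(d\perp d)$ for the rank-$2$ form $d\perp d$ over the same $C$. Repeating the computation of the previous paragraph one rank higher, the orthogonal type yields $C_{\Aut P\circ P}(P')/O_p(C_{\Aut P\circ P}(P'))\cong\GO^\pm(2,3^e)$, while the exchange type yields $\GL(2,3^e)$; these differ in order, so the rank-$2$ invariant separates the cases for every $e$. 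The delicate point is verifying that $\perp$-summing two copies preserves the composition-algebra type and exactly doubles the rank of the semisimple quotient, with no spurious contribution to $\rad\Adj(b\perp b)$; this rests on the behaviour of $\Adj$ under $\perp$ together with \propref{prop:isom-type}.
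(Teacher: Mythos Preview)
Your approach is exactly the paper's: its entire proof is the one line ``This follows from \corref{coro:indecomp}, \thmref{thm:isom-classical} and \thmref{thm:main-reduction},'' and you have faithfully unpacked those citations, including the reduction to $P'=Z(P)$ via \lemref{lem:contract-1}, the identification $G\cong\Isom(b)/O_p(\Isom(b))\cong\Isom(d)$ with $d$ of rank~$1$, and the case split on the composition algebra $C$. Your handling of the $p=3$ orthogonal/exchange collision by passing to $P\circ P$ is precisely what is needed, and your concern about $\Adj(b\perp b)$ is answered in the paper by writing $b\perp b=d\otimes b$ for the $2$-dimensional dot product $d$ and applying \propref{prop:tensor}.

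There is, however, a genuine gap in your converse argument, and the paper's one-liner does not close it either. You invoke \thmref{thm:isom-classical} to compute $G$ assuming $P'=Z(P)$, but for a \emph{decomposable} $P$ this can fail: take $P=Q\times A$ with $Q$ non-abelian centrally indecomposable and $1\neq A\leq Z(P)$, as in \lemref{lem:contract-1}.$(i)$. A direct check shows that every $\varphi\in C_{\Aut P}(Z(P))$ has the form $(q,a)\mapsto(q\psi,\,a+q f)$ with $\psi\in C_{\Aut Q}(Q')$ and $f\in\hom(Q/Q',A)$, so modulo the $p$-core one gets $G(P)\cong G(Q)$. Thus if $Q$ is, say, of orthogonal type with $p\neq 3$, then $G(P)\cong\mathbb{Z}_2$ lands on the list even though $P$ is centrally decomposable. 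The ``if'' direction as literally stated therefore cannot hold without an additional hypothesis such as $P'=Z(P)$; the theorem is best read as a classification of the centrally indecomposable groups into types, with the forward direction carrying the content.
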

\begin{proof}
This follows from \corref{coro:indecomp}, \thmref{thm:isom-classical} and
\thmref{thm:main-reduction}.
\end{proof}
In Section \ref{sec:ex} we demonstrate that with the possible exception of the unitary type,  each of these types can occur.

%
%
\section{Isometry orbits of $\perp$-decompositions}\label{sec:main}

In this section we describe the orbits of $C_{\Aut P}(P')$ in its action on the set of fully refined central decompositions.  To do this, we define a computable $C_{\Aut P}(P')$-invariant for each fully refined central decomposition called its \emph{address}.  Then we
prove that any two fully refined central decompositions with the same address lie in the same orbit.

		%
		%
		\subsection{Addresses: $\mathcal{H}@$ and $\mathcal{X}@$}\label{sec:address}

\begin{defn}\label{def:address-simple}
Let $d:V\times V\to C$ be a non-degenerate Hermitian $C$-form.  
\begin{enumerate}[(i)]
\item Given a non-singular $x\in V$ (cf. \defref{def:non-singular}),
the \emph{address} of $X:=Cx$ is
\begin{equation*}
	X@:=d(x,x)N(C^\times),
\end{equation*}
as an element of $K^\times/N(C^\times)$.
\item $\mathcal{X}@ :=\{X@ : X\in\mathcal{X}\}$ (as a multiset indexed by
$\mathcal{X}$) for every fully refined $\perp$-decomposition $\mathcal{X}$ 
of $d$.
\end{enumerate}
\end{defn}

From \thmref{thm:Hurwitz} we know $N(C)=K$ if $C>K$ and therefore the addresses 
of non-singular points of a non-symmetric non-degenerate Hermitian $C$-form are all equal to $K^\times$.  However, for non-degenerate symmetric bilinear forms, the address is a coset of 
$(K^\times)^2$.
		
Let $d:V\times V\to K$ be a non-degenerate symmetric bilinear form.  
Fix $\omega\in K^\times-(K^\times)^2$.  Every address of a non-singular 
point of $V$ is either $[1]:=(K^\times)^2$ or $[\omega]:=\omega(K^\times)^2$.  If
$\mathcal{X}$ is an orthogonal basis of $d$, then for some $0\leq s\leq n$,
\begin{equation}\label{eq:address}
	\mathcal{X}@ =
		\{\overbrace{[1],\dots, [1]}^{n-s},\overbrace{[\omega],\dots,[\omega]}^s\},
		\qquad n=\dim V.
\end{equation}
We write $(n-s:s)$ for the address $\mathcal{X}@$.  

The discriminant of a Hermitian $C$-form $d$ is 
\begin{equation}
	\disc d=\prod_{X\in\mathcal{X}} X@
\end{equation}
as an element of $K^\times/N(C^\times)$ (cf. \eqref{def:disc}).  In particular,
if $d$ is symmetric then $\disc d=[\omega^s]$ (cf. \eqref{eq:address}).  
Otherwise we can regard the discriminant as trivial.

Let $P$ be a $p$-group $P$ of class $2$, exponent $p$, and $P'=Z(P)$.  Let $\mathcal{H}$ be a fully refined central decomposition of $P$, $\mathcal{X}:=\Bi(\mathcal{H})$, and $\mathcal{E}:=\mathcal{E}(\mathcal{X})$.  Using the notation of Section \ref{sec:links} and \propref{prop:links}, for each maximal $*$-ideal $I$ of $\Adj(\Bi(P))$, assign the address of $\mathcal{H}_I$, $\mathcal{X}_I$, $\mathcal{E}_I$, and $\mathcal{E}_{J/(I\intersect J)}$ as the address of $\mathcal{X}_{d(I)}$.  Finally, 
\begin{align}
	\mathcal{E}@ :=\{ (I,\mathcal{E}_I @) : I\in\spec_0 \Adj(\Bi(P))\},\\
	\mathcal{X}@ :=\{ (I,\mathcal{X}_I @) : I\in\spec_0 \Adj(\Bi(P))\},\\
	\mathcal{H}@ :=\{ (I,\mathcal{H}_I @) : I\in\spec_0 \Adj(\Bi(P))\}.
\end{align}
\begin{remark}
Recall that $\mathcal{X}_{d(I)}$ depends on the choice of non-degenerate
Hermitian $C$-form $d:=d(I):U\times U\to C$.  Any other choice is pseudo-isometric to $d$.  Suppose that $d':U'\times U'\to C$ is pseudo-isometric
to $d$ via $(\alpha,\beta)$.  Let $u\in U$ such that $d(u,u)\in K^\times$
(cf. \propref{prop:orthogonal}).  Then
\begin{equation}
	d(u,u)\beta=d(u\alpha,u\alpha)=\overline{d(u\alpha,u\alpha)}
		=\bar{\beta}d(u,u).
\end{equation}
Hence, $\beta=\bar{\beta}$; thus, $\beta\in K^\times$.  

The affect is that $\mathcal{X}_{d'} @ \beta=\mathcal{X}_d @$.  Therefore
the specific cosets in $K^\times/N(C^\times)$ are not significant, rather the
total number of members with the same address.  For finite fields the notation
$(n-s:s)$ is sufficient to record the address unambiguously.
\end{remark}

\begin{prop}\label{prop:address}
\begin{enumerate}[(i)]
\item If $\mathcal{X}$ is a fully refined $\perp$-decomposition of $b$ and 
$\varphi\in \Isom(b)$ then $X@=X\varphi @$ for all $X\in\mathcal{X}$.
\item If $\mathcal{H}$ is a fully refined central decomposition of $P$ and
$\varphi\in C_{\Aut P}(P')$ then $H@=H\varphi @$ for all $H\in\mathcal{H}$.
\end{enumerate}
\end{prop}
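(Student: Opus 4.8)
The plan is to prove $(i)$ first and then deduce $(ii)$ by transporting the statement through the equivalence of Section \ref{sec:p-bilinear}. The content of $(i)$ is that the address is an invariant of each $\perp$-factor under the isometry action, and the content of $(ii)$ is the same statement phrased for $p$-groups; since $C_{\Aut P}(P')$ corresponds to $\Isom(b)$ under $\Bi$ (\propref{prop:auto-isom}.$(iii)$ and \thmref{thm:main-reduction}.$(iii)(b)$), part $(ii)$ should follow formally once $(i)$ is established.

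For part $(i)$, recall that the address $X@$ was not defined on $\mathcal{X}$ directly but via the passage to the simple quotients: each $X \in \mathcal{X}$ lies in exactly one block $\mathcal{X}_I$ for a maximal $*$-ideal $I$, and $X@$ is read off as the address of the corresponding point $Ue\tau$ in the Hermitian $C$-form $d(I)$, namely $d(x,x)N(C^\times) \in K^\times/N(C^\times)$ for a non-singular representative $x$. So the first step is to check that an isometry $\varphi \in \Isom(b)$ respects this whole bookkeeping apparatus. Concretely: under the induced action $f \mapsto f^{\varphi} = \varphi^{-1} f \varphi$ on $\Adj(b)$ (\propref{prop:groups}.$(i)$), every maximal $*$-ideal $I$ is permuted among the maximal $*$-ideals. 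Since $\varphi$ fixes the target coset group $K^\times/N(C^\times)$ pointwise (the isometry acts trivially on $W$, so it induces the identity scalar on the form values after passing to $d(I)$), I expect the relevant $I$ to in fact be fixed, so that $X$ and $X\varphi$ lie over the same simple quotient. The heart of the matter is then: if $X = Ve$ for an idempotent $e \in \mathcal{E}$, then $X\varphi = V(e^{\varphi})$ (this is exactly \thmref{thm:perp-idemp}.$(\ref{fr:3})$ with $\hat\alpha = 1_W$), and the non-singular point representing $X\varphi$ in $d(I)$ has the same $d$-value as that for $X$. Because $\varphi$ is an isometry, $b(x,x) = b(x\varphi, x\varphi)$, and after reducing modulo $I$ and applying the fixed $*$-isomorphism $\tau$, the Hermitian value $d(x\alpha, x\alpha) = d(x,x)$ is unchanged (using that the induced scalar $\beta$ on $W/(\text{image in }C)$ is trivial for an isometry). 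Hence $(X\varphi)@ = d(x,x)N(C^\times) = X@$.

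The main obstacle I anticipate is the compatibility of the isometry action with the choice of $*$-isomorphism $\tau : A/I \to \Adj(d(I))$ used to define $\mathcal{X}_{d(I)}$, since $\tau$ was fixed somewhat arbitrarily (as flagged in Section \ref{sec:links} and the \remref{rem:exchange-symplectic} following \defref{def:address-simple}). An isometry $\varphi$ need not commute with $\tau$ on the nose; rather, it induces a pseudo-isometry of $d(I)$, and by the computation in the remark before this proposition any such pseudo-isometry scales the form by some $\beta \in K^\times$ with $\beta = \bar\beta$. The careful point is that for a genuine \emph{isometry} (as opposed to a pseudo-isometry) this scalar is trivial, so the address coset $d(x,x)N(C^\times)$ is preserved and not merely preserved up to a global $N(C^\times)$-shift. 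I would make this precise by tracking that the $W$-component of $\varphi$ is $1_W$ throughout, which forces the induced scaling on each $d(I)$ to be $1$, and this is what separates the isometry statement $(i)$ from the weaker pseudo-isometry invariance.

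For part $(ii)$, I would simply translate: given $\mathcal{H}$ with $P' = Z(P)$, set $b := \Bi(P)$ and $\mathcal{X} := \Bi(\mathcal{H})$, which is a fully refined $\perp$-decomposition by \thmref{thm:main-reduction}.$(ii)$. An automorphism $\varphi \in C_{\Aut P}(P')$ corresponds, via \propref{prop:auto-isom}.$(iii)$, to an isometry of $b$ modulo $\Aut_\gamma P$; the $\Aut_\gamma P$ part acts trivially on $V = P/P'$ and hence fixes each $\perp$-factor $HP'/P'$ setwise without altering the induced form, so only the $\Isom(b)$-component matters for the address. The address $H@$ is by definition $(\Bi(\mathcal{H}))$-side address of the factor $HP'/P'$, so $H@ = (HP'/P')@$ and $(H\varphi)@ = (H\varphi P'/P')@ = ((HP'/P')\beta)@$ where $\beta \in \Isom(b)$ is the isometry component of $\varphi$. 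Applying part $(i)$ to $\beta$ gives $((HP'/P')\beta)@ = (HP'/P')@ = H@$, completing the proof.
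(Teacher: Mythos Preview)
Your proposal is correct and follows essentially the same route as the paper: pass to each simple quotient $\Adj(b)/I\cong\Adj(d(I))$, observe that $\Isom(b)$ maps into $\Isom(d(I))$ (since the quotient is a $*$-homomorphism and $\varphi\varphi^*=1$ is preserved, cf.\ \propref{prop:groups}.$(ii)$), and then use that an isometry of $d$ fixes $d(x,x)$, hence the coset $d(x,x)N(C^\times)$; part $(ii)$ then follows from $(i)$ via \thmref{thm:main-reduction}.

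One small correction to your reasoning: your justification for why each maximal $*$-ideal $I$ is fixed by $\varphi$ is misplaced. This has nothing to do with $\varphi$ acting trivially on $W$. The action of $\varphi$ on $\Adj(b)$ is by conjugation $f\mapsto\varphi^{-1}f\varphi$ (\propref{prop:groups}.$(i)$), which is an inner automorphism of the associative algebra $\Adj(b)$ and therefore automatically fixes every two-sided ideal. Once this is noted, the induced map on $\Adj(b)/I$ is a $*$-automorphism sending $\varphi$ to a genuine isometry of $d(I)$, and the pseudo-isometry scaling issue you anticipate simply does not arise.
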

\begin{proof}
$(i)$.  Let $I\in\spec_0 \Adj(b)$ and $\Adj(b)/I\cong \Adj(d)$, 
$d:=d(I):U\times U\to C$.  By \propref{prop:groups}.$(ii)$, $\Isom(b)$ maps 
into $\Isom(d)$.  Let $X\in\mathcal{X}_I$ and $Cx$, $x\in U$, the corresponding 
member of $\mathcal{X}_{d(I)}$.  The address of $X$ is by definition the 
address of $Cx$.  As $d(x,x)=d(x\varphi,x\varphi)$ it follows that
$Cx\varphi @=Cx@$ and $X\varphi @=X@$.  $(ii)$.  This follows from $(i)$
and \thmref{thm:main-reduction}.
\end{proof}

		%
		%
		\subsection{Orbits of fully refined $\perp$-decompositions 
		of non-degenerate Hermitian $C$-forms}\label{sec:simples}

The theorems of this section are undoubtedly known, though with different terminology.

\begin{lemma}\label{lem:isom-inv}
Let $d:V\times V\to C$ be a non-degenerate Hermitian $C$-form and $\mathcal{X}$ a fully refined $\perp$-decompositions of $d$.  Then, for each $\varphi\in \Isom(d)$
there is a $\tau\in\Isom(d)$ which is a product of involutions and such that $X\varphi=X\tau$, for $X\in\mathcal{X}$.
\end{lemma}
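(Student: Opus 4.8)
The plan is to realise the prescribed bijection $X\mapsto X\varphi$ by transporting the factors of $\mathcal{X}$ one at a time, using the most basic isometric involutions (orthogonal symmetries) as building blocks. First I would record what these involutions are: if $V=V_+\perp V_-$ is any orthogonal splitting into non-degenerate subspaces, then the map $s$ acting as $1$ on $V_+$ and $-1$ on $V_-$ lies in $\Isom(d)$ and has $s^2=1$; conversely every isometric involution arises this way, since $s^2=1$ forces $s^*=s^{-1}=s$ (\propref{prop:groups}), so $s$ is self-adjoint and its $\pm1$-eigenspaces are orthogonal (for $a\in V_+$, $b\in V_-$ we get $d(a,b)=d(as,b)=d(a,bs)=-d(a,b)$, hence $d(a,b)=0$ as $\chr k\neq 2$). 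In particular, for a non-singular $x$ (\defref{def:non-singular}) the splitting $V=Cx\perp x^\perp$ of \propref{prop:orthogonal} gives the symmetry $s_x$, equal to $-1$ on $Cx$ and $1$ on $x^\perp$, as an involution in $\Isom(d)$. These $s_x$ are the factors out of which $\tau$ will be built.

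Next I would set up an induction on the rank $n$ of $V$. Using \propref{prop:orthogonal} and \corref{coro:orthonormal}, write $\mathcal{X}=\{X_1,\dots,X_n\}$ with $X_i=Cx_i$ for an orthogonal $C$-basis, and put $z_i:=x_i\varphi$, so that $Cz_i=X_i\varphi$ and $d(z_i,z_i)=d(x_i,x_i)$ since $\varphi$ is an isometry. The inductive step rests on a rank-two transport claim $(\star)$: \emph{if $u,u'$ are non-singular with $d(u,u)=d(u',u')$, there is a product $\sigma$ of at most two symmetries, supported on a non-degenerate subspace containing $u$ and $u'$, with $Cu\,\sigma=Cu'$ and $\sigma$ the identity on the orthogonal complement of that subspace.} Granting $(\star)$ with $u=x_1$, $u'=z_1$, I obtain $\sigma_1\in\Isom(d)$, a product of symmetries, with $X_1\sigma_1=Cz_1=X_1\varphi$. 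Because $\sigma_1$ is an isometry and $X_i\perp X_1$ for $i\geq 2$, the subspaces $X_i\sigma_1$ and the targets $Cz_i$ all lie in $W:=z_1^\perp$; these two orthogonal frames of the non-degenerate space $W$ have identical Gram data ($d(x_i\sigma_1,x_i\sigma_1)=d(x_i,x_i)=d(z_i,z_i)$), so the basis-matching map is an isometry of $W$ and the induction hypothesis supplies a product of symmetries supported in $W$ carrying each $X_i\sigma_1$ to $Cz_i$. Such symmetries fix $W^\perp=Cz_1$ pointwise, so composing with $\sigma_1$ yields $\tau$ with $X_i\tau=X_i\varphi$ for all $i$ (the base case $n=1$ being $\tau=1$). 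The essential simplification is that only the \emph{lines} $X_i$ must be matched, so the scalar by which a symmetry acts inside a line is irrelevant; this is why the symmetries suffice even though they need not generate all of $\Isom(d)$.

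The main obstacle is $(\star)$ itself over a general composition algebra $C$. The naive reflecting vector $v=u-u'$ fails because $d(u,u')$ need not lie in $K$: a direct computation shows that $s_{u-\mu u'}$ sends $u$ onto the line $Cu'$ exactly when $u-\mu u'$ is non-singular and the unit scalar $\mu$ can be chosen with $d(u,u')\bar\mu\in K$, a solvability condition controlled by whether $N(d(u,u'))$ is a norm in $K$. When a single symmetry is unavailable -- the candidate vectors $u\pm\mu u'$ being singular, or the scalar condition failing, both of which can genuinely occur in the exchange case $C=K\oplus K$ with its zero divisors -- I would route through an auxiliary non-singular line $Cu''$ of the same norm in the ambient (non-degenerate, rank $\geq 2$) subspace, transporting $Cu\to Cu''\to Cu'$ by one symmetry at each step. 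Producing a suitable $u''$ is where the finiteness of $K$ and the classification \thmref{thm:Hurwitz} enter (in particular $N(C)=K$ whenever $C>K$), and this case analysis, rather than the inductive scaffolding, is the real content of the argument.
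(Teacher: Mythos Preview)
Your approach is correct in outline but takes a genuinely different route from the paper. The paper's proof is essentially a one-paragraph appeal to \propref{prop:isom-type}: once $\Isom(d)$ is identified as a classical group over $K$, one quotes standard generation facts---orthogonal and symplectic groups (rank $\geq 2$) are generated by their involutions, so $\tau:=\varphi$ works outright; in the unitary and exchange cases the involutions generate a normal subgroup containing $\Isom(d)\cap\SL(V)$, so $\varphi$ differs from a product of involutions only by a diagonal element $\mu$ which already stabilises every member of $\mathcal{X}$, whence $\tau:=\mu^{-1}\varphi$ works. No induction, no transport of lines, no symmetry computations.

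By contrast you reprove the needed transitivity from scratch, building a Cartan--Dieudonn\'e style argument over a general composition algebra. Your inductive scaffolding is sound, and your identification of the transport claim $(\star)$ as the crux is exactly right. But you should be aware that $(\star)$ is not a formality: in the exchange case $C=K\oplus K$ your scalar condition $d(u,u')\bar\mu\in K$ with $N(\mu)=1$ becomes a square-detection problem in $K$ which can genuinely fail for every $\mu$, and in the symplectic case $C=M_2(K)$ one must also ensure that the reflecting vector $u-\mu u'$ spans a full $C$-line (not automatic when $C$ has zero divisors). Your two-step detour through an auxiliary $u''$ can be made to work over a finite field via counting, but carrying this out cleanly across all four types is comparable in effort to the classical generation facts the paper simply cites. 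What your approach buys is self-containment and an explicit $\tau$ built from rank-one symmetries rather than arbitrary involutions; what the paper's approach buys is brevity, at the cost of importing the structure theory of classical groups.
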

\begin{proof} If the rank of $V$ is 1 then let $\tau=1$.  So assume the rank
is greater than $1$.  By 
\propref{prop:isom-type}, we have the four classical groups to consider.
The orthogonal groups are generated by reflections so take $\tau:=\varphi$.  In the exchange, unitary, and symplectic cases, the rank of $V$ excludes the case
$GF(q)^\times$, $\GU(1,q)$ and $\Sp(2,q)$.  Therefore the relevant symplectic groups are generated by their involutions and again $\tau:=\varphi$.  In the exchange and unitary cases the involutions generate a normal subgroup $N\geq \Isom(d)\intersect \SL(V)$.  Therefore $\varphi\equiv \mu \pmod{N}$ where $\mu$ is a diagonalizable.  Without loss of generality, 
$\mathcal{X}\mu=\mathcal{X}$, so take $\tau:=\mu^{-1}\varphi\in N$.
\end{proof}

\begin{thm}\label{thm:address}
Let $d:V\times V\to C$ be a non-degenerate Hermitian $C$-form and $\mathcal{X}$
and $\mathcal{Y}$ fully refined $\perp$-decompositions of $d$.  Then
there is an isometry $\varphi$ of $d$ such that $\mathcal{X}\varphi=\mathcal{Y}$
if, and only if, $\mathcal{X}@=\mathcal{Y}@$.  Indeed, if $\phi:\mathcal{X}\to \mathcal{Y}$ is a bijection where $X\phi@=X@$ for each $X\in\mathcal{X}$, then 
$\varphi$ can be taken as a product of involutions where $X\varphi=X\phi$,
for each $X\in\mathcal{X}$.
\end{thm}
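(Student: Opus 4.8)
The plan is to prove both implications, with the forward direction immediate and the reverse direction carried by the orthogonal-basis structure of \propref{prop:orthogonal} together with \lemref{lem:isom-inv}. For the forward direction, suppose $\varphi\in\Isom(d)$ satisfies $\mathcal{X}\varphi=\mathcal{Y}$. Then for each $X\in\mathcal{X}$ we have $X\varphi\in\mathcal{Y}$, and \propref{prop:address}.$(i)$ gives $X\varphi@=X@$; hence the multisets $\mathcal{Y}@=\{X\varphi@:X\in\mathcal{X}\}$ and $\mathcal{X}@=\{X@:X\in\mathcal{X}\}$ coincide. Concretely this just records that $d(x\varphi,x\varphi)=d(x,x)$, so the coset $d(x,x)N(C^\times)$ is unchanged.

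For the converse it suffices to prove the stronger ``Indeed'' clause, since equality of the multisets $\mathcal{X}@=\mathcal{Y}@$ furnishes a bijection $\phi\colon\mathcal{X}\to\mathcal{Y}$ with $X\phi@=X@$ for every $X\in\mathcal{X}$. So fix such a $\phi$. By \propref{prop:orthogonal} both $\mathcal{X}$ and $\mathcal{Y}$ arise from orthogonal $C$-bases in which every member has rank $1$; thus I may write $X=Cx$ and $X\phi=Cy$ with $x,y$ non-singular. The equality $X\phi@=X@$ means $d(y,y)\in N(C^\times)d(x,x)$, say $d(y,y)=N(t)d(x,x)$ for some $t\in C^\times$. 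Replacing $y$ by $t^{-1}y$ (which still generates $X\phi$) yields $d(y,y)=d(x,x)$; doing this for every member produces orthogonal $C$-bases $\{x\}$ and $\{y\}$ with matching norms and $Cx=X$, $Cy=X\phi$.

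Next I would define the $C$-linear map $\varphi_0$ sending $x\mapsto y$ across the basis. On basis vectors $d(x\varphi_0,x'\varphi_0)=d(y,y')$ equals $0$ when $X\neq X'$ (orthogonality of $\mathcal{Y}$) and equals $d(x,x)$ when $X=X'$ (matched norms); since $d$ is $C$-sesquilinear (\defref{def:Hermitian}) this extends to $d(u\varphi_0,v\varphi_0)=d(u,v)$ for all $u,v$, so $\varphi_0\in\Isom(d)$, and by construction $X\varphi_0=Cy=X\phi$ for each $X$. This already settles the existence of an isometry realizing $\phi$. To upgrade $\varphi_0$ to a product of involutions I would invoke \lemref{lem:isom-inv}: applied to $\varphi_0$ and the decomposition $\mathcal{X}$ it produces $\tau\in\Isom(d)$, a product of involutions, with $X\tau=X\varphi_0=X\phi$ for all $X\in\mathcal{X}$; taking $\varphi:=\tau$ finishes the proof.

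The rescaling and the verification that $\varphi_0$ is an isometry are the only computations, and they are routine. The genuine content---that an isometry acting as $\phi$ on $\mathcal{X}$ can be chosen inside the subgroup generated by involutions---has already been isolated in \lemref{lem:isom-inv}, whose proof splits into the four classical cases via \propref{prop:isom-type}; so within this theorem that step is a citation rather than an obstacle. The one point that deserves care is the reduction of the multiset equality $\mathcal{X}@=\mathcal{Y}@$ to a norm-matching bijection: in the non-symmetric cases $N(C^\times)=K^\times$ forces every address to equal $K^\times$, so the condition is vacuous and $\Isom(d)$ is transitive on all fully refined $\perp$-decompositions, whereas in the symmetric case $K^\times/N(C^\times)=K^\times/(K^\times)^2$ is the genuine invariant recorded by $(n-s:s)$, and only decompositions sharing this datum can be matched.
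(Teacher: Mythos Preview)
Your proof is correct and essentially identical to the paper's: both directions are handled the same way, with the reverse direction building the isometry by choosing generators of each rank-$1$ factor, rescaling so the norms match (the paper absorbs the scalar into the map, writing $x\varphi=s_x y_x$, while you rescale $y$ first---these are the same move), and then invoking \lemref{lem:isom-inv} to pass to a product of involutions. Your closing remarks on the symmetric versus non-symmetric dichotomy are accurate commentary but not needed for the argument itself.
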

\begin{proof} Suppose $\mathcal{X}\varphi=\mathcal{Y}$ for some $\varphi\in \Isom(d)$.
Given $X\in \mathcal{X}$, $d(x\varphi,x\varphi)=d(x,x)$ for each $x\in X$; hence,
$X@$ equals $X\varphi@$.  Thus, the addresses of $\mathcal{X}$ and $\mathcal{Y}$ agree.

For the converse, suppose we have a bijection $\phi$ as described above.  Fix 
generators $x$ and $y_x$ for $X=Cx\in\mathcal{X}$ and $X\phi=Cy_x\in\mathcal{Y}$, respectively.  By assumption, there is an $s_x\in C^\times$ such that 
$d(x,x)=N(s_x) d(y_x,y_x)$.

Define $\varphi:V\to V$ by $x\varphi=s_x y_x$ for each 
$X=Cx \in \mathcal{X}$.  It follows that 
$d(x\varphi,x\varphi)=N(s_x)d(y_x,y_x)=d(x,x)$ for all 
$X=Cx\in \mathcal{X}$; thus, $\varphi\in \Isom(d)$.
Furthermore, $\mathcal{X}\varphi=\mathcal{Y}$ and $X\varphi=X\phi$.
To convert $\varphi$ into a product of involutions, invoke \lemref{lem:isom-inv}.
\end{proof}

We also require the following version of transitivity as well.
\begin{thm}\label{thm:singles}
Let $d:V\times V\to C$ be a non-degenerate Hermitian $C$-form.  If $X,Y\in V$ 
are non-singular points (\defref{def:non-singular}), then $X\varphi=Y$ 
for some $\varphi\in\Isom(d)$ if, and only if, $X@=Y@$.
\end{thm}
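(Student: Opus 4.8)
The plan is to follow the proof of \thmref{thm:address} almost verbatim, but with the single non-singular point $X$ in place of a full fully refined $\perp$-decomposition. Write $X=Cx$ and $Y=Cy$ for non-singular generators, so that $X@=d(x,x)N(C^\times)$ and $Y@=d(y,y)N(C^\times)$ in $K^\times/N(C^\times)$, where $d(x,x),d(y,y)\in K^\times$ by \defref{def:non-singular}. Since isometries of $d$ are $C$-linear (\propref{prop:isom-type}), an isometry carrying the point $X$ to the point $Y$ is exactly what must be built.

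The forward implication is immediate: if $\varphi\in\Isom(d)$ with $X\varphi=Y$, then $x\varphi$ generates $Y$, so $x\varphi=cy$ with $c\in C^\times$, and $d(x,x)=d(x\varphi,x\varphi)=d(cy,cy)=N(c)\,d(y,y)$ gives $X@=Y@$.

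For the converse I first normalize. As $X@=Y@$ there is $s\in C^\times$ with $d(x,x)=N(s)\,d(y,y)$; replacing $y$ by $sy$, which still generates $Y$ since $s$ is invertible, I may assume $d(x,x)=d(y,y)$. By \propref{prop:orthogonal} a non-singular point splits off orthogonally, so $V=Cx\perpsum x^\perp=Cy\perpsum y^\perp$ with both complements non-degenerate Hermitian $C$-forms of rank $n-1$, where $n$ is the rank of $d$. Granting an isometry $\psi\colon x^\perp\to y^\perp$ of the restricted forms, I define $\varphi\in\GL(V)$ by $cx\mapsto cy$ on $Cx$ and $\varphi|_{x^\perp}=\psi$. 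Checking $\varphi\in\Isom(d)$ on $u=cx+u'$, $v=c'x+v'$ (with $u',v'\in x^\perp$) is routine: the cross terms vanish by orthogonality, and the diagonal terms agree because $d(x,x)=d(y,y)$ and $\psi$ is an isometry. Then $X\varphi=C(x\varphi)=Cy=Y$, as required.

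The one substantive step, which I expect to be the main obstacle, is producing the isometry $\psi\colon d_{x^\perp}\to d_{y^\perp}$, and here the type of $C$ intervenes. When $C>K$ the form is not symmetric and $N(C)=K$ by \thmref{thm:Hurwitz}, so \corref{coro:orthonormal} supplies orthonormal bases of $x^\perp$ and $y^\perp$ of equal size, and the $C$-linear map sending one orthonormal basis to the other is the desired isometry. When $C=K$ the form is symmetric and I appeal to the discriminant classification of \secref{sec:sym-forms}: from $\disc d=X@\cdot\disc d_{x^\perp}=Y@\cdot\disc d_{y^\perp}$ together with $X@=Y@$ I obtain $\disc d_{x^\perp}=\disc d_{y^\perp}$, and equal rank with equal discriminant forces $d_{x^\perp}$ and $d_{y^\perp}$ to be isometric. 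In this symmetric case one can even avoid the complement altogether: after normalizing $d(x,x)=d(y,y)$, the norms of $x-y$ and $x+y$ sum to $4d(x,x)\neq 0$, so one of these vectors is anisotropic, and the corresponding reflection carries $x$ to $\pm y$, hence the point $X$ to $Y$. I anticipate that the discriminant bookkeeping is the only delicate point, the remainder being the orthogonal-splitting argument already employed in \propref{prop:orthogonal} and \thmref{thm:address}.
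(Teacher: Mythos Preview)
Your proof is correct and essentially the same as the paper's: both reduce to showing $d_{x^\perp}$ and $d_{y^\perp}$ are isometric via the discriminant identity $X@\cdot\disc d_{X^\perp}=\disc d=Y@\cdot\disc d_{Y^\perp}$ (resp.\ via orthonormal bases when $C>K$), then glue. The only cosmetic difference is that the paper packages the gluing step by extending $X$ and $Y$ to full orthogonal bases $\mathcal{X},\mathcal{Y}$ with $\mathcal{X}@=\mathcal{Y}@$ and invoking \thmref{thm:address} wholesale, whereas you normalize $d(x,x)=d(y,y)$ and write down the block-diagonal isometry directly; your reflection shortcut in the symmetric case is a pleasant extra not in the paper.
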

\begin{proof}
If $X\varphi = Y$ then $X@=Y@$.  

For the reverse direction suppose that $X@=Y@$.
Since $X@\disc d_{X^\perp}=\disc d=Y@\disc d_{Y^\perp}$, it
follows that $\disc d_{X^\perp}=\disc d_{Y^\perp}$.  By 
\eqref{eq:dot-prod} for the symmetric case and \propref{prop:orthogonal} for all other cases,
there are orthogonal bases $\mathcal{X}'$ of $d_{X^\perp}$ and $\mathcal{Y}'$ of 
$d_{Y^\perp}$ such that $\mathcal{X}'@=\{[1],\dots,[1],[\disc d_{X^\perp}]\}$ and 
$\mathcal{X}'@=\{[1],\dots,[1],[\disc d_{Y^\perp}]\}$.  Set $\mathcal{X}=\{X\}\sqcup\mathcal{X}'$ and $\mathcal{Y}:=\{Y\}\sqcup\mathcal{Y}'$.  Then $\mathcal{X}$ and
$\mathcal{Y}$ are fully refined $\perp$-decompositions of $d$.  Furthermore,
\begin{equation*}
\mathcal{X}@ = \{X@,[1],\dots,[1],[\disc d_{X^\perp}]\}
	= \{Y@,[1],\dots,[1],[\disc d_{Y^\perp}]\}
	=\mathcal{Y} @.
\end{equation*}
Therefore, by \thmref{thm:address}, there is a
$\varphi\in \Isom(d)$ such that $\mathcal{X}\tau=\mathcal{Y}$
and $X\varphi=Y$.
\end{proof}

		%
		%
\subsection{Orbits of frames in Jordan algebras}\label{sec:radicals}

In this section we determine the orbits of $\Isom(b)$ acting on fully refined $\perp$-decompositions of $b$, for an arbitrary Hermitian bilinear map $b:V\times V\to W$.  To do this we use frames, radicals, and  the semi-simple structure of the Jordan algebra $\Sym(b)$.
We caution that we make frequent use of Sections \ref{sec:Jordan} and \ref{sec:frames}, at times without specific reference.

Suppose $\mathcal{X}$ is a fully refined $\perp$-decomposition of $b$.  By \thmref{thm:perp-idemp}, $\mathcal{E}:=\mathcal{E}(\mathcal{X})$ is a frame of $\Sym(b)$.
We also know that $\Isom(b)$ acts on $\Sym(b)$ by conjugation (\thmref{thm:jordan})
and that $\mathcal{E}^\varphi = \mathcal{E}(\mathcal{X}\varphi)$ for each
$\varphi\in \Isom(b)$ (\thmref{thm:perp-idemp}).  Therefore, it suffices to work with the orbits of frames of $\Sym(b)$ under the action of $\Isom(b)$.  To make use of the Jordan algebra we also translate the action of $\Isom(b)$ into Jordan automorphisms of $\Sym(b)$ in the following way.

By \propref{prop:groups}.$(ii)$, every isometry $\varphi$ has the defining property $\varphi \varphi^*=1$. Hence, $\varphi\in \Sym(b)\intersect \Isom(b)$ if, and only if, $\varphi^2=1$.

\begin{defn}\label{def:inv}
Define $\Inv(J)=\langle U_x:x\in J, x^2=1\rangle\leq \GL(J)$ for a special Jordan algebra $J$.
\end{defn}
We consider only those Jordan algebras $J$ which are subalgebras or quotient algebras of a special Hermitian Jordan algebra such as $\Sym(b)$.  Note that if $x\in J$ with $x^2=1$ then $yU_x=x^{-1}yx=y^x$ for all $y\in J$.  Therefore each element of $\Inv(J)$ acts both as a product of $U$-operators and as conjugation.  So $\Inv(J)$ is a group of automorphisms of $J$ built from elements of $J$.

\begin{remark}\label{rem:inv-isom}
The group $\Inv(\Sym(b))$ is not contained in $\Isom(b)$ and we are careful to distinguish the action on $J:=\Sym(b)$ by the two groups as follows:  if $\varphi\in \Isom(b)$ then write $y^\varphi$ (cf. \propref{prop:groups}.$(i)$), and if $\varphi\in \Inv(J)$ then use the usual function notation $y\varphi$, for $y\in J$.  However, $\Inv(\Sym(b))$ embeds in $\Isom(b)$ by extending $U_x\mapsto x$, $x\in \Sym(b)$, $x^2=1$.
\end{remark}

By \defref{def:idem-jord}, if $e\in J$ is an idempotent then $JU_e=eJe$ is a
subalgebra with identity $e$.
\begin{prop}\label{prop:extend-inv}
Let $e$ be an idempotent in $J$.  Then $\Inv(JU_e)$ embeds in $\Inv(J)$
acting as the identity on $JU_{1-e}$.
\end{prop}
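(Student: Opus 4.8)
The plan is to build the embedding from the Peirce decomposition of $J$ relative to $e$, writing $J_1=JU_e=eJe$, $J_0=JU_{1-e}=(1-e)J(1-e)$, and $J_{1/2}$ for the remaining Peirce component, so that $J=J_1\oplus J_{1/2}\oplus J_0$. First I would check that $\psi\colon J_1\to J$, $y\mapsto y+(1-e)$, is a unital monomorphism of Jordan algebras which intertwines quadratic products: it sends the identity $e$ of $J_1$ to $1$, and expanding with the relations $ey=ye=y$, $(1-e)y=y(1-e)=0$ for $y\in eJe$ gives both $\psi(y)\bullet\psi(z)=\psi(y\bullet z)$ and $\psi(zU_y)=\psi(z)U_{\psi(y)}$. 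In particular, if $x^2=e$ then $\tilde{x}:=\psi(x)=x+(1-e)$ satisfies $\tilde{x}^2=1$, so $U_{\tilde{x}}\in\Inv(J)$. A direct computation then records the Peirce action of $U_{\tilde{x}}$: it restricts to $U_x$ on $J_1$, acts as the identity on $J_0$ (since $\tilde{x}$ acts there as $1-e$), and acts on $J_{1/2}$ by left multiplication by $x$. Thus the subgroup $G:=\langle U_{\tilde{x}}\mid x\in J_1,\ x^2=e\rangle\le\Inv(J)$ fixes $JU_{1-e}$ pointwise, and restriction to $J_1$ gives a homomorphism $r\colon G\to\GL(J_1)$ with image exactly $\Inv(JU_e)=\langle U_x\rangle$.

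Since $\Inv(JU_e)$ is by definition a faithful subgroup of $\GL(J_1)$, producing the embedding amounts to splitting the surjection $r\colon G\twoheadrightarrow\Inv(JU_e)$. To do this cleanly I would pass through the isometry realization. By \propref{prop:perp-idemp} the idempotent $e$ gives the $\perp$-decomposition $\{E,E^\perp\}$ of $V$ with $E=Ve$, hence an orthogonal splitting $b=b_E\perp b_{E^\perp}$, together with Jordan isomorphisms $JU_e\cong\Sym(b_E)$ and $JU_{1-e}\cong\Sym(b_{E^\perp})$. The extend-by-identity map $\Isom(b_E)\to\Isom(b)$, $\alpha\mapsto\alpha\oplus 1_{E^\perp}$, is then an injective homomorphism carrying involutions to involutions, and under the identification of $\Inv$ with the involution-generated subgroup of the isometry group (\remref{rem:inv-isom}, with the action on $J=\Sym(b)$ as in \thmref{thm:jordan} and \propref{prop:groups}) it realizes $U_x\mapsto U_{\tilde{x}}$ with $\tilde{x}=x\oplus 1_{E^\perp}=x+(1-e)$. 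Because each extended isometry is the identity on $E^\perp$, conjugation by it fixes every operator supported on $E^\perp$, i.e. every element of $JU_{1-e}$; this is where the asserted triviality on $JU_{1-e}$ comes from.

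The hard part will be injectivity, and it is genuinely the crux. The kernel of $r$ consists of elements of $G$ fixing both $J_1$ and $J_0$ while possibly acting by a sign on $J_{1/2}$; the culprits are the central involutions of $J_1$ (for instance $-e$ and scalar involutions), which act trivially by conjugation on $J_1$ but nontrivially by left multiplication on $J_{1/2}$. This is exactly why the naive generator assignment $U_x\mapsto U_{\tilde{x}}$ is not by itself well defined: in $\GL(J_1)$ one has $U_x=U_{-x}$, yet $U_{\tilde{x}}\neq U_{\widetilde{-x}}$. The isometry route is what resolves this, since as isometries $x$ and $-x$ are distinct and the extend-by-identity map is honestly injective; the step I would need to carry out carefully is the reconciliation of the two descriptions of $\Inv(JU_e)$ — as conjugations on $J_1$ versus as the involution-generated subgroup of $\Isom(b_E)$ — verifying that the sign ambiguity on $J_{1/2}$ is absorbed so that the resulting map is a single-valued injective homomorphism whose image fixes $JU_{1-e}$.
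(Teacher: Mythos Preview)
Your core construction is exactly the paper's: for $v\in JU_e$ with $v^2=e$, set $u=(1-e)+v$, verify $u^2=1$, and check that $U_u$ restricts to $U_v$ on $JU_e$ and to the identity on $JU_{1-e}$. The paper does precisely this in three lines and stops there, under the rubric ``it suffices to extend the generators.''

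Where you diverge is in the long discussion of injectivity. Your observation is correct: as operators on $J_1$ one has $U_x=U_{-x}$, while $U_{\tilde{x}}\neq U_{\widetilde{-x}}$ (they differ by a sign on $J_{1/2}$), so the restriction map $r\colon G\to\Inv(JU_e)$ has nontrivial kernel (for instance $U_{1-2e}\in\ker r$). The paper simply does not engage with this; the word ``embeds'' is used loosely, and the only property ever invoked downstream (in \lemref{lem:nil-fix}) is that each element of $\Inv(JU_e)$ admits \emph{some} lift in $\Inv(J)$ acting trivially on $JU_{1-e}$. That is what the generator extension delivers.

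Your proposed repair via $\Isom(b_E)\hookrightarrow\Isom(b)$ does not close the gap you identified. The passage from $\Inv(JU_e)\le\GL(J_1)$ to the involution-generated subgroup of $\Isom(b_E)$ in \remref{rem:inv-isom} carries the \emph{same} sign ambiguity ($U_x\mapsto x$ versus $U_x\mapsto -x$), so appealing to the honest injectivity of $\alpha\mapsto\alpha\oplus 1_{E^\perp}$ on isometries only relocates the problem. In short: your construction matches the paper, your worry is legitimate but not something the paper resolves or needs, and the detour through isometries does not buy you the single-valued section you are after.
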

\begin{proof}
It suffices to extend the generators of $\Inv(JU_e)$ to $J$.  Let $v\in JU_e$
with $v^2=e$.  Set $u:=(1-e)+v\in J$.  As $v=vU_e=eve$ it follows that 
$u^2=(1-e)^2+(1-e)eve+eve(1-e)+v^2=1$, so $U_u\in\Inv(J)$.  Furthermore,
if $x\in JU_e$, then $xU_u=xU_eU_u=((1-e)+v)exe((1-e)+v)=xU_v$.  Finally, if 
$x\in JU_{1-e}$, then $xU_u=xU_{1-e}U_u=((1-e)+v)(1-e)x(1-e)((1-e)+v)=x$.
\end{proof}

\begin{lemma}\cite[III.7,Lemma 4]{Jacobson:Jordan}\label{lem:inv-lift}
Let $N$ be a nil ideal in $J$.  If $N+u\in J/N$ with $u^2-1\in N$, 
then there is a $v\in J$ such that $N+u=N+v$ and $v^2=1$.
\end{lemma}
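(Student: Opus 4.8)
The plan is to lift $u$ to a genuine involution by extracting a square root inside the commutative associative subalgebra that $u$ generates; equivalently one could lift the idempotent $\tfrac12(1+u)$, but the square-root form is cleaner here. Write $z:=u^2-1$, which lies in $N$ by hypothesis and is therefore nilpotent, say $z^m=0$ (computed as a $\bullet$-power). The crucial structural point, recorded in \secref{sec:Jordan}, is that $u^i\bullet u^j=u^{i+j}$; hence the $k$-span $k[u]$ of $\{1,u,u^2,\dots\}$ is a commutative \emph{associative} subalgebra of $J$ on which $\bullet$ is ordinary polynomial multiplication. In particular every polynomial expression in $u$ that I form below automatically lies in $J$, and all manipulations may be carried out as in a commutative ring. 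Since $z$ is nilpotent, $1+z=u^2$ is a unit of $k[u]$ with inverse $\sum_{k\geq 0}(-z)^k$, a finite sum.

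Next I would produce a square root of $(1+z)^{-1}$ inside $k[u]$ by setting
\[
	w:=\sum_{k=0}^{m-1}\binom{-1/2}{k}z^k .
\]
Each coefficient makes sense in odd characteristic because $\binom{-1/2}{k}=(-1)^k\binom{2k}{k}/4^k$ has a denominator that is a power of $2$; and the power-series identity $\bigl(\sum_k\binom{-1/2}{k}T^k\bigr)^2=\sum_k(-1)^kT^k$ over $\mathbb{Z}[1/2]$ specializes, under $T\mapsto z$, to $w^2=\sum_{k\geq 0}(-z)^k=(1+z)^{-1}$ in $k[u]$. Thus $w^2(1+z)=1$.

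Finally I would put $v:=u\bullet w\in J$. Working in the commutative associative ring $k[u]$ gives $v^2=u^2w^2=(1+z)(1+z)^{-1}=1$, so $v$ is an involution. Moreover $w=1+\sum_{k\geq 1}\binom{-1/2}{k}z^k\in 1+N$, since $z\in N$ and $N$ is an ideal; hence $v=u\bullet w=u+u\bullet(w-1)$ with $u\bullet(w-1)\in N$, so that $N+v=N+u$, as required.

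The only genuine subtleties are the two I have flagged, and both are dispatched by the same device of retreating to $k[u]$: one must be sure the square-root construction stays inside the Jordan algebra $J$ rather than merely in the ambient $*$-algebra $A$, which is immediate once one observes that $k[u]\subseteq J$ is associative; and one must check that the square-root series is defined in characteristic $p$, which follows from the $2$-power denominators of $\binom{-1/2}{k}$. No use of the associative structure of $A$ beyond these observations is needed, so the argument is intrinsic to $J$.
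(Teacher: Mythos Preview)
Your proof is correct. The paper does not supply its own argument for this lemma; it simply cites Jacobson \cite[III.7, Lemma 4]{Jacobson:Jordan}. Your approach---retreating to the commutative associative subalgebra $k[u]\subseteq J$, extracting a square root of $(1+z)^{-1}$ via the binomial series (whose denominators are powers of $2$ and hence invertible in odd characteristic), and setting $v=uw$---is precisely the standard device and is what the cited proof in Jacobson amounts to.
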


\begin{prop}\label{prop:lift-inv}
\begin{enumerate}[(i)]
\item If $\varphi\in \Inv(J)$ then $(\rad J)\varphi = \rad J$ and 
$\varphi|_{J/\rad J}\in \Inv(J/\rad J)$.
\item Suppose $N\normaleq J$ and $N$ is 
nil (in particular for $N\subseteq \rad J$).  Then for each
$\hat{\varphi}\in\Inv(J/N)$ there is a $\varphi\in\Inv(J)$ such 
that $\varphi|_{J/N}=\hat{\varphi}$.  
\end{enumerate}
\end{prop}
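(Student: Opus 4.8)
The plan is to reduce both parts to the behaviour of the generating operators $U_x$ (with $x^2=1$) of $\Inv(J)$ under the natural quotient map, and then to get the surjectivity in $(ii)$ by lifting involutions one generator at a time via \lemref{lem:inv-lift}. The fact driving everything is that for any ideal $I$ of $J$ and any $x\in J$ one has $IU_x\subseteq I$; this is immediate from the identity $aU_x=2(a\bullet x)\bullet x-a\bullet x^2$, since each term on the right lies in $I$ once $a\in I$. A generator $U_x$ of $\Inv(J)$ satisfies $U_x^2=U_{x^2}=1$ and is thus invertible, so $IU_x\subseteq I$ forces $IU_x=I$. Taking $I=\rad J$, which is a nil ideal by \thmref{thm:herm-structure}.$(i)$, gives $(\rad J)\varphi=\rad J$ for every $\varphi\in\Inv(J)$, the first claim of $(i)$; taking $I=N$ shows likewise that every $\varphi\in\Inv(J)$ stabilizes the nil ideal $N$, so that restriction to $J/N$ is defined and the statement of $(ii)$ makes sense.

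Next I would use that a Jordan homomorphism commutes with quadratic operators: writing $\pi\colon J\to J/M$ for the quotient ($M=\rad J$ in $(i)$, $M=N$ in $(ii)$) and $\overline{x}:=\pi(x)$, we have $\pi(yU_x)=\overline{y}\,U_{\overline{x}}$, so $U_x$ descends to $U_{\overline{x}}$. When $x^2=1$ the image satisfies $\overline{x}^2=\overline{1}=1$, whence $U_{\overline{x}}$ is a generator of $\Inv(J/M)$. Thus a product $\varphi=U_{x_1}\cdots U_{x_m}\in\Inv(J)$ restricts to $U_{\overline{x_1}}\cdots U_{\overline{x_m}}\in\Inv(J/\rad J)$, finishing $(i)$.

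For $(ii)$ I would lift generators. By definition an arbitrary $\hat{\varphi}\in\Inv(J/N)$ is a finite product $U_{\overline{w_1}}\cdots U_{\overline{w_m}}$ with each $\overline{w_i}^2=1$. Choosing any preimage $u_i\in J$ of $\overline{w_i}$ gives $u_i^2-1\in N$, so \lemref{lem:inv-lift} furnishes $v_i\in J$ with $v_i^2=1$ and $v_i+N=u_i+N$. Then $U_{v_i}\in\Inv(J)$ descends to $U_{\overline{v_i}}=U_{\overline{w_i}}$ by the intertwining above, and $\varphi:=U_{v_1}\cdots U_{v_m}$ restricts to $\hat{\varphi}$ on $J/N$. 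The genuine content sits entirely in \lemref{lem:inv-lift}; granting it, the argument is bookkeeping, and the one point demanding care is keeping the $U$-operator description of $\Inv$ consistent with its conjugation action through the quotient, which is exactly what the identity $\pi(yU_x)=\overline{y}\,U_{\overline{x}}$ guarantees.
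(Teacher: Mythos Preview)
Your proof is correct and follows essentially the same route as the paper. The only cosmetic difference is in part~$(i)$: the paper simply invokes that each $U_x$ with $x^2=1$ acts as conjugation by $x$ and hence as a Jordan automorphism of $J$, so the radical (an intersection of maximal inner ideals) is automatically preserved; you instead verify ideal-invariance directly via the identity $aU_x=2(a\bullet x)\bullet x-a\bullet x^2$. For part~$(ii)$ your argument---lift each generating involution through \lemref{lem:inv-lift} and then compose---is exactly the paper's.
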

\begin{proof}
(i) $\Inv(J)$ is a subgroup of the automorphism group of $J$ and so maximal 
inner ideals are mapped to 
maximal inner ideals and the radical is preserved.  Since involutions of $J$ 
are sent to involutions of $J/\rad J$, it follows that 
$\Inv(J)|_{J/\rad J}\leq \Inv(J/\rad J)$.

(ii) By definition $\Inv(J/N)$ is generated by the $U_{\hat{v}}$ for which $\hat{v}$
is an involution of $J/N$.  For each $\hat{v}$, by \lemref{lem:inv-lift} there 
is an involution $v\in J$ such that $\hat{v}=v+N$.  Thus 
$U_{\hat{v}}=U_{v+N}=(U_v)|_{J/N}$, $U_v\in \Inv(J)$.
\end{proof}

\begin{lemma}\label{lem:nil-idemp}
Let $e,e'\in J$ be orthogonal idempotents.  If $z\in J$ such that
$z^2=0$ and $e+z$ is an idempotent, then there is a $v\in J$ such that
(i) $v^2=1$, (ii) $eU_v=e+z$ and (iii) $e'U_v=e'-2e'\bullet z+e'U_z$.
\end{lemma}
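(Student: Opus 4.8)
The plan is to work inside the ambient associative $*$-algebra $A$, so that $J=\mathfrak{H}(A,*)$ with $yU_x=xyx$ and $x\bullet y=\frac{1}{2}(xy+yx)$, and then to exhibit $v$ explicitly rather than invoke an existence result such as \lemref{lem:inv-lift}. First I would distill the single relation that the hypotheses actually encode: expanding $(e+z)^2=e+z$ and cancelling using $e^2=e$ and $z^2=0$ yields
\begin{equation*}
	ez+ze=z.
\end{equation*}
This, together with $z^2=0$ and the orthogonality $ee'=e'e=0$, is all I expect to need; in particular I do not anticipate needing the full Peirce decomposition of $z$ relative to $e$.

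Next I would propose the candidate
\begin{equation*}
	v:=2e-1+z.
\end{equation*}
Since $e,z\in J$ are self-adjoint and $1^*=1$, the element $v$ is automatically self-adjoint, so once $v^2=1$ is checked it is a genuine involution lying in $J$. The idea behind this choice is that $2e-1$ is the obvious sign involution attached to $e$, and adding the off-diagonal perturbation $z$ is exactly what shifts $e$ to $e+z$ while leaving $v^2=1$ intact.

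For $(i)$ I would expand $v^2=(2e-1)^2+(2e-1)z+z(2e-1)+z^2$ and simplify: the first term is $1$, the last is $0$, and the two cross terms collapse to $2(ez+ze)-2z=0$ by the relation above, giving $v^2=1$. The one auxiliary identity driving $(ii)$ and $(iii)$ is $zez=0$, which I would obtain at once by writing $ze=z-ez$ and computing $zez=(z-ez)z=z^2-ez^2=0$. With this in hand, a direct expansion of $vev$ reduces to $e+ez+ze=e+z$, proving $(ii)$. For $(iii)$, orthogonality gives $(2e-1)e'=e'(2e-1)=-e'$, so $ve'v$ collapses to $e'-e'z-ze'+ze'z$, which is exactly $e'-2e'\bullet z+e'U_z$ once one recalls $e'U_z=ze'z$ and $2e'\bullet z=e'z+ze'$.

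The main obstacle here is entirely in the discovery of the correct $v$: once the candidate $2e-1+z$ is written down, each of the three verifications is a short associative computation, and the only non-mechanical simplification, the vanishing $zez=0$, falls out immediately from $z^2=0$ and $ez+ze=z$. Thus I expect no genuine difficulty beyond guessing the right perturbed involution, and in particular no lifting lemma or structure theory is required.
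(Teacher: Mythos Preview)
Your proof is correct and is essentially the paper's own argument: the paper takes $v=1-2e-z$, which is the negative of your $v=2e-1+z$, and since $U_{-v}=U_v$ the two choices yield identical conjugations and identical verifications of $(i)$--$(iii)$. The auxiliary identities you isolate, $ez+ze=z$ and $zez=0$, are exactly those used in the paper.
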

\begin{proof}
Let $v=1-2e-z$.

$(i)$.  Since $e+z=(e+z)^2=e+ez+ze$ it follows that $z=ez+ze$.  
Hence,  $v^2=1-4e+4e^2-2z + 2ez+2ze+z^2=1$. 
For $(ii)$ note that $0=z^2=ez^2+zez$ so that $zez=0$.  Thus,
\begin{align*}
(1-2e-z)e(1-2e-z)
	& = ((1-2e-z)e)(e(1-2e-z))\\
	& = (e+ze)(e+ez)
	= e+ez+ze
	=e +z.
\end{align*}
So $eU_v=e+z$.  Finally for (iii):
\begin{equation*}
e'U_v 
	= (1-2e-z)e'(1-2e-z)
	=(e'-ze')(e'-e'z)
	=e'-2e'\bullet z+e'U_z.
\end{equation*}
\end{proof}

\begin{lemma}\label{lem:nil-fix}
Let $N$ be an ideal in $J$ such that $N^2=0$.
If $\mathcal{E}$ and $\mathcal{F}$ are both sets of supplementary idempotents of $J$ such that $\mathcal{E}\equiv \mathcal{F}\pmod{N}$, then there is  $\varphi\in \Inv(J)$
such that $\mathcal{E}\varphi=\mathcal{F}$.
\end{lemma}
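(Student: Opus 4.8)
The plan is to induct on $n:=|\mathcal{E}|$, moving one idempotent at a time while keeping the remaining idempotents congruent to their targets modulo $N$. Write $\mathcal{E}=\{e_1,\dots,e_n\}$ and $\mathcal{F}=\{f_1,\dots,f_n\}$ with $e_i\equiv f_i\pmod N$; since orthogonal nonzero idempotents stay distinct after reduction modulo the nil ideal $N$ (if $\bar e=\bar{e'}$ for orthogonal $e,e'$ then $e=-e\bullet(e'-e)\in N$, forcing $e=0$), this pairing is canonical. The base case $n=1$ forces $\mathcal{E}=\{1\}=\mathcal{F}$, so $\varphi=1$ works.

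For the inductive step, set $z_1:=f_1-e_1\in N$. Because $N^2=0$ we have $z_1^2=z_1\bullet z_1=0$, and $f_1=e_1+z_1$ is an idempotent, so \lemref{lem:nil-idemp} (with $e=e_1$) supplies $v_1\in J$ with $v_1^2=1$, $e_1U_{v_1}=f_1$, and $e_jU_{v_1}=e_j-2e_j\bullet z_1+e_jU_{z_1}$ for $j\ge 2$. Put $e_j':=e_jU_{v_1}$. Since $N$ is an ideal, and hence an inner ideal, both $e_j\bullet z_1$ and $e_jU_{z_1}$ lie in $N$, so $e_j'\equiv e_j\equiv f_j\pmod N$; and as $U_{v_1}\in\Inv(J)$ is a Jordan automorphism fixing $1=v_1^2$, the set $\{f_1,e_2',\dots,e_n'\}$ is again a set of supplementary idempotents.

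Now work inside the Peirce-$0$-algebra $J':=JU_{1-f_1}$, whose identity is $1-f_1$. Both $\{e_2',\dots,e_n'\}$ and $\{f_2,\dots,f_n\}$ are supplementary idempotents of $J'$, since each such member is orthogonal to $f_1$. The ideal $N':=N\intersect J'$ of $J'$ satisfies $(N')^2\subseteq N^2=0$, and $e_j'\equiv f_j\pmod{N'}$ because $e_j'-f_j\in N\intersect J'$. By induction there is $\varphi'\in\Inv(J')$ with $e_j'\varphi'=f_j$ for $j\ge 2$. Applying \propref{prop:extend-inv} to the idempotent $1-f_1$ (so that $JU_{1-f_1}=J'$), $\varphi'$ extends to $\varphi''\in\Inv(J)$ acting as the identity on $JU_{f_1}$, and in particular fixing $f_1$. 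Then $\varphi:=U_{v_1}\varphi''\in\Inv(J)$ satisfies $e_1\varphi=f_1\varphi''=f_1$ and $e_j\varphi=e_j'\varphi''=f_j$ for $j\ge 2$, whence $\mathcal{E}\varphi=\mathcal{F}$.

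The main obstacle is bookkeeping rather than any deep difficulty: one must check that a single application of \lemref{lem:nil-idemp} neither destroys the supplementary-idempotent structure nor spoils the congruence $e_j'\equiv f_j$ of the not-yet-matched idempotents, and that all remaining corrections can be confined to the complement $JU_{1-f_1}$ of the idempotent just fixed, so that \propref{prop:extend-inv} lets the inductively produced automorphism leave $f_1$ untouched.
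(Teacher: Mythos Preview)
Your proof is correct and follows essentially the same route as the paper: induct on $|\mathcal{E}|$, use \lemref{lem:nil-idemp} to move one idempotent onto its target while keeping the remaining ones congruent modulo $N$, and then apply the inductive hypothesis inside $JU_{1-f_1}$ together with \propref{prop:extend-inv} to finish. Your treatment of the base case and of the canonical pairing (via the nil-idempotent argument) is slightly more explicit than the paper's, but the core argument is the same.
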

\begin{proof}
Take $e\in\mathcal{E}-\mathcal{F}$ and $f=e+z\in\mathcal{F}$, $z\in N$ so that $z^2=0$.
By \lemref{lem:nil-idemp}.(i,ii), there is an involution $v\in J$ such that $eU_v=e+z=f$.
Hence, $\mathcal{E}':=\mathcal{E}U_v$ is a supplementary set of idempotents of $J$.  By 
\lemref{lem:nil-idemp}(iii), $\mathcal{E}'\equiv \mathcal{E} \pmod{N}$ so that 
$\mathcal{E}'\equiv \mathcal{F}\pmod{N}$.  Also, $f\in\mathcal{E}'\intersect \mathcal{F}$.

We now induct on the size of $\mathcal{E}$.  In the base case $\mathcal{E}=\{e\}$ 
and $\mathcal{F}=\{f\}$, so $\mathcal{E}U_v=\mathcal{E}'=\mathcal{F}$.  Otherwise,
as $\mathcal{E}'$ is a set of supplementary idempotents, for all $e'\in\mathcal{E}'-\{f\}$, $e'U_{1-f}=e'$ so $\mathcal{E}'-\{f\}=\mathcal{E}'U_{1-f}-\{0\}$ and similarly
$\mathcal{F}-\{f\}=\mathcal{F}U_{1-f}-\{0\}$.  So $\mathcal{E}'-\{f\}$ and 
$\mathcal{F}-\{f\}$ are both sets of supplementary idempotents in $JU_{1-f}$, where 
$\mathcal{E}'-\{f\}\equiv\mathcal{F}-\{f\} \pmod{NU_{1-f}}$. 
By induction there is a  $\tau'\in \Inv(JU_{1-f})$ such that 
$(\mathcal{E}'-\{f\})\tau'=\mathcal{F}-\{f\}$.  By \propref{prop:extend-inv} 
there is a $\tau\in\Inv(J)$ extending $\tau'$ to $J$ so that $\tau$ is 
the identity on $JU_f$.  So $\mathcal{E}'\tau=\mathcal{F}$.  Thus 
$U_v \tau\in \Inv(J)$ with $\mathcal{E}U_v\tau=\mathcal{F}$.
\end{proof}

\begin{prop}\label{prop:reduce-semi-simple}
Two sets of supplementary idempotents of $J$ are equivalent under the action of $\Inv(J)$ if, and only if, their images in $J/\rad J$ are equivalent under the action of $\Inv(J/\rad J)$.
\end{prop}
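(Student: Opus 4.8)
The plan is to prove both implications by passing across the quotient map $J\to J/\rad J$, using \propref{prop:lift-inv} to move elements of $\Inv$ up and down and \lemref{lem:nil-fix} to absorb the radical one square-zero layer at a time.

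The forward implication I would obtain immediately. If $\varphi\in\Inv(J)$ satisfies $\mathcal{E}\varphi=\mathcal{F}$, then by \propref{prop:lift-inv}.$(i)$ the induced map $\bar\varphi:=\varphi|_{J/\rad J}$ lies in $\Inv(J/\rad J)$; since reduction modulo $\rad J$ carries supplementary idempotents to supplementary idempotents and satisfies $\overline{x\varphi}=\bar x\,\bar\varphi$, the images obey $\bar{\mathcal{E}}\bar\varphi=\bar{\mathcal{F}}$.

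For the converse, start from $\hat\varphi\in\Inv(J/\rad J)$ with $\bar{\mathcal{E}}\hat\varphi=\bar{\mathcal{F}}$. As $\rad J$ is nil (\thmref{thm:herm-structure}.$(i)$), \propref{prop:lift-inv}.$(ii)$ furnishes $\varphi\in\Inv(J)$ restricting to $\hat\varphi$; replacing $\mathcal{E}$ by $\mathcal{E}\varphi$ reduces the problem to the case $\mathcal{E}\equiv\mathcal{F}\pmod{\rad J}$, where one must produce $\psi\in\Inv(J)$ with $\mathcal{E}\psi=\mathcal{F}$. Here \lemref{lem:nil-fix} cannot be applied directly, because it requires an ideal of \emph{square zero} while $\rad J$ is only nilpotent, so I would filter the radical. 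Since $\rad J=J\intersect\rad A$ and $\rad A$ is a nilpotent $*$-ideal (\thmref{thm:*-structure}.$(i)$), each associative power $(\rad A)^j$ is again a $*$-ideal, so
\begin{equation*}
	N_j:=J\intersect(\rad A)^j
\end{equation*}
is a Jordan ideal of $J$. These satisfy $N_1=\rad J$, $N_m=0$ for some $m$, and $N_j\bullet N_j\subseteq N_{j+1}$: whenever $x,y\in N_j$ the element $x\bullet y=\tfrac{1}{2}(xy+yx)$ is self-adjoint and lies in $(\rad A)^{2j}\subseteq(\rad A)^{j+1}$, hence in $N_{j+1}$.

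Finally I would telescope down this chain. Assuming $\mathcal{E}\equiv\mathcal{F}\pmod{N_j}$, I pass to $\tilde J:=J/N_{j+1}$, in which $\tilde N:=N_j/N_{j+1}$ is an ideal with $\tilde N\bullet\tilde N=0$ and the images of $\mathcal{E},\mathcal{F}$ are congruent modulo $\tilde N$; \lemref{lem:nil-fix} then yields $\tilde\psi_j\in\Inv(\tilde J)$ matching them, and \propref{prop:lift-inv}.$(ii)$ (with the nil ideal $N_{j+1}\subseteq\rad J$) lifts it to $\psi_j\in\Inv(J)$ with $\mathcal{E}\psi_j\equiv\mathcal{F}\pmod{N_{j+1}}$. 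Iterating from $j=1$ to $m-1$ sharpens the congruence layer by layer until it becomes equality modulo $N_m=0$, and the composite of the $\psi_j$ with the initial $\varphi$ is the required element of $\Inv(J)$. The step needing the most care is the verification that $\{N_j\}$ is a chain of Jordan ideals with square-zero successive quotients, since this is exactly what licenses the repeated use of \lemref{lem:nil-fix}.
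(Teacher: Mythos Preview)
Your proof is correct and follows essentially the same strategy as the paper's: lift via \propref{prop:lift-inv}, then peel off the radical in square-zero layers using \lemref{lem:nil-fix}. The only difference is cosmetic---you filter explicitly by $N_j=J\cap(\rad A)^j$ using the ambient associative structure, whereas the paper inducts on $\dim\rad J$ and invokes \cite[Lemma V.2.2]{Jacobson:Jordan} to produce at each stage an ideal $M$ with $N^2\subseteq M\subset N$.
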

\begin{proof}
The forward direction follows from \propref{prop:lift-inv}.$(i)$.  For the converse, let 
$\mathcal{E}$ and $\mathcal{F}$ be sets of supplementary idempotents of $J$ such that 
$\mathcal{E}\tilde{\varphi}\equiv \mathcal{F} \pmod{\rad J}$ for some
$\tilde{\varphi}\in \Inv(J/\rad J)$.  By \propref{prop:lift-inv}.$(ii)$ we can replace $\tilde{\varphi}$ with some $\varphi\in \Inv(J)$.  

We will induct on the dimension of $\rad J$. In the base case $\rad J=0$ and the
result is clear.  Now suppose $N:=\rad J>0$.  By \cite[Lemma V.2.2]{Jacobson:Jordan} 
there is an ideal $M$ of $J$ such that $N^2\subseteq M\subset N$.  Then $\mathcal{E}\varphi \equiv \mathcal{F} \pmod{N/M}$ in $J/M$ and $(N/M)^2=0$, so by \lemref{lem:nil-fix} there is a $\tilde{\mu}\in \Inv(J/M)$ such that $\mathcal{E}\varphi \tilde{\mu}\equiv\mathcal{F} \pmod{M}$.  By \propref{prop:lift-inv}.$(ii)$, $\hat{\mu}$ lifts to some $\mu\in \Inv(J)$ such that $\mathcal{E}\varphi \mu\equiv \mathcal{F}\pmod{M}$.  As $M$ is a nil ideal properly contained in $N$, using $M$ in the r\^{o}le of $N$ and inducting we find a $\tau\in\Inv(J)$ such that $\mathcal{E}\varphi\mu\tau=\mathcal{F}$.
\end{proof}

\begin{thm}\label{thm:inv-trans}
$\Inv(J)$ is transitive on the set of frames of $\Sym(b)$ which have any given address.
\end{thm}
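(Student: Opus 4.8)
The plan is to reduce to the semisimple quotient $J/\rad J$ and then treat each simple factor separately with \thmref{thm:address}. By \propref{prop:reduce-semi-simple}, two frames of $J=\Sym(b)$, viewed as sets of supplementary idempotents, lie in the same $\Inv(J)$-orbit if and only if their images in $J/\rad J$ lie in the same $\Inv(J/\rad J)$-orbit. Moreover the address $\mathcal{E}@$ was defined (see \secref{sec:address}) entirely in terms of the images $\mathcal{E}_{J/(I\intersect J)}$, so the address descends to $J/\rad J$ unchanged. Thus it suffices to show: if $\bar{\mathcal{E}}$ and $\bar{\mathcal{F}}$ are frames of the semisimple algebra $\bar{J}:=J/\rad J$ with $\bar{\mathcal{E}}@=\bar{\mathcal{F}}@$, then some element of $\Inv(\bar{J})$ carries $\bar{\mathcal{E}}$ to $\bar{\mathcal{F}}$.

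Next I would exploit the semisimple structure. By \thmref{thm:herm-structure}, $\bar{J}\cong \Sym(d_1)\times\cdots\times\Sym(d_s)$, where $\Sym(d_i)\cong \bar{J}U_{\epsilon_i}$ for the central idempotent $\epsilon_i$ cutting out the $i$-th factor, and the factors correspond to the maximal $*$-ideals $I_i$ of $\Adj(b)$. Since every primitive idempotent of a direct product is supported on a single factor, each frame splits as a disjoint union $\bar{\mathcal{E}}=\bigsqcup_i \mathcal{E}_{I_i}$ with $\mathcal{E}_{I_i}$ a frame of $\Sym(d_i)$; by construction these are the sets $\mathcal{E}_{J/(I_i\intersect J)}$ of \secref{sec:links}, corresponding to fully refined $\perp$-decompositions $\mathcal{X}_{d(I_i)}$ of $d_i$ via \thmref{thm:perp-idemp}. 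The hypothesis $\bar{\mathcal{E}}@=\bar{\mathcal{F}}@$ then says precisely that in each factor the decompositions $\mathcal{X}_{d(I_i)}$ arising from $\bar{\mathcal{E}}$ and from $\bar{\mathcal{F}}$ have equal addresses.

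I would then apply \thmref{thm:address} factor by factor. For each $i$ it produces an isometry $\varphi_i\in\Isom(d_i)$ that is a product of involutions and carries the $\bar{\mathcal{E}}$-decomposition to the $\bar{\mathcal{F}}$-decomposition. The key translation is that an involution $x\in\Isom(d_i)$ satisfies both $x^2=1$ and $xx^*=1$, hence $x^*=x$: the involutions of the isometry group are exactly the self-adjoint square roots of $1$, that is, the generators of $\Inv(\Sym(d_i))$ (cf. the discussion before \defref{def:inv} and \remref{rem:inv-isom}). Writing $\varphi_i=x_{i,1}\cdots x_{i,m_i}$, conjugation by $\varphi_i$ acts on $\Sym(d_i)$ as $U_{x_{i,1}}\cdots U_{x_{i,m_i}}\in\Inv(\Sym(d_i))$, and by \thmref{thm:perp-idemp}.$(iii)$ this element sends $\mathcal{E}_{I_i}$ to $\mathcal{F}_{I_i}$. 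Using \propref{prop:extend-inv} to embed $\Inv(\Sym(d_i))=\Inv(\bar{J}U_{\epsilon_i})$ into $\Inv(\bar{J})$ acting trivially on the remaining factors, the product of these embedded elements is a single $\hat{\varphi}\in\Inv(\bar{J})$ with $\bar{\mathcal{E}}\hat{\varphi}=\bar{\mathcal{F}}$. Lifting through \propref{prop:reduce-semi-simple} then gives the desired element of $\Inv(J)$.

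I expect the main obstacle to be the bookkeeping that links the three incarnations of the data---the abstract address $\mathcal{E}@$, the per-factor frames $\mathcal{E}_{J/(I\intersect J)}$, and the $\perp$-decompositions $\mathcal{X}_{d(I)}$ on which \thmref{thm:address} operates---together with the verification that a product-of-involutions isometry genuinely realizes an element of $\Inv(\bar{J})$ acting on frames in the same way. Once these identifications are fixed, the argument is a clean assembly of \propref{prop:reduce-semi-simple}, \thmref{thm:address}, and \propref{prop:extend-inv}.
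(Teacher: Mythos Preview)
Your proposal is correct and follows essentially the same route as the paper: reduce to the semisimple quotient via \propref{prop:reduce-semi-simple}, split the frame over the simple factors of $J/\rad J$ using \thmref{thm:herm-structure}, and then apply \thmref{thm:address} in each factor together with the identification of isometry involutions with generators of $\Inv$ (\remref{rem:inv-isom}). The paper is terser---it phrases the factor-by-factor step as ``$\Inv(J)$ restricts to $\Inv(M)$'' rather than your use of \propref{prop:extend-inv} to embed and glue---but the logic is the same.
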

\begin{proof}
By \propref{prop:reduce-semi-simple} we may assume $\rad J=0$.
By \thmref{thm:herm-structure}.$(ii,iii)$, $J$ is the direct product
of a uniquely determined set $\mathcal{M}$ of  
simple Jordan matrix algebras.
If $e$ is a primitive idempotent of $J$ then $eJe$ is a minimal inner ideal
of $J$ (cf. \cite[Theorem 1.III]{Jacobson:Jordan}), and so $e$ lies in a minimal 
ideal of $J$, thus in a unique simple direct factor of $J$.  Hence, if $\mathcal{E}$ 
is a frame of $J$ then $M\intersect \mathcal{E}$ is a frame of $M$, for each 
$M\in\mathcal{M}$.  Furthermore, $\Inv(J)$ restricts to 
$\Inv(M)$ for each $M\in\mathcal{M}$.  
Thus \corref{thm:address} and \remref{rem:inv-isom} show that $\Inv(J)$ is 
transitive on frames with the same address.
\end{proof}

\begin{coro}\label{coro:main}
\begin{enumerate}[(i)]
\item $\Isom(b)$ acts transitively on the set of fully refined $\perp$-de\-comp\-o\-si\-tions with a given address.
\item If $P$ is a $p$-group of class $2$, exponent $p$, and $P'=Z(P)$, then $C_{\Aut P}(P')$ acts transitively on the set of fully refined central decompositions with a given address.
\end{enumerate}
\end{coro}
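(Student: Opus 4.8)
The plan is to route everything through the transitivity of $\Inv(J)$ on frames (\thmref{thm:inv-trans}), using the dictionary between $\perp$-decompositions and frames of $J := \Sym(b)$, and then to push the result back to $p$-groups through the functor $\Bi$. Both parts are then essentially bookkeeping on top of the structure theory already in hand; the one genuinely delicate point lives in part $(ii)$.

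For $(i)$, I would take fully refined $\perp$-decompositions $\mathcal{X},\mathcal{Y}$ of $b$ with $\mathcal{X}@=\mathcal{Y}@$. By \thmref{thm:perp-idemp}.$(ii)$ the sets $\mathcal{E}(\mathcal{X})$ and $\mathcal{E}(\mathcal{Y})$ are frames of $J$, and by the definition of address in \secref{sec:address} their addresses agree. \thmref{thm:inv-trans} then supplies $\psi\in\Inv(J)$ with $\mathcal{E}(\mathcal{X})\psi=\mathcal{E}(\mathcal{Y})$. The bridge is \remref{rem:inv-isom}: a generator $U_x$ with $x\in J$, $x^2=1$ corresponds to the isometry $x$, since $xx^*=x^2=1$ puts $x\in\Isom(b)$ by \propref{prop:groups}.$(ii)$, and, as $x^{-1}=x$, its conjugation action $f\mapsto x^{-1}fx$ on $J$ is exactly $fU_x$. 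Hence $\psi$ corresponds to some $\varphi\in\Isom(b)$ acting on $J$ identically, so $\mathcal{E}(\mathcal{X})^\varphi=\mathcal{E}(\mathcal{Y})$. Applying \thmref{thm:perp-idemp}.$(iii)$ gives $\mathcal{E}(\mathcal{X}\varphi)=\mathcal{E}(\mathcal{Y})$, and since $\mathcal{E}(\cdot)$ is a bijection this forces $\mathcal{X}\varphi=\mathcal{Y}$. Combined with the address-invariance of \propref{prop:address}.$(i)$, the $\Isom(b)$-orbits are then exactly the address classes.

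For $(ii)$, assume $P'=Z(P)$ and take fully refined central decompositions $\mathcal{H},\mathcal{K}$ with $\mathcal{H}@=\mathcal{K}@$. Since the address is computed from $\Bi(\mathcal{H})$, this reads $\Bi(\mathcal{H})@=\Bi(\mathcal{K})@$, and $b=\Bi(P)$ is non-degenerate because $\rad b=Z(P)/P'=0$. By \thmref{thm:main-reduction}.$(ii)$ both $\Bi(\mathcal{H}),\Bi(\mathcal{K})$ are fully refined $\perp$-decompositions of $b$, so part $(i)$ yields $\beta\in\Isom(b)$ with $\Bi(\mathcal{H})\beta=\Bi(\mathcal{K})$, and \thmref{thm:main-reduction}.$(iii)(b)$ then produces $\alpha\in C_{\Aut P}(P')$ with $(\mathcal{H}P')\alpha=\mathcal{K}P'$.

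The remaining, and slightly delicate, point is that this only matches the two decompositions modulo $P'=Z(P)$, and I would upgrade it to an exact equality. Because $\alpha$ centralizes $P'=Z(P)$, we get $(\mathcal{H}\alpha)Z(P)=(\mathcal{H}Z(P))\alpha=(\mathcal{H}P')\alpha=\mathcal{K}P'=\mathcal{K}Z(P)$. Now \corref{coro:refine}.$(ii)$ gives $\gamma\in\Aut_\zeta P$ with $(\mathcal{H}\alpha)\gamma=\mathcal{K}$; and since $P'=Z(P)$ forces $\Aut_\zeta P=\Aut_\gamma P\le C_{\Aut P}(P')$ (every lower-central automorphism being the identity on $P'$), the composite $\alpha\gamma$ lies in $C_{\Aut P}(P')$ and sends $\mathcal{H}$ to $\mathcal{K}$. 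I expect the main obstacle to be exactly this bookkeeping at the end: keeping the descent through $\Bi$ honest about the difference between ``equal as central decompositions'' and ``equal modulo $Z(P)$,'' and verifying that the refining automorphism $\gamma$ remains inside $C_{\Aut P}(P')$ rather than merely in $\Aut_\zeta P$.
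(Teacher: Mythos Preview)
Your proof is correct and follows the same route as the paper, which simply cites \thmref{thm:inv-trans} with \remref{rem:inv-isom} for $(i)$ and \thmref{thm:main-reduction} for $(ii)$. In fact you have done more than the paper: you correctly noticed that \thmref{thm:main-reduction}.$(iii)(b)$ only yields $(\mathcal{H}P')\alpha=\mathcal{K}P'$, and you closed this gap via \corref{coro:refine}.$(ii)$ together with the observation that $P'=Z(P)$ forces $\Aut_\zeta P=\Aut_\gamma P\le C_{\Aut P}(P')$; the paper's one-line proof leaves this step implicit.
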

\begin{proof}
$(i)$.  This follows from \thmref{thm:inv-trans} and \remref{rem:inv-isom}.
$(ii)$.  This follows form part $(i)$ and \thmref{thm:main-reduction}.
\end{proof}

\begin{coro}\label{coro:isom-address}
Let $b:V\times V\to W$ be a non-degenerate Hermitian bilinear map.  Suppose that
$X$ and $Y$ are two $\perp$-factors of $b$.
\begin{enumerate}[(i)]
\item   Then there is a $\varphi\in \Isom(b)$
such that $X\varphi=Y$ if, and only if, $X@=Y@$ (which includes $X\in\mathcal{X}_I$,
$Y\in\mathcal{Y}_I$ for the same maximal $*$-ideal $I$ of $\Adj(b)$).
\item $b_X$ is isometric to $b_Y$ if, and only if, $X@=Y@$. 
\item Let $P$ be a $p$-group of class $2$, exponent $p$, and $P'=Z(P)$ with 
centrally indecomposable subgroups $H$ and $K$.  Then there is a 
$\varphi\in C_{\Aut P}(P')$ such that $H\varphi=K$ if, and only if,
$H@=K@$.
\end{enumerate}
\end{coro}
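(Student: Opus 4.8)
The plan is to prove $(i)$ as the analogue, for an arbitrary non-degenerate Hermitian map, of the single-point transitivity \thmref{thm:singles} proved for one simple form, and then to deduce $(ii)$ and $(iii)$ from it. For the forward implication of $(i)$, note that since $X$ is $\perp$-indecomposable it belongs to some fully refined $\perp$-decomposition $\mathcal{X}$, and $\mathcal{X}\varphi$ is another such decomposition having $Y=X\varphi$ among its members; thus $X@=X\varphi@=Y@$ by \propref{prop:address}.$(i)$. The parenthetical claim that this forces the \emph{same} maximal $*$-ideal $I$ holds because $\Isom(b)$ acts on $\Adj(b)$ by conjugation (\propref{prop:groups}.$(i)$), and conjugation by a unit fixes every two-sided ideal; hence each $I\in\spec_0\Adj(b)$ is $\Isom(b)$-invariant, and no isometry can carry a factor lying over $I$ to one lying over a different ideal.

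For the converse of $(i)$ I would imitate the proof of \thmref{thm:singles}. Let $I$ be the common ideal, $d:=d(I)$, and let $\bar{X},\bar{Y}$ be the non-singular points of $d$ corresponding to $X,Y$ via \propref{prop:links}. From $\disc d=\bar{X}@\cdot\disc d_{\bar{X}^\perp}$ and the hypothesis $\bar{X}@=\bar{Y}@$ one obtains $\disc d_{\bar{X}^\perp}=\disc d_{\bar{Y}^\perp}$. Using this I would complete $X$ and $Y$ to fully refined $\perp$-decompositions $\mathcal{X}\ni X$ and $\mathcal{Y}\ni Y$ of $b$ that coincide on every simple factor $J\neq I$ and that, inside $d(I)$, use orthogonal bases of $\bar{X}^\perp$ and $\bar{Y}^\perp$ realizing this common discriminant. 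Then $\mathcal{X}@=\mathcal{Y}@$, witnessed by an address-preserving bijection $\phi\colon\mathcal{X}\to\mathcal{Y}$ with $X\phi=Y$. Feeding $\phi$ into the mechanism of \thmref{thm:inv-trans}, whose simple-factor step \thmref{thm:address} produces an isometry realizing any prescribed address-preserving bijection, yields $\varphi\in\Isom(b)$ with $X\varphi=Y$.

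Parts $(ii)$ and $(iii)$ are then short. For $(ii)$, if $X@=Y@$ take $\varphi$ from $(i)$; an isometry fixes $W$ pointwise, so $b(Y,Y)=b(X\varphi,X\varphi)=b(X,X)$ and the restriction $\varphi|_X\colon X\to Y$ is an isometry of $b_X$ onto $b_Y$. Conversely an isometry $b_X\to b_Y$ is in particular a pseudo-isometry and induces a $*$-isomorphism $\Adj(b_X)\cong\Adj(b_Y)$ (\propref{prop:groups}.$(i)$); as both maps are $\perp$-indecomposable, \corref{coro:indecomp} identifies their semisimple quotients with the same composition algebra, and the discriminant coset is an isometry invariant, so $X@=Y@$. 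For $(iii)$ I would translate through $\Bi$: put $X:=HP'/P'$ and $Y:=KP'/P'$, which are $\perp$-indecomposable factors of $b:=\Bi(P)$ by \thmref{thm:main-reduction}.$(i)$ and whose addresses are by definition $H@$ and $K@$. By \propref{prop:auto-isom}.$(iii)$, $C_{\Aut P}(P')\cong\Isom(b)\ltimes\Aut_{\gamma}P$, and every lower-central automorphism is the identity on $V=P/P'$, hence fixes the subspaces $X$ and $Y$. Thus a $\beta\in\Isom(b)$ with $X\beta=Y$ corresponds to a $\varphi\in C_{\Aut P}(P')$ with $H\varphi P'=KP'$, and a final application of \lemref{lem:contract-1}.$(ii)$ adjusts $\varphi$ by a lower-central automorphism to obtain $H\varphi=K$ exactly; the converse is immediate, and $(i)$ finishes the argument.

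The main obstacle is the converse of $(i)$: manufacturing an isometry of the entire map $b$ that carries the \emph{one} factor $X$ exactly onto $Y$, rather than merely matching two whole decompositions. The two difficulties are coordinating the several simple factors $d(J)$, which I handle by leaving all $J\neq I$ untouched, and, inside $d(I)$, arranging that the orthogonal complements $\bar{X}^\perp$ and $\bar{Y}^\perp$ support decompositions of equal address. The discriminant identity $\disc d_{\bar{X}^\perp}=\disc d_{\bar{Y}^\perp}$ is the linchpin that makes the point-preserving transitivity of \thmref{thm:address} and \thmref{thm:inv-trans} applicable.
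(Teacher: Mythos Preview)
Your approach is correct in spirit and rests on the same core lemmas as the paper, but the paper takes a shorter route for the reverse of $(i)$.  Rather than extending $X$ and $Y$ to carefully matched \emph{full} frames and invoking \thmref{thm:inv-trans}, the paper works with the coarse two-element supplementary sets $\mathcal{E}(\{X,X^\perp\})=\{e_X,1-e_X\}$ and $\mathcal{E}(\{Y,Y^\perp\})=\{e_Y,1-e_Y\}$.  Since \propref{prop:reduce-semi-simple} applies to \emph{any} supplementary sets of idempotents, not just frames, it suffices to arrange $e_X\equiv e_Y\pmod{\rad J}$; this is done by applying \thmref{thm:singles} in the single relevant simple factor $\Sym(d(I))$, converting to involutions via \lemref{lem:isom-inv} and \remref{rem:inv-isom}, and lifting through the radical via \propref{prop:lift-inv}.$(ii)$.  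Then \propref{prop:reduce-semi-simple} finishes.  Because the sets have only two elements and $e_X,e_Y$ are the primitive ones, the resulting $\Inv(J)$-element must send $e_X$ to $e_Y$, so no bijection-tracking is needed.

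Your version works too, but two points deserve care.  First, ``coincide on every simple factor $J\neq I$'' cannot be taken literally at the level of subspaces, since $X^\perp\neq Y^\perp$; what you need (and what suffices) is that $\mathcal{X}_J@=\mathcal{Y}_J@$ for each $J$, which is automatic for non-orthogonal $J$ and arrangeable for orthogonal $J$ by choosing the refinements of $X^\perp$ and $Y^\perp$ appropriately.  Second, the assertion that \thmref{thm:inv-trans} realizes your prescribed bijection $\phi$ with $X\phi=Y$ is not stated in that theorem; it requires going into the proofs of \propref{prop:reduce-semi-simple} and \lemref{lem:nil-fix} to see that the mod-radical correspondence $e\mapsto f$ (with $e\varphi\equiv f$) is preserved idempotent-by-idempotent.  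This is true, but the paper's two-element formulation sidesteps the issue entirely.

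Parts $(ii)$ and $(iii)$ you handle essentially as the paper does: the paper deduces $(iii)$ from \thmref{thm:main-reduction}, and treats the forward directions of $(i)$ and $(ii)$ as clear, obtaining the reverse of $(ii)$ by restricting the isometry from $(i)$.  Your justification of ``$b_X$ isometric to $b_Y$ implies $X@=Y@$'' via ``the discriminant coset is an isometry invariant'' is no more and no less detailed than the paper's one-word ``clear''.
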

\begin{proof}
The forward direction of $(i)$ and $(ii)$ are clear.  For the reverse, use \thmref{thm:singles}, \lemref{lem:isom-inv}, \remref{rem:inv-isom}, and 
\propref{prop:lift-inv}.$(ii)$ to arrange for 
$\mathcal{E}(\{X,X^\perp\})\equiv\mathcal{E}(\{Y,Y^\perp\})$.  Then
\propref{prop:reduce-semi-simple} completes the proof.  $(iii)$.  This follows from
$(ii)$ and \thmref{thm:main-reduction}.
\end{proof}

%
%
\section{Semi-refinements and proof of \thmref{thm:central-count}.$(i)$}\label{sec:orbits}

By \thmref{coro:main}.$(i)$, any two fully refined $\perp$-decompositions with the
same address have the same multiset of isometry types.  This section is concerned with strengthening this result by involving pseudo-isometries in order to prove \thmref{thm:central-count}.$(i)$.

		%
		%
\subsection{The orthogonal bases of symmetric bilinear forms}
		
Let $d:V\times V\to K$ be a non-degenerate symmetric bilinear form and
recall the notation $(n-s:s)$ for addresses, given in Section \ref{sec:address}.  

\begin{lemma}\label{lem:pair}
If $\mathcal{X}$ and $\mathcal{Y}$ are fully refined $\perp$-decompositions of $d$ with 
$\mathcal{X}@=(n-s:s)$ and $\mathcal{Y}@=(n-r:r)$, then $2|s-r$.
\end{lemma}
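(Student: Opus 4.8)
The plan is to exploit the fact, recorded just after \eqref{def:disc}, that the discriminant of $d$ is an invariant of the form itself and does not depend on the chosen orthogonal basis, together with the formula $\disc d=\prod_{X\in\mathcal{X}}X@$ from Section \ref{sec:address}. The address $(n-s:s)$ means exactly $s$ of the members have address $[\omega]$ and the remaining $n-s$ have address $[1]$, so the product telescopes to $[\omega]^s=[\omega^s]$ as a coset in $K^\times/(K^\times)^2$.

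First I would compute $\disc d$ in two ways. Using $\mathcal{X}$ with $\mathcal{X}@=(n-s:s)$ gives
\[
\disc d=\prod_{X\in\mathcal{X}}X@=[\omega^s],
\]
and using $\mathcal{Y}$ with $\mathcal{Y}@=(n-r:r)$ gives $\disc d=[\omega^r]$. Since both compute the single well-defined invariant $\disc d$ (cf.\ \eqref{def:disc}), these cosets coincide: $[\omega^s]=[\omega^r]$ in $K^\times/(K^\times)^2$, equivalently $\omega^{s-r}\in(K^\times)^2$.

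Finally I would use that $\omega$ is a fixed non-square, so the coset $[\omega]$ is the nontrivial element of the order-two group $K^\times/(K^\times)^2$. Hence $[\omega]^{s-r}=[\omega^{s-r}]$ is trivial if and only if $s-r$ is even, and $\omega^{s-r}\in(K^\times)^2$ forces $2\mid s-r$.

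The main point to be careful about — rather than a genuine obstacle — is the invariance of the discriminant: the conclusion rests entirely on the fact that $\disc d$ is intrinsic to $d$ and therefore agrees across the two orthogonal bases, while the address merely reads off the parity of the number of non-square entries. Once that is in hand the divisibility is immediate from $K^\times/(K^\times)^2\cong\mathbb{Z}_2$.
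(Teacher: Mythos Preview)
Your proof is correct and follows exactly the same approach as the paper: compute $\disc d$ via each orthogonal basis to obtain $[\omega^s]=\disc d=[\omega^r]$, then conclude $2\mid s-r$ from the fact that $[\omega]$ has order two in $K^\times/(K^\times)^2$.
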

\begin{proof}
Recall that the discriminant is independent of the basis of $V$.  Hence,
we have $[\omega^s]=\disc d=[\omega^r]$ so that
$\omega^{s-r}\equiv 1 \pmod{(K^\times)^2}$ and $2|s-r$.
\end{proof}

\begin{thm}\label{thm:coupling}
Let $\mathcal{X}$ be a fully refined $\perp$-decomposition with address $(n-r:r)$.
There is an involution $\rho\in \Isom(d)$ where $\mathcal{X}\rho=\mathcal{X}$
and such that, if $S:=\{X\in\mathcal{X} : X\rho=X\}$ then
\begin{enumerate}[(i)]
\item if $|\mathcal{X}|=2m+1$ then $S=\{X\}$ with $X@ =\disc d$,
\item if $|\mathcal{X}|=2m$ and $\disc d=[\omega]$ then $S=\{X,X'\}$
with $X@=[1]$, $X'@=[\omega]$,
\item if $|\mathcal{X}|=2m$ and $\disc d=[1]$ then $S=\emptyset$,
\item and for each $0\leq s\leq n$, where $2|r-s$, there is a fully refined 
$\perp$-decomposition $\mathcal{Y}$ where 
\begin{enumerate}[(a)]
\item $\mathcal{Y}@=(n-s:s)$,
\item $\langle X,X\rho\rangle 
= \langle \mathcal{Y}\intersect \langle X,X\rho\rangle\rangle$
for each $X\in\mathcal{X}$.
\end{enumerate}
\end{enumerate}
\end{thm}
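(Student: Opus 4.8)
The plan is to build $\rho$ by coupling the members of $\mathcal{X}$ in same-address pairs, and to obtain each $\mathcal{Y}$ in $(iv)$ by re-choosing an orthogonal basis inside each coupled pair. By \propref{prop:orthogonal} specialized to the symmetric case, the members of $\mathcal{X}$ are orthogonal lines $X_i=Kx_i$, and after rescaling generators I may assume all $x_i$ with $X_i@=[1]$ share a common norm and all $x_j$ with $X_j@=[\omega]$ share a common norm (two norms of equal address differ by a square, which is absorbed by scaling $x_i\mapsto c_ix_i$). Within each address class I pair off the lines two at a time; on a pair $\{X_i,X_j\}$ of equal norm the transposition $x_i\mapsto x_j$, $x_j\mapsto x_i$ is an involutory isometry of $\langle X_i,X_j\rangle$ swapping the two lines, while on the unpaired lines I let $\rho$ act as the identity. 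Since $V$ is the orthogonal sum of these planes and the fixed lines, the resulting $\rho$ lies in $\Isom(d)$, is an involution, and satisfies $\mathcal{X}\rho=\mathcal{X}$.

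First I would settle $(i)$--$(iii)$ by counting parities. Writing $a=n-r$ for the number of $[1]$-lines and $r$ for the number of $[\omega]$-lines, the unpaired lines are exactly one per class of odd size, and $\disc d=\prod_X X@=[\omega^r]$ (cf. \eqnref{eq:address}, \eqnref{def:disc}). When $n$ is odd, exactly one of $a,r$ is odd, leaving a single fixed line whose address is $[1]$ or $[\omega]$ according as $r$ is even or odd, that is $X@=\disc d$; this is $(i)$. When $n$ is even, $a\equiv r\pmod 2$: if $\disc d=[1]$ then $a$ and $r$ are both even, every line is paired, and $S=\emptyset$, giving $(iii)$; if $\disc d=[\omega]$ then both are odd, leaving one fixed $[1]$-line and one fixed $[\omega]$-line, giving $(ii)$.

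For $(iv)$ the key observation is that every coupled plane $\langle X,X\rho\rangle$ has $\disc=X@\cdot(X\rho)@=[1]$, since its two lines share an address. A non-degenerate symmetric plane of discriminant $[1]$ is isometric to the standard form of \eqnref{eq:dot-prod}, and a short computation (the norm $u_1^2+u_2^2$ attains every field value) shows it has orthogonal bases of address $(2:0)$ and of address $(0:2)$; by \lemref{lem:pair} these are the only two available. Thus each coupled plane can independently carry either two $[1]$-factors or two $[\omega]$-factors, while the at most two fixed lines keep their addresses. Letting $f_\omega\in\{0,1\}$ count the fixed $[\omega]$-lines and $p$ the number of coupled planes, the attainable values of $s$ are exactly $f_\omega+2k$ for $0\le k\le p$, and a direct check across the parity and discriminant cases shows this set equals $\{s:0\le s\le n,\ 2\mid r-s\}$. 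Given such an $s$, I choose $k=(s-f_\omega)/2$ planes to carry the $(0:2)$ basis and the rest the $(2:0)$ basis, and let $\mathcal{Y}$ be the union of these planar bases together with the fixed lines; then $\mathcal{Y}$ is a fully refined $\perp$-decomposition with $\mathcal{Y}@=(n-s:s)$ and, by construction, $\langle X,X\rho\rangle=\langle\mathcal{Y}\cap\langle X,X\rho\rangle\rangle$ for every $X$.

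The main obstacle I expect is the bookkeeping in $(iv)$: verifying that the realizable range $\{f_\omega+2k\}$ matches exactly the constraint $2\mid r-s$, $0\le s\le n$ in each of the parity/discriminant cases. Everything else reduces to the elementary facts that equal-address lines are swapped by an involutory isometry and that a discriminant-$[1]$ plane carries orthogonal bases of both nontrivial addresses.
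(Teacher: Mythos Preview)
Your argument is correct and is essentially the paper's proof unwound: both pair off equal-address lines into discriminant-$[1]$ planes, leave the parity residue fixed, and then re-choose bases in each plane to hit any target $s$ with $2\mid r-s$. The paper packages this as an induction on $|\mathcal{X}|$, invoking \thmref{thm:address} to produce the involutory swap on a pair and \lemref{lem:line} for the address flip on a plane, whereas you build $\rho$ in one pass after normalizing generators so that equal-address lines share a norm; the underlying content is the same.
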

\begin{proof}
We proceed by induction on the size of $\mathcal{X}$.

If $\mathcal{X}=\{X\}$ then let $\rho=1$ and $\mathcal{Y}=\mathcal{X}$.  
Hence $S=\mathcal{X}$ and $\disc d=X@$, as required by $(i)$. Also $(iv)$ is
satisfied trivially.

If $\mathcal{X}=\{X,X'\}$, $X\neq X'$  then $\disc d=X @ X'@$.

If $X@\neq X'@$ then take $\rho=1$ and $\mathcal{Y}=S=\mathcal{X}$ 
and up to relabeling, $(ii)$ is satisfied.  Once again, $(iv)$ is satisfied trivially
as $s=r=1$.

Suppose that $X@=X'@$.  By \thmref{thm:address} there is a $\rho\in \Isom(d)$
where $X\rho=X'$ and $X'\rho =X$, and indeed we may take $\rho^2=1$.
Notice $S=\emptyset$ and $\disc d=[1]$, as required by $(iii)$.  For $(iv)$, either
$s=r$ and we let $\mathcal{Y}=\mathcal{X}$ or $s=2-r$.  By \lemref{lem:line} there
is $(\varphi,\omega)\in \Isom^*(d)$; hence, $\mathcal{Y}:=\mathcal{X}\varphi$
satisfies $(iv)$.

If $n=|\mathcal{X}|> 2$ then there are distinct $X,X'\in \mathcal{X}$ with 
$X@=X'@$.  By induction on $\mathcal{Z}:=\mathcal{X}-\{X,X'\}$ we have an isometry
$\tau$ of $d_{\langle \mathcal{Z}\rangle}$ which permutes $\mathcal{Z}$.  We also 
induct on $\{X,X'\}$ to locate an involution $\mu\in \Isom(d_{\langle X,X'\rangle})$ 
such that $X\mu=X'$.  Set $\rho=\tau\oplus \mu\in\Isom(d)$.  Hence, $\rho^2=1$ 
and permutes $\mathcal{X}$.  Moreover, 
$\{X\in\mathcal{X} : X\rho = X\}=S=\{Z\in\mathcal{Z} : Z\tau=Z\}$ and 
$\disc d=X@ X'@\disc d_{\langle\mathcal{Z}\rangle}
=\disc d_{\langle \mathcal{Z}\rangle}$.  Therefore, each case of $S$ 
is satisfied for $\mathcal{X}$ with $\rho$ as it is satisfied for $\mathcal{Z}$ with $\tau$.
Therefore $\rho$ satisfies $(i)$, $(ii)$, and $(iii)$.

For $(iv)$, let $2|r-s$.   First assume $s\geq 2$.  From the induction on $\mathcal{Z}$ 
there is a fully refined $\perp$-decomposition $\mathcal{W}$ of $\langle \mathcal{Z}\rangle$ 
of address $(n-2:s-2)$ such that $\langle Z,Z\tau\rangle =\langle \mathcal{Y}\intersect
\langle Z,Z\rho\rangle\rangle$ for each $Z\in\mathcal{Z}$.  If $X@=[\omega]$ then 
set $\mathcal{Y}=\mathcal{W}\sqcup\{X,X'\}$ to complete $(iv)$.  If $X@=[1]$ then
use $(\varphi,\omega)\in \Isom^*(d_{\langle X,X'\rangle})$ from \lemref{lem:line}
and set $\mathcal{Y}:=\mathcal{W}\sqcup \{X\varphi,X'\varphi\}$.  Finally, if $s<2$
then take $\mathcal{W}$ to have address $(n-2:s)$ and define
$\mathcal{Y}:=\mathcal{W}\sqcup\{X\varphi,X'\varphi\}$ if $X@=[\omega]$, and
$\mathcal{Y}:=\mathcal{W}\sqcup\{X,X'\}$ otherwise.
\end{proof}

\begin{coro}\label{coro:count}
The set of addresses of orthogonal bases of $d$ is 
	\[\left\{(n-(c+2k):c+2k): 0\leq k\leq \frac{n-c}{2}\right\}\]
where $\disc d=[\omega^c]$, $c=0,1$.  In particular, there are 
$1+\lfloor\frac{n-c}{2}\rfloor$ addresses.
\end{coro}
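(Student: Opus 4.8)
The plan is to combine the parity constraint forced by the discriminant with the realizability statement already contained in \thmref{thm:coupling}.$(iv)$, so that the corollary reduces to bookkeeping. First I would pin down which addresses \emph{can} occur. If $\mathcal{X}$ is any orthogonal basis with address $(n-s:s)$, then by the discriminant formula $\disc d=\prod_{X\in\mathcal{X}}X@=[\omega^s]$ (as recorded after \eqref{eq:address}). Since by hypothesis $\disc d=[\omega^c]$ and $\omega$ is a non-square, this gives $[\omega^{s-c}]=[1]$, hence $s\equiv c\pmod 2$; equivalently this is the content of \lemref{lem:pair} pinned to the fixed residue $c$. Writing $s=c+2k$ and imposing $0\leq s\leq n$ yields $0\leq k\leq\frac{n-c}{2}$, so every address of an orthogonal basis lies in the claimed set.

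Next I would show that every such address is in fact realized. By \propref{prop:orthogonal} (or directly \eqref{eq:dot-prod}) the form $d$ admits at least one orthogonal basis, say of address $(n-r:r)$; the discriminant computation above forces $r\equiv c\pmod 2$. Now fix any $s$ with $0\leq s\leq n$ and $s\equiv c\pmod 2$. Then $2\mid r-s$, so \thmref{thm:coupling}.$(iv)$ produces a fully refined $\perp$-decomposition of address $(n-s:s)$, which (being fully refined for a symmetric form) is precisely an orthogonal basis of that address. As $s=c+2k$ ranges over $c,c+2,\dots$ up to $n$, the realized addresses are exactly those of the claimed set, establishing the reverse inclusion.

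Finally I would count: the parameter $k$ runs over the integers $0,1,\dots,\lfloor\frac{n-c}{2}\rfloor$, giving $1+\lfloor\frac{n-c}{2}\rfloor$ distinct addresses. The only substantive step is the realizability, which is delegated entirely to \thmref{thm:coupling}.$(iv)$; everything else is the discriminant parity argument and a count, so I do not expect any genuine obstacle once that theorem is in hand.
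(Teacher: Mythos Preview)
Your proposal is correct and follows essentially the same approach as the paper: the paper's own proof invokes \thmref{thm:coupling}.$(iv)$ for realizability and \lemref{lem:pair} for the parity constraint, exactly as you do. You have simply unpacked the discriminant argument behind \lemref{lem:pair} and made explicit the need for a starting orthogonal basis before applying \thmref{thm:coupling}.$(iv)$, but the structure is identical.
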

\begin{proof}
From \thmref{thm:coupling}.$(iv)$, there is a fully refined $\perp$-decomposition
of $d$ for each address in the set.  By \lemref{lem:pair}, these are the possible 
addresses of $d$.
\end{proof}

\begin{coro}\label{coro:pseudo-orbits}
Let $d:V\times V\to K$ be a non-degenerate symmetric bilinear form with $n=\dim V$
and let $\mathcal{X}$ and $\mathcal{Y}$ be orthogonal bases with addresses $(n-s:s)$ and
$(n-r:r)$, respectively.
\begin{enumerate}[(i)]
\item If $n$ is odd then $\mathcal{X}\varphi=\mathcal{Y}$ for some 
$(\varphi,\hat{\varphi})\in \Isom^*(d)$ if, and only if, $s=r$.
\item If $n$ is even then $\mathcal{X}\varphi=\mathcal{Y}$ for some 
$(\varphi,\hat{\varphi})\in \Isom^*(d)$ if, and only if, $s=r$ or $s=n-r$.
\end{enumerate}
\end{coro}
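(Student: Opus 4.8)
The plan is to reduce the entire statement to tracking how the scalar multiplier of a pseudo-isometry acts on the multiset of addresses of an orthogonal basis. Since $d$ is symmetric we have $C=K$ and $N(C^\times)=(K^\times)^2$, so each line $X=Kx$ carries an address $X@=d(x,x)(K^\times)^2$ lying in the order-two group $K^\times/(K^\times)^2=\{[1],[\omega]\}$. By \propref{prop:pseudo-isom}.$(ii)$ the map $(\varphi,\hat{\varphi})\mapsto\hat{\varphi}$ is a homomorphism $\Isom^*(d)\to K^\times$ with kernel $\Isom(d)$; writing $\hat{\varphi}=t\,1_K$, the defining identity \eqref{eq:pseudo-def} gives $d(x\varphi,x\varphi)=t\,d(x,x)$ for all $x$, whence $X\varphi@=[t]\cdot X@$ for every line $X$. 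This single computation drives everything and is the step I expect to matter most: it says the effect of $\varphi$ on the \emph{entire} multiset of addresses is governed by the one coset $[t]$, uniformly across all lines and independently of the orthogonal basis used to build the generators in \propref{prop:dot-isom-*} (the point being that a pseudo-isometry has a single scalar multiplier, so the relation is basis-free). Concretely, if $t\in(K^\times)^2$ then $[t]$ is trivial and $\mathcal{X}\varphi@=\mathcal{X}@$, while if $t$ is a nonsquare then every $[1]$ is exchanged with $[\omega]$, so an address $(n-s:s)$ becomes $(s:n-s)$.

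Next I would read off from \propref{prop:dot-isom-*} which multipliers actually occur. For $n$ odd the image of the multiplier map is $(K^\times)^2$, so every $(\varphi,\hat{\varphi})\in\Isom^*(d)$ has square multiplier and therefore preserves addresses; combined with \thmref{thm:address} (equal address implies a common $\Isom(d)$-orbit, and $\Isom(d)\le\Isom^*(d)$) this yields that $\mathcal{X}\varphi=\mathcal{Y}$ for some pseudo-isometry if, and only if, $\mathcal{X}@=\mathcal{Y}@$, i.e.\ $s=r$, proving $(i)$. For $n$ even the image is all of $K^\times$, so there is a pseudo-isometry $(\varphi_0,\omega)$ with nonsquare multiplier, and by the previous paragraph it sends an orthogonal basis of address $(n-s:s)$ to one of address $(s:n-s)$.

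Finally I would assemble $(ii)$ from these two ingredients. For the forward direction, if $\mathcal{X}\varphi=\mathcal{Y}$ with multiplier $t$: a square $t$ forces $\mathcal{X}@=\mathcal{Y}@$, i.e.\ $s=r$, while a nonsquare $t$ gives $\mathcal{Y}@=\mathcal{X}\varphi@=(s:n-s)$, forcing $r=n-s$, i.e.\ $s=n-r$. For the reverse direction, if $s=r$ then \thmref{thm:address} already supplies an isometry, while if $s=n-r$ then $\mathcal{X}\varphi_0@=(s:n-s)=(n-r:r)=\mathcal{Y}@$, so \thmref{thm:address} provides $\psi\in\Isom(d)$ with $(\mathcal{X}\varphi_0)\psi=\mathcal{Y}$, and the composite $\varphi_0\psi\in\Isom^*(d)$ does the job. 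The parity data from \lemref{lem:pair} and \corref{coro:count} are consistent with both $s=r$ and $s=n-r$ (for even $n$ these each force $s\equiv r\pmod 2$), so no admissible address is lost; apart from the multiplier computation, the argument is bookkeeping in the order-two group $K^\times/(K^\times)^2$.
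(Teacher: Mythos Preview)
Your proposal is correct and follows essentially the same route as the paper: compute $X\varphi@=[\hat{\varphi}]\cdot X@$, then split on whether $\hat{\varphi}$ is a square, and for the converse compose the explicit $(\varphi,\omega)\in\Isom^*(d)$ from \propref{prop:dot-isom-*} with an isometry supplied by \thmref{thm:address}. The only cosmetic difference is that for the forward direction in odd dimension the paper rules out a nonsquare multiplier by a discriminant calculation ($(\disc d)[\omega^n]=\disc d$ forces $2\mid n$) rather than by quoting \propref{prop:dot-isom-*}.$(i)$ directly, but the content is identical.
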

\begin{proof}
Let $\mathcal{X}\varphi=\mathcal{Y}$.  Then as $\hat{\varphi}\in K^\times$,
$\hat{\varphi}\equiv 1$ or $\omega$ $\pmod{(K^\times)^2}$.  If $x\in\mathcal{X}$,
then
	\[X\varphi @\equiv d(x\varphi,x\varphi)\equiv d(x,x)\hat{\varphi}
		\equiv X@\hat{\varphi} \pmod{(K^\times)^2},\qquad X=\langle x\rangle.\]
Thus $\mathcal{Y}@=\mathcal{X}@\hat{\varphi}$.  If $\hat{\varphi} \equiv 1 
\pmod{(K^\times)^2}$ then $s=r$.  If $\hat{\varphi}\equiv \omega$ then
$s=n-r$, and
\[(\disc d)[\omega^n]=\prod_{X \in\mathcal{X}} X@\hat{\varphi}
	=\prod_{Y\in\mathcal{Y}} Y@=\disc d.\]
So, $2|n$.  This completes $(i)$.

For the converse, by \thmref{thm:address} it remains only to consider $s=n-r$,
which means $\mathcal{X}@=\mathcal{Y}@[\omega]$, and from above also $n=2m$.
By \propref{prop:dot-isom-*}.$(ii)$ there is a $(\varphi,\omega)\in\Isom^*(d)$.
Therefore $\mathcal{X}\varphi @=\mathcal{Y}@$.  By \thmref{thm:address}
there is a $\tau\in \Isom(d)$ such that $\mathcal{X}\varphi\tau=\mathcal{Y}$.
This completes $(ii)$.
\end{proof}

		%
		%
\subsection{Semi-refinements}\label{sec:final-proof}

\begin{defn}\label{def:semi-refined-bi}
A bilinear map $b:V\times V\to W$ is $\perp$-\emph{semi-indecomposable} if it is either $\perp$-indecomposable or $b$ has orthogonal type with a fully refined $\perp$-decomposition 
$\{X,Y\}$ such that $X@=Y@$.

A $\perp$-decomposition is \emph{semi-refined} if it consists of $\perp$-semi-indecomposables and it has no coarser $\perp$-decomposition consisting of $\perp$-semi-indecomposables.
\end{defn}

\begin{remark}\label{rem:c-perp-c}
Suppose that $b$ is a $\perp$-semi-indecomposable bilinear map which is not 
$\perp$-in\-de\-com\-pos\-a\-ble.  Hence, we have a fully refined $\perp$-decomposition 
$\{X,Y\}$ of $b$ with $X@=Y@$.  By \corref{coro:isom-address}.$(ii)$, this is equivalent
to having an isometry $\varphi\in \Isom(b)$ in which $X\varphi=Y$.  Thus
$b_{X}$ is isometric to $b_{Y}$.  Hence, if $c:=b_{X}$ then $b$ is isometric
to $c\perp c$.
\end{remark}

\begin{thm}\label{thm:semi-refined}
Let $b$ be a non-degenerate Hermitian bilinear map.
\begin{enumerate}[(i)]
\item Given a semi-refined $\perp$-decomposition $\mathcal{Z}$ and any fully refined
$\perp$-decomposition $\mathcal{X}$, there is a fully refined $\perp$-decomposition
$\mathcal{Y}$ with $\mathcal{X}@=\mathcal{Y}@$ and 
\begin{equation*}
	\mathcal{Z}=\mathcal{Y}^{[\rho]} :=\{\langle Y,Y\rho\rangle : Y\in\mathcal{Y}\},
\end{equation*}
where $\rho\in \Isom(b)$ is an involution. In particular,
 $|\mathcal{Z}|\geq |\mathcal{X}|/2$.
\item  $\Isom(b)$ acts transitively on the set of semi-refined $\perp$-decompositions.
\item  Every fully refined $\perp$-decomposition of a bilinear map $b$ determines
a semi-refined $\perp$-decomposition (as in $(i)$).  In particular, semi-refined 
$\perp$-de\-com\-pos\-i\-tions exist.
\end{enumerate}
\end{thm}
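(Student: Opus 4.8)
The plan is to reduce everything to the simple Hermitian $C$-forms $d(I)$, $I\in\spec_0\Adj(b)$, and to separate the orthogonal factors from the rest. By \propref{prop:orthogonal} every $\perp$-indecomposable block has rank $1$ over its $C$, and the only other semi-indecomposable is a block $Z\cong c\perp c$ living in an orthogonal factor (\defref{def:semi-refined-bi}, \remref{rem:c-perp-c}); since the two lines of such a block are isometric, and hence share an address and a maximal ideal (\corref{coro:isom-address}), a semi-refined $\mathcal{Z}$ partitions by maximal ideal just as a fully refined one does, $\mathcal{Z}=\bigsqcup_I\mathcal{Z}_I$. In a non-orthogonal factor $N(C^\times)=K^\times$, so semi-indecomposable means indecomposable, $\mathcal{Z}_I$ is already fully refined with the single possible address, and there is nothing to do. All the content lives in the orthogonal (symmetric-form) factors, where I will lean on \thmref{thm:coupling} and \corref{coro:count}.

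For $(iii)$, given a fully refined $\mathcal{X}$ I would apply \thmref{thm:coupling} inside each orthogonal factor to produce an involution $\rho\in\Isom(b)$ with $\mathcal{X}\rho=\mathcal{X}$, acting as the identity on every non-orthogonal factor. The fold $\mathcal{X}^{[\rho]}$ replaces each orbit $\{X,X\rho\}$ of equal address by the block $\langle X,X\rho\rangle\cong c\perp c$ and leaves the $\rho$-fixed factors (the set $S$ of \thmref{thm:coupling}) as rank-$1$ pieces. Because $S$ carries at most one factor of each address and two rank-$1$ pieces can coalesce into a semi-indecomposable only inside an orthogonal factor, $\mathcal{X}^{[\rho]}$ admits no coarser semi-indecomposable decomposition, i.e. it is semi-refined. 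This already settles existence.

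For $(i)$ I would go the other way and build $\mathcal{Y}$ by refining the given $\mathcal{Z}$ block by block: a rank-$1$ block $Z$ is kept as a factor $Y=Z$ with $\rho|_Z=1$, while a block $Z\cong c\perp c$ is split as $Z=\langle y\rangle\perp\langle y'\rangle$ with $\langle y\rangle@=\langle y'\rangle@$, after rescaling so that $d(y,y)=d(y',y')$, and $\rho|_Z$ is taken to be the involution exchanging $y$ and $y'$ (an isometry by \corref{coro:isom-address}). As the blocks are pairwise $b$-orthogonal and span $V$, these local involutions assemble to an involution $\rho\in\Isom(b)$, and by construction $\mathcal{Z}=\mathcal{Y}^{[\rho]}$ with $\mathcal{Y}$ fully refined. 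The remaining freedom is that a $c\perp c$ block, having discriminant $[1]$, may be split either as $(2:0)$ or as $(0:2)$ (\corref{coro:count}), each choice changing the $[\omega]$-count of $\mathcal{Y}_I$ by $2$. The main obstacle is the bookkeeping that this freedom realizes exactly the prescribed address $\mathcal{X}_I@$: writing $r_0$ for the number of $[\omega]$ rank-$1$ pieces and $m$ for the number of $c\perp c$ blocks in the factor $I$, one checks that $r_0=c$ for the parity class $c$ of $\disc d(I)$ (because a semi-refined decomposition carries at most one rank-$1$ piece per address, whence $r_0\in\{0,1\}$ and $|S|=c$ or $c+1$ according to the parity of $\dim U_I$), and that the attainable counts $\{r_0+2j:0\le j\le m\}$ then coincide with the full range of valid addresses of \corref{coro:count}. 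Since $\mathcal{X}_I@$ is one of these, I can match it, giving $\mathcal{Y}@=\mathcal{X}@$; the bound $|\mathcal{Z}|\ge|\mathcal{X}|/2$ follows because $\mathcal{Y}$, being fully refined of the same address as $\mathcal{X}$, has $|\mathcal{Y}|=|\mathcal{X}|$ factors (\corref{coro:main}), which $\mathcal{Z}=\mathcal{Y}^{[\rho]}$ groups into blocks of size at most $2$.

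Finally, for $(ii)$ I would deduce transitivity from $(i)$. Given semi-refined $\mathcal{Z},\mathcal{Z}'$, fix any fully refined $\mathcal{X}$ and use $(i)$ to write $\mathcal{Z}=\mathcal{Y}^{[\rho]}$ and $\mathcal{Z}'=\mathcal{Y}'^{[\rho']}$ with $\mathcal{Y}@=\mathcal{Y}'@=\mathcal{X}@$. By \corref{coro:main} there is $\psi\in\Isom(b)$ with $\mathcal{Y}\psi=\mathcal{Y}'$, and a direct computation gives $\mathcal{Z}\psi=\mathcal{Y}'^{[\psi^{-1}\rho\psi]}$. Thus $\mathcal{Z}\psi$ and $\mathcal{Z}'$ are both folds of the single fully refined $\mathcal{Y}'$ by involutions permuting its factors. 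The subtle last step is that any two such folding involutions differ by an element of $\Isom(b)$ stabilising $\mathcal{Y}'$: within each orthogonal factor the two folds induce perfect matchings (with at most one unmatched factor, forced by parity) on the factors of each fixed address, and any two such matchings are conjugate by a product of equal-address transpositions, each realised by an isometry via \corref{coro:isom-address}. Conjugating $\psi^{-1}\rho\psi$ onto $\rho'$ by such an element carries $\mathcal{Z}\psi$ onto $\mathcal{Z}'$, proving transitivity.
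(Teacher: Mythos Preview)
Your proposal is correct and follows essentially the same architecture as the paper: reduce to the simple factors $d(I)$, dispose of the non-orthogonal factors (where all addresses are trivial), and handle the orthogonal factors via \thmref{thm:coupling} and the address bookkeeping of \corref{coro:count}. Your analysis of the fixed set $S$ in a semi-refined decomposition---that $|S|$ and the addresses of its members are forced by $\dim U_I$ and $\disc d(I)$---is exactly the content the paper uses, and your explicit bookkeeping for $(i)$ (showing the attainable $[\omega]$-counts $\{r_0+2j:0\le j\le m\}$ exhaust the range of \corref{coro:count}) is a welcome clarification of what the paper compresses into a citation of \thmref{thm:coupling}.

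The one place your route diverges is in $(ii)$. The paper avoids your two-step ``arbitrary $\psi$, then conjugate the matchings'' argument by directly choosing an address-preserving bijection $\phi:\mathcal{Y}\to\mathcal{Y}'$ that \emph{also} intertwines the two involutions, i.e.\ $Y\rho\phi=Y\phi\rho'$, and then invoking the pointwise form of \thmref{thm:address} (carried through the radical by the explicit idempotent-by-idempotent construction of \lemref{lem:nil-fix} and \propref{prop:reduce-semi-simple}) to realize $\phi$ by a single isometry $\varphi$. This immediately gives $\mathcal{Z}\varphi=\mathcal{Y}^{[\rho]}\varphi=\mathcal{Y}'^{[\rho']}=\mathcal{Z}'$. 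Your approach works too, but note that the tool you need for the fix-up step is really this same pointwise statement (\thmref{thm:address} plus lifting), not \corref{coro:isom-address} as cited: the latter only produces an isometry sending one factor to another, without controlling what happens to the remaining factors, whereas you need to realize a prescribed permutation of $\mathcal{Y}'$.
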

\begin{proof}
$(i)$.  The idempotents associated to a semi-indecomposable $b_Z$, $Z\in\mathcal{Z}$,
project to the same simple factor of $\Adj(b)$.  By \propref{prop:partition}, 
$\{\mathcal{Z}_I :I\normal\Adj(b)$ a maximal $*$-ideal $\}$ partitions 
$\mathcal{Z}$.  Hence, it suffices to consider $\mathcal{Z}_I$ for a fixed maximal $*$-ideal $I$ of $\Adj(b)$.

For each $Z\in \mathcal{Z}_I$, either $b_Z$ is $\perp$-in\-de\-com\-pos\-a\-ble or
it has a $\perp$-decomposition of size 2 whose members have equal addresses.  As 
$\mathcal{Z}_I$ is semi-refined, the set $S=\{Z\in\mathcal{Z}:b_Z$ is
$\perp$-indecomposable$\}$ has size $1$ if $|\mathcal{Z}_I|$ is odd, or
size 2 with $S=\{Y,Y'\}$ and $Y@\neq Y'@$, or $S=\emptyset$.  
It follows that $\mathcal{Z}_I$ is determines a fully refined $\perp$-decomposition
\begin{equation*}
	\mathcal{Y}_I:=\left(\sqcup_{Z\in\mathcal{Z}_I}\{Y_Z,Y'_Z\}\right)
		\sqcup S
\end{equation*}
in which $Y_Z@=Y'_Z@$ and $Z=\langle Y_Z,Y'_Z\rangle$, for each 
$Z\in\mathcal{Z}-S$.  By \thmref{thm:coupling} and \lemref{lem:inv-lift}, 
there is an involution $\rho \in \Isom(b)$ for which 
$\mathcal{Y}^{[\rho]}=\mathcal{Z}$ and furthermore, such that 
$\mathcal{X}_I@=\mathcal{Y}_I@$.

$(ii)$.  Let $\mathcal{W}$ be another semi-refined $\perp$-decomposition of $b$.
As in $(i)$ we know $\mathcal{W}=\mathcal{U}^{[\tau]}$ where $\mathcal{U}$
is fully refined and has address equal to that of $\mathcal{Y}$.  So we may define
a bijection $\phi:\mathcal{Y}\to\mathcal{U}$ such that $Y@=Y\phi @$ and
$Y\rho\phi=Y\phi\tau$, for each $Y\in\mathcal{Y}$.  By \corref{coro:main},
$\phi$ induces a $\varphi\in \Isom(b)$ such that 
$Y\varphi=Y\phi$ so that $\mathcal{Y}\varphi=\mathcal{U}$ and $\mathcal{Y}^{[\rho]}\varphi=\mathcal{U}^{[\tau]}$.

$(iii)$.  Let $\mathcal{X}$ be a fully refined $\perp$-decompositions.  
From $(i)$, any semi-refined $\perp$-decomposition can be fully
refined to have the same address of $\mathcal{X}$.  By $(ii)$ is this unique
up to an isometry.  Therefore it remains only to prove that there is a 
semi-refined $\perp$-decomposition.  This follows from \thmref{thm:coupling}.
\end{proof}

\begin{defn}\label{def:semi-refined}
A $p$-group $P$ of class $2$ and exponent $p$ is centrally 
\emph{semi-in\-de\-com\-pos\-a\-ble} if it is either centrally 
indecomposable or $P=H\circ H$ where $H$ is centrally indecomposable
of orthogonal type (cf. \eqref{eq:canonical-cent}).

A central decomposition is \emph{semi-refined} if it consists of centrally 
semi-in\-de\-com\-pos\-a\-ble subgroups and it has no coarser central decomposition consisting of centrally semi-indecomposable subgroups.
\end{defn}

\begin{remark}\label{rem:H-circ-H}
If $P$ is centrally semi-indecomposable and not centrally decomposable then 
$P=H\circ H$ where $H$ is centrally indecomposable.  Thus $\Bi(P)=\Bi(H)\perp \Bi(H)$.
As in \remref{rem:c-perp-c}, this is equivalent to having a fully refined
central decomposition $\{H,K\}$ of $P$ where $H@=K@$.
\end{remark}

\begin{coro}\label{coro:central-II}
Every fully refined central decomposition $\mathcal{H}$ of a $p$-group $P$ of class 
$2$, exponent $p$, and $P'=Z(P)$, generates a semi-refined central decomposition
\begin{equation*}
	\mathcal{H}^{[\rho]} := \{ \langle H,H\rho\rangle : H\in\mathcal{H}\},
\end{equation*}
for some $\rho\in C_{\Aut P}(P')$ in which $\mathcal{H}\rho=\mathcal{H}$ and
$\rho^2=1$.
Furthermore, $C_{\Aut P}(P')$ acts transitively on the set of semi-refined central decompositions.
\end{coro}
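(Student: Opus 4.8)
The plan is to transport the whole statement across the functor $\Bi$ to the non-degenerate Hermitian bilinear map $b:=\Bi(P)$, where it becomes precisely \thmref{thm:semi-refined}, and then carry the resulting isometry data back into $C_{\Aut P}(P')$. First I would record the dictionary. Since $P'=Z(P)$ the radical $\rad b=Z(P)/P'=0$, so $b$ is non-degenerate; being alternating it is Hermitian (with $\theta=-1$), so \thmref{thm:semi-refined} is available. By \thmref{thm:main-reduction}.$(ii)$, $\mathcal{X}:=\Bi(\mathcal{H})$ is a fully refined $\perp$-decomposition of $b$, and because $Z(P)=P'$ forces $Z(\mathcal{H})=\emptyset$, the functor $\Bi$ is a bijection $\mathcal{H}\to\mathcal{X}$. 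Comparing \defref{def:semi-refined} with \defref{def:semi-refined-bi} and invoking \remref{rem:H-circ-H} (a centrally semi-indecomposable but centrally decomposable group is exactly an $H\circ H$ whose two factors have equal address, i.e. a $\perp$-semi-indecomposable restriction $b_{\langle Y,Y\rangle}$), one sees that $\Bi$ sends centrally semi-indecomposable subgroups to $\perp$-semi-indecomposable restrictions; hence it identifies semi-refined central decompositions of $P$ with semi-refined $\perp$-decompositions of $b$, matching $\langle H,H\rho\rangle$ with $\langle Y,Y\rho\rangle$ for $Y=\Bi(H)$.

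Next I would apply \thmref{thm:semi-refined}.$(iii)$ together with $(i)$: the fully refined $\mathcal{X}$ determines a semi-refined $\mathcal{X}^{[\rho]}$ for an involution $\rho\in\Isom(b)$ with $\mathcal{X}\rho=\mathcal{X}$. To move $\rho$ back on the nose, I would realize $P$ explicitly as $\Grp(b)$ through an isomorphism $\varphi_\ell$ from \propref{prop:semi-equiv-cat}.$(ii)$, where the transversal $\ell$ is chosen \emph{adapted to} $\mathcal{H}$ (so that $(HP'/P')\ell\subseteq H$ for each $H$), making $\varphi_\ell$ identify $\mathcal{H}$ with the canonical decomposition $\Grp(\mathcal{X})=\{\Grp(b_Y):Y\in\mathcal{X}\}$. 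Under the splitting $C_{\Aut P}(P')\cong\Isom(b)\ltimes\Aut_{\gamma}P$ of \propref{prop:auto-isom}.$(iii)$, $\rho$ lifts to the automorphism $\tilde\rho:(v,w)\mapsto(v\rho,w)$ of $\Grp(b)$; since $\rho$ is an isometry one has $\Grp(b_Y)\tilde\rho=\Grp(b_{Y\rho})$, so $\tilde\rho$ permutes $\mathcal{H}$ exactly, giving $\mathcal{H}\tilde\rho=\mathcal{H}$, while $\tilde\rho^2=1$ because $\Isom(b)$ embeds as a genuine subgroup. Then $\mathcal{H}^{[\rho]}=\{\langle H,H\tilde\rho\rangle\}$ corresponds to the semi-refined $\mathcal{X}^{[\rho]}$ and is therefore itself semi-refined.

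For the transitivity clause I would translate two given semi-refined central decompositions into two semi-refined $\perp$-decompositions of $b$, match them by an isometry via \thmref{thm:semi-refined}.$(ii)$, and pull that isometry back into $C_{\Aut P}(P')$ using \thmref{thm:main-reduction}.$(iii)(b)$, again reading the action in the $\Grp(b)$ model so that equality of decompositions (and not merely equality modulo $P'$) is obtained.

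The conceptual core, and the step I expect to do the real work, is the identification of the two notions of semi-refinement through \remref{rem:H-circ-H}, so that \thmref{thm:semi-refined} may be quoted verbatim. The delicate technical point is that $\tilde\rho$ must permute $\mathcal{H}$ literally and be an involution; both are secured by working in the explicit $\Grp(b)$ model, where an isometry acts on the canonical subgroups $\Grp(b_Y)$ without ambiguity. The one thing genuinely requiring verification is the existence of a transversal adapted to $\mathcal{H}$, which follows because a fully refined $\perp$-decomposition is a direct decomposition of $V=P/P'$, allowing $\ell$ to be defined factor-by-factor into the corresponding members $H$.
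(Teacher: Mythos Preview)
Your approach is essentially the paper's own: translate $\mathcal{H}$ to the fully refined $\perp$-decomposition $\mathcal{X}=\Bi(\mathcal{H})$ of the non-degenerate map $b=\Bi(P)$, invoke \thmref{thm:semi-refined} to obtain the involutory isometry $\rho$ with $\mathcal{X}^{[\rho]}$ semi-refined, and then lift back to $C_{\Aut P}(P')$ via the $\Isom(b)\ltimes\Aut_{\gamma}P$ splitting. Your treatment of the lift is in fact more explicit than the paper's, which simply says ``let $\tau$ be the automorphism on $\mathcal{H}$ induced by $\rho$'' and cites \propref{prop:auto-isom}; your device of choosing a transversal $\ell$ adapted to $\mathcal{H}$ so that $\varphi_\ell$ identifies $\mathcal{H}$ with $\Grp(\mathcal{X})$ is exactly what is needed to make that line precise.

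One small point on the transitivity clause. You propose to translate two arbitrary semi-refined central decompositions directly to semi-refined $\perp$-decompositions and quote \thmref{thm:semi-refined}.$(ii)$. This is fine, but the passage from ``$\Bi(\mathcal{S})\varphi=\Bi(\mathcal{T})$'' back to ``$\mathcal{S}\alpha=\mathcal{T}$'' (rather than merely $\mathcal{S}P'\alpha=\mathcal{T}P'$) cannot be done purely in a single $\Grp(b)$ model, since $\mathcal{S}$ and $\mathcal{T}$ need not simultaneously be of the canonical form $\Grp(b_Y)$ for the same choice of transversal. The paper handles this by invoking \corref{coro:refine} (whose automorphisms, built in \thmref{thm:refine} and \lemref{lem:contract-1}.$(ii)$, are identity on $Z(P)=P'$ and hence lie in $C_{\Aut P}(P')$). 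Equivalently, two adapted transversals differ by an element of $\Aut_{\gamma}P\leq C_{\Aut P}(P')$, so a composition fixes the discrepancy; either way you should make this adjustment explicit.
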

\begin{proof}
Let $\mathcal{H}$ be fully refined central decomposition of $P$.  

As $P'=Z(P)$, $b:=\Bi(P)$ is non-degenerate.  Let $\mathcal{X}:=\Bi(\mathcal{H})$ (cf. Section \ref{sec:bi}).  By \thmref{thm:main-reduction}.$(i)$ we know $\mathcal{X}$ is a fully refined
$\perp$-decomposition of $b$.  By \thmref{thm:semi-refined} there is an
isometry $\rho$ which permutes $\mathcal{X}$ such that $\mathcal{X}^{[\rho]}$ 
is semi-refined.  Let $\tau$ be the automorphism on $\mathcal{H}$ induced by 
$\rho$ (cf. \propref{prop:auto-isom}).  Thus, $H\tau\neq H$ only if $H$ is centrally indecomposable of orthogonal type (see \defref{def:semi-refined} and \thmref{thm:indecomps}) and $H@=H\tau @$ (cf. 
\corref{coro:isom-address}.$(iii)$).  Hence, 
$\langle H,H\tau\rangle\cong H\circ H$ for each $H\neq H\tau$, $H\in\mathcal{H}$.  This makes $\mathcal{H}^{[\tau]}$ semi-refined.

Given any other fully refined central decomposition $\mathcal{K}$ of $P$ 
 it follows that $\mathcal{K}$ can be semi-refined 
by an automorphism $\mu$ which permutes $\mathcal{K}$.
Thus, $\mathcal{H}^{[\tau]}$ and $\mathcal{K}^{[\mu]}$ have full refinements with a common address.  Therefore \corref{coro:main}, 
\thmref{thm:main-reduction}.$(ii.b)$, and \corref{coro:refine} prove the transitivity of $C_{\Aut P}(P')$.
\end{proof}

\begin{proof}[Proof of \thmref{thm:central-count}.$(i)$]
First assume that $P'=Z(P)$.  By \thmref{thm:central-III} we know all fully refined 
central decomposition have the same size.  By \corref{coro:central-II}, we know that 
all semi-refinements of
a fully refined central decomposition are equivalent under $\Aut P$.  Furthermore,
this also shows that a semi-refined central decomposition has the form
$\mathcal{H}^{[\rho]}=\{\langle H,H\rho\rangle : H\in\mathcal{H}\}$
where $\rho\in \Aut P$.  Therefore the multiset 
$\{|\langle H,H\rho\rangle| : H\in\mathcal{H}\}$
is uniquely determined by $P$.  Indeed, $|H|=|\langle H,H\rho\rangle|/[H:Z(H)]$ is 
uniquely determined by $H$ and $P$.  

Choose a subset $\mathcal{K}$ of $\mathcal{H}$ in which $H\in\mathcal{K}$ if,
and only if, $H\rho\notin \mathcal{K}$, for each $H\in\mathcal{H}$ with $H\neq H\rho$.  Then $\mathcal{H}$ is 
partitioned into
\begin{equation}
	\{H\in\mathcal{H} : H\rho=H\}\sqcup \mathcal{K}\sqcup\mathcal{K}\rho.
\end{equation}
Hence, the multiset $\{|H|:H\in\mathcal{H}\}$ equals
\begin{equation}
	\{|H|: H\in\mathcal{H} : H\rho=H\}\sqcup 
		\{|K|:K\in\mathcal{K}\}\sqcup\{|K|: K\in\mathcal{K}\}.
\end{equation}
Thus, the multiset of orders of members of $\mathcal{H}$ is uniquely determined by $P$.  

The similar argument works for the multiset of orders of the centers of the members
of $\mathcal{H}$.

Finally, for the case when $P'<Z(P)$ we invoke \lemref{lem:contract-2} and
\lemref{lem:contract-1}.$(ii)$.
\end{proof}

%
%
\section{Unbounded numbers of orbits of central decompositions}\label{sec:ex}

The proof of \thmref{thm:central-count}.$(i)$ has depended on
a study of $C_{\Aut P}(P')$.  Whenever $C_{\Aut P}(P')$ is transitive on the set 
of fully refined central decompositions (\thmref{thm:main-reduction} and \corref{coro:main}) this approach is sufficient.  However, $C_{\Aut P}(P')$ may have multiple orbits.  
This occurs only if there are centrally indecomposable $p$-groups of orthogonal type 
(cf. \thmref{thm:indecomps}).

In this section we have two principal aims: first to show one way that symmetric bilinear 
forms arise in the context of $p$-groups.  Secondly,
we develop examples of centrally indecomposable $p$-groups of the other types specified in \thmref{thm:indecomps}, with the exception of the unitary type.

Most the constructions and theorems in this section are subsumed by more general 
results in \cite{Wilson:grp-alge}, but the proofs provided here are self-contained and 
require fewer preliminaries.

		\subsection{Centrally indecomposable $p$-groups of orthogonal type}

In \cite{Wilson:grp-alge} we prove that there are exponentially many $p$-groups
of order $p^n$ which have class 2, exponent $p$, and are centrally indecomposable of 
orthogonal type.
Indeed, we also show that a $p$-group of class $2$ and exponent $p$ 
with ``randomly selected presentation'' is ``almost always'' a centrally indecomposable 
group of orthogonal type.  Here we describe just one family of centrally indecomposable 
$p$-groups of orthogonal type.

\begin{lemma}\label{lem:free-alt-adj}
Let $V$ be a $k$-vector space of dimension $n>2$.  Define 
$b:V\times V\to V\wedge V$ by $b(u,v):=u\wedge v$, for all $u,v\in V$.
Then $b$ is alternating and $\Adj(b)\cong k$ with 
trivial involution, that is, $b$ is $\perp$-indecomposable of orthogonal type.
\end{lemma}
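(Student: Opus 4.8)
The plan is to verify the three assertions in turn, the crux being the computation of $\Adj(b)$. That $b$ is alternating is immediate: $b(v,v)=v\wedge v=0$ for all $v\in V$. Moreover $b$ is non-degenerate, since $u\wedge V=0$ forces $u=0$ once $\dim V\geq 2$ (extend $u$ to a basis and pair it against another basis vector). Thus $b$ is a non-degenerate alternating, hence Hermitian, bilinear map with $W=b(V,V)=V\wedge V$, so the machinery of Section \ref{sec:adj-sym} applies. It is also clear that every scalar $\lambda 1_V$ lies in $\Adj(b)$ and is self-adjoint, because $b(\lambda u,v)=\lambda(u\wedge v)=b(u,\lambda v)$; so $k1_V\subseteq\Adj(b)$ with trivial involution on this subalgebra.

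The heart of the argument is the reverse inclusion $\Adj(b)\subseteq k1_V$. Fix $f\in\Adj(b)$ with adjoint $f^*$ and a basis $e_1,\dots,e_n$ of $V$, so that $\{e_i\wedge e_j:i<j\}$ is a basis of $V\wedge V$. For $i\neq j$ the defining relation reads $e_if\wedge e_j=e_i\wedge e_jf^*$. Expanding both sides in the basis of $V\wedge V$, every term on the left is a multiple of some $e_k\wedge e_j$ and every term on the right a multiple of some $e_i\wedge e_l$; the only basis vector common to the two families is $e_i\wedge e_j$. Comparing coefficients I would conclude that $e_if\in\langle e_i,e_j\rangle$ and, symmetrically, $e_jf^*\in\langle e_i,e_j\rangle$. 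This is exactly where the hypothesis $n>2$ is essential: intersecting the planes $\langle e_i,e_j\rangle$ over the $n-1\geq 2$ indices $j\neq i$ gives $\bigcap_{j\neq i}\langle e_i,e_j\rangle=\langle e_i\rangle$, so $e_if=\lambda_ie_i$ for some scalar $\lambda_i$, and $f$ is diagonal in the chosen basis; likewise $e_jf^*=\mu_je_j$.

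It remains to see that these scalars coincide. Substituting $e_if=\lambda_ie_i$ and $e_jf^*=\mu_je_j$ back into $e_if\wedge e_j=e_i\wedge e_jf^*$ yields $\lambda_i(e_i\wedge e_j)=\mu_j(e_i\wedge e_j)$, i.e.\ $\lambda_i=\mu_j$ for all $i\neq j$; invoking a third index (again using $n>2$) forces all $\lambda_i$ and all $\mu_j$ to equal a single scalar $\lambda$. Hence $f=f^*=\lambda 1_V$, which establishes $\Adj(b)=k1_V\cong k$ with trivial involution. Finally, since $k$ with trivial involution is an associative composition algebra of orthogonal type (\thmref{thm:Hurwitz}) and coincides with its own quotient by the radical, \corref{coro:indecomp} gives that $b$ is $\perp$-indecomposable, and the orthogonal type of $\Adj(b)$ records that $b$ has orthogonal type. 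I expect the only delicate point to be the coefficient bookkeeping and the two appeals to $n>2$ in the second and third paragraphs; everything else is routine.
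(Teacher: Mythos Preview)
Your proof is correct and follows essentially the same route as the paper's: both fix a basis, use the relation $e_if\wedge e_j=e_i\wedge e_jf^*$, expand in the wedge basis, and read off that $f$ is diagonal with all diagonal entries equal. Your presentation is in fact slightly cleaner at one point: by recording only $e_if\in\langle e_i,e_j\rangle$ from a single pair $(i,j)$ and then intersecting over $j\neq i$, you avoid any ambiguity about the $e_j\wedge e_j=0$ term that silently kills the $g_{ij}$ coefficient, whereas the paper asserts $g_{is}=0$ for all $s\neq i$ directly and relies on varying $j$ implicitly.
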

\begin{proof}
Take $g\in\Adj(b)$.  We show that $g$ is a scalar matrix and thus $\Adj(b)\cong k$.  Hence $b$ is $\perp$-indecomposable of type $1$ with respect to $k$.

Let $V=\langle e_1,\dots, e_n\rangle$ so that $\{e_i\wedge e_j: 1\leq i<j\leq n\}$ is 
a basis of $V\wedge V$.  Fix $1\leq i,j\leq n$, $i\neq j$.  We have
\begin{equation}\label{eq:part1}
	e_i g\wedge e_j=b(e_i g,e_j)=b(e_i,e_j g^*)=e_i\wedge e_j g^*,
		\qquad 1\leq i<j\leq n.
\end{equation}
If we take $e_i g =\sum_{s=1}^n g_{is} e_s$ and $e_j g^*=\sum_{t=1}^n g^*_{jt} e_t$, then
\begin{align*}
0 & =e_i g\wedge e_j-e_i\wedge e_j g^*
	= \sum_{s=1}^n g_{is} (e_s\wedge e_j) - \sum_{t=1}^n g_{jt}^* (e_i\wedge e_t)\\
	& = \sum_{s=1, s\neq i}^n g_{is} (e_s\wedge e_j) + (g_{ii}-g^*_{jj})e_i\wedge e_j
		- \sum_{t=1, t\neq j}^n g_{jt}^* (e_i\wedge e_t).
\end{align*}
So we have $g_{is}=0$ for all $s\neq i$ and $g^*_{jt}=0$ for all $t\neq j$, $1\leq s,t\leq n$
and furthermore $g_{ii}=g_{jj}^*$.  As this is done for arbitrary $1\leq i,j\leq n$, $i\neq j$, we have $g_{11}=g_{22}^*=g_{ii}$ for all $2<i\leq n$.  Finally, $g_{22}=g_{11}^*=g_{33}=g_{11}$
so in fact $g=g_{11} I_n$ and similarly $g^*=g_{11}I_n$.  As $g$ was arbitrary, 
$\Adj(b)=k$.
\end{proof}

If $\dim V=2$ then $V\wedge V\cong k$ and the 
$k$-bilinear map $b$ is simply the non-degenerate alternating 
$k$-bilinear form of dimension $2$.  This is indecomposable 
of symplectic type (\lemref{lem:alt-form}) and the corresponding group is the extra-special
group of order $p^3$ and exponent $p$.

\begin{coro}\label{coro:free-ortho}
Let $V$ be an $\mathbb{F}_q$-vector space of dimension $n>2$ and let 
$b:V\times V\to V\wedge V$ be defined by $b(u,v)=u\wedge v$ for all $u,v\in V$.  
Then $\Grp(b)$ is centrally indecomposable of orthogonal type (see Section \ref{sec:grp}).  
\end{coro}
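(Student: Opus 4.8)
The plan is to recognize this corollary as a direct translation of \lemref{lem:free-alt-adj} through the machinery that links $\perp$-decompositions of bilinear maps to central decompositions of $p$-groups. Since $k=\mathbb{F}_q$ with $q$ a power of the odd prime $p$, the construction of Section \ref{sec:grp} shows $\Grp(b)$ is a $p$-group of class $2$ and exponent $p$, with commutator subgroup $0\times(V\wedge V)$ and center $\rad b\times(V\wedge V)$.

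First I would verify that $b$ is non-degenerate. If $0\neq u\in V$, then since $\dim V=n>2$ one may choose $v$ with $u,v$ linearly independent, whence $b(u,v)=u\wedge v\neq 0$; thus $\rad b=0$. Consequently $Z(\Grp(b))=(\Grp(b))'$, so the standing hypothesis $P'=Z(P)$ of \thmref{thm:main-reduction} is satisfied for $P:=\Grp(b)$. This is the one point that genuinely needs checking, as it is what licenses the reduction theorem.

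Next I would invoke \lemref{lem:free-alt-adj}, which gives $\Adj(b)\cong k$ with trivial involution and hence that $b$ is $\perp$-indecomposable of orthogonal type. By \propref{prop:semi-equiv-cat}.$(i)$ there is a natural pseudo-isometry $(\tau,\hat{\tau})$ from $b$ to $\Bi(\Grp(b))$. Such a pseudo-isometry sends a $\perp$-decomposition $\mathcal{X}$ of $b$ to $\{X\tau:X\in\mathcal{X}\}$, a $\perp$-decomposition of $\Bi(\Grp(b))$, and conversely through $\tau^{-1}$, so $\perp$-indecomposability transfers; moreover it induces a $*$-isomorphism of adjoint algebras by \propref{prop:groups}.$(i)$. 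Therefore $\Bi(\Grp(b))$ is $\perp$-indecomposable with adjoint algebra again $\cong k$ under the trivial involution. Applying \thmref{thm:main-reduction}.$(i)$ then yields that $\Grp(b)$ is centrally indecomposable.

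Finally, to identify the type I would appeal to \corref{coro:indecomp} and \thmref{thm:indecomps}: since $\Adj(\Bi(\Grp(b)))/\rad$ is isomorphic to the base field $k$, an associative composition algebra of orthogonal type, the group $\Grp(b)$ belongs to the orthogonal family of \thmref{thm:indecomps}. I expect no real obstacle here beyond bookkeeping: the entire mathematical content lies in \lemref{lem:free-alt-adj}, and the corollary merely transports it across the functor $\Grp$ and the reduction \thmref{thm:main-reduction}.
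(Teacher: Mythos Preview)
Your proposal is correct and follows essentially the same approach as the paper, which records only ``This follows from \thmref{thm:main-reduction}'' (with \lemref{lem:free-alt-adj} implicit as the immediately preceding result). Your version usefully makes explicit the verification that $\rad b=0$, hence $P'=Z(P)$, which is the hypothesis needed to invoke \thmref{thm:main-reduction}; the paper leaves this to the reader.
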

\begin{proof} This follows from \thmref{thm:main-reduction}.\end{proof}

When $q=p$, $\Grp(b)\cong\langle a_1,\dots,a_n \mid $ 
class $2$, exponent $p\rangle$.
Note that the smallest example of an orthogonal type group is $\langle a_1, a_2,a_3\mid $ class $2$,
exponent $p\rangle$ -- the free class 2 exponent $p$-group of rank 3 and order $p^6$.

		%
		%
		\subsection{Direct sums and tensor products}

Direct sums and tensor products are two natural means to construct bilinear maps from 
others.
To use these we must demonstrate that the adjoint algebras of such products are determined
by the adjoints of the components.  A full account is given in \cite{Wilson:grp-alge} but
here we give only the cases required and supply direct computational proofs.

\begin{defn}
Let $b:V\times V\to W$ and $b':V'\times V'\to W'$ be $k$-bilinear maps.
Let $b\oplus b':V\oplus V'\times V\oplus V'\to W\oplus W'$ be the
bilinear map defined by
	\[(b\oplus b')(u\oplus u',v\oplus v')=b(u,v)\oplus b'(u',v')\]
for all $u,v\in V$ and $u',v'\in V'$.
\end{defn}

\begin{prop}
Let $b$ and $b'$ be two non-degenerate bilinear maps.
Then $\Adj(b\oplus b')=\Adj(b)\oplus \Adj(b')$, where the $*$-operator on the right
hand side is componentwise.   Hence also $\Sym(b\oplus b')=\Sym(b)\oplus \Sym(b')$.
\end{prop}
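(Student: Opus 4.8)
The plan is to represent each $f\in\End(V\oplus V')$ as a block operator with components $f_{11}\colon V\to V$, $f_{12}\colon V\to V'$, $f_{21}\colon V'\to V$, and $f_{22}\colon V'\to V'$, determined by $(u\oplus u')f=(uf_{11}+u'f_{21})\oplus(uf_{12}+u'f_{22})$, and to decompose any adjoint in the same way. The decisive feature of $b\oplus b'$ is that it makes the two summands orthogonal: $(b\oplus b')(u\oplus 0,\,0\oplus v')=0$ for all $u\in V$, $v'\in V'$, since the cross terms $b(u,0)$ and $b'(0,v')$ vanish. Note also that $\rad(b\oplus b')=\rad b\oplus\rad b'=0$, so $b\oplus b'$ is non-degenerate and adjoints relative to it exist and are unique (\propref{prop:adjoint}). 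The inclusion $\Adj(b)\oplus\Adj(b')\subseteq\Adj(b\oplus b')$ is the routine direction: given $f_{11}\in\Adj(b)$ and $f_{22}\in\Adj(b')$, I would set $f:=f_{11}\oplus f_{22}$ and verify directly that $f^*:=f_{11}^*\oplus f_{22}^*$ satisfies the adjoint identity, the two sides splitting into the separate $b$- and $b'$-adjoint identities on each summand.

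For the reverse inclusion, the main work — and the only place non-degeneracy enters — is to show that every $f\in\Adj(b\oplus b')$ is block diagonal. Writing $g:=f^*$ and testing the defining identity $(b\oplus b')(xf,y)=(b\oplus b')(x,yg)$ on the cross pair $x=u\oplus 0$, $y=0\oplus v'$ collapses all but one term on each side, yielding $b'(uf_{12},v')=0$ and $b(u,v'g_{21})=0$ for all $u,v'$; non-degeneracy of $b'$ and of $b$ then forces $f_{12}=0$ and $g_{21}=0$. Testing symmetrically on $x=0\oplus u'$, $y=v\oplus 0$ gives $f_{21}=0$ and $g_{12}=0$, so both $f$ and $g$ are block diagonal. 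Feeding the diagonal pairs $x=u\oplus 0$, $y=v\oplus 0$ and $x=0\oplus u'$, $y=0\oplus v'$ back into the identity now reads $b(uf_{11},v)=b(u,vg_{11})$ and $b'(u'f_{22},v')=b'(u',v'g_{22})$, which say exactly $f_{11}\in\Adj(b)$ with $f_{11}^*=g_{11}$ and $f_{22}\in\Adj(b')$ with $f_{22}^*=g_{22}$. Hence $f=f_{11}\oplus f_{22}\in\Adj(b)\oplus\Adj(b')$ and the involution is componentwise.

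The assertion about $\Sym$ then follows at once: $\Sym(b\oplus b')$ is precisely the set of self-adjoint elements of $\Adj(b\oplus b')$ (those $f$ with $f^*=f$), and since the involution is componentwise, $f_{11}\oplus f_{22}$ is self-adjoint if and only if $f_{11}=f_{11}^*$ and $f_{22}=f_{22}^*$, giving $\Sym(b\oplus b')=\Sym(b)\oplus\Sym(b')$. I expect the only genuine obstacle to be the block-diagonalization step: one must use the adjoint on \emph{both} sides of the defining identity and invoke non-degeneracy of \emph{both} $b$ and $b'$ — not just one of them — in order to annihilate all four off-diagonal blocks of $f$ and of $f^*$. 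Everything else is bookkeeping with the orthogonality of the summands.
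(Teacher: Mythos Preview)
Your proposal is correct and follows essentially the same approach as the paper: both test the adjoint identity on vectors lying in a single summand to force the off-diagonal blocks to vanish via non-degeneracy, and then read off the adjoint relation on each diagonal block. The only cosmetic difference is that the paper works coordinate-free (writing $(u\oplus 0)f=x\oplus x'$ and testing against $v\oplus v'$) while you use explicit $2\times 2$ block notation and test against pure cross pairs $0\oplus v'$; your version is slightly more symmetric in that it treats $f$ and $f^*$ in parallel, whereas the paper block-diagonalizes $f$ first and then invokes $f^*\in\Adj(b\oplus b')$ to get the same for $f^*$.
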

\begin{proof}
Evidently $\Adj(b)\oplus \Adj(b')\leq \Adj(b\oplus b')$.  For the reverse,
let $f\in \Adj(b\oplus b')\in \End (V\oplus V')$.  
Given $u,v\in V$, $v'\in V'$, take $(u\oplus 0)f=x\oplus x'$ and $(v\oplus v')f=y\oplus y'$
for some $x\oplus x',y\oplus y'\in V\oplus V'$.  It follows that
\begin{multline*}
b(x,v)\oplus b'(x',v')
	=(b\oplus b')((u\oplus 0)f,v\oplus v')\\
	=(b\oplus b')(u\oplus 0,(v\oplus v')f^*)
	=b(u,y)\oplus b'(0,y')
	=b(u,y)\oplus 0.
\end{multline*}
Therefore $b'(x',v')=0$ for all $v'\in V'$.  So $x'\in \rad b'=0$.  Thus 
$(u\oplus 0)f\in V\oplus 0$ for all $u\in V$.   Similarly $(0\oplus v')f\in 0\oplus V'$.  
So $f\in (\End V)\oplus(\End V')$.  

Let $f=g\oplus h$ and $f^*=g^*\oplus h^*$ 
for $g,g^*\in \End V$ and $h,h^*\in \End V'$.  It follows that
\begin{multline*}
b(ug,v)\oplus 0
	=b(ug,v)\oplus b'(u',0)
	=(b\oplus b')((u\oplus u')f,v\oplus 0)\\
	=(b\oplus b')(u\oplus u',(v\oplus 0)f^*)
	=b(u,vg^*)\oplus b'(u',0).
\end{multline*}
Therefore $g\in \Adj(b)$ and similarly $h\in\Adj(b')$.  
So $f\in\Adj(b)\oplus\Adj(b')$.
\end{proof}

Given two bilinear maps $b:V\times V\to W$ and $b':V'\times V'\to W'$
we induce a multi-linear map $(b\times b'):V\times V'\times V\times V'\to W\otimes W'$ defined by:
\begin{equation}
	(b\otimes b') (u, u',v,v')
		:= b(u,v)\otimes b'(u',v'),\qquad \forall u,v\in V, u',v'\in V'.
\end{equation}
Let $\widehat{b\times b'}$
denote the induced linear map $V\otimes V'\otimes V\otimes V'\to W\otimes W'$.  
With this notation we give:
\begin{defn}
Let $b\otimes b':V\otimes V'\times V\otimes V'\to W\otimes W'$ be the
restriction of $\widehat{b\times b'}$ to $V\otimes V'\times V\otimes V'$,
where $b:V\times V\to W$ and $b':V'\times V'\to W'$ are bilinear maps.
\end{defn}
Evidently, $b\otimes b'$ is bilinear.  Using tensor products and the following obvious 
result, we can convert symmetric bilinear maps to alternating bilinear maps.
\begin{prop}
Let $b:U\times U\to W$ and $c:V\times V\to X$ be Hermitian maps over $k$
with involutions $\theta$ and $\tau$, respectively.  Then $b\otimes c$ is
Hermitian with involution $\theta\otimes \tau$.  In particular, 
the tensor of two symmetric bilinear maps is symmetric, the tensor of a
symmetric and an alternating bilinear map is alternating, and the tensor
of two alternating bilinear maps is symmetric.
\end{prop}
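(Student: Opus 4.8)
The plan is to reduce the Hermitian identity to simple tensors using bilinearity and then verify it by a one-line computation. Writing $\sigma := \theta\otimes\tau$, I must check that $\sigma\in\GL(W\otimes X)$ and that $(b\otimes c)(\xi,\eta)=(b\otimes c)(\eta,\xi)\sigma$ for all $\xi,\eta\in U\otimes V$. Since both sides of this equation are bilinear in $(\xi,\eta)$, it suffices to verify it when $\xi=u\otimes u'$ and $\eta=v\otimes v'$ are simple tensors, with $u,v\in U$ and $u',v'\in V$. This is the only genuinely necessary reduction, and it is immediate from the definition of $b\otimes c$ as the restriction of $\widehat{b\times c}$.

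On simple tensors I would compute directly, using that $b$ is $\theta$-Hermitian, that $c$ is $\tau$-Hermitian, and the defining property of the tensor product of linear maps, namely $(w\otimes x)(\theta\otimes\tau)=(w\theta)\otimes(x\tau)$ for $w\in W$, $x\in X$. Explicitly,
\begin{align*}
(b\otimes c)(u\otimes u',\,v\otimes v')
&= b(u,v)\otimes c(u',v')
= \bigl(b(v,u)\theta\bigr)\otimes\bigl(c(v',u')\tau\bigr)\\
&= \bigl(b(v,u)\otimes c(v',u')\bigr)(\theta\otimes\tau)
= (b\otimes c)(v\otimes v',\,u\otimes u')\,\sigma .
\end{align*}
To see that $\sigma$ is a legitimate involution, note that $\theta$ and $\tau$ are invertible, so $\sigma=\theta\otimes\tau$ is invertible with inverse $\theta^{-1}\otimes\tau^{-1}$; and since $\theta^2=1_W$ and $\tau^2=1_X$ we get $\sigma^2=\theta^2\otimes\tau^2=1_{W\otimes X}$. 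Thus $b\otimes c$ is $\sigma$-Hermitian, as claimed.

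The three ``in particular'' assertions then follow by substituting the relevant involutions. For two symmetric maps, $\theta=1_W$ and $\tau=1_X$ give $\sigma=1_{W\otimes X}$, so $b\otimes c$ is symmetric; for a symmetric and an alternating map, $\theta=1_W$ and $\tau=-1_X$ give $\sigma=-1_{W\otimes X}$, so $b\otimes c$ is skew-symmetric, hence alternating since $p$ is odd; and for two alternating maps, $\theta=-1_W$ and $\tau=-1_X$ give $\sigma=(-1_W)\otimes(-1_X)=1_{W\otimes X}$, so $b\otimes c$ is symmetric. There is no real obstacle here: the statement is flagged as ``obvious'' precisely because everything reduces to the elementary identity $(w\otimes x)(\theta\otimes\tau)=(w\theta)\otimes(x\tau)$, and the only point requiring a moment's care is the passage from simple tensors to arbitrary elements of $U\otimes V$, which is handled by bilinearity.
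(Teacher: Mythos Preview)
Your proof is correct and is exactly the natural argument; the paper itself omits any proof, introducing the proposition as ``the following obvious result,'' so your write-up is simply the standard elaboration of what the paper leaves implicit.
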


\begin{prop}\label{prop:tensor}
Let $d:U\times U\to C$ be a non-degenerate Hermitian $C$-form with 
$k=\{x\in C:\bar{x}=x\}$
and let $b':V\times V\to W$ be a $k$-bilinear map.  Then 
$\Adj(d\otimes b)=\Adj(d)\otimes \Adj(b)$
and $\Sym(d\otimes b)=\Sym(d)\otimes \Sym(b)$.
\end{prop}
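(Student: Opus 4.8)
The plan is to establish the algebra identity $\Adj(d\otimes b)=\Adj(d)\otimes\Adj(b)$ first and then read off the Jordan identity by restricting to self-adjoint elements; throughout I identify $\End(U\otimes V)$ with $\End U\otimes\End V$ in the usual way. The inclusion $\Adj(d)\otimes\Adj(b)\subseteq\Adj(d\otimes b)$ is a direct computation: for $f\in\Adj(d)$ and $g\in\Adj(b)$ the definition of $d\otimes b$ gives
\[(d\otimes b)\big((u\otimes v)(f\otimes g),u'\otimes v'\big)=d(uf,u')\otimes b(vg,v')=d(u,u'f^*)\otimes b(v,v'g^*),\]
so $f\otimes g\in\Adj(d\otimes b)$ with $(f\otimes g)^*=f^*\otimes g^*$; extending by linearity gives the inclusion and records that the adjoint involution acts factorwise.

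The substance is the reverse inclusion. Given $F\in\Adj(d\otimes b)$ (adjoints exist and are unique by \propref{prop:adjoint}, as $d\otimes b$ is non-degenerate), I would fix a representation $F=\sum_{i=1}^r f_i\otimes g_i$ of minimal tensor rank, so that both $\{f_i\}\subseteq\End U$ and $\{g_i\}\subseteq\End V$ are $k$-linearly independent, and likewise write $F^*=\sum_{j=1}^s f_j'\otimes g_j'$ minimally. Expanding the defining relation $(d\otimes b)((u\otimes v)F,u'\otimes v')=(d\otimes b)(u\otimes v,(u'\otimes v')F^*)$ and letting all four arguments vary yields
\[\sum_i d(uf_i,u')\otimes b(vg_i,v')=\sum_j d(u,u'f_j')\otimes b(v,v'g_j').\]
Since a four-linear map $U\times U\times V\times V\to C\otimes_k W$ corresponds to a unique element of $\mathrm{Bil}_k(U,C)\otimes_k\mathrm{Bil}_k(V,W)$, this is the tensor identity $\sum_i D_i\otimes\gamma_i=\sum_j D_j'\otimes\delta_j$, where $D_i=d(\,\cdot\,f_i,\cdot)$, $\gamma_i=b(\,\cdot\,g_i,\cdot)$, $D_j'=d(\cdot,\cdot f_j')$, and $\delta_j=b(\cdot,\cdot g_j')$.

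Here the non-degeneracy hypotheses do the work. Because $\rad b=0$, linear independence of $\{g_i\}$ forces linear independence of the partial forms $\{\gamma_i\}$: a relation $\sum_i\mu_i\gamma_i=0$ reads $b(v(\sum_i\mu_i g_i),v')=0$ for all $v,v'$, hence $\sum_i\mu_i g_i=0$. The same argument gives independence of $\{\delta_j\}$, and non-degeneracy of $d$ gives independence of $\{D_i\}$ and of $\{D_j'\}$. Thus both sides are minimal-rank tensors, so by the well-definedness of the left and right supports of a tensor we get $r=s$, $\mathrm{span}\{D_i\}=\mathrm{span}\{D_j'\}$, and $\mathrm{span}\{\gamma_i\}=\mathrm{span}\{\delta_j\}$. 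Writing $D_i=\sum_j\lambda_{ij}D_j'$ means $d(uf_i,u')=d\big(u,u'(\sum_j\lambda_{ij}f_j')\big)$ for all $u,u'$, so $f_i\in\Adj(d)$ with adjoint $\sum_j\lambda_{ij}f_j'$; symmetrically $g_i\in\Adj(b)$. Hence $F\in\Adj(d)\otimes\Adj(b)$, finishing the equality of adjoint algebras.

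The main obstacle is precisely this disentangling step: a priori the tensor factors $f_i,g_i$ couple the two forms, and it is only non-degeneracy of $d$ and $b$—upgrading linear independence of operators to linear independence of the associated partial bilinear maps—that lets minimal-rank uniqueness split the identity into a $d$-side and a $b$-side. The Jordan identity $\Sym(d\otimes b)=\Sym(d)\otimes\Sym(b)$ is then obtained by restricting to self-adjoint elements: since $\Sym(c)=\mathfrak H(\Adj(c))$ and the involution on $\Adj(d\otimes b)=\Adj(d)\otimes\Adj(b)$ acts factorwise as $(f\otimes g)^*=f^*\otimes g^*$, running the above argument under the constraint $F=F^*$ (so the supporting spans are taken $*$-stable) identifies the $*$-fixed points with $\Sym(d)\otimes\Sym(b)$.
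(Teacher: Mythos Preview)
Your argument for $\Adj(d\otimes b)=\Adj(d)\otimes\Adj(b)$ is correct and takes a genuinely different route from the paper's. The paper exploits the special hypothesis on $d$: it fixes an orthogonal $C$-basis $\mathcal{X}$ of $U$ (available by \propref{prop:orthogonal}), takes the associated idempotents $\mathcal{E}$, writes an arbitrary $g\in\Adj(d\otimes b)$ as $g=\sum_{e,f\in\mathcal{E}}(e\otimes 1)g(f\otimes 1)=\sum_{x,y\in\mathcal{X}} e_{x,y}\otimes g_{x,y}$, and then checks by a direct calculation with the basis vectors that each $g_{x,y}\in\Adj(b)$ with adjoint $\tfrac{d(x,x)}{d(y,y)}g^*_{y,x}$. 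Your minimal-tensor-rank argument sidesteps the orthogonal basis entirely and uses only non-degeneracy of $d$ and $b$; this is more general (as the paper itself remarks after the proof, the identity holds for arbitrary non-degenerate bilinear maps), whereas the paper's coordinate computation is more explicit and ties directly to the Peirce decomposition already in play.

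Your treatment of the $\Sym$ assertion, however, has a real gap. You claim that because the involution on $\Adj(d)\otimes\Adj(b)$ acts factorwise as $(f\otimes g)^*=f^*\otimes g^*$, the $*$-fixed subspace is $\Sym(d)\otimes\Sym(b)$. This is false: in odd characteristic one has $\Adj(d)=\Sym(d)\oplus\mathrm{Skew}(d)$ and likewise for $b$, and under the factorwise involution the fixed subspace is
\[
\mathfrak{H}\big(\Adj(d)\otimes\Adj(b)\big)
  =\big(\Sym(d)\otimes\Sym(b)\big)\ \oplus\ \big(\mathrm{Skew}(d)\otimes\mathrm{Skew}(b)\big),
\]
which strictly contains $\Sym(d)\otimes\Sym(b)$ whenever both skew parts are nonzero. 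For instance, with $d$ the dot product on $K^2$ and $b$ the non-degenerate alternating form on $K^2$, one computes $\dim\Sym(d\otimes b)=6$ but $\dim\big(\Sym(d)\otimes\Sym(b)\big)=3$. Your phrase ``supporting spans taken $*$-stable'' does not repair this, since a minimal-rank decomposition of a self-adjoint tensor need not have self-adjoint tensor factors. Note that the paper's own proof argues only the $\Adj$ identity and says nothing about $\Sym$, and the applications in the paper invoke only the $\Adj$ equality; so the defect here is in the asserted $\Sym$ identity rather than in your method.
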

\begin{proof}
Clearly $\Adj(d)\otimes \Adj(b)\leq \Adj(d\otimes b)$.
For the reverse inclusion, let $\mathcal{X}$ be an orthogonal basis of $d$ 
and $\mathcal{E}=\mathcal{E}(X)$.  Take $g\in \Adj(d\otimes b)$.  We
show that $g\in \Adj(d)\otimes \Adj(b)$.

If $x,y\in \mathcal{X}$ with associated idempotents $e,f\in\mathcal{E}$,
then $(e\otimes 1)g(f\otimes 1)$ restricts to 
$\langle x\rangle\otimes V\to \langle y\rangle \otimes V$, so
there is a $g_{x,y}:V\to V$ defined by $vg_{x,y}=v'$, where 
$(x\otimes v)(e\otimes 1)g(f\otimes 1)=y\otimes v'$.  
Let $(x,y)$ be the transposition interchanging $x$ and $y$ and identity
on $\mathcal{X}-\{x,y\}$, treated as an element of $\End U=\Adj(d)$.
Set $e_{x,y}=e(x,y)f$.
Thus, $(e\otimes 1)g(f\otimes 1)=e_{x,y} \otimes g_{x,y}$.
Since
\begin{equation*}
g = \left(\sum_{e\in \mathcal{E}} e\otimes 1\right)g
	\left(\sum_{f\in\mathcal{E}} f\otimes 1\right)
	= \sum_{e,f\in\mathcal{E}} (e\otimes 1)g(f\otimes 1)\\
	= \sum_{x,y\in\mathcal{X}} e_{x,y}\otimes g_{x,y},
\end{equation*}
it suffices to prove that $g_{x,y}\in \Adj(b)$.

As $(e\otimes 1)g(f\otimes 1)\in\Adj(d\otimes b)$ with
$((e\otimes 1)g(f\otimes 1))^*=(f\otimes 1)g^*(e\otimes 1)$ it follows that:
\begin{align*}
1\otimes b(v (d(y,y)g_{x,y}),v')
	& = d(y,y)\otimes b(vg_{x,y},v')\\
	& = (d\otimes b)((x\otimes v)(e\otimes 1)g(f\otimes 1),y\otimes v')\\
	& = (d\otimes b)(x\otimes v,(y\otimes v')(f\otimes 1)g^*(e\otimes 1))\\
	&= d(x,x)\otimes b(v,v'g^*_{y,x})
	= 1 \otimes b(v,v'(d(x,x) g^*_{y,x}) ).
\end{align*}
Notice we have used the fact that $d(x,x),d(y,y)\in k^\times$ and that the
tensor is over $k$.  Therefore 
$b(v g_{x,y},v')=\frac{d(x,x)}{d(y,y)}b(v,v'g^*_{y,x}) $ for all $v,v'\in V$.
Hence $g_{x,y}\in\Adj(b)$ with adjoint $\frac{d(x,x)}{d(y,y)} g^*_{y,x}$.
This completes the proof.
\end{proof}
It can be shown that $\Adj(b\otimes c)=\Adj(b)\otimes\Adj(c)$
for any two bilinear maps $b$ and $c$ \cite{Wilson:grp-alge}.

		%
		%
\subsection{Proof of \thmref{thm:central-count}.$(ii)$}
		\label{sec:orthog-ex}

The best known examples of central products are the extra-special $p$-groups of
exponent $p$ and rank $2n$: $p^{1+2n}=\overbrace{p^{1+2}\circ\cdots \circ p^{1+2}}^n$, 
\cite[Theorem 5.5.2]{gor}.  It is customary to recognize $\Bi(p^{1+2n})$ as
the non-degenerate alternating bilinear form 
$b:\mathbb{Z}_p^{2n}\times \mathbb{Z}_p^{2n}\to \mathbb{Z}_p$.
We view this map as $d\otimes c$,
where $d:\mathbb{Z}_p^n\times \mathbb{Z}_p^n\to \mathbb{Z}_p$ is the
dot product $d(u,v):=uv^t$, and 
$c:\mathbb{Z}_p^2\times \mathbb{Z}_p^2\to \mathbb{Z}_p\wedge\mathbb{Z}_p$.  

This construction generalizes.  If $H$ is a centrally indecomposable group then 
it has an associated associative composition algebra $C:=\Adj(\Bi(H))/\rad \Adj(\Bi(H))$
(cf. \thmref{thm:indecomps}).  Recall that $K:=\{x\in C: x=\bar{x}\}$ is a field (cf.
\defref{def:comp}).  Set $P:=\overbrace{H\circ \cdots \circ H}^n$ and
$b:=\Bi(\overbrace{H\circ\cdots \circ H}^n)$.  As in \exref{ex:cent-perp}, 
$b=\overbrace{\Bi(H)\perp\cdots\perp \Bi(H)}^n$ which we can express compactly
as $b=d\otimes_K \Bi(H)$, where $d:K^n\times K^n\to K$ is
the usual dot product $d(u,v):=uv^t$, $u,v\in K^n$.  Hence, by \propref{prop:tensor},
it follows that $\Adj(b)=\Adj(d)\otimes_K \Adj(\Bi(H))$ and thus
$\Adj(b)/\rad \Adj(b)\cong \Adj(d)\otimes_K C$.   Yet, $\Adj(d)\otimes_K C\cong \Adj(d')$, where $d':C^n\times C^n\to C$ is defined by $d'(u,v):=u\bar{v}^t$, for $u,v\in C^n$.
If $C> K$ then \corref{coro:main} proves that all fully refined central decompositions of $P$
are conjugate under automorphisms of $P$.   We now demonstrate that the same is not generally possible with orthogonal type.

\begin{lemma}\label{lem:many-addresses}
Let $H=\langle X\rangle$ be a centrally indecomposable $p$-group of orthogonal 
type over $\mathbb{F}_q$ with $X$ a minimal generating set of $H$.
Set $P:=\overbrace{H\circ\cdots\circ H}^n$ and let $\mathcal{H}_0=\{H_1,\dots, H_n\}$ be the canonical central decomposition given by the central product,
so that $H_i=\langle x_i : x\in X\rangle$ where $x_i$ denotes $x$ in the $i$-th
component.  

Let $\omega=\alpha^2+\beta^2\in\mathbb{Z}_p$ be a non-square.
If $0\leq m\leq n/2$ then define 
$\mathcal{H}_m=\{K_1,\dots,K_{2m},H_{2m+1},\dots,H_n\}$ where
	\[K_{2j-1} :=\langle x_{2j-1}^{\alpha} x_{2j}^{\beta}: x\in X \rangle,\quad
		K_{2j} :=\langle x_{2j-1}^{\beta} x_{2j}^{-\alpha}: x\in X \rangle,\]
for $1\leq j\leq m$.   Then every member of $\mathcal{H}_m$ is isomorphic
 to $H$ and 
$\mathcal{H}_m$ is a fully refined central decompositions
of $P$ with address $(n-2m:2m)$, for $1\leq m\leq n/2$.
\end{lemma}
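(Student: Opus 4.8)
The plan is to carry everything over to the bilinear side via the functor $\Bi$ and to recognize $\mathcal{H}_m$ as the image of $\mathcal{H}_0$ under a block-diagonal rotation built from \lemref{lem:line}.$(i)$. Write $c:=\Bi(H)$ on the space $U$ and $b:=\Bi(P)$; by \exref{ex:cent-perp}, $b=\overbrace{c\perp\cdots\perp c}^n$ on $V=V_1\oplus\cdots\oplus V_n$ with each $V_i\cong U$, and since $H$ has orthogonal type we may write $b=d\otimes_K c$, where $d:K^n\times K^n\to K$ is the dot product and $K=\Adj(c)/\rad\Adj(c)$. Under $\Bi$ the subgroup $K_{2j-1}$ becomes the subspace $\{(\alpha u,\beta u):u\in U\}$ and $K_{2j}$ the subspace $\{(\beta u,-\alpha u):u\in U\}$ inside $V_{2j-1}\oplus V_{2j}$, i.e. the image of $V_{2j-1}\oplus V_{2j}$ under $\phi\otimes 1_U$, where $\phi=\begin{bmatrix}\alpha&\beta\\\beta&-\alpha\end{bmatrix}$ is the matrix of \lemref{lem:line}.$(i)$.

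First I would show each member is isomorphic to $H$. A direct computation gives $b\bigl((\alpha u,\beta u),(\alpha u',\beta u')\bigr)=(\alpha^2+\beta^2)c(u,u')=\omega\, c(u,u')$, and likewise $b\bigl((\beta u,-\alpha u),(\beta u',-\alpha u')\bigr)=\omega\, c(u,u')$. Hence $u\mapsto(\alpha u,\beta u)$ and $u\mapsto(\beta u,-\alpha u)$ are pseudo-isometries from $c$ onto $b_{\Bi(K_{2j-1})}$ and $b_{\Bi(K_{2j})}$ respectively. Since $\Grp(\Bi(H))\cong H$ by \propref{prop:semi-equiv-cat}.$(ii)$ and $\Grp$ respects pseudo-isometry, \thmref{thm:main-reduction} yields $K_{2j-1}\cong K_{2j}\cong H$, while $H_{2m+1},\dots,H_n$ are copies of $H$ by construction.

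Next I would check that $\mathcal{H}_m$ is a fully refined central decomposition. Orthogonality across distinct blocks is automatic; within a block, $b\bigl((\alpha u,\beta u),(\beta v,-\alpha v)\bigr)=\alpha\beta\, c(u,v)-\alpha\beta\, c(u,v)=0$, so $K_{2j-1}$ and $K_{2j}$ commute. Because $\det\phi=-\omega\neq 0$, the two new subspaces span all of $V_{2j-1}\oplus V_{2j}$, so $\Bi(\mathcal{H}_m)$ is a direct-sum decomposition of $V$ into pairwise $b$-orthogonal subspaces; it is therefore a $\perp$-decomposition (\defref{def:perp}) and no proper subset generates $V$. Each summand is $\perp$-indecomposable since it is pseudo-isometric to the $\perp$-indecomposable $c$, so $\Bi(\mathcal{H}_m)$ is fully refined, and by \thmref{thm:main-reduction}.$(i,ii)$ so is $\mathcal{H}_m$.

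Finally, the address. By \propref{prop:tensor}, $\Adj(b)=\Adj(d)\otimes_K\Adj(c)$, so its unique maximal $*$-ideal has quotient form $d(I)$ pseudo-isometric to $d$ on $K^n$, and the address of each factor is read off on $d$. The image of $H_i$ is the line $Ke_i$ with $d(e_i,e_i)=1$, giving address $[1]$; while $K_{2j-1}$ and $K_{2j}$ map to the lines $K(\alpha e_{2j-1}+\beta e_{2j})$ and $K(\beta e_{2j-1}-\alpha e_{2j})$, each of $d$-norm $\alpha^2+\beta^2=\omega$, giving address $[\omega]$. Thus $\mathcal{H}_m@=\{[1]^{\,n-2m},[\omega]^{\,2m}\}=(n-2m:2m)$. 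The step demanding the most care — and the main obstacle — is this address computation: one must match the scalar $\omega$ arising in the restriction of $b$ with the formal address defined through the simple quotient $d(I)$, and must ensure that $\omega$, chosen as a non-square in $\mathbb{Z}_p$, remains a non-square in the field $K=\Adj(c)/\rad\Adj(c)$ over which the orthogonal form lives (which is automatic when $K=\mathbb{Z}_p$, e.g. for the groups of \corref{coro:free-ortho}).
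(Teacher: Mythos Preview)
Your proposal is correct and follows essentially the same route as the paper: both pass to the bilinear side, write $\Bi(P)=d\otimes c$, and apply the $2\times 2$ rotation $\phi$ of \lemref{lem:line}.$(i)$ blockwise to $\mathcal{H}_0$; the paper packages this as an explicit automorphism $\tau_j=\Grp(\phi\otimes 1,\omega)$ of $\langle H_{2j-1},H_{2j}\rangle$ sending $H_{2j-1},H_{2j}$ to $K_{2j-1},K_{2j}$, from which the isomorphism $K_{2j\pm 1}\cong H$, the central-decomposition property, and the address $[\omega]$ all follow at once, while you verify these same facts by separate direct computations. Your closing caveat about $\omega$ being a non-square in $K$ rather than merely in $\mathbb{Z}_p$ is well taken and is glossed over in the paper too; it is harmless in the only application (\thmref{thm:many-orbits}), where $K=\mathbb{Z}_p$.
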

\begin{proof}
As $X$ is a minimal generating set of $H$, if $x,y\in X$ with $Z(H)x=Z(H)y$ then $x=y$.
Therefore, $\overbrace{X\times\cdots \times X}^n$ is mapped injectively into $P$ via
the homomorphism $\pi:\prod_{H\in\mathcal{H}} H\to P$ described in Section \ref{sec:cent-prod}. 
This makes the groups $H_i$, $K_{2j-1}$, and $K_{2j}$ well-defined,
for each $1\leq i\leq n$ and $1\leq j\leq n/2$.  Furthermore, $H_i \cong H$ for each
$1\leq i\leq n$ and $\mathcal{H}_0$ is a fully refined central decomposition of $P$.

Set $X_i=H_i/H'_i=H_i P'/P'$, $W=P'=H'_i$, $1\leq i\leq n$.  Also set 
$L_j:=\langle H_{2j-1},H_{2j}\rangle=\langle K_{2j-1},K_{2j}\rangle$, $1\leq j\leq n/2$.
Then $L_j/L'_j=X_{2j-1}\oplus X_{2j}$ and $b|_{L_j/L'_j}$ is $b\perp b$ where $b=\Bi(H)$.
Recall that $\Bi(P)=d\otimes b$ where $d:k^n\times k^n\to k$ is the dot product
and $\mathcal{X}:=\Bi(\mathcal{H}_0)=\{X_i : 1\leq i\leq n\}$ is a fully refined $\perp$-decomposition of $\Bi(P)$.  As $\Adj(\Bi(P))=\Adj(d)\otimes \Adj(\Bi(H))\cong \Adj(d)$, it follows that 
$\mathcal{X}_d=\{Y_1,\dots, Y_n\}$ is fully refined $\perp$-decomposition of $d$.
In fact, the implied isomorphism $\Adj(\Bi(P))$ to $\Adj(d)$ maps $f\otimes 1\to f$,
so $\mathcal{E}(\mathcal{X})$ is sent to the canonical frame
$\{\Diag\{1,0,\dots\},\dots,\Diag\{\dots,0,1\}\}$ of $\Adj(d)$.  So,
$\mathcal{H}_0@=\mathcal{X}_d @=(n:0)$.

Define
\begin{equation*}
	(\varphi_j,\hat{\varphi}_j)
		:=\left(\begin{bmatrix} \alpha 1_{X_{2j-1}} & \beta 1_{X_{2j}}\\
								\beta 1_{X_{2j-1}} & -\alpha 1_{X_{2j}}\end{bmatrix},
					(\alpha^2+\beta^2)1_W\right)\in \Isom^* (b|_{L_j/L'_j}).
\end{equation*}
Set $\tau_j := \Grp(\varphi_j,\hat{\varphi}_j)\in \Aut L_j$. 
Then $K_{2j-1}=H_{2j-1}\tau_j$ and $K_{2j}= H_{2j}\tau_j$ for $1\leq j\leq n/2$.  
Furthermore, $(\varphi_j,\hat{\varphi}_j)$ induces
\begin{equation*}
	\left(\begin{bmatrix} \alpha  & \beta \\
								\beta & -\alpha \end{bmatrix},
					\omega \right)\in \Isom^* (\langle Y_{2j-1}, Y_{2j}\rangle).
\end{equation*}
Therefore, $K_{2j-1}@=[\omega]$ and $K_{2j}@=[\omega]$.
Thus we have proved that $\mathcal{H}_m$ has address $(n-2m:2m)$.
\end{proof}

At this point we know there are multiple $C_{\Aut P}(P')$-orbits of fully refined central
decompositions of $P$, for any $P$ satisfying the hypothesis of \lemref{lem:many-addresses}.
But we have not worked with $\Aut P$-orbits yet.  We now show that 
there are multiple $\Aut P$-orbits as well.

\begin{lemma}\label{lem:tensor}
Given vector spaces $U$ and $V$, the map $\alpha\oplus \beta\to \alpha\otimes \beta$
from $\GL(U)\oplus \GL(V)\to\GL(U\otimes V)$ has kernel
\begin{equation*}
	Z := \langle s1_U\oplus s^{-1}1_V \mid s\in k^\times \rangle.
\end{equation*}
and the image is isomorphic to $\GL(U)\circ \GL(V) = (\GL(U)\oplus \GL(V))/Z$.
\end{lemma}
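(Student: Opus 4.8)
The plan is to treat the map $\mu:\alpha\oplus\beta\mapsto \alpha\otimes\beta$ first as a group homomorphism, then to pin down its kernel; the isomorphism of the image with $\GL(U)\circ\GL(V)$ is afterwards just the first isomorphism theorem together with the defining description of the central product.

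First I would check that $\mu$ really is a homomorphism $\GL(U)\oplus\GL(V)\to \GL(U\otimes V)$. Writing operators on the right as elsewhere in the paper, functoriality of the tensor product gives
\[
(u\otimes v)\bigl((\alpha\otimes\beta)(\alpha'\otimes\beta')\bigr)
 = u\alpha\alpha'\otimes v\beta\beta'
 = (u\otimes v)\bigl((\alpha\alpha')\otimes(\beta\beta')\bigr)
\]
for all $u\in U$ and $v\in V$, so $\mu$ is multiplicative; it lands in $\GL(U\otimes V)$ because $\alpha\otimes\beta$ has inverse $\alpha^{-1}\otimes\beta^{-1}$.

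The heart of the argument is the kernel. Here I would exploit the canonical identification $\End(U\otimes V)\cong \End U\otimes\End V$, valid since $U$ and $V$ are finite dimensional, under which the operator $\alpha\otimes\beta$ corresponds exactly to the pure tensor $\alpha\otimes\beta$ and $1_{U\otimes V}$ to the pure tensor $1_U\otimes 1_V$. The relation $\alpha\otimes\beta=1_{U\otimes V}$ then reads as an equality of two nonzero pure tensors in the vector space $\End U\otimes \End V$. Since two nonzero pure tensors $a\otimes b$ and $a'\otimes b'$ in a tensor product of vector spaces coincide only when $a'=sa$ and $b'=s^{-1}b$ for some $s\in k^\times$, applying this to $\alpha\otimes\beta=1_U\otimes 1_V$ forces $\alpha=s\,1_U$ and $\beta=s^{-1}1_V$. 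Conversely each such pair clearly lies in the kernel, so $\ker\mu=Z$ as claimed. (If one prefers to avoid the $\End$-tensor identification, the same conclusion follows by a direct coordinate computation: fixing a basis $\{v_j\}$ of $V$ and expanding $u\alpha\otimes v_j\beta=u\otimes v_j$ in $\bigoplus_j U\otimes v_j$ shows first that $\alpha$ is a scalar, and then that every $v_j$ is an eigenvector of $\beta$ for the reciprocal scalar.)

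Finally the first isomorphism theorem gives $\im\mu\cong (\GL(U)\oplus\GL(V))/Z$, and this quotient is by definition $\GL(U)\circ\GL(V)$: the subgroup $Z$ identifies the scalar $s\,1_U$ in the center of $\GL(U)$ with $s\,1_V$ in the center of $\GL(V)$, which is precisely the central-product identification of \eqref{eq:canonical-cent}. The only genuinely substantive step is the kernel computation, and within it the rigidity statement that equal nonzero pure tensors differ by reciprocal scalars; everything else is formal.
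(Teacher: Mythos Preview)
Your argument is correct and matches the paper's own proof, which is just the one-line hint ``Fix bases for $U$ and $V$ and consider the resulting matrices in the kernel.'' Your pure-tensor rigidity argument is a slightly more conceptual packaging of the same computation, and your parenthetical coordinate alternative is exactly what the paper has in mind.
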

\begin{proof} Fix bases for $U$ and $V$ and consider the resulting
matrices in the kernel. \end{proof}

\begin{thm}\label{thm:many-orbits}
Let $H:=\langle x,y,z | $ class $2$, exponent $p\rangle$ (which is centrally indecomposable
by \corref{coro:free-ortho}), $P:=\overbrace{H\circ\cdots\circ H}^{2n}$ and 
$\mathcal{H}_m$ be as in \lemref{lem:many-addresses}.  Then all the following hold:
\begin{enumerate}[(i)]
\item Every member of $\mathcal{H}_m$ is
isomorphic to $H$.
\item $\mathcal{H}_m$ is a fully refined central decomposition of $P$.
\item For every fully refined central decomposition $\mathcal{K}$ of $P$, there is
is a unique $m$ and some $\alpha\in C_{\Aut P}(P')$ such that 
$\mathcal{K}^{\alpha}=\mathcal{H}_m$.  So there are $1+n$ orbits 
of fully refined central decomposition under the action of $C_{\Aut P}(P')$.
\item $\mathcal{H}_m$ and 
$\mathcal{H}_{m'}$ are in the same $\Aut P$-orbit if, and only if, $m'=n-m$.  
\end{enumerate}
Hence there are exactly $1+\lfloor n/2\rfloor$ orbits in the set of fully 
refined central decompositions of $P$ under the action of $\Aut P$.
\end{thm}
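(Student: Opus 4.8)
The plan is to reduce everything to the symmetric bilinear form underlying $\Bi(P)$ and then read off the orbits from the address calculus of Sections~\ref{sec:address}--\ref{sec:final-proof}. Parts $(i)$ and $(ii)$ are exactly \lemref{lem:many-addresses} applied with $2n$ in place of $n$, so nothing new is needed there. For the rest, first I would record the structural facts: since $H$ is centrally indecomposable of orthogonal type, $\Adj(\Bi(H))\cong k$ by \lemref{lem:free-alt-adj}, and $\Bi(P)=d\otimes \Bi(H)$ where $d$ is the dot product on $\mathbb{F}_p^{2n}$ (as in \exref{ex:cent-perp} and the discussion preceding \lemref{lem:many-addresses}). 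Hence \propref{prop:tensor} gives $\Adj(\Bi(P))\cong \Adj(d)$, a simple $*$-algebra of orthogonal type. In particular $\spec_0\Adj(\Bi(P))$ is a single $*$-ideal, so the address of any fully refined central decomposition is just the address of the corresponding orthogonal basis of $d$; moreover $P'=Z(P)$ because $H$ is non-abelian centrally indecomposable and hence $H'=Z(H)$, so the hypotheses of \corref{coro:main} and \corref{coro:pseudo-orbits} are met.

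For $(iii)$: the form $d$ has $\disc d=[\det I_{2n}]=[1]$, so \corref{coro:count} shows the possible addresses of $d$ are exactly $(2n-2k:2k)$ for $0\le k\le n$, a total of $n+1$ distinct addresses. By \lemref{lem:many-addresses}, $\mathcal{H}_m$ realizes the address $(2n-2m:2m)$, so every one of these $n+1$ addresses occurs and is realized by a unique $\mathcal{H}_m$. Since \corref{coro:main}.$(ii)$ states that $C_{\Aut P}(P')$ acts transitively on the fully refined central decompositions of a given address, the $C_{\Aut P}(P')$-orbits are in bijection with these addresses. This yields $1+n$ orbits and shows every fully refined central decomposition $\mathcal{K}$ is $C_{\Aut P}(P')$-equivalent to a unique $\mathcal{H}_m$.

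For $(iv)$: By \thmref{thm:main-reduction}.$(iii.a)$, two central decompositions lie in the same $\Aut P$-orbit if and only if the associated $\perp$-decompositions lie in the same $\Isom^*(\Bi(P))$-orbit; under the identification $\Adj(\Bi(P))\cong\Adj(d)$ these correspond to orthogonal bases of $d$ and their $\Isom^*(d)$-orbits. Since $2n$ is even, \corref{coro:pseudo-orbits}.$(ii)$ says bases of addresses $(2n-2m:2m)$ and $(2n-2m':2m')$ are pseudo-isometric iff $m'=m$ or $m'=n-m$. Concretely, the element $(\varphi,\omega)\in\Isom^*(d)$ of \propref{prop:dot-isom-*}.$(ii)$ induces $(\varphi\otimes 1,\,\omega 1)\in\Isom^*(\Bi(P))$, i.e.\ an automorphism of $P$, carrying the address $(2n-2m:2m)$ to $(2m:2n-2m)$, the address of $\mathcal{H}_{n-m}$; composing with a suitable element of $C_{\Aut P}(P')$ from $(iii)$ sends $\mathcal{H}_m$ to $\mathcal{H}_{n-m}$ exactly. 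Thus the $\Aut P$-orbit of $\mathcal{H}_m$ is $\{\mathcal{H}_m,\mathcal{H}_{n-m}\}$. Counting the pairs $\{m,n-m\}$ for $0\le m\le n$, with the single fixed point $m=n/2$ occurring exactly when $n$ is even, gives $1+\lfloor n/2\rfloor$ orbits, as claimed.

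The main obstacle I expect is the bookkeeping in $(iv)$: verifying that the target-scaling pseudo-isometry of $d$ genuinely lifts, through the tensor identification $\Adj(\Bi(P))\cong\Adj(d)$, to an honest automorphism of $P$ acting on addresses by $s\mapsto 2n-s$, and that this is the only extra collapsing beyond the $C_{\Aut P}(P')$-orbits of $(iii)$. Once the reduction to $d$ and the address of $\mathcal{H}_m$ are pinned down, the combinatorial count of orbits is routine.
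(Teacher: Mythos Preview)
Your treatment of $(i)$--$(iii)$ matches the paper's essentially line for line: reduce to $b:=\Bi(P)=d\otimes c$ with $c:=\Bi(H)$, use \propref{prop:tensor} to get $\Adj(b)\cong\Adj(d)$, read off the possible addresses from \corref{coro:count}, and invoke \corref{coro:main} for transitivity at a fixed address.

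The gap is in $(iv)$, and it is exactly the one you flag at the end but do not close. You establish only one implication: the element $(\varphi\otimes 1,\omega 1)$ shows that $\mathcal{H}_m$ and $\mathcal{H}_{n-m}$ lie in the same $\Aut P$-orbit, so there are \emph{at most} $1+\lfloor n/2\rfloor$ orbits. What you have not shown is that $\Isom^*(b)$ does not collapse any further addresses. The sentence ``under the identification $\Adj(\Bi(P))\cong\Adj(d)$ these correspond to orthogonal bases of $d$ and their $\Isom^*(d)$-orbits'' asserts precisely the missing fact: a $*$-isomorphism of adjoint algebras transports frames and $\Isom(b)$-orbits (via \propref{prop:groups}.$(ii)$), but it does \emph{not} automatically transport pseudo-isometry orbits, because $\Isom^*(b)$ is not determined by $\Adj(b)$ alone --- it depends on the target $W$ as well. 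In principle $\Isom^*(b)$ could strictly contain $\Isom^*(d)\circ\Isom^*(c)$ and merge more addresses.

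The paper closes this by an index computation: $\Isom^*(b)/\Isom(b)$ embeds in $\GL(W)\cong\GL(3,p)$ (\propref{prop:pseudo-isom}.$(ii)$), while the obvious subgroup $\Isom^*(d)\circ\Isom^*(c)\leq\Isom^*(b)$ already has index $|\GL(3,p)|$ over $\Isom(b)\cong\GO(d)$ (using $\Isom^*(c)\cong\GL(3,p)$ and \propref{prop:dot-isom-*}). Hence $\Isom^*(b)=\Isom^*(d)\circ\Isom^*(c)$. Since $\Isom^*(c)$ acts by $1\otimes f$ and therefore trivially on $\Adj(b)=\Adj(d)\otimes k$, the $\Isom^*(b)$-orbits on frames are exactly the $\Isom^*(d)$-orbits, and \corref{coro:pseudo-orbits}.$(ii)$ finishes. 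You should insert this argument (or an equivalent) where you currently assert the correspondence of pseudo-isometry orbits.
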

\begin{proof}
Let $k:=\mathbb{Z}_p$.

By design, $\Bi(H)$ is the map $c:V\times V\to W$ where $V=k^3$, 
$W:=k^3\wedge k^3\cong k^3$ and $c(u,v)=u\wedge v$, 
$u,v\in V$.  Hence $(i)$ and $(ii)$ follow from \lemref{lem:many-addresses}.  Furthermore, 
every possible address (see \corref{coro:count}) of $\Bi(P)$ is given by one of 
the $\mathcal{H}_m$.   Therefore $(iii)$ follows from \corref{coro:main} and 
\thmref{thm:main-reduction}.

To prove $(iv)$ we start by describing the structure of $\Isom^*(b)$.  
Set $b:=\Bi(P)$ and recall that $b=d\otimes c$ where $d:U\times U\to k$ is 
the dot product on $U:=k^n$.  Following \lemref{lem:tensor} we find that 
\begin{equation*}
	\Isom^*(d)\circ \Isom^*(c)=\Isom^*(d)\oplus \Isom^*(c)/
		\langle (s 1_U\oplus s^{-1} 1_V, 1_{k\otimes W}): s\in k^\times\rangle
\end{equation*}
embeds in $\Isom^*(b)$.  \emph{We claim that $\Isom^*(b)$ equals this embedding}.

By \propref{prop:tensor} we know that $\Adj(b)=\Adj(d)\otimes\Adj(c)\cong \Adj(d)$.  
Hence $\Isom(b)\cong \Isom(d)=\GO(d)$.  Indeed this shows that
\begin{equation*}
	\Isom(b)=\{\alpha\otimes 1_V : \alpha\in \GO(d)\}.
\end{equation*}
In particular, $\Isom(b)$ embeds in $\Isom^*(d)\circ\Isom^*(c)$.  Observe that
$\Isom^*(c)=\{(f,f\otimes f) : f\in\GL(V)\}\cong \GL(3,p)$ and use 
\lemref{prop:dot-isom-*} to conclude that
\begin{align*}
	[\Isom^*(d)\circ \Isom^*(c):\Isom(b)]
		& = \frac{ (p-1)|\GO(d)| |\GL(3,p)|}{(p-1) |\GO(d)|}
		= |\GL(3,p)|.
\end{align*}

As $\Isom^*(b)/\Isom(b)\leq \GL(k\otimes W)\cong \GL(3,p)$, we conclude 
by orders that $\Isom^*(b)=\Isom^*(d)\circ \Isom^*(c)$. Hence the 
orbits of $\Isom^*(b)$ on fully refined central decompositions are those of 
$\Isom^*(d)\circ\Isom^*(c)$, that is, the orbits described in
\corref{coro:pseudo-orbits}.
\end{proof}

\begin{proof}[\thmref{thm:central-count}.$(ii)$]
This follows from \thmref{thm:many-orbits}.
\end{proof}
		\subsection{Centrally indecomposable $p$-groups of non-orthogonal type}

Centrally indecomposable families of symplectic type are the easiest to construct by
classical methods.  Already the extraspecial $p$-groups $p^{1+2}$ of exponent $p$
serve as examples.  We generalize the extraspecial example to include
field extensions of $\mathbb{Z}_p$.  We let $k$ be an arbitrary field.

\begin{lemma}\label{lem:alt-form}
The $k$-bilinear form $d:k^2\times k^2\to k$ 
defined by $d(u,v)=\det\begin{bmatrix} u\\ v\end{bmatrix}$, for all 
$u,v\in k^2$, has $\Adj(d)=M_2(k)$ with the 
adjugate involution, thus $d$ is $\perp$-indecomposable of symplectic type.
\end{lemma}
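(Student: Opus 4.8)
The plan is to compute $\Adj(d)$ explicitly in the standard basis and then identify the resulting $*$-operator with the adjugate involution of \eqref{eq:adjugate}. Write $e_1,e_2$ for the standard basis of $k^2$ and set $J=\begin{bmatrix} 0 & 1 \\ -1 & 0\end{bmatrix}$, so that $d(u,v)=uJv^t$ for all $u,v\in k^2$ (with vectors written as rows and maps acting on the right). Because $d$ is a non-degenerate bilinear \emph{form} into the one-dimensional space $k$, the assignment $v\mapsto d(\,\cdot\,,v)$ is an isomorphism $k^2\to(k^2)^*$; hence every $f\in\End V$ admits an adjoint, and so $\Adj(d)=\End V\cong M_2(k)$ as an algebra. (Uniqueness of adjoints is \propref{prop:adjoint}.)

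Next I would pin down the $*$-operator. Representing $f$ by the matrix $F$ and using $d(uF,v)=u(FJ)v^t$ together with $d(u,vF^*)=u\bigl(J(F^*)^t\bigr)v^t$, the defining relation $d(uF,v)=d(u,vF^*)$ becomes $FJ=J(F^*)^t$. Solving, and using $J^{-1}=J^t=-J$, gives $F^*=-JF^tJ$. A direct $2\times 2$ computation then shows $-JF^tJ=\begin{bmatrix} d & -b \\ -c & a\end{bmatrix}$ for $F=\begin{bmatrix} a & b \\ c & d\end{bmatrix}$, which is exactly the adjugate involution of \eqref{eq:adjugate}. Thus $\Adj(d)\cong M_2(k)$ equipped with the adjugate involution, which is the symplectic-type associative composition algebra of \thmref{thm:Hurwitz}.$(iv)$.

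Finally I would read off $\perp$-indecomposability and the type. By \thmref{thm:jordan}, $\Sym(d)=\mathfrak{H}(\Adj(d),*)$ consists of the $F$ with $F=\bar F$; since $p$ is odd, the equations $a=d$, $b=-b$, $c=-c$ force $\Sym(d)$ to be the scalar matrices $\cong k$, whose only idempotents are $0$ and $1$. By \corref{coro:indecomp} this makes $d$ $\perp$-indecomposable, and since $\rad\Adj(d)=0$ we have $\Adj(d)/\rad\Adj(d)\cong M_2(k)$, placing $d$ in the symplectic case of \thmref{thm:simple-*-alge}. The only real bookkeeping is keeping the signs straight in $F^*=-JF^tJ$ under the right-action convention; everything else is forced by non-degeneracy and the $2\times 2$ identity $-JF^tJ=\mathrm{adj}(F)$.
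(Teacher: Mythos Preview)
Your argument is correct. The paper actually gives no proof for this lemma at all---it is stated as self-evident and followed immediately by its corollary---so your computation is a faithful expansion of what the paper leaves to the reader: write $d(u,v)=uJv^t$, observe non-degeneracy forces $\Adj(d)=M_2(k)$, solve $FJ=J(F^*)^t$ to obtain $F^*=-JF^tJ=\mathrm{adj}(F)$, and then read off $\Sym(d)\cong k$ and the symplectic type via \corref{coro:indecomp} and \thmref{thm:Hurwitz}.$(iv)$.
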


\begin{coro}
Let $d:\mathbb{F}_q^2\times \mathbb{F}_q^2\to \mathbb{F}_q$ be the non-degenerate
alternating bilinear form of dimension 2.  Then $\Grp(d)$ is centrally indecomposable 
of symplectic type.
\end{coro}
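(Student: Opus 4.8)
The plan is to transport the properties of $d$ established in \lemref{lem:alt-form} across the functorial correspondence of Section \ref{sec:p-bilinear}, exactly as in the proof of \corref{coro:free-ortho}. First I would record that $P:=\Grp(d)$ is a $p$-group of class $2$ and exponent $p$ with $P'=Z(P)$: by construction $\Grp(d)$ has commutator subgroup $0\times W$ and center $\rad d\times W$, and since $d$ is non-degenerate we have $\rad d=0$, so $P'=Z(P)=0\times W$. This verifies the standing hypothesis $P'=Z(P)$ needed to apply \thmref{thm:main-reduction}.

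Next I would establish central indecomposability. By \propref{prop:semi-equiv-cat}.$(i)$ there is a pseudo-isometry from $d$ to $\Bi(\Grp(d))$, and a pseudo-isometry carries $\perp$-decompositions bijectively to $\perp$-decompositions; hence $\Bi(\Grp(d))$ is $\perp$-indecomposable precisely because $d$ is, by \lemref{lem:alt-form} (this is exactly the argument already used in the proof of \thmref{thm:main-reduction}.$(i)$). Then \thmref{thm:main-reduction}.$(i)$ gives that $P=\Grp(d)$ is centrally indecomposable. For the type label I would carry the adjoint $*$-algebra along the same pseudo-isometry: by \propref{prop:groups}.$(i)$ it induces a $*$-isomorphism $\Adj(d)\cong\Adj(\Bi(\Grp(d)))$, and \lemref{lem:alt-form} identifies $\Adj(d)=M_2(k)$ with the adjugate involution. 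This $*$-algebra is simple with trivial radical, so the associated composition algebra $\Adj(\Bi(\Grp(d)))/\rad\Adj(\Bi(\Grp(d)))\cong M_2(k)$ is of symplectic type (cf. \thmref{thm:simple-*-alge} and \thmref{thm:indecomps}), placing $\Grp(d)$ in the symplectic family.

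Essentially every substantive ingredient is already in hand, so the proof is bookkeeping; the real content lives in \lemref{lem:alt-form} and \thmref{thm:main-reduction}. The one point that genuinely requires care is the change of base field: \lemref{lem:alt-form} is stated for an arbitrary $k$, whereas $\Grp$ and $\Bi$ operate on $\mathbb{Z}_p$-bilinear maps, so one must view $d$ as a $\mathbb{Z}_p$-bilinear map and check that forgetting the $\mathbb{F}_q$-structure enlarges neither $\Adj(d)$ nor the set of $\perp$-decompositions. This holds because non-degeneracy together with $\mathbb{F}_q$-bilinearity forces every $\mathbb{Z}_p$-linear operator with an adjoint to commute with the scalars $\mathbb{F}_q$, and hence to be $\mathbb{F}_q$-linear; consequently $\Adj_{\mathbb{Z}_p}(d)=\Adj_{\mathbb{F}_q}(d)=M_2(\mathbb{F}_q)$, and both the symplectic type and the $\perp$-indecomposability survive the passage to $\mathbb{Z}_p$.
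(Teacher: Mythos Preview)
Your proof is correct and follows the paper's approach exactly; the paper's proof is the one-liner ``This follows from \thmref{thm:main-reduction}.'' Your final paragraph on the change of base field from $\mathbb{F}_q$ to $\mathbb{Z}_p$ is a genuine clarification that the paper omits (the same issue is silently present in \corref{coro:free-ortho}), and your argument that non-degeneracy of $d$ forces $\Adj_{\mathbb{Z}_p}(d)=\Adj_{\mathbb{F}_q}(d)$ is the right way to close it.
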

\begin{proof} This follows from \thmref{thm:main-reduction}.\end{proof}

Presently we are not aware of any alternating bilinear maps which are centrally
indecomposable of unitary type.  We expect infinite families over any field $\mathbb{F}_{q^2}$
to exist.  Our search for such examples is on-going.

We next construct a family of centrally indecomposable $p$-groups of exchange type.
This family also illustrates that there can be a non-trivial $O_p(C_{\Aut P}(P'))$.
There are families of exchange type without this feature but we choose this family
for the ease of proof. 

\begin{lemma}\label{lem:exchange}
Let $V$ be a $k$-vector space of dimension $n>1$.  Define the $k$-bilinear
map $b:(k\oplus V)\times (k\oplus V)\to V$ by
\begin{equation}\label{eq:exchange}
	b(\alpha\oplus u,\beta\oplus v) := \alpha v-\beta u.
\end{equation}
Then $b$ is alternating and 
\begin{equation*}
\Adj(b)   \cong  
\left\{\begin{bmatrix} \alpha 1_k & h \\ 0 & \beta 1_V\end{bmatrix}:
 h\in\hom(k,V), \alpha,\beta\in k\right\},
\end{equation*}
where the multiplication and the action on $k\oplus V$ is interpreted as 
matrix multiplication and where the involution is defined by
\begin{equation*}
\begin{bmatrix} \alpha 1_k & h \\ 0 & \beta 1_V\end{bmatrix}^*
 :=  \begin{bmatrix} \beta 1_k & -h \\ 0 & \alpha 1_V \end{bmatrix}.
\end{equation*}
In particular, $\Adj(b)/\rad \Adj(b)\cong k\oplus k$ with the exchange
involution and the radical is $\left\{\begin{bmatrix} 0 & h\\ 0 & 0\end{bmatrix}
: h\in\hom(k,V)\right\}$.  Thus $b$ is $\perp$-indecomposable of exchange type.
\end{lemma}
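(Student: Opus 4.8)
The plan is to compute $\Adj(b)$ directly from the defining adjoint equation and then read off its radical and semisimple quotient. First I would record the easy facts: $b(\alpha\oplus u,\alpha\oplus u)=\alpha u-\alpha u=0$, so $b$ is alternating, and $b$ is non-degenerate, since if $b(\alpha\oplus u,\beta\oplus v)=0$ for all $\beta\oplus v$, then taking $v=0$ and $\beta=1$ forces $u=0$, while taking $\beta=0$ forces $\alpha v=0$ for all $v$, whence $\alpha=0$ as $n\geq 1$. Thus by \propref{prop:adjoint}, $\Adj(b)$ is a $*$-algebra in which adjoints are unique.

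Next I would write a general $f\in\End(k\oplus V)$ in block form $f=\left[\begin{smallmatrix} a & w \\ \phi & M\end{smallmatrix}\right]$ with $a\in k$, $w\in\hom(k,V)$, $\phi\in\hom(V,k)$, $M\in\End V$, acting on the right by $(\alpha\oplus u)f=(\alpha a+u\phi)\oplus(\alpha w+uM)$, and likewise $f^*=\left[\begin{smallmatrix} a^* & w^* \\ \phi^* & M^*\end{smallmatrix}\right]$. Substituting into $b((\alpha\oplus u)f,\beta\oplus v)=b(\alpha\oplus u,(\beta\oplus v)f^*)$ and expanding yields an identity in $\alpha,\beta\in k$ and $u,v\in V$. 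I would then match terms according to which arguments they involve: the $\alpha v$-terms give $M^*=a\,1_V$, the $\beta u$-terms give $M=a^*\,1_V$, the $\alpha\beta$-terms give $w^*=-w$, and the purely $u,v$-terms give $(u\phi)v=-(v\phi^*)u$ for all $u,v\in V$.

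The key step — and the only place the hypothesis $n>1$ enters — is this last relation. Since $\dim V\geq 2$, for every nonzero $u$ I can choose $v$ with $u,v$ linearly independent, and independence then forces the scalar $u\phi=0$; hence $\phi=0$, and symmetrically $\phi^*=0$. For $n=1$ this argument fails and $b$ is instead the two-dimensional alternating form of symplectic type (\lemref{lem:alt-form}), so the dimension restriction is genuinely needed. Combining the relations, $M=a^*\,1_V$ is scalar, so writing $\alpha:=a$, $\beta:=a^*$, $h:=w$ gives $f=\left[\begin{smallmatrix}\alpha 1_k & h\\ 0 & \beta 1_V\end{smallmatrix}\right]$ with $f^*=\left[\begin{smallmatrix}\beta 1_k & -h\\ 0 & \alpha 1_V\end{smallmatrix}\right]$, exactly the claimed algebra and involution. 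Since the term-matching is an equivalence, every matrix of this form conversely lies in $\Adj(b)$, so the description is exact.

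Finally I would identify the structure. A one-line multiplication of two such matrices shows that $N=\{\left[\begin{smallmatrix}0 & h\\ 0 & 0\end{smallmatrix}\right]:h\in\hom(k,V)\}$ is a two-sided ideal with $N^2=0$, hence $N\subseteq\rad\Adj(b)$; and the map $\left[\begin{smallmatrix}\alpha & h\\ 0 & \beta\end{smallmatrix}\right]\mapsto(\alpha,\beta)$ identifies $\Adj(b)/N$ with $k\oplus k$, which is semisimple, so $\rad\Adj(b)=N$. The induced involution sends $(\alpha,\beta)\mapsto(\beta,\alpha)$, the exchange involution, so $\Adj(b)/\rad\Adj(b)$ is an associative composition algebra of exchange type (\thmref{thm:Hurwitz}). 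By \corref{coro:indecomp} this makes $b$ $\perp$-indecomposable, and of exchange type, completing the proof. The expansion and the final multiplication are routine; the only real content is the dimension argument killing $\phi$ and $\phi^*$.
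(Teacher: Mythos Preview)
Your proof is correct and follows essentially the same route as the paper: both decompose an arbitrary endomorphism along the splitting $k\oplus V$, use the adjoint equation to constrain each block, and invoke linear independence (the hypothesis $n>1$) to kill the $V\to k$ component. The only difference is cosmetic---you write explicit block matrices where the paper uses the Peirce components $ege,\,egf,\,fge,\,fgf$ with respect to the idempotents $e=1_k\oplus 0_V$ and $f=0_k\oplus 1_V$---and you supply a few details (non-degeneracy, the converse inclusion, the radical computation) that the paper leaves implicit.
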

\begin{proof}
It is easily checked that $e:=1_k\oplus 0_V, f:=0_k\oplus 1_V\in 
\End (k\oplus V)$ are both in $\Adj(b)$ and furthermore $e^*=f$, $e^2=e$, $f^2=f$.  Fix 
$g\in \Adj(b)$.  Then $ege$, $egf$, $fge$ and $fgf$ lie in $\Adj(b)$ and
$g=ege+egf+fge+fgf$.  Therefore, we need only characterize these four terms.

Let $u,v\in V$ be linearly independent.  Since $(0\oplus u)fge=\lambda\oplus 0$
and $(0\oplus v)fg^*e=\tau\oplus 0$ for some $\lambda,\tau\in k$, it follows that
\begin{equation*}
\begin{split}
\lambda v
& =b(\lambda\oplus 0,0\oplus v)
=b((0\oplus u)g,0\oplus v)=b(0\oplus u,(0\oplus v) g^*)\\
& =b(0\oplus u,\tau\oplus 0)
=-\tau u.
\end{split}
\end{equation*}
However, $u$ and $v$ are linearly independent, and hence $\lambda=0=\tau$ so $fge=0=fg^*e$.

Next let $(1\oplus 0)ege=\alpha\oplus 0$ and $(0\oplus u)fg^*f=0\oplus v$ for some 
$\alpha\in k$ and $v\in V$.  Then
\begin{equation*}
\begin{split}
\alpha u
& =b(\alpha\oplus 0,0\oplus u)
=b((1\oplus 0)ege,0\oplus u)=b(1\oplus 0,(0\oplus u)g^*)\\
& =b(1\oplus 0,0\oplus v)
=v.
\end{split}
\end{equation*}
Thus $fg^*f=0\oplus \alpha 1_V$ where $ege=\alpha 1_k\oplus 0_V$.
Setting $(0\oplus u)fgf =0\oplus v$ and $(1\oplus 0)eg^*e=\beta\oplus 0$ we similarly find
$fgf=0\oplus \beta 1_V$ where $eg^* e=\beta 1_k\oplus 0_V$.

Finally, set $(1\oplus 0)egf=0\oplus u$ and $(1\oplus 0)eg^* f=0\oplus v$.  Then
\begin{equation*}
\begin{split}
-u
& =b(0\oplus u,1\oplus 0)
=b((1\oplus 0)egf,1\oplus 0)=b(1\oplus 0,(1\oplus 0)eg^*f)\\
& =b(1\oplus 0,0\oplus v)
=v.
\end{split}
\end{equation*}
So $egf$ is induced by a $k$-linear map $h:k\to V$ and $eg^*f$ is
induced by $-h$.
\end{proof}

\begin{coro}
Let $b:(\mathbb{F}_q\oplus \mathbb{F}_q^n)\times 
(\mathbb{F}_q\oplus \mathbb{F}_q^n)
\to \mathbb{F}_q^n$ be as in \eqref{eq:exchange} with $n>1$.   Then 
$\Grp(b)$ is centrally indecomposable of exchange type.
\end{coro}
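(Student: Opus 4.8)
The plan is to derive this as an immediate corollary of the preceding \lemref{lem:exchange} together with the functorial dictionary of \thmref{thm:main-reduction}, exactly in the style of \corref{coro:free-ortho} and the symplectic corollary above. All of the genuine computation has already been carried out at the level of the bilinear map $b$, so what remains is purely to transport those facts across the $\Bi$--$\Grp$ correspondence to the group $\Grp(b)$.

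First I would set $P:=\Grp(b)$. Since \lemref{lem:exchange} establishes that $b$ is alternating, the construction of \secref{sec:grp} guarantees that $P$ is a genuine $p$-group of class $2$ and exponent $p$. A one-line check that $\rad b=0$ (if $\gamma\oplus w$ pairs trivially with every $\beta\oplus v$, then $\gamma v=\beta w=0$ for all $\beta\in k$ and $v\in V$, which forces $\gamma=0$ and $w=0$ as $V\neq 0$) shows $b$ is non-degenerate, hence $Z(P)=P'$ and the hypotheses of \thmref{thm:main-reduction} are met. By \propref{prop:semi-equiv-cat}.$(i)$, $b$ is pseudo-isometric to $\Bi(P)$; since $\perp$-indecomposability is a pseudo-isometry invariant and \lemref{lem:exchange} already shows $b$ is $\perp$-indecomposable, so is $\Bi(P)$. \thmref{thm:main-reduction}.$(i)$ then yields that $P$ is centrally indecomposable.

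To pin down the \emph{type}, I would appeal to \thmref{thm:indecomps}, whose classification is governed by $\Adj(\Bi(P))/\rad$. A pseudo-isometry induces a $*$-isomorphism of adjoint algebras (\propref{prop:groups}.$(i)$), so this quotient is $*$-isomorphic to $\Adj(b)/\rad\Adj(b)$, which \lemref{lem:exchange} computes to be $k\oplus k$ with the exchange involution; this is precisely the exchange case of \thmref{thm:indecomps}. The side condition $|P|\neq p$ there is satisfied because $n>1$ makes $\dim(k\oplus V)=n+1>2$, so $|P|=q^{2n+1}>p$. There is no real obstacle here: the substantive work lives entirely in \lemref{lem:exchange}, and the only points that merit a moment's care are matching the data $\Adj(b)/\rad\Adj(b)\cong k\oplus k$ to the labelling in \thmref{thm:indecomps} and confirming that the excluded degenerate case $|P|=p$ cannot arise---both immediate once $n>1$ is used.
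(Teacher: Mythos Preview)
Your argument is correct and is exactly the approach the paper takes: the paper's own proof is the single line ``This follows from \thmref{thm:main-reduction},'' and you have simply unpacked that citation (together with the type identification via \thmref{thm:indecomps}) in full detail.
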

\begin{proof} This follows from \thmref{thm:main-reduction}.\end{proof}

If $n=1$ then $b$ is simply the non-degenerate alternating 
bilinear $k$-form of dimension $2$.  The smallest example 
of a $p$-group with exchange type is in fact of order $p^5$ with rank $3$,
namely: $\langle a,b,c \mid [a,c]=1,$ class $2$, exponent $p\rangle$.

We can \eqref{eq:exchange} to illustrate that Section \ref{sec:radicals} is required.
We emphasize that instances of non-trivial radicals are known in far more 
general settings than $\perp$-indecomposable bilinear maps of exchange type.  

The radical in of $\Adj(b)$, for $b$ as in \eqref{eq:exchange}, intersects
$\Sym(b)$ trivially.  However, if we define 
$c:(k\oplus V)\times (k\oplus V)\to V$ by
\begin{equation}
	c(\alpha\oplus u, \beta\oplus v) := \alpha v + \beta u,\qquad 
		\forall \alpha,\beta,\in k, u,v\in V;
\end{equation}
then $\Adj(c)/\rad \Adj(c)\cong k\oplus k$ with the exchange involution.
Here $\rad\Adj(c)\leq \Sym(c)$.  To make this example alternating we may
simply tensor by the alternating bilinear map from \lemref{lem:free-alt-adj}.
To further make a $\perp$-decomposable bilinear map we may tensor with
a dot-product.  By \propref{prop:tensor}, the result has a non-trivial radical 
in $\Sym(b)$.

\section{Closing remarks}\label{sec:closing}

		%
		%
\subsection{Conjecture on uniqueness of fully refined central decompositions}
		\label{sec:conjecture}
		
It remains open whether or not the multiset of isomorphism types of a
fully refined central decomposition of a $p$-group $P$ of class $2$ and exponent $p$
is uniquely determined.  By \corref{coro:central-II}, it suffices to answer the following:
\begin{quote}
Let $H$ and $K$ be centrally indecomposable $p$-groups of class $2$, exponent $p$,
and of orthogonal type.  Is true that whenever $H\circ H\cong K\circ K$ then 
$H\cong K$? (Where $H\circ H$ is as in \eqref{eq:canonical-cent}).
\end{quote}
We conjecture this is true.  

A counter-example will require a $p$-group $P:=H\circ H$ of class $2$ and 
exponent $p$, where $H$ is non-abelian centrally indecomposable and  
$\Adj(\Bi(P))/\rad \Adj(\Bi(P))\cong \Adj(d)$ with $d$ the dot-product
$d(u,v)=uv^t$, $u,v\in K^2$.  Furthermore, $\Bi(P)$ must not be bilinear 
over $K$.  Infinite families of this sort are known, but so far have not produced 
counter-examples.

Because such groups involve symmetric bilinear forms, it is possible that a solution
will divide along the congruence of $p$ modulo $4$.  Some evidence of this has been
uncovered while attempting to develop counter-examples.  From these conditions
and others, it appears that a counter-example group $P$ will have order at least 
$5^{30}$.

\subsection{Further directions}
In the future, it may become possible to replace various algebraic arguments 
contained here with group theory.
Nonetheless, 
the condition that an endomorphism $f\in \End V$ lies in $\Adj(b)$
(or $\Sym(b)$) is determined by a system of linear equations.  This is the 
source of polynomial time algorithms for computing
central decompositions of $p$-groups found in \cite{Wilson:algo-cent}.  In 
contrast, the equations to determine if $f\in\Isom(b)$ or $\Isom^*(b)$ 
(and hence to determine the automorphism group of a $p$-group) are quadratic and
generally difficult to solve.

The use of bilinear
maps makes many of the results adaptable to other algebraic objects.
For instance, our theorems apply (at least over finite fields) 
to central decompositions of class $2$ nilpotent Lie algebras.  See also 
\cite{Abbasi:3} and \cite[pp. 608-609]{Bond}.  

\subsection{Other fields}
The use of finite fields removed the need to consider 
Hermitian forms over non-commutative division rings in the classification of 
$*$-simple algebras, and consequently also the related simple Jordan algebras 
(\thmref{thm:*-structure} and \thmref{thm:herm-structure}); therefore, 
this assumption affects Sections \ref{sec:simples} and \ref{sec:orbits}.
 Furthermore, as finite 
fields are separable we are able to apply Taft's $*$-algebra version of the 
Wedderburn Principal theorem (\thmref{thm:Taft}) in proving \thmref{thm:indecomps}.  
Evidently our proofs apply also to bilinear maps over algebraically 
closed fields of characteristic not 2.

\subsection{$2$-groups of exponent $4$}
The omission of $2$-groups of exponent $4$ in the proof of 
\thmref{thm:main-reduction} can be relaxed \cite{Wilson:isoclinism}.  The 
known obstacles for $2$-groups of class 2 and 
exponent $4$ derive from the usual complications of symmetric bilinear forms in 
characteristic 2.  We are presently investigating whether or not these are 
indeed the only limitations.

%
%
\subsection*{Acknowledgments}

I am grateful to W. M. Kantor for many helpful discussions and for
suggesting the Jordan product for $\Sym(b)$; to
E. M. Luks and C. R. B. Wright for their comments on earlier drafts; 
and to E. I. Zel'manov and H. P. Petersson for advice on Jordan
algebras.

%
%


\begin{thebibliography}{10}
\expandafter\ifx\csname url\endcsname\relax
  \def\url#1{\texttt{#1}}\fi
\expandafter\ifx\csname urlprefix\endcsname\relax\def\urlprefix{URL }\fi

\bibitem{Abbasi:2}
G.~K. Abbasi, Central decompositions of finite {$p$}-groups with an abelian
  second center and a center of order {$p$}, Vestnik Moskov. Univ. Ser. I Mat.
  Mekh.~(2) (1985) 75--78, 96.

\bibitem{Abbasi:1}
G.~Q. Abbasi, Extraspecial {$q$}-groups and central decompositions, Punjab
  Univ. J. Math. (Lahore) 17/18 (1984/85) 63--68.

\bibitem{Abbasi:3}
G.~Q. Abbasi, Central decompositions of nilpotent {L}ie algebras, Math. Japon.
  34~(6) (1989) 841--846.

\bibitem{Artin:geometry}
E.~Artin, Geometric algebra, Interscience Publishers, Inc., New York-London,
  1957.

\bibitem{Aschbacher}
M.~Aschbacher, Finite group theory, vol.~10 of Cambridge Studies in Advanced
  Mathematics, Cambridge University Press, Cambridge, 1986.

\bibitem{Baer:class-2}
R.~Baer, Groups with abelian central quotient group, Trans. Amer. Math. Soc.
  44~(3) (1938) 357--386.

\bibitem{Bond}
J.~Bond, Lie algebras of genus one and genus two, Pacific J. Math. 37 (1971)
  591--616.

\bibitem{gor}
D.~Gorenstein, Finite groups, 2nd ed., Chelsea Publishing Co., New York, 1980.

\bibitem{Hall:isoclinism}
P.~Hall, The classification of prime-power groups, J. Reine Angew. Math. 182
  (1940) 130--141.

\bibitem{Higman:enum}
G.~Higman, Enumerating {$p$}-groups. {I}. {I}nequalities, Proc. London Math.
  Soc. (3) 10 (1960) 24--30.

\bibitem{Jacobson:Jordan}
N.~Jacobson, Structure and representations of {J}ordan algebras, American
  Mathematical Society Colloquium Publications, Vol. XXXIX, American
  Mathematical Society, Providence, R.I., 1968.

\bibitem{Jacobson:Ark}
N.~Jacobson, Structure theory of {J}ordan algebras, vol.~5 of University of
  Arkansas Lecture Notes in Mathematics, University of Arkansas, Fayetteville,
  Ark., 1981.

\bibitem{Kaloujnine:class-2}
L.~Kaloujnine, Zum {P}roblem der {K}lassifikation der endlichen metabelschen
  {$p$}-{G}ruppen, Wiss. Z. Humboldt-Univ. Berlin. Math.-Nat. Reihe 4 (1955)
  1--7.

\bibitem{Khukhro}
E.~I. Khukhro, {$p$}-automorphisms of finite {$p$}-groups, vol. 246 of London
  Mathematical Society Lecture Note Series, Cambridge University Press,
  Cambridge, 1998.

\bibitem{Lewis:*-survey}
D.~W. Lewis, Involutions and anti-automorphisms of algebras, Bull. London Math.
  Soc. 38~(4) (2006) 529--545.

\bibitem{Loos:fitting}
O.~Loos, Fitting decomposition in {J}ordan systems, J. Algebra 136~(1) (1991)
  92--102.

\bibitem{McCrimmon:jor-nil}
K.~McCrimmon, Quadratic {J}ordan algebras whose elements are all invertible or
  nilpotent, Proc. Amer. Math. Soc. 35 (1972) 309--316.

\bibitem{Osborn:jor-nil}
J.~M. Osborn, Jordan and associative rings with nilpotent and invertible
  elements., J. Algebra 15 (1970) 301--308.

\bibitem{Petersson:idemp}
H.~P. Petersson, Conjugacy of idempotents in {J}ordan pairs, Comm. Algebra
  6~(7) (1978) 673--715.

\bibitem{Pierce}
R.~S. Pierce, Associative algebras, vol.~88 of Graduate Texts in Mathematics,
  Springer-Verlag, New York, 1982.

\bibitem{Robinson}
D.~J.~S. Robinson, A course in the theory of groups, vol.~80 of Graduate Texts
  in Mathematics, Springer-Verlag, New York, 1993.

\bibitem{Schafer:nonass}
R.~D. Schafer, An introduction to nonassociative algebras, Pure and Applied
  Mathematics, Vol. 22, Academic Press, New York, 1966.

\bibitem{Sims:enum}
C.~C. Sims, Enumerating {$p$}-groups, Proc. London Math. Soc. (3) 15 (1965)
  151--166.

\bibitem{Taft:Wedderburn}
E.~J. Taft, Invariant {W}edderburn factors, Illinois J. Math. 1 (1957)
  565--573.

\bibitem{Tang:cent-1}
C.~Y. Tang, On uniqueness of generalized direct decompositions, Pacific J.
  Math. 23 (1967) 171--182.

\bibitem{Tang:cent-2}
C.~Y. Tang, On uniqueness of central decompositions of groups, Pacific J. Math.
  33 (1970) 749--761.

\bibitem{Warfield:nil}
R.~B. Warfield, Jr., Nilpotent groups, Springer-Verlag, Berlin, 1976, lecture
  Notes in Mathematics, Vol. 513.

\bibitem{Wilson:algo-cent}
J.~B. Wilson, Algorithms for decomposing nilpotent groups(in preparation).

\bibitem{Wilson:isoclinism}
J.~B. Wilson, Lie equivalent $p$-groups(in preparation).

\bibitem{Wilson:grp-alge}
J.~B. Wilson, $p$-groups, bilinear maps, and their algebras(in preparation).

\end{thebibliography}

\def\cprime{$'$}

\end{document}